\newcommand{\eqnsection}{
\renewcommand{\theequation}{\thesection.\arabic{equation}}
   \makeatletter
   \csname  @addtoreset\endcsname{equation}{section}
   \makeatother}
\def\tM{\mathtt{M}}
\def\M{{\mathcal M}}
\def\R{{\mathbb R}}
\def\e{{\mathbb E}}
\def\p{{\mathbb P}}
\def\P{{\bf P}}
\def\E{{\bf E}}
\def\Q{{\bf Q}}
\def\N{{\mathbb N}}
\def\T{{\mathbb T}}
\def\L{{\mathcal L}}
\def\B{{\mathcal B}}
\def\C{{\mathcal C}}
\def\en{\mathcal{E}}
 \def\pa{\overleftarrow}
\def\nf{\mathcal{F}}
\def\mS{{\underline{S}}}
\def\MS{{\overline{S}}}
\def\MV{{\overline{V}}}
\newtheorem{thm}{Theorem}[section]
\newtheorem{prop}[thm]{Proposition}
\newtheorem{lem}[thm]{Lemma}
\newtheorem{rem}[thm]{Remark}
\newtheorem{condition}[thm]{Assumption}
\newcommand{\ind}[1]{\mathbf{1}_{\left\{ #1 \right\}}}
\renewcommand{\epsilon}{\varepsilon}
\def\cb{{\bf c}}
\newcommand{\egloi}{\stackrel{\textrm{(d)}}{=}}
\newcommand{\ty}{{\beta}} % Type dans la marche alÃ©atoires
\newcommand{\tauh}{{\widehat{\tau}}} % Type dans la marche alÃ©atoires
\newcommand{\fl}[1]{{\lfloor #1 \rfloor}} % partie entiere infÃ©rieure
\title{\bf Maximal local time of randomly biased random walks on a Galton-Watson tree%Tail behaviours of branching random walks, of associated multitype Galton-Watson tree and of branching process with immigration in random environment
}
\author{Xinxin Chen\footnote{Institut Camille Jordan-C. N. R. S. UMR 5208-Universit\'e Claude Bernard Lyon 1(France).
\newline\vspace{0.1cm}\hspace{0.2cm} $\dag$Institut Denis Poisson-C. N. R. S. UMR 7013-Université d'Orléans (France).
\newline\vspace{0.1cm} MSC 2000 60J80; 60G50; 60K37. Supported by ANR MALIN} $\ $, Lo\"ic de Raph\'elis$^\dag$ }
\begin{document}

\maketitle
%\listoftodos
\begin{abstract}
We consider a recurrent random walk on a rooted tree in random environment given by a branching random walk. Up to the first return to the root, its edge local times form a Multi-type Galton-Watson tree with countably infinitely many types. When the walk is the diffusive or sub-diffusive, by studying the maximal type of this Galton-Watson tree, we establish the asymptotic behaviour of the largest local times of this walk  during $n$ excursions, under the annealed law.

%The environment is given by a branching random walk for which the tail distribution of $W_\infty$, the limit of its additive martingale has been given in \cite{Liu}. In this work, we study the joint tail distribution of $W_\infty$ and the minimum of the branching random walk, and then use it to obtain the tail behaviour of the largest local times of the associated random walk in random environment during one excursion. We also establish the asymptotic behaviour of the largest local times during $n$ excursions. %Our arguments also show the tail distribution of the maximum of a branching process in random environment with immigration.

\noindent\textit{Keywords: Branching random walk, Randomly biased random walk, Multi-type Galton-Watson tree}%, Branching process in random environment with immigration}
\end{abstract}

\section{Introduction: Models and results}
\label{Intro}
\subsection{Branching random walk and randomly biased random walk}
Let us first introduce a branching random walk on the real line, whose reproduction law is given by the law of $\C$, a point process on $\R$. The construction is as follows.

We start with one vertex at time $0$, which is called the root, denoted by $\rho$ and positioned at $V(\rho)=0$. At time $1$, the root gives birth to some children whose positions constitute a point process distributed as $\C$. These children form the first generation. Recursively, for any $n\geq0$, at time $n+1$, every vertex $u$ of the $n$-th generation produces its children independently of the other vertices so that the displacements of its children with respect to its position are distributed as $\C$. All children of the vertices of the $n$-th generation form the $(n+1)$-th generation. 

We hence get the genealogical tree $\T$. For any vertex $u\in\T$, let $V(u)$ denote its position and $|u|$ denote its generation with $|\rho|=0$. For two vertices $u,v\in\T$, write $u\leq v$ if $u$ is an ancestor of $v$ and write $u<v$ if $u\leq v$ but $u\neq v$. %In particular, we denote by $\overleftarrow{u}$ the parent of $u$. 
Denote by $\P$ the law of the branching random walk $\en=(\T, (V(u))_{u\in\T})$, which serves as the environment.

Given $\en=(\T, (V(u))_{u\in\T})$, the randomly biased random walk $(X_n)_{n\geq0}$ is a nearest-neighbour random walk on $\T$ started from $X_0=\rho$, and with transition probabilities as follows: ${\forall x\in\T\setminus\{\rho\}}$,
\begin{equation}\label{RW}
\p^\en(X_{n+1}=v\vert X_n=u)=
\begin{cases}
\frac{e^{-V(u)}}{e^{-V(u)}+\sum_{z: \overleftarrow{z}=u}e^{-V(z)}}\textrm{ if } \pa{u}=v\\
\\
\frac{e^{-V(v)}}{e^{-V(u)}+\sum_{z: \overleftarrow{z}=u}e^{-V(z)}}\textrm{ if } \pa{v}=u
\end{cases}
\end{equation}
where $\pa{u}$ represents the parent of $u\in\T\setminus\{\rho\}$. To define the transition probabilities for $x=\rho$ in a proper way, we add artificially a parent $\overleftarrow{\rho}$ to the root $\rho$ and suppose that \eqref{RW} holds for $u=\rho$ and that $\p^\en(X_{n+1}=\rho\vert X_n=\overleftarrow{\rho})=1$.  The \textit{quenched} law of the random walk $(X_n)_{n\geq0}$ on $\T\cup\{\pa{\rho}\}$ is denoted by $\p^\en$. Its\textit{ annealed} law is denoted by $\p(\cdot):=\int \p^\en(\cdot)\P(d\en)$.

Note that the law of the environment $\en$ is characterised by the law of $\C$. Let us introduce the Laplace transform of $\C$ defined by
\[
\psi(t):=\E\left[\sum_{|u|=1}e^{-tV(u)}\right]=\E\left[\int e^{-tx}\C(dx)\right], \forall t\in\R.
\]
In this paper, we assume
\begin{condition}\label{cond1}
$\psi(0)>1$, $\E[\sum_{|u|=1}|V(u)|e^{-V(u)}]<\infty$ 
and
\begin{equation}\label{hyp1}
\psi(1):=\E\left[\sum_{|u|=1}e^{-V(u)}\right]=1, \ \psi'(1):=-\E\left[\sum_{|u|=1}V(u)e^{-V(u)}\right]<0.
\end{equation}
\end{condition}
Note that $\psi(0)>1$ means that the Galton-Watson tree $\T$ is supercritical. %Let $\P^*$ be the measure conditioned to survive.

Let us introduce the quantity 
\begin{equation*}
\kappa:=\inf\{t>1: \psi(1)\geq 1\}
\end{equation*}
with convention that $\inf\emptyset=\infty$.  We also require the following assumptions.
\begin{condition}\label{cond2}
Either there exists some $\kappa\in(1,\infty)$ with $\psi(\kappa)=1$; or $\kappa=\infty$ and $\psi(t)<1$ for all $t>1$.
\end{condition} 
\begin{condition}\label{cond3}
The support of $\C$ is non-lattice.
\end{condition}
\begin{condition}\label{cond4}
If $\kappa\in(1,\infty)$, there exists some $\delta>0$ such that 
\begin{equation}\label{hyp1+}
\psi(t)<\infty, \forall t\in(1-\delta,\kappa+\delta),\textrm{ and } \E\left[\left(\sum_{|u|=1}e^{-V(u)}\right)^{\kappa+\delta}\right]<\infty.
\end{equation}
If $\kappa=\infty$, there exists some $\delta>0$ such that
\begin{equation}
 \E\left[\left(\sum_{|u|=1}e^{-V(u)}\right)^{2+\delta}\right]<\infty.
\end{equation}
\end{condition}

In~\cite{LP}, a criterion for recurrence/transience is established. So, from \eqref{hyp1}, it is known that $\min_{t\in[0;1]}\psi(t)=\psi(1)=1$ and that the random walk is recurrent. More precisely, Faraud showed with some extra-conditions that if $\psi(1)=1$ and $\psi'(1)<0$ then the random walk $(X_n)_{n\geq 0}$ is null recurrent. 
%It is known from \cite{LP} that under the assumption \ref{cond1}, the random walk $(X_n)$ is null recurrent. 
Moreover, under Assumptions \ref{cond1} and \ref{cond2}, when $\kappa\in(1,2]$,  it has been proved in \cite{HS} and \cite{dR16} that the random walk is sub-diffusive; and when $\kappa>2$, the walk is diffusive and satisfies an invariance principle, see \cite{AdR15} and \cite{Faraud}. More precisely, A\"id\'ekon and de Raph\'elis also proved in \cite{AdR15} that for $\kappa>2$, the tree visited by the walk up to time $n$, after being rescaled by a factor $n^{1/2}$, converges in law to the Brownian forest. Then in \cite{dR16} similar result is obtained for $\kappa\in(1,2]$, but in a stable regime. %with a scaling factor $n^{1-1/\kappa}$ and a convergence towards the Levy forest. 
Next, Chen, de Raph\'elis and Hu studied the localisation of the most visited sites in \cite{CdRH16}. 

In this paper, we are interested in the most visited edges and want to know how many times the walk crosses them up to some fixed or random time.

Let us introduce the edge local times $\overline{L}_n(u)$, $n\geq1$, $u\in\T$ defined by
\begin{equation}\label{localtime}
\overline{L}_n(u):=\sum_{k=1}^n\ind{X_{k-1}=\pa{u}, X_k=u}.
\end{equation}

Define a sequence of stopping times $(\tau_n)_{n\geq0}$ by
\[
\tau_n:=\inf\{k>\tau_{n-1}: X_{k-1}=\pa{\rho}, X_k=\rho\}, \forall n\geq1
\]
with $\tau_0:=0$. Note that $\tau_n-1$ is the $n$-th return to $\pa{\rho}$ of the random walk and that $\tau_1<\infty$, $\p$-a.s., as the walk is recurrent. Usually, we call the walk up to $\tau_1$ the first excursion and the $n$-th excursion means the walk from $\tau_{n-1}$ to $\tau_n$.

Then observe that $(\overline{L}_{\tau_1}(u))_{u\in\T}$ is a multi-type Galton-Watson tree with root of type 1, under $\p$, according to Lemma 3.1 of \cite{AdR15}. Its detailed reproduction distributions will be given in \eqref{offspringMtype}. We are initially interested in the tail distribution of the maximal local time during the first excursion:
\[
\p\left(\max_{u\in\T}\overline{L}_{\tau_1}(u)\geq x\right),
\]
as $x\rightarrow\infty$. The order of this tail has been considered in Theorem 1.5 of \cite{CdRH16}. We obtain the precise tail and use it to study the asymptotic of $\max_{u\in\T}\overline{L}_{\tau_n}(u)$ under $\p$ and eventually under $\p^\en$.%One will see later that the law of $\max_{u\in\T}\overline{L}_{\tau_1}(u)$ satisfies a distribution equation studied in \cite{CM18}. We will prove that it has a Cauchy-type tail when $1<\kappa<2$.
\subsection{Main results}
Let us state the main results of this paper. For the branching random walk, let us define
\[
W_n:=\sum_{|u|=n}e^{-V(u)}, \forall n\geq0.
\]
Obviously, by \eqref{hyp1}, $(W_n)_{n\geq0}$ is a $\P$-martingale with respect to the filtration of sigma-fields $\{\nf^V_n:=\sigma((u, V(u)); |u|\leq n)\}_{n\geq0}$. It is usually called the additive martinale. It is immediate that $W_n$ converges $\P$-a.s. to a nonnegative limit $W_\infty$. Under assumptions \ref{cond1}, \ref{cond4}, it converges also in $L^1$ (see for instance \cite{Lyons}). 

Denote by $f(x)\sim g(x)$ as $x\rightarrow x_0$ if $\lim_{x\rightarrow x_0}f(x)/g(x)=1$. Then it is known in \cite{Liu} that if $\kappa<\infty$, there exists a constant $C_0\in(0,\infty)$ such that 
\begin{equation}
\P(W_\infty\geq x)\sim C_0 x^{-\kappa}, \textrm{ as } x\rightarrow\infty.
\end{equation}
%{\color{red} what if $\kappa=\infty$?}
Moreover, according to Theorem 2.1 of \cite{Liu}, for any $p\in(1,\kappa)$, $\E[W_\infty^p]<\infty$ if and only if $\E[W_1^p]<\infty$.

Let $\tM:=\inf_{u\in\T}V(u)$ be the minimum of the branching random walk and let
\[
\mathcal{M}_e:=\sup_{u\in\T}e^{-V(u)}=\exp(-\tM).
\]
Then the assumption \ref{cond2} implies that $\tM\in\R$, $\P$-a.s. We have the following theorem on the joint tail of $(W_\infty, \mathcal{M}_e)$.
\begin{thm}\label{thmtailWM} Under the assumptions \ref{cond1}, \ref{cond2}, \ref{cond3} and \ref{cond4},
if $\kappa\in(1,\infty)$, there exists an decreasing continuous function $\gamma: [0,\infty)\rightarrow (0,\infty)$ such that $\gamma(0)>0$, $\lim_{a\rightarrow\infty}\gamma(a)=0$ and that for any $a\geq0$,
\begin{equation}
\P\left(W_\infty \geq ax, \mathcal{M}_e\geq x\right)\sim \gamma(a)x^{-\kappa},\textrm{ as }x\rightarrow\infty,
\end{equation}
where $\gamma$ will be given later in \eqref{gammaa}. In particular, for $a=0$, as $x\rightarrow\infty$,
\[
\P\left(\mathcal{M}_e\geq x\right)\sim c_{\tM}x^{-\kappa}.
\]
with $c_{\tM}=\gamma(0)$.
\end{thm}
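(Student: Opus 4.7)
The plan is to exploit the branching identity
\begin{equation*}
(W_\infty, \mathcal{M}_e) \stackrel{d}{=} \Big(\sum_{|u|=1} e^{-V(u)} W_\infty^{(u)}, \, 1 \vee \max_{|u|=1} e^{-V(u)} \mathcal{M}_e^{(u)}\Big),
\end{equation*}
where $(W_\infty^{(u)}, \mathcal{M}_e^{(u)})_{|u|=1}$ are, conditionally on the first generation, i.i.d.\ copies of $(W_\infty, \mathcal{M}_e)$, combined with a \emph{one-big-vertex} analysis. The first step is to prove that, for $x$ large, $\{\mathcal{M}_e \geq x\}$ is asymptotically carried by the existence of a unique first-generation child $u$ with $e^{-V(u)}\mathcal{M}_e^{(u)} \geq x$, on which event the remaining subtrees contribute only an $O_P(1)$ amount to $W_\infty$. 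Iterating this identification picks out an ancestral ``spine of big vertices'' whose lowest position is precisely $\tM$.

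To implement this, I would use the size-biased spinal decomposition attached to the $L^1$-martingale $W_n^{(\kappa)} := \sum_{|u|=n} e^{-\kappa V(u)}$, well-defined since $\psi(\kappa)=1$. Under the corresponding tilted probability, a random spine $(\xi_n)_{n \geq 0}$ is selected along which $(V(\xi_n)-V(\xi_{n-1}))_{n\geq 1}$ are i.i.d.\ with the $e^{-\kappa\cdot}$-biased step law, and hence have strictly negative drift $-\psi'(\kappa)<0$ (by convexity of $\psi$, since $\psi(1)=\psi(\kappa)=1$ and $\psi'(1)<0$). Off the spine, the subtrees reproduce as under $\P$, so $W_\infty - e^{-V(\xi_n)} W_\infty^{(\xi_n)}$ stays almost surely finite, and $\tM$ can be related through the many-to-one formula to the first passage of $(V(\xi_n))$ below $-\log x$.

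Combining this with the known marginal tail $\P(W_\infty \geq x)\sim C_0 x^{-\kappa}$ from \cite{Liu} and a renewal-theoretic analysis of the overshoot of $(V(\xi_n))$ below $-\log x$, I would obtain a representation of the form
\begin{equation*}
\gamma(a) = \E_{*}\!\left[h_a\!\left(W_\infty, e^{-\mathcal{O}}\right)\right],
\end{equation*}
where $\E_{*}$ averages over the limit overshoot $\mathcal O$ and an independent copy of $W_\infty$ (the additive martingale of the subtree rooted at the minimizer), and $h_a$ is nonnegative, nonincreasing in $a$, and vanishes as $a\to\infty$. The required properties of $\gamma$ then follow readily: monotonicity is immediate from the nesting of the events; continuity is inherited from the absolute continuity of the law of $W_\infty$ (ensured by the non-lattice Assumption \ref{cond3}); $\gamma(0)=c_{\tM}>0$ is the marginal tail of $\mathcal{M}_e$; and $\lim_{a\to\infty}\gamma(a)=0$ follows by dominated convergence from $W_\infty<\infty$ a.s.

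The main obstacle will be making the one-big-vertex reduction quantitative and uniform in $x$, so as to justify passing the limit through the spinal expectation. This is where Assumption \ref{cond4} (the $L^{\kappa+\delta}$-moment on $\sum_{|u|=1} e^{-V(u)}$) enters crucially, by guaranteeing that configurations with two or more ``large'' first-generation subtrees contribute $o(x^{-\kappa})$ to the joint tail and can therefore be discarded when identifying $\gamma$.
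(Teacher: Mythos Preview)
Your sketch points in a reasonable direction—spinal decomposition plus renewal theory—but both the route and several specific steps differ from the paper, and some of your claims would not go through as stated.

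The paper does not iterate a one-big-vertex reduction from generation $1$. Instead it decomposes $W_\infty$ directly along the ancestral line of the global minimizer $u^*$ (the vertex with $V(u^*)=\tM$), introduces the key random variable $\mathcal{W}^\tM := e^{\tM} W_\infty$, and proves a \emph{conditional limit theorem} (Theorem~\ref{BRWcvg}): conditionally on $\{\tM\leq -x\}$, $(\mathcal{W}^\tM, \tM+x)\Rightarrow(\mathcal{W}^\tM_\infty, -U)$ with $U$ exponential of parameter $\kappa$, independent of $\mathcal{W}^\tM_\infty$. The formula $\gamma(a)=c_\tM\,\E[(1\wedge \mathcal{W}^\tM_\infty/a)^\kappa]$ is then immediate. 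The change of measure used for the tree is the \emph{additive} martingale one ($\Q^*$ of Proposition~\ref{BRWchangeofp}), under which the spine walk has positive drift; the negative-drift walk $S^{(\kappa)}$ enters only through many-to-one identities and a time-reversal trick in the renewal estimates (Lemmas~\ref{BRWroughbd}--\ref{BRWrest}). Your proposal to work directly under the $\kappa$-tilted measure is not what the paper does, and since $W_n^{(\kappa)}$ is not uniformly integrable (because $\psi'(\kappa)>0$), that measure is singular to $\P$ and cannot be used globally on $\mathcal{F}^V_\infty$; it is only available at the level of finite-$n$ many-to-one formulas.

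There are also concrete gaps. Your assertion that the off-spine contributions to $W_\infty$ are ``$O_\P(1)$'' on $\{\mathcal{M}_e\geq x\}$ is precisely the hard part: one is conditioning on an event of probability $\asymp x^{-\kappa}$, and the sum $\sum_{k\leq |u^*|}\sum_{z\in\Omega(u^*_k)}e^{-V(z)}W_\infty^{(z)}$ must be controlled uniformly in $x$. The paper handles this via the truncated variable $\mathcal{W}^{u^*,\leq t}$ and Lemma~\ref{BRWrest}, whose proof (time reversal of the spine increments, splitting on the size of $\sum_{z\in\Omega(w_j)}e^{-\kappa\Delta V(z)}$, and the moment hypothesis of Assumption~\ref{cond4}) is the technical heart. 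Your continuity argument for $\gamma$ is also wrong: the non-lattice Assumption~\ref{cond3} concerns the support of $\mathcal{C}$ and says nothing about absolute continuity of $W_\infty$. In the paper, continuity of $\gamma$ on $(0,\infty)$ is by dominated convergence in the explicit formula, and continuity at $0$ (equivalently $\gamma(0)=c_\tM$) requires the separate estimate of Lemma~\ref{BRWtailM}, namely $\limsup_{\epsilon\downarrow 0}\limsup_{x\to\infty} x^{\kappa}\P(W_\infty\leq \epsilon x,\ \mathcal{M}_e\geq x)=0$.
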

This result brings out the following theorem on the randomly biased random walk $(X_n)_{n\geq0}$.

\begin{thm}\label{tailmaxL}
Under the assumptions \ref{cond1}, \ref{cond2}, \ref{cond3} and \ref{cond4}, there exists $c^\star_\kappa\in(0,\infty)$ such that
\begin{enumerate}
\item if  $\kappa\in(1,2)$,
\begin{equation}
\p\left(\max_{u\in\T}\overline{L}_{\tau_1}(u)\geq x\right)\sim c_\kappa^\star x^{-1}, \textrm{ as } x\rightarrow\infty.
\end{equation}
\item if $\kappa=2$,
\begin{equation}
\p\left(\max_{u\in\T}\overline{L}_{\tau_1}(u)\geq x\right)\sim \frac{c^\star_\kappa}{x\sqrt{\log x}} , \textrm{ as } x\rightarrow\infty.
\end{equation}
\item if $\kappa\in(2,\infty)$,
\begin{equation}
\p\left(\max_{u\in\T}\overline{L}_{\tau_1}(u)\geq x\right)\sim c^\star_\kappa x^{-\kappa/2}, \textrm{ as } x\rightarrow\infty.
\end{equation}
\item if $\kappa=\infty$, for any $p>1$,
\begin{equation}
\e\left[\left(\max_{u\in\T}\overline{L}_{\tau_1}(u)\right)^p\right]<\infty.
%\p\left(\max_{u\in\T}\overline{L}_{\tau_1}(u)\geq x\right)\sim c^\star_\kappa \sqrt{\p\left(M_1\geq x\right)}, \textrm{ as } x\rightarrow\infty,
\end{equation}
%where $M_1$ is defined in \eqref{}.
\end{enumerate}
\end{thm}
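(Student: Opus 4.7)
The plan is to exploit the multi-type Galton-Watson representation of $(\overline{L}_{\tau_1}(u))_{u\in\T}$ recalled from Lemma 3.1 of \cite{AdR15}. Writing $L(u):=\overline{L}_{\tau_1}(u)$, I would first describe the offspring law precisely: conditionally on the environment and on $L(u)=k$, the children's local times $(L(v))_{v:\,\pa{v}=u}$ arise from $k$ independent sub-excursions started when the walk crosses $(\pa{u},u)$ downward, each such sub-excursion consisting of a geometric number of visits to $u$ (with up-probability $e^{-V(u)}/(e^{-V(u)}+\sum_{w:\,\pa{w}=u}e^{-V(w)})$) and, conditionally on that, binomial visit-counts to each child. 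Reversibility gives $\e^{\en}[L(u)]=e^{-V(u)}$, which motivates the rescaling $\widetilde L(u):=L(u)\,e^{V(u)}$.

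The next step is to establish an approximate representation $L(u)\approx e^{-V(u)}\,\Xi_u$, where $\Xi_u$ is measurable with respect to the branching random walk below $u$ and is closely related to the shifted additive martingale $W_\infty$ at $u$. This representation is obtained by iterating the one-step offspring recursion together with suitable concentration estimates (CLT-type when $\kappa>2$, stable-type when $\kappa\in(1,2)$). It reduces the asymptotic analysis of $\p(\max_u L(u)\geq x)$ to a joint-tail statement on the environment, where Theorem \ref{thmtailWM} provides the crucial input.

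The four regimes are then treated separately.
\begin{itemize}
\item For $\kappa\in(2,\infty)$, CLT-type fluctuations give $L(u)$ a Gaussian-like tail around $e^{-V(u)}$, so that $\max_u L(u)$ behaves like $(\mathcal{M}_e\cdot W_\infty)^{1/2}$; optimizing over the vertex along the spine realizing $\tM$ and invoking Theorem \ref{thmtailWM} through $\gamma$ yields the tail $x^{-\kappa/2}$.
\item For $\kappa\in(1,2)$ a single-big-jump phenomenon occurs: heavy-tailed geometric returns at a single vertex $u$ dominate the maximum, and a Breiman-type lemma applied to the product of $e^{-V(u)}$ and the conditional tail of $\Xi_u$ produces the universal exponent $1$.
\item The case $\kappa=2$ is the critical boundary and yields the $(\log x)^{-1/2}$ correction through a slowly varying truncation argument.
\item For $\kappa=\infty$, all moments of $W_\infty$ and $\mathcal{M}_e$ are finite, and a routine $L^p$ bound on the offspring distribution combined with the assumption $\psi(t)<1$ for every $t>1$ closes the argument.
\end{itemize}

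The main obstacle is the representation step: proving $L(u)\approx e^{-V(u)}\Xi_u$ with enough uniformity across the infinite tree that the maximum inherits exactly the correct tail exponent and constant. This requires combining fine concentration for the multi-type offspring law with iteration along every spine, and in the sub-diffusive regime $\kappa\in(1,2)$ the absence of a standard CLT makes the analysis particularly delicate, relying on the precise form of the joint tail function $\gamma(a)$ provided by Theorem \ref{thmtailWM}.
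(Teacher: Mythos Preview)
Your proposal misses the central mechanism of the paper's proof and, as written, would not produce the correct exponents. The paper does not attempt a uniform pointwise approximation $L(u)\approx e^{-V(u)}\Xi_u$ over the whole tree. Instead it decomposes the multi-type Galton--Watson tree at the stopping line $\mathcal{L}_1$ of first descendants of type~$1$, obtaining the distributional fixed-point equation
\[
M^\star\;\stackrel{(d)}{=}\;\max\Bigl\{M_1,\ \max_{1\le i\le L_1}M_i^\star\Bigr\},
\]
where $M_1=\max_{u\in\mathcal{B}_1}\beta(u)$, $L_1=\#\mathcal{L}_1$, and the $M_i^\star$ are i.i.d.\ copies of $M^\star$ independent of $(M_1,L_1)$. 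The analytic work then goes entirely into Proposition~\ref{tailML}, which establishes the joint tail $\p_1(L_1\ge ax,\,M_1\ge x)\sim C_\infty\gamma(a)x^{-\kappa}$; this is where Theorem~\ref{thmtailWM} is used, via a change of measure to $\widehat{\p}^*_1$ and the approximation $(L_1,M_1)\approx \beta^{\sigma_A}(w_{\sigma_A})\cdot(W_\infty^{(w_{\sigma_A})},\mathcal{M}_e^{(w_{\sigma_A})})$ along the spine. Once the joint tail of $(L_1,M_1)$ is known, the three regimes for $M^\star$ follow \emph{directly} from Corollary~1.4 of \cite{ChenMa}, a black-box result on max-type recursive distributional equations with critical branching $\E[L_1]=1$.

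The gap in your plan is precisely this recursive structure. Your heuristics do not recover the exponents: for $\kappa>2$ a Gaussian approximation of $L(u)$ around $e^{-V(u)}$ would \emph{not} yield a $x^{-\kappa/2}$ tail for the maximum (your claim that $\max_u L(u)$ behaves like $(\mathcal{M}_e W_\infty)^{1/2}$ has no basis---check the resulting exponent). The exponent $\kappa/2$ is a genuinely non-local effect of the critical recursion: with $\E[L_1]=1$ and $\p_1(L_1\ge x)\asymp\p_1(M_1\ge x)\asymp x^{-\kappa}$, the fixed point $M^\star$ acquires tail index $\kappa/2$ when $\kappa>2$, index $1$ when $\kappa\in(1,2)$, and a $\sqrt{\log x}$ correction at $\kappa=2$. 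This is exactly the content of the cited result from \cite{ChenMa}, and your direct approach would have to rediscover it. Likewise, for $\kappa\in(1,2)$ the exponent $1$ does not come from a Breiman lemma on a single vertex; it is again a property of the recursion. If you wish to keep a direct route, you would need to replace the approximation step by an honest analysis of the fixed-point equation above, which is essentially the paper's strategy.
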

In addition, as a corollary of Theorem \ref{tailmaxL}, we have the following result on the maximal edge local time up to time $\tau_n$.
\begin{thm}\label{maxLn}Under Assumptions \ref{cond1}, \ref{cond2}, \ref{cond3} and \ref{cond4},
\begin{enumerate}
\item if $\kappa\in(1,2)$, under the annealed probability $\p$,
\begin{equation}
\frac{\max_{u\in \T}\overline{L}_{\tau_n}(u)}{n}\xrightarrow[n\rightarrow\infty]{(d)} X_*,
\end{equation}
where $X_*$ is a positive random variable of distribution function $ \E\left[e^{-c^\star_\kappa \frac{W_\infty}{t}}; \M_e\leq t\right]$ (which stochastically dominates ${\cal M}_e$).
\item if $\kappa\geq2$, under the annealed probability $\p$,
\begin{equation}
\frac{\max_{u\in \T}\overline{L}_{\tau_n}(u)}{n}\xrightarrow[n\rightarrow\infty]{\textrm{ in }\p} {\cal M}_e.
\end{equation}
\end{enumerate}
\end{thm}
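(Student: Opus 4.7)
The approach is to exploit the excursion decomposition: setting $L^{(i)}(u) := \overline{L}_{\tau_i}(u) - \overline{L}_{\tau_{i-1}}(u)$, the fields $(L^{(i)}(\cdot))_{i\geq 1}$ are iid under $\p^\en$, each distributed as $\overline{L}_{\tau_1}(\cdot)$. A standard reversibility computation (the walk is reversible with edge-conductance $e^{-V(u)}$ on $(\pa u,u)$, and the total conductance at the auxiliary parent $\pa\rho$ equals $e^{-V(\rho)}=1$) yields the pointwise expectation $\e^\en[\overline{L}_{\tau_1}(u)] = e^{-V(u)}$, and hence the pointwise law of large numbers $\overline{L}_{\tau_n}(u)/n \to e^{-V(u)}$, $\p^\en$-a.s., for each fixed $u$. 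Set $M^{(i)} := \max_u L^{(i)}(u)$; these are iid under $\p^\en$ with annealed tail described by Theorem \ref{tailmaxL}.

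For item 2 ($\kappa\geq 2$), the lower bound is immediate from the pointwise LLN applied to any $u_\epsilon$ with $e^{-V(u_\epsilon)} \geq \M_e - \epsilon$. For the upper bound, the tail estimate of Theorem \ref{tailmaxL} gives $n\,\p(M^{(1)} > tn) \to 0$ for every $t > 0$, so $\max_{i\leq n} M^{(i)}/n \to 0$ in $\p$-probability. The plan is to split the tree into the vertices $u$ with $e^{-V(u)} > \M_e - \epsilon$ (a $\P$-a.s.\ finite non-empty set to which the pointwise LLN applies) and the remaining vertices, on which the deviation $\overline{L}_{\tau_n}(u)/n - e^{-V(u)}$ is bounded uniformly by coupling with $\max_i M^{(i)}$ and a truncated-variance estimate on $L^{(1)}(u)$. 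This should give $\max_u \overline{L}_{\tau_n}(u)/n \to \M_e$ in $\p$-probability.

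For item 1 ($\kappa \in (1,2)$) the first step is to extract from the proof of Theorem \ref{tailmaxL} the quenched tail
\begin{equation*}
\lim_{x\to\infty} x\,\p^\en(M^{(1)} \geq x) \;=\; c^\star_\kappa\, W_\infty,\qquad \P\text{-a.s.}
\end{equation*}
A classical Poisson-limit computation then yields, for each $t > 0$,
\begin{equation*}
\p^\en\bigl(\max_{i\leq n} M^{(i)} \leq tn\bigr) \;=\; \bigl(1 - \p^\en(M^{(1)} > tn)\bigr)^n \;\longrightarrow\; \exp(-c^\star_\kappa W_\infty/t).
\end{equation*}
The heart of the proof is then the quenched asymptotic equivalence
\begin{equation*}
\tfrac{1}{n}\max_u \overline{L}_{\tau_n}(u) \;-\; \tfrac{1}{n}\max_{i\leq n} M^{(i)} \;\xrightarrow[n\to\infty]{\p^\en}\; 0 \quad\text{on}\quad \{\M_e \leq t\}.
\end{equation*}
The lower bound is trivial; the upper bound exploits that, conditional on an environment with $\M_e \leq t$, each $L^{(1)}(u)$ has mean at most $t$, so the pointwise LLN forces $\sum_i L^{(i)}(u)$ to exceed $tn$ only when one excursion $i^*$ contributes an exceptional $L^{(i^*)}(u) \gg n$---and then $L^{(i^*)}(u) \leq M^{(i^*)} \leq \max_i M^{(i)}$. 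Together with the observation that on $\{\M_e > t\}$ the pointwise LLN already forces $\max_u \overline{L}_{\tau_n}(u)/n > t$ eventually, integration over the environment yields the distribution function $\e[e^{-c^\star_\kappa W_\infty/t};\,\M_e \leq t]$ of $X_*$.

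The main obstacle in both items is transferring the pointwise LLN into a uniform statement over the infinite tree. In item 2 this calls for a uniform deviation bound on $\overline{L}_{\tau_n}(u)/n - e^{-V(u)}$ over vertices with $e^{-V(u)}$ far from $\M_e$, exploiting Theorem \ref{tailmaxL}; in item 1 it calls for the heavy-tail asymptotic equivalence with $\max_i M^{(i)}$, which relies on the fact that in the stable ($\alpha=1$) regime the max over $u$ of the sum over $i$, and the max over $i$ of the max over $u$, are driven by the same single exceptional excursion.
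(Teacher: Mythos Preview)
Your approach is genuinely different from the paper's and has real gaps.

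\textbf{What the paper does.} The paper does \emph{not} decompose into the $n$ excursions under $\p^\en$. Instead it views $(\overline{L}_{\tau_n}(u))_{u\in\T}$ as the multi-type Galton--Watson tree $(\beta(u))_u$ under $\p_n$ (root of type $n$), and decomposes \emph{spatially} at the stopping line $\L_1=\{u:\beta(u)=1,\ \min_{\rho<v<u}\beta(v)\ge 2\}$. Writing $L_1=|\L_1|$, $M_1=\max_{u\le\L_1}\beta(u)$, the branching property at $\L_1$ gives
\[
\p_n\Big(\max_{u}\beta(u)\le tn\Big)=\e_n\Big[(1-\p_1(M^\star>tn))^{L_1};\ M_1\le tn\Big].
\]
Theorem~\ref{tailmaxL} gives $\p_1(M^\star>tn)\sim c^\star_\kappa/(tn)$ (or $o(1/n)$ for $\kappa\ge 2$), and Proposition~\ref{cvgLM} gives $(L_1/n,M_1/n)\to(W_\infty,\M_e)$ in $\p_n$-probability. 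Both items follow in a few lines.

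\textbf{Gaps in your route.} First, the quenched tail $x\,\p^\en(M^{(1)}\ge x)\to c^\star_\kappa W_\infty$ is asserted, not proved. Theorem~\ref{tailmaxL} is purely annealed; its proof goes through the annealed stopping-line equation~\eqref{keyeq} and Proposition~\ref{tailML}, with no environment-wise analysis. Extracting a $\P$-a.s.\ quenched statement is a separate, non-trivial problem that you have not addressed.

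Second, and more seriously, your key equivalence for item~1 is wrong as stated. On an environment with $\M_e=t/2$ (so $\{\M_e\le t\}$ holds), the pointwise LLN forces $\max_u\overline{L}_{\tau_n}(u)/n\ge t/2+o(1)$, while $\max_{i\le n}M^{(i)}/n$ can be arbitrarily small with positive $\p^\en$-probability; so their difference does not go to zero. What you actually need is the weaker $\p^\en(\max_u\overline{L}_{\tau_n}(u)>tn)-\p^\en(\max_i M^{(i)}>tn)\to 0$ on $\{\M_e<t\}$, and this is exactly the hard uniformity problem: you must control a supremum over infinitely many vertices of sums whose annealed law is heavy-tailed. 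The ``one big jump'' heuristic is valid for a single sum but does not transfer automatically to a supremum over an infinite index set; a union bound over $u$ fails, and you give no mechanism to localise the maximiser. The same issue appears in item~2: your ``uniform deviation bound via coupling with $\max_i M^{(i)}$ and a truncated-variance estimate'' is a sketch of what one would like, not an argument.

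The stopping-line decomposition in the paper resolves both difficulties at once: the branching property at $\L_1$ handles the uniformity over $u$ structurally, and Proposition~\ref{cvgLM} (an $L^{1+\alpha}$ convergence) replaces any need for a quenched tail.
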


\begin{rem}
In fact, it is known from Theorem~1.1 in \cite{CdRH16} that if $\kappa>2$, even without Assumption~\ref{cond3}, 
\begin{equation}
\frac{\max_{u\in \T}\overline{L}_{\tau_n}(u)}{n}\xrightarrow[n\rightarrow\infty]{\p-a.s.} {\cal M}_e;
\end{equation}
and that if $\kappa\in(1,2]$, $\p$-a.s.,
\begin{equation}
\liminf_{n\rightarrow\infty}\frac{\max_{u\in \T}\overline{L}_{\tau_n}(u)}{n}={\cal M}_e. 
\end{equation}
Moreover, Proposition~5.1 of~\cite{CdRH16} says that when $\kappa\in(1,2)$, $\p(\cdot|\#\T=\infty)$-a.s.,
\begin{equation}
\limsup_{n\rightarrow\infty}\frac{\max_{u\in \T}\overline{L}_{\tau_n}(u)}{n}=\infty.
\end{equation}
\end{rem}

\begin{rem}
Notice that if one defines the vertex local times as $L_n(u):=\sum_{k=1}^n\ind{X_k=u}$, then $L_{\tau_n}(u)=\overline{L}_{\tau_n}(u)+\sum_{\pa{v}=u}\overline{L}_{\tau_n}(v)$. Thus, the behaviour of vertex local times is closely related to that of edge local times, and we expect our result to hold for vertex local times. However, vertex local times are less convenient to manipulate, and our method would not apply without several technical adjustments. 
\end{rem}

A natural question is then to study $\max_{u\in \T}\overline{L}_{n}(u)$ under $\p$ and under $\p^\en$. In fact, the asymptotic behaviour of $\tau_n$ under the quenched probability $\p^\mathcal{E}$ has been considered by Hu \cite{Hu}. The quenched and joint asymptotic of $(\tau_n, \max_{u\in \T}\overline{L}_{\tau_n}(u))$ will be treated in an upcoming paper.

The organisation of this paper is as follows. Sections~\ref{tailBRW} and \ref{lemBRW} deal with the branching random walk and are self-contained for the proof of Theorem \ref{thmtailWM}. In Section~\ref{idea}, we prove Theorems \ref{tailmaxL} and \ref{maxLn} by use of Proposition \ref{tailML}.
%In Section~\ref{tailBRW}, we consider the branching random walk which serves as the environment and prove Theorem \ref{thmtailWM}. 
In Section~\ref{tailRWRE}, we prove Proposition \ref{tailML} from Theorem \ref{thmtailWM} by use of two changes of measures. Section~\ref{lemRWRE} contains the proofs of the lemmas in Section \ref{tailRWRE}.

Throughout the paper, $(c_i)_{i\geq0}$ and $(C_i)_{i\geq0}$ denote positive constants. We say that $a_n\sim b_n$ as $n\rightarrow\infty$ if $\lim_{n\rightarrow\infty}\frac{a_n}{b_n}=1$. We set $\sum_\emptyset=0$ and $\prod_{\emptyset}=1$. For $x,y\in\R$, we let $x\wedge y:=\min(x,y)$ and $x\vee y:=\max(x,y)$. 

\section{Maximal edge local time: proofs of Theorems \ref{tailmaxL} and \ref{maxLn}}
\label{idea}

In this section, we consider $\{\overline{L}_{\tau_1}(u); u\in\T\}$. In fact, for $x\in\T$ with children $\{y:\pa{y}=x\}$, and for any $s_y\in[0,1]$, 
\begin{equation*}
\e^\en\left[\prod_{y: \pa{y}=x}(s_y)^{\overline{L}_{\tau_1}(y)}\Big\vert \overline{L}_{\tau_1}(x)=k \right]=\frac{e^{-kV(x)}}{\left(e^{-V(x)}+\sum_{y:\pa{y}=x}(1-s_y) e^{-V(y)}\right)^k} \quad \forall k\geq1.
\end{equation*}
In other words, $(\overline{L}_{\tau_1}(y))_{\pa{y}=x}$ has negative multinomial distribution with parameters $\overline{L}_{\tau_1}(x)$ and $(\frac{e^{-V(y)}}{e^{-V(u)}+\sum_{\pa{z}=x}e^{-V(z)}})_{\pa{y}=x}$. In particular, if $x$ is of type $\overline{L}_{\tau_1}(x)=0$, then all its descendants are of type $0$. According to Lemma 3.1 of \cite{AdR15}, for any $k\geq1$, under the annealed probability $\p$, $\{\overline{L}_{\tau_k}(u); u\in\T\}$ is a multi-type Galton Watson tree with types taking values in $\N$, whose root $\rho$ is of type $k$. 

We denote by $\xi=\{\xi_i; i\geq1\}$ its offspring distribution. Here $\xi_i$ stands for the offspring law of a vertex of type $i$. For any $k\geq1$, denote by $\p_k$ the law of a multi-type GW tree with offspring $\xi$ and initial type $k$. From now on, we use $(\beta(u), u\in\T)$ to represent this multi-type GW tree. So, $\p(\max_{u\in\T}\overline{L}_{\tau_n}(u)\in\cdot)=\p_n(\max_{u\in\T}\beta(u)\in\cdot)$. 
%
%With a little abuse of notations, let
%\[
%\mathcal{L}_1:=\{u\in\T: \beta(u)=1, \min_{\rho<v<u}\beta(v)\geq2\},\textrm{ and } \B_1:=\{u\in\T: \min_{\rho<v<u}\beta(v)\geq2\}\cup\{\rho\}.
%\]  
%Also let $L_1:=\#\L_1$ and 
%\[
%M_1:=\max_{u\leq \L_1}\beta(u)
%\]

%Then $\p(\max_{u\in\T}\overline{L}_{\tau_1}(u)\geq x)=\p_1(\max_{u\in\T}\beta(u)\geq x)$. 
Now define
\begin{equation}\label{lineL}
\L_1:=\{x\in\T: \beta(x)=1, \min_{\rho<y<x}\beta(y)\geq2\}.
\end{equation}
and $\B_1:=\{x\in\T: \min_{\rho<y< x}\beta(y)\geq2\}\cup\{\rho\}$. For convenience, we sometimes write $u\leq\mathcal{L}_1$ for $u\in\B_1$, and $u<\mathcal{L}_1$ for $u\in\B_1\setminus\L_1$. %Moreover, let $L_1$ denote the cardinality of $\mathcal{L}_1$. 
Observe that
\begin{equation}\label{Maxelt}
\max_{u\in\T}\beta(u)=\max\{\max_{u\in \B_1}\beta(u), \max_{u\in\mathcal{L}_1} \max_{v: u\leq v} \beta(v)\}.
\end{equation}
As $(\beta(u))_{u\in\T}$ is a multi-type Galton-Watson tree with root of type 1 under $\p_1$, by Markov property at the stopping line $\mathcal{L}_1$, $\{\max_{v: u\leq v} \beta(v)\}_{u\in\mathcal{L}_1}$ are i.i.d. and distributed as $\max_{u\in\T}\beta(u)$ under $\p_1$, and independent of $(\beta(u), u\in\B_1)$. Let
\begin{equation}\label{cardLM}
L_1:=Card(\mathcal{L}_1),\quad M_1:=\max_{u\in \B_1}\beta(u),\quad M^\star:=\max_{u\in\T}\beta(u).
\end{equation}
So we rewrite the equation \eqref{Maxelt} as follows: under $\p_1$,
\begin{equation}\label{keyeq}
M^\star\egloi\max\{M_1, \max_{1\leq i\leq L_1}M^{\star}_i\}
\end{equation}
where $M^{\star}_i, i\geq1$ are i.i.d. copies of $M^\star$, independent of $(M_1, L_1)$. Thus the tail of $\max_{u\in\T}\beta(u)$ under $\p_1$ depends mainly on the joint tail of $(M_1, L_1)$. 

\begin{prop}\label{tailML}
Under the assumptions \ref{cond1}, \ref{cond2}, \ref{cond3} and \ref{cond4}, and assuming $\kappa\in(1,\infty)$, there exists a  constant $C_\infty\in(0,\infty)$ such that for any $a\geq0$, 
\begin{equation}
\p_1(L_1\geq ax, M_1\geq x)\sim C_\infty\gamma(a)x^{-\kappa},\textrm{ as } x\rightarrow\infty,
\end{equation}
where $C_\infty$ is defined in equation~\eqref{eq:defCinf}. In particular, $\p(M_1\geq x)\sim C_\infty \gamma(0) x^{-\kappa}$. Moreover, there exists $c_L\in(0,\infty)$ such that
\[
\p_1(L_1\geq x)\sim c_Lx^{-\kappa},\textrm{ as } x\rightarrow\infty,
\]
where $c_L=C_0C_\infty$.
\end{prop}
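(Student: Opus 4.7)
The plan is to reduce Proposition~\ref{tailML} to Theorem~\ref{thmtailWM} by identifying, at the appropriate scale, the random pair $(M_1,L_1)$ (coming from the multi-type GW tree under $\p_1$) with the environmental pair $(\mathcal{M}_e,W_\infty)$ of the underlying branching random walk. First I would write
\[
\p_1(L_1\geq ax,\,M_1\geq x)=\E\!\left[\p_1^\en\big(L_1\geq ax,\,M_1\geq x\big)\right]
\]
and focus on the quenched probability. The negative multinomial offspring distribution recalled just before the statement tells us that, starting from a single visit at the root, the type $\beta(u)$ of a vertex $u\in\B_1$ naturally lives on the scale $e^{-V(u)}$. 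In particular, on the event $\{M_1\approx x\}$, the vertex $u^\star\in\B_1$ at which the maximum is achieved should satisfy $e^{-V(u^\star)}\asymp x$, which forces $u^\star$ to lie close to a vertex realising the minimum $\tM$ of the branching random walk.

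The strategy is then to make this heuristic precise via two successive changes of measure, as announced in the introduction. The first one, at the level of the environment, is the classical size-biasing associated to the additive martingale $W_n$; it selects a spine along which $V$ realises its near-minimum and is what produces the $x^{-\kappa}$ decay, in the spirit of Liu~\cite{Liu}. The second change of measure, at the level of the multi-type GW tree, size-biases by the local time along the spine, tilting $\p_1^\en$ into a law under which $\beta(u^\star)/e^{-V(u^\star)}$ admits an explicit non-degenerate limit. Tracking both changes of measure along the spine, one identifies the constant $C_\infty$ of equation~\eqref{eq:defCinf} as the $\P$-average of this limit. The joint tail $\p_1(L_1\geq ax,M_1\geq x)$ then reduces to the joint tail $\P(W_\infty\geq ax',\mathcal{M}_e\geq x')$ scaled by $C_\infty$, which by Theorem~\ref{thmtailWM} equals $C_\infty\,\gamma(a)\,x^{-\kappa}$ asymptotically.

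The main obstacle will be to control, uniformly on $\{M_1\geq x\}$ with $x\to\infty$, the concentration of $(M_1,L_1)$ around their environmental counterparts: the offspring distributions at different vertices are strongly correlated through the shared environment $\en$, and one has to simultaneously handle the optimal spine (carrying the large value of $M_1$) and the lateral contributions to $L_1$ coming from the many subtrees branching off the spine, which is what rebuilds the factor $W_\infty$ at the boundary $\L_1$. Once the joint asymptotic is established, the particular case $a=0$ gives the tail of $M_1$, while the marginal tail $\p_1(L_1\geq x)\sim C_0 C_\infty x^{-\kappa}$ follows by sending $a\to\infty$ in the joint tail and invoking Liu's theorem $\P(W_\infty\geq x)\sim C_0x^{-\kappa}$, consistently with the behaviour $\gamma(a)\sim C_0 a^{-\kappa}$ as $a\to\infty$.
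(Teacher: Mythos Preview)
Your heuristic is in the right spirit --- two changes of measure and a reduction to the environmental pair $(\mathcal{M}_e,W_\infty)$ --- and this is indeed what the paper does. But there is a genuine gap in how you account for the exponent $\kappa$.

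The paper's mechanism is not a direct comparison under $\p_1$. It first proves the exact identity
\[
\e_1\big[L_1;\,L_1\geq ax,\,M_1\geq x\big]=\widehat{\p}^*_1\big(L_1\geq ax,\,M_1\geq x\big),
\]
where $\widehat{\p}^*_1$ is the size-biased law of Section~\ref{s:changeofmeasurep}, reconstructed on the $\Q^*$-environment. Under $\widehat{\p}^*_1$ one shows (Lemma~\ref{tailLMp}, via the three-step approximation of Section~\ref{jointtailLMp}) that $(L_1,M_1)\approx \beta^{\sigma_A}(w_{\sigma_A})\cdot(W_\infty^{(w_{\sigma_A})},\mathcal{M}_e^{(w_{\sigma_A})})$; since $(W_\infty,\mathcal{M}_e)$ under $\Q^*$ has a tail of order $x^{-(\kappa-1)}$ (equation~\eqref{tailWMQ}), this yields $\widehat{\p}^*_1(L_1\geq ax,M_1\geq x)\sim\eta_1(a)x^{-(\kappa-1)}$. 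The extra factor $x^{-1}$ is then recovered by \emph{unbiasing}: writing $P_\alpha(ax,x)=\e_1[L_1^\alpha;L_1\geq ax,M_1\geq x]$, one has the elementary identity
\[
P_0(ax,x)=\frac{P_1(ax,x)}{ax}-\frac{1}{x}\int_a^\infty P_1(ux,x)\,\frac{du}{u^2},
\]
and dominated convergence gives $x^\kappa P_0(ax,x)\to\int_a^\infty(\eta_1(a)-\eta_1(u))\,u^{-2}\,du$. Your outline never produces this step: you assert that the environment change of measure ``is what produces the $x^{-\kappa}$ decay'', but under $\Q^*$ the tail of $(\mathcal{M}_e,W_\infty)$ is $x^{-(\kappa-1)}$, not $x^{-\kappa}$. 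Starting from type $1$, there is no law-of-large-numbers concentration of $\beta(u)$ around $e^{-V(u)}$ (this only kicks in from a large type, cf.\ Proposition~\ref{cvgLM}); the size-biasing by $L_1$ is precisely what manufactures such a large type along the spine, at the cost of one power of $x$ that must then be paid back.

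Two smaller points. First, the case $a=0$ (the tail of $M_1$) does not follow by continuity from $a>0$ without the separate estimate $\limsup_{\epsilon\downarrow0}\limsup_{x\to\infty}x^\kappa\p_1(L_1\leq\epsilon x,M_1\geq x)=0$ (Lemma~\ref{tailsmallLM}), which rules out the scenario of a large $M_1$ with anomalously small $L_1$; you do not address this. Second, your derivation of the $L_1$ marginal by ``sending $a\to\infty$'' is not the paper's route and would require both uniformity of the joint asymptotic in $a$ and the identity $\lim_{a\to\infty}a^\kappa\gamma(a)=C_0$, i.e.\ $c_{\tM}\E[(\mathcal{W}^{\tM}_\infty)^\kappa]=C_0$, neither of which you establish. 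The paper instead reruns the same moment-then-unbias argument with $W_\infty$ alone, using $\Q^*(W_\infty\geq x)\sim\frac{\kappa}{\kappa-1}C_0x^{-(\kappa-1)}$, which directly gives $c_L=C_0C_\infty$.
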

The proof of Proposition \ref{tailML} is postponed to Section~\ref{tailRWRE}. Here we use the result to obtain the tail of $M^\star=\max_{u\in\T}\beta(u)$ under $\p_1$.

\begin{proof}[Proof of Theorem \ref{tailmaxL}]
If $\kappa\in(1,\infty)$, given equation~\eqref{keyeq}, Proposition~\ref{tailML} allows us to apply Corollary~1.4 of~\cite{ChenMa} which yields Theorem~\ref{tailmaxL}. 

If $\kappa=\infty$, given equation~\eqref{keyeq}, as $x\rightarrow\infty$,
\[
\p\left(\max_{u\in\T}\overline{L}_{\tau_1}(u)\geq x\right)\sim c^\star_\kappa \sqrt{\p_1(M_1\geq x)},
\]
because $L_1$ has finite variance according to Lemma \ref{Mom}. Moreover, $M_1$ has moments of all orders by Lemma \ref{Mom}, which concludes that $\max_{u\in\T}\overline{L}_{\tau_1}(u)$ also has moments of all orders.
\end{proof}

\begin{prop}\label{cvgLM}
Under the assumptions \ref{cond1}, \ref{cond2}, \ref{cond3} and \ref{cond4}, for any $\alpha\in(0,\kappa-1)$, we have as $n\rightarrow\infty$,
\begin{equation}
\e_n\left[|\frac{L_1}{n}-W_\infty|^{1+\alpha}+|\frac{M_1}{n}-\mathcal{M}_e|^{1+\alpha}\right]\rightarrow 0.
\end{equation}
\end{prop}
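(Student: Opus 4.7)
My approach exploits the identification of the multi-type Galton--Watson tree $(\beta(u))_{u\in\T}$ under $\p_n$ with the edge local time profile $(\overline{L}_{\tau_n}(u))_{u\in\T}$ of the randomly biased walk after $n$ excursions. The negative multinomial structure recalled at the start of Section~\ref{idea} gives the quenched conditional mean $\e_n[\beta(u)\mid\en]=ne^{-V(u)}$ for every $u\in\T$, and the i.i.d. decomposition $\overline{L}_{\tau_n}(u)=\sum_{j=1}^{n}(\overline{L}_{\tau_j}(u)-\overline{L}_{\tau_{j-1}}(u))$ of the edge local time into excursion contributions, combined with the quenched strong law of large numbers, yields
\[
\frac{\beta(u)}{n}\xrightarrow[n\to\infty]{\p\text{-a.s.}}e^{-V(u)}\qquad\text{for each fixed }u\in\T.
\]
This pointwise limit is the common building block for both convergences.

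For $M_1/n\to\M_e$, fix any $u\in\T$: every strict ancestor $y$ of $u$ satisfies $\beta(y)/n\to e^{-V(y)}>0$, so $\beta(y)\geq 2$ for $n$ large, which puts $u$ in $\B_1$ eventually, and thus $M_1/n\geq\beta(u)/n\to e^{-V(u)}$; taking a supremum over $u\in\T$ gives $\liminf_{n\to\infty}M_1/n\geq\M_e$. The matching upper bound is obtained by controlling the global maximum $\max_{u\in\B_1}\beta(u)e^{V(u)}/n$ via the joint moment bounds provided by Proposition~\ref{tailML}. For $L_1/n\to W_\infty$, I would view $\mathcal{L}_1$ as a stopping line of the genealogical tree, located at typical depth $\asymp\log n$. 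Applying the optional stopping theorem to the additive martingale $(W_k)_{k\geq 0}$ at $\mathcal{L}_1$ gives $\sum_{u\in\mathcal{L}_1}e^{-V(u)}\to W_\infty$ almost surely, and since $e^{-V(u)}$ is of order $1/n$ on $\mathcal{L}_1$ (heuristically, $\beta(u)=1\approx ne^{-V(u)}$), the number of vertices $|\mathcal{L}_1|=L_1$ in the line is of order $nW_\infty$; a conditional variance bound on the fluctuation of $V(u)-\log n$ along $\mathcal{L}_1$, together with the concentration of $\beta$ around its mean, makes this rigorous.

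To upgrade almost sure convergence to $L^{1+\alpha}$ convergence for $\alpha\in(0,\kappa-1)$, it suffices to exhibit the uniform moment bound $\sup_{n}\e_n[(L_1/n)^{1+\alpha}+(M_1/n)^{1+\alpha}]<\infty$. The polynomial tails in Proposition~\ref{tailML} and Theorem~\ref{thmtailWM} give finite $(1+\alpha)$-moments for $L_1$, $M_1$, $W_\infty$, $\M_e$ under $\p_1$; the remaining step is to transfer the bound from $\p_1$ to $\p_n$ uniformly in $n$, using the decomposition of the type-$n$ root as a superposition of $n$ environment-coupled type-$1$ roots. I expect this transfer to be the main obstacle: because the $n$ type-$1$ subtrees share the same branching random walk $V$, they are strongly correlated, so the required moment bound under $\p_n$ has to exploit the joint tail $\p_1(L_1\geq ax,M_1\geq x)\sim C_\infty\gamma(a)x^{-\kappa}$ given by Proposition~\ref{tailML} rather than the marginals alone. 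A secondary difficulty is the quantitative control of the random stopping line $\mathcal{L}_1$, whose position depends on $n$, when comparing $\sum_{u\in\mathcal{L}_1}e^{-V(u)}$ with $W_\infty$.
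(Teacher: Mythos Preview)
Your plan has a circularity problem that breaks it. You propose to obtain the uniform moment bound $\sup_n\e_n[(L_1/n)^{1+\alpha}+(M_1/n)^{1+\alpha}]<\infty$ by invoking the polynomial tails of Proposition~\ref{tailML}. But in the paper's logical order, Proposition~\ref{tailML} is proved \emph{via} Lemma~\ref{tailLMp}, whose Step~2 (Lemma~\ref{lemma:LMstep2part2}) explicitly appeals to Proposition~\ref{cvgLM} to justify replacing $(L_1^{(u)},W_\infty^{(u)})$ by $(M_1^{(u)},\mathcal{M}_e^{(u)})$. So Proposition~\ref{tailML} is downstream of Proposition~\ref{cvgLM}, not available as input to it. Even setting the circularity aside, the tail of $(L_1,M_1)$ under $\p_1$ does not by itself control $\e_n[(L_1/n)^{1+\alpha}]$: your proposed transfer via a superposition of $n$ environment-coupled type-$1$ trees faces exactly the correlation obstacle you flag, and the joint tail under $\p_1$ gives no leverage on it.

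The paper avoids this entirely by proving the moment bound directly as Lemma~\ref{Mom}: $\e_i[L_1^{1+\alpha}]\le C_\alpha i^{1+\alpha}$ and $\e_i[M_1^{1+\alpha}]\le C_\alpha i^{1+\alpha}$ for every $i\ge1$, established by the size-biased change of measure $\widehat\p_i^*$ and a recursion along the spine (many-to-one), with no reference to tail asymptotics. This immediately gives the uniform integrability you need. For the convergence itself, the paper does not attempt the a.s.\ route you sketch (optional stopping at $\mathcal{L}_1$, heuristic $e^{-V(u)}\approx 1/n$ on the line); instead it bounds the Laplace transform $\e_n[e^{-t(L_1/n-W_\ell^M)-s(M_1/n-\mathcal{M}_e^{\leq\ell,M})}]$ from above and below by conditioning at a \emph{fixed} generation~$k$, using the branching property together with Lemma~\ref{Mom} to control the subtrees beyond generation~$k$, then passes to the quenched law of large numbers on the finitely many $\beta(u)/n$ with $|u|\le k$, and finally lets $k\to\infty$. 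Your pointwise building block $\beta(u)/n\to e^{-V(u)}$ is indeed the right ingredient, but the way to globalise it is the truncation-plus-moment scheme just described, not the stopping-line heuristic, which as written gives no quantitative control on the fluctuation of $V(u)-\log n$ along $\mathcal{L}_1$.
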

The proof will be postponed in Section~\ref{tailRWRE}. Now we are ready to prove Theorem \ref{maxLn}.

\begin{proof}[Proof of Theorem \ref{maxLn}] When $\kappa\in(1,2)$, Theorem \ref{tailmaxL} tells us that $\p_1(\max_{u\in\T}\beta(u)> x)\sim \frac{c_\kappa^\star}{x}$ as $x\rightarrow\infty$. Observe that for any $t>0$,
\[
\p_n\left(\max_{u\in\T}\beta(u)\leq tn\right)=\p_n\left(\max_{u\leq \L_1}\beta(u)\leq tn,  \max_{u\in\L_1}\max_{v: v\geq u}\beta(v)\leq tn\right)
\]
By Markov property at $\L_1$, it follows that
\begin{align*}
\p_n\left(\max_{u\in\T}\beta(u)\leq tn\right)=&\e_n\left[\prod_{u\in\L_1}\p(\max_{v: v\geq u}\beta(v)\leq tn\vert \beta(u)=1); \max_{u\leq \L_1}\beta(u)\leq tn\right]\\
=&\e_n\left[\left(1-\p_1(\max_{v\in\T}\beta(v)> tn)\right)^{L_1}; M_1\leq tn\right]\\
=& \e_n\left[ e^{- c^\star_\kappa\frac{L_1}{tn}(1+o_n(1))}; M_1\leq tn\right]
\end{align*}
Proposition \ref{cvgLM} implies that under $\p_n$, $(\frac{L_1}{n}, \frac{M_1}{n})$ converges in probability to $(W_\infty, \M_e)$. We hence deduce that
\[
\lim_{n\rightarrow\infty}\p_n\left(\max_{u\in\T}\beta(u)\leq tn\right)= \E\left[e^{-c^\star_\kappa \frac{W_\infty}{t}}; \M_e\leq t\right].
\]
One can easily check that $F(t):= \E\left[e^{-c^\star_\kappa \frac{W_\infty}{t}}; \M_e\leq t\right]$ with $F(0):=0$ is a distribution function. Therefore, under $\p$, $\frac{\max_{u\in\T}\overline{L}_{\tau_n}(u)}{n}$ converges in law to some random variable $X_*$ with distribution function $F$.

When $\kappa\geq2$, as  $\p_1(\max_{u\in\T}\beta(u)> x)\sim \frac{o_x(1)}{x}$, similar arguments yield that for any $t>0$,
\begin{align*}
\p_n\left(\max_{u\in\T}\beta(u)\leq tn\right)=&\e_n\left[\left(1-\p_1(\max_{v\in\T}\beta(v)> tn)\right)^{L_1}, M_1\leq tn\right]\\
=& \e_n\left[ e^{-L_1\frac{o_n(1)}{n}}; M_1\leq tn\right]
\end{align*}
which converges to $\P(\M_e\leq t)$. As a result, under $\p$, $\frac{\max_{u\in\T}\overline{L}_{\tau_n}(u)}{n}$ converges in law to $\M_e$. Moreover, one could show that for any $\epsilon>0$,
\begin{align*}
\p_n\left(|\frac{\max_{u\in\T}\beta(u)}{n}-\M_e|\ge \epsilon\right)\leq & o_n(1)+\p_n\left(|\frac{M_1}{n}-\M_e|\ge \delta/2\right)=o_n(1).
\end{align*}
This suffices to conclude the convergence in probability of $\frac{\max_{u\in\T}\overline{L}_{\tau_n}(u)}{n}$ under $\p$.
\end{proof}

\section{Tail behaviours of the branching random walk}
\label{tailBRW}

This section is devoted to proving Theorem \ref{thmtailWM}.

Let us first consider $W_\infty$, the almost sure limit of the additive martingale $W_n=\sum_{|z|=n}e^{-V(z)}$. According to~\cite{Big77}, under assumptions~\ref{cond1} and~\ref{cond4}, $W_\infty>0$ if and only if $\T$ is infinite. Immediately, one sees that $\P$-a.s.,
\[
W_\infty=\sum_{|z|=1}e^{-V(z)}W_\infty^{(z)},
\]
where $W_\infty^{(z)}, |z|=1$ are martingale limits associated with the subtree rooted at $z$, respectively, which are therefore i.i.d. copies of $W_\infty$ and are independent of $(V(z), |z|=1)$. We will generalise this decomposition.

For any $u\in\T$ such that $|u|=n$, let $(\rho, u_1,\cdots, u_n)$ be its ancestral line. For any $z\in \T$, let $\Omega(z)$ be the set of all brothers of $z$, i.e.,
\begin{equation}\label{defbro}
\Omega(z):=\{v\in \T: \overleftarrow{v}=\overleftarrow{z}, v\neq z\}.
\end{equation}
Then, observe that
\[
W_\infty=\sum_{k=1}^n \sum_{z\in\Omega(u_k)}e^{-V(z)}W_\infty^{(z)}+e^{-V(u)}W_\infty^{(u)}, \P\textrm{-a.s.}
\]
To deal with $(W_\infty,\mathcal{M}_e)$, recall that $\tM=\inf_{u\in\T}V(u)$ and $\mathcal{M}_e=e^{-\tM}$. Let us take $u^*\in\T$ such that $V(u^*)=\tM$, if there exist several choices, one chooses $u^*$ at random among the youngest ones. So, $\P$-a.s.
\begin{equation*}
W_\infty= e^{-\tM}\sum_{k=1}^{|u^*|} \sum_{z\in\Omega(u^*_k)}e^{\tM-V(z)}W_\infty^{(z)}+e^{-\tM}W_\infty^{(u^*)}.
\end{equation*}
One sees hence that
\begin{equation}\label{eMW}
\mathcal{W}^{\tM}:=e^{\tM}W_\infty=\sum_{k=1}^{|u^*|} \sum_{z\in\Omega(u^*_k)}e^{\tM-V(z)}W_\infty^{(z)}+W_\infty^{(u^*)}.
\end{equation}

Observe that the joint law of $(W_\infty,\mathcal{M}_e)$ is totally given by the joint law of $(\mathcal{W}^{\tM}, \tM)$. Let us state the following theorem, which is largely inspired by \cite{Madaule17} in which the boundary case is treated. 

\begin{thm}\label{BRWcvg}
Suppose that the assumptions \ref{cond1},\ref{cond2}, \ref{cond3} and \ref{cond4} are all fulfilled. Assume that $\kappa\in(1,\infty)$. Then there exists a constant $c_\tM\in(0,\infty)$ such that as $x\rightarrow\infty$, 
\begin{equation}\label{tailM}
\P(\tM\leq -x)\sim c_\tM e^{-\kappa x}.
\end{equation}
Further, conditionally on $\{\tM\leq-x\}$, the following convergence in law holds as $x\rightarrow\infty$:
\begin{equation}
(\mathcal{W}^{\tM}, \tM+x)\Longrightarrow (\mathcal{W}^{\tM}_\infty, -U)
\end{equation}
where $U$ is an exponential random variable with parameter $\kappa$, independent of $\mathcal{W}^\tM_\infty$.
\end{thm}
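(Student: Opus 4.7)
The plan is to split the statement into the one-dimensional tail \eqref{tailM} and the joint conditional limit, and handle them by complementary tools. For \eqref{tailM}, I start from the distributional fixed-point identity satisfied by $\mathcal{M}_e=e^{-\tM}$, namely
$\mathcal{M}_e\egloi\max\bigl(1,\,\max_{|u|=1}e^{-V(u)}\mathcal{M}_e^{(u)}\bigr)$,
with the $\mathcal{M}_e^{(u)}$ i.i.d.\ copies of $\mathcal{M}_e$ independent of $(V(u))_{|u|=1}$. This is a max-type smoothing transform whose Kesten--Goldie index is exactly the unique $\kappa>1$ satisfying $\psi(\kappa)=1$. Assumption \ref{cond3} delivers the non-lattice hypothesis on $-V(u)$, while \ref{cond4} supplies the $(\kappa+\delta)$-th moment of $\sum_{|u|=1}e^{-V(u)}$ needed to control the boundary term in Goldie's implicit renewal theorem (in its branching variant, after Jelenkovi\'c--Olvera-Cravioto and Alsmeyer--Iksanov--R\"osler). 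This yields $\P(\mathcal{M}_e\geq x)\sim c_\tM x^{-\kappa}$, i.e.\ \eqref{tailM}. The marginal convergence $\tM+x\Rightarrow -U$ with $U\sim\mathrm{Exp}(\kappa)$ is then an immediate consequence of the exact exponential form of the tail: $\P(\tM+x\leq -y\mid\tM\leq -x)\to e^{-\kappa y}$ for every $y\geq 0$.

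For the joint convergence I would exploit the spine identity \eqref{eMW} together with the change of measure driven by the $\P$-martingale $D_n=\sum_{|u|=n}e^{-\kappa V(u)}$. Under the resulting law $\Q$, a distinguished spine $(w_n)$ is singled out along which $V(w_\cdot)$ is a genuine random walk with step law $\widehat{\mu}(dx)=\E[\sum_{|v|=1}e^{-\kappa V(v)}\ind{V(v)\in dx}]$ of negative mean $-\psi'(\kappa)<0$, while the sibling subtrees emanating from the spine remain $\P$-distributed (the spine being size-biased at each generation). The many-to-one identity rewrites, for every non-negative functional $F$,
\[
\E[F;\,\tM\leq -x]=\sum_{n\geq 0}\mathbf{E}^{\Q}\!\bigl[e^{\kappa V(w_n)}F\,\ind{w_n=u^{*},\,V(w_n)\leq -x}\bigr],
\]
and the event $\{w_n=u^{*}\}$ factorises into a ladder condition $V(w_k)>V(w_n)$ for $k<n$ on the spine walk together with independent ``subtree-above-$V(w_n)$'' events on the sibling and terminal subtrees. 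Because $\widehat{\mu}$ has negative mean, the outer sum is dominated by the first-passage epoch $T_x=\inf\{n:V(w_n)\leq -x\}$, and applying the Key Renewal Theorem to the overshoot $V(w_{T_x})+x$ weighted by the Cram\'er factor $e^{\kappa(V(w_{T_x})+x)}$ simultaneously pins down the constant $c_\tM$ and shows the overshoot converges in law to $-U$. Viewed backwards from $T_x$, the spine and its sibling decorations converge to a stationary bi-infinite object --- a time-reversed random walk conditioned to stay positive, decorated with i.i.d.\ subtrees --- and plugging this stationary object into \eqref{eMW} defines $\mathcal{W}^{\tM}_\infty$; its independence from $U$ reflects the usual decoupling between the overshoot and the stationary pre-limit in the renewal limit.

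The main obstacle is to justify exchanging the limit $x\to\infty$ with the infinite sum in \eqref{eMW}. Two ingredients are required. First, under $\Q$ conditioned on its running minimum being attained at the last step, the spine increments $V(w_k)-V(w_n)$ grow like $\psi'(\kappa)(n-k)$ by ballot-type estimates, so the contributions $e^{\tM-V(z)}W_\infty^{(z)}$ in \eqref{eMW} decay geometrically away from the minimiser and the tail of the spine sum is uniformly small in $x$. Second, by the moment assumption \ref{cond4} and Liu's $L^p$-convergence of the additive martingale, each $W_\infty^{(z)}$ admits a uniform $L^{1+\eta}$-bound for some $\eta\in(0,\kappa-1)$, yielding uniform integrability of the truncated sums. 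Combining these two facts with a Portmanteau argument on finitely-many spine test functionals then upgrades the finite-dimensional renewal convergence to the full joint convergence in law claimed in the theorem.
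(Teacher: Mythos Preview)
Your overall strategy---spine decomposition, change of measure, renewal theory for the overshoot, and a time-reversal argument to identify the limiting object---is exactly the engine behind the paper's proof as well. The paper, however, organises things differently. Rather than using the $\kappa$-tilted martingale $D_n=\sum_{|u|=n}e^{-\kappa V(u)}$, it works under the additive-martingale measure $\Q^*$ (so the spine walk has \emph{positive} drift, distributed as $(S_n)$) and only afterwards passes to $(S_n^{(\kappa)})$ via the exponential tilt \eqref{rwschange}; the two routes are equivalent but the paper's choice keeps the off-spine subtrees literally $\P$-distributed without adjustment. More importantly, the paper does \emph{not} separate the tail of $\tM$ from the joint law: it introduces the truncation $\mathcal{W}^{u^*,\leq t}$ of \eqref{truncatedW}, shows in Lemma~\ref{BRWmaincvg} that $e^{\kappa x}\E[\phi(\mathcal{W}^{u^*,\leq t})\ind{\tM\leq -x}]$ converges for every bounded continuous $\phi$ (taking $\phi\equiv 1$ gives \eqref{tailM}), controls the truncation error uniformly in $x$ (Lemma~\ref{BRWrest}), establishes tightness (Lemma~\ref{BRWtight}), and finishes with Laplace transforms. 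Your ``stationary bi-infinite object seen backwards from $T_x$'' is made concrete in the paper as the explicit limit $\mathcal{E}_t(\phi)$ of \eqref{eq:defE}, obtained after time-reversing the i.i.d.\ spine increments and applying the renewal estimate \eqref{RWeScvg}.

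Two points in your sketch would need real work before they carry weight. First, appealing to Goldie/Jelenkovi\'c--Olvera-Cravioto for the tail of $\mathcal{M}_e$ is risky: those results treat \emph{sum}-type branching fixed points $R=\sum_u A_uR_u+B$, whereas your equation is \emph{max}-type, and a black-box theorem giving $\P(\mathcal{M}_e\geq x)\sim c_\tM x^{-\kappa}$ with $c_\tM\in(0,\infty)$ under exactly Assumptions~\ref{cond1}--\ref{cond4} is not obviously available; the paper instead obtains both the asymptotic and the positivity of $c_\tM$ directly from the spine computation together with the rough two-sided bound of Lemma~\ref{BRWroughbd}. Second, your many-to-one identity
\[
\E[F;\tM\leq -x]=\sum_{n\geq 0}\mathbf{E}^{\Q}\bigl[e^{\kappa V(w_n)}F\,\ind{w_n=u^{*},\,V(w_n)\leq -x}\bigr]
\]
hides a genuine difficulty: the event $\{w_n=u^*\}$ depends on the \emph{whole} tree, not just the path to $w_n$, so it does not sit inside a standard many-to-one. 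The paper handles this in \eqref{spineuptomin}--\eqref{eq:spineuptomin2} by factoring $\{u=u^*\}$ into a ladder condition on the spine, independent ``minimum stays above $V(w_k)$'' events on each sibling subtree, and a terminal-subtree factor---this is where the products $\prod_{z\in\Omega(w_j)}\overline{F}(V(w_k)-V(z))$ and the function $\Phi$ enter. Your tail-truncation ingredients (ballistic growth of $V(w_k)-V(w_n)$ and $L^{1+\eta}$-control of the $W_\infty^{(z)}$) are correct in spirit and correspond to Lemma~\ref{BRWrest}, but note that the paper needs an additional splitting into the events $\{e^{V(w_k)-V(w_{j-1})}\tilde{\Delta}_j^e\text{ small}\}$ versus $\{\ln\tilde{\Delta}_j^e\text{ large}\}$ to make the geometric decay rigorous, since the sibling increments $\Delta V(z)$ are not bounded.
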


\begin{lem}\label{BRWtailM}
Under the assumptions \ref{cond1},\ref{cond2}, \ref{cond3} and \ref{cond4}, if $\kappa\in(1,\infty)$, we have
\begin{equation}
\limsup_{\epsilon\downarrow0}\limsup_{x\rightarrow\infty}x^{\kappa}\P(W_\infty\leq \epsilon x, \M\geq x)=0.
\end{equation}
\end{lem}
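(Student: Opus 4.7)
\textbf{Plan for Lemma \ref{BRWtailM}.} The plan is to reduce the estimate to the weak-limit statement of Theorem \ref{BRWcvg}. From \eqref{eMW} one has the a.s.\ identity $W_\infty = \mathcal{M}_e \cdot \mathcal{W}^{\tM}$, so on $\{\mathcal{M}_e\geq x\}$ we have $\mathcal{W}^{\tM} = W_\infty/\mathcal{M}_e$. Hence $\{W_\infty \leq \epsilon x\} \cap \{\mathcal{M}_e \geq x\} \subseteq \{\mathcal{W}^{\tM}\leq\epsilon\}\cap\{\mathcal{M}_e\geq x\}$, and
\[
x^{\kappa}\, \P\!\left(W_\infty \leq \epsilon x,\ \mathcal{M}_e \geq x\right) \leq \bigl(x^{\kappa}\, \P(\mathcal{M}_e \geq x)\bigr)\cdot \P\!\left(\mathcal{W}^{\tM} \leq \epsilon \,\bigm|\, \mathcal{M}_e \geq x\right).
\]

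I would then invoke Theorem \ref{BRWcvg}: it gives $x^{\kappa}\P(\mathcal{M}_e \geq x)\to c_{\tM}$ and, conditionally on $\{\mathcal{M}_e \geq x\}$, the weak convergence $\mathcal{W}^{\tM}\Rightarrow \mathcal{W}^{\tM}_\infty$ as $x\to\infty$. For every $\epsilon>0$ which is a continuity point of the distribution of $\mathcal{W}^{\tM}_\infty$ (all but countably many), this yields
\[
\limsup_{x\to\infty} x^{\kappa}\,\P\!\left(W_\infty \leq \epsilon x,\ \mathcal{M}_e \geq x\right) \leq c_{\tM}\cdot \P\!\left(\mathcal{W}^{\tM}_\infty \leq \epsilon\right).
\]
Letting $\epsilon\downarrow 0$ along such continuity points (and using the monotonicity in $\epsilon$ of the left-hand side to conclude for the full limit), the claimed iterated $\limsup$ reduces to showing that $\P(\mathcal{W}^{\tM}_\infty = 0)=0$.

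The main obstacle is therefore the almost-sure positivity of the limit $\mathcal{W}^{\tM}_\infty$. To establish it, I would exploit the pre-limit lower bound $\mathcal{W}^{\tM}\geq W_\infty^{(u^*)}$ furnished by \eqref{eMW}, where $W_\infty^{(u^*)}$ is the additive martingale limit of the subtree rooted at a minimizer $u^*$ of $V$. Under Assumption \ref{cond4} the additive martingale converges in $L^1$, so $W_\infty^{(u^*)}>0$ almost surely on the event that this subtree is infinite. The delicate point is to transfer this positivity to the weak limit: one has to verify that, under the conditioning $\{\mathcal{M}_e\geq x\}$ with $x\to\infty$, the subtree dangling from $u^*$ remains infinite in the limit. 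I expect this to follow from a spine-type description of the conditional law of the environment given $\{\mathcal{M}_e \geq x\}$, in the spirit of the construction in \cite{Madaule17} on which Theorem \ref{BRWcvg} is modeled; indeed, the spine that emerges along $u^*$ should, modulo size-biased reweighting of the reproduction law, be infinite almost surely, ensuring $W_\infty^{(u^*)}>0$ in the weak limit. Once $\mathcal{W}^{\tM}_\infty>0$ a.s.\ is obtained, the lemma follows from the two displays above.
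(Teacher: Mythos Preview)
Your reduction is correct and clean: on $\{\mathcal{M}_e\geq x\}$ one has $\mathcal{W}^{\tM}=W_\infty/\mathcal{M}_e\leq W_\infty/x$, so indeed $x^\kappa\P(W_\infty\leq\epsilon x,\mathcal{M}_e\geq x)\leq x^\kappa\P(\tM\leq -\log x)\cdot\P(\mathcal{W}^{\tM}\leq\epsilon\mid \tM\leq-\log x)$, and Theorem~\ref{BRWcvg} handles both factors in the limit. The entire content of the lemma is thus pushed into the claim $\P(\mathcal{W}_\infty^{\tM}=0)=0$, and this is where your argument has a genuine gap. Note that in the paper the implication runs the other way: the Remark right after the proof of Theorem~\ref{thmtailWM} records that $\P(\mathcal{W}_\infty^{\tM}>0)=1$ is a \emph{consequence} of Lemma~\ref{BRWtailM}. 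So you have not reduced to something easier; you have reduced to something equivalent.

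Your sketch for positivity via $\mathcal{W}^{\tM}\geq W_\infty^{(u^*)}$ does not work as stated. The vertex $u^*$ is the \emph{terminal} point of the (finite) ancestral line to the minimum, not a point on an infinite spine: in the limiting picture of Lemma~\ref{BRWmaincvg} the subtree hanging from $u^*$ is a fresh branching random walk conditioned on $\{\tM\geq0\}$ (this is the factor $\Phi(a)=\E[\phi(a+W_\infty)\mathbf{1}_{\tM\geq0}]$ appearing there), and such a tree can go extinct with positive probability whenever $\P(\sum_{|u|=1}1=0)>0$. Hence $W_\infty^{(u^*)}=0$ with positive probability in the limit, and the lower bound $\mathcal{W}^{\tM}\geq W_\infty^{(u^*)}$ alone cannot give $\mathcal{W}_\infty^{\tM}>0$ a.s. One would instead have to argue that among the infinitely many brothers grafted along the (backward-infinite) limiting spine, some subtree survives; making this precise requires exactly the kind of extinction/survival dichotomy the paper carries out.

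The paper's proof is entirely different and self-contained: it does not invoke Theorem~\ref{BRWcvg}. One writes $e^{\kappa r}\P(W_\infty\leq\epsilon e^r,\tM\leq-r)$ via the many-to-one change of measure along the line to the minimizer, reverses time on the spine random walk, and splits the contribution of the last $b$ steps according to whether \emph{some} brother with small displacement has a surviving subtree (the $E_2(r,\text{survival})$ term, controlled by $\P(W_\infty^+\leq\epsilon e^{2K})\to0$) or none does (the $E_2(r,\text{extinction})$ term, which decays geometrically in $b$ because the probability that every brother fails is strictly less than one). This directly yields the double $\limsup$ without ever needing to know the law of $\mathcal{W}_\infty^{\tM}$.
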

The proof of Lemma~\ref{BRWtailM} is given in Subsection~\ref{subsubsec:prooflemmas}. We prove that Theorem \ref{thmtailWM} is a direct consequence of Theorem \ref{BRWcvg} (whose proof is postponed to the next section) in the following.
\begin{proof}[Proof of Theorem \ref{thmtailWM}]
For any $a>0$ and $x>0$, observe that
\begin{align*}
\P(W_\infty\geq ax, \mathcal{M}_e\geq x)
=&\P(W_\infty\geq ax, \tM\leq -\log x)\\
=&\P(e^{-\tM-\log x}\mathcal{W}^\tM\geq a, \tM+\log x\leq 0).
\end{align*}
Let $x\rightarrow\infty$, as a consequence of Theorem \ref{BRWcvg},
\begin{align*}
\lim_{x\rightarrow\infty}x^\kappa\P(W_\infty\geq ax, \mathcal{M}_e\geq x)=&\lim_{x\rightarrow\infty}\P(e^{-\tM-\log x}\mathcal{W}^\tM\geq a\vert \tM+\log x\leq 0)\P(\tM\leq-\log x)x^\kappa\\
= &\P(e^U\mathcal{W}^{\tM}_\infty\geq a)\lim_{x\rightarrow\infty}\P(\tM\leq -\log x)x^\kappa\\
= & \gamma(a),
\end{align*}
with 
\begin{equation}\label{gammaa}
\gamma(a):=c_\tM \P(e^U\mathcal{W}^{\tM}_\infty\geq a)=c_\tM \E\left[\left(1\wedge \frac{\mathcal{W}^\tM_\infty}{a}\right)^{\kappa}\right], \forall a>0.
\end{equation}
Notice that by Lebesgue's dominated convergence thereom, $\gamma$ is continuous on $\R_+^*$. For $a=0$, \eqref{tailM} implies directly that
\[
\P(\mathcal{M}_e\geq x)=\P(\tM\leq -\log x)\sim c_\tM x^{-\kappa}.
\]
Here $\gamma(0)=c_\tM=\lim_{a\to 0}\gamma(a)$ by Lemma \ref{BRWtailM} hence $\gamma$ is also continuous in $0$.
\end{proof}

\begin{rem}
Note that as $c_\tM=\lim_{a\to 0}\gamma(a)$, Lemma \ref{BRWtailM} also implies that $\P(\mathcal{W}_\infty^\tM>0)=1$.
\end{rem}

\subsection{Proof of Theorem \ref{BRWcvg}}
First we state the following lemma which gives a rough estimate on the tail of $\tM$.
\begin{lem}\label{BRWroughbd}
Under the assumptions of Theorem \ref{BRWcvg}, there exist $0<c_1\leq 1$ such that 
\begin{equation}\label{BRWrhbdM}
c_1e^{-\kappa x}\leq \P(\tM\leq -x)\leq e^{-\kappa x}, \forall x\geq1.
\end{equation}
\end{lem}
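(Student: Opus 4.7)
My approach combines the $\kappa$-additive martingale
\[W_n(\kappa)=\sum_{|u|=n}e^{-\kappa V(u)},\qquad \E[W_n(\kappa)]=\psi(\kappa)^n=1,\]
(well-defined since $\psi(\kappa)=1$ by Assumption~\ref{cond2}) with the first-passage stopping line
\[\mathcal{T}(x):=\bigl\{u\in\T:\ V(u)\leq-x,\ V(u_k)>-x\text{ for }0\leq k<|u|\bigr\},\]
for which $\{\tM\leq-x\}=\{\mathcal{T}(x)\neq\emptyset\}$.

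The upper bound would be immediate: summing the non-negative martingale along the stopping line (optional sampling / Fatou) yields $\E\bigl[\sum_{u\in\mathcal{T}(x)}e^{-\kappa V(u)}\bigr]\leq 1$, and since every $u\in\mathcal{T}(x)$ satisfies $e^{-\kappa V(u)}\geq e^{\kappa x}$ the sum dominates $e^{\kappa x}\mathbf{1}_{\mathcal{T}(x)\neq\emptyset}$, giving $\P(\tM\leq-x)\leq e^{-\kappa x}$ with constant $1$ and no further work needed.

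For the lower bound I would use many-to-one together with the exponential tilt of $\C$ by $e^{-\kappa y}$, which turns the spine into a random walk $(S_n)$ of step Laplace transform $\psi(\kappa+\cdot)$ under a probability $\hat\P$. Convexity of $\psi$ combined with $\psi(1)=\psi(\kappa)=1$, $\psi'(1)<0$ and $\kappa>1$ forces $\hat\E[S_1]=-\psi'(\kappa)<0$, so $T_x:=\inf\{n:S_n\leq-x\}$ is $\hat\P$-a.s.\ finite. Writing $U_x:=-x-S_{T_x}\geq 0$, the many-to-one identity gives
\[\E[|\mathcal{T}(x)|]=\hat\E\bigl[e^{\kappa S_{T_x}}\bigr]=e^{-\kappa x}\,\hat\E\bigl[e^{-\kappa U_x}\bigr].\]
The non-lattice Assumption~\ref{cond3} together with the moments provided by Assumption~\ref{cond4} make the renewal theorem for the tilted walk $(S_n)$ applicable, so $(U_x)_{x\geq 1}$ is tight and $\hat\E[e^{-\kappa U_x}]$ is uniformly bounded below, yielding $\E[|\mathcal{T}(x)|]\asymp e^{-\kappa x}$.

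The remaining step, converting this first-moment estimate into a lower bound on $\P(\mathcal{T}(x)\neq\emptyset)$, is where I expect the main obstacle. My plan is to apply Paley--Zygmund to the truncated variable $|\mathcal{T}_M(x)|$ with $\mathcal{T}_M(x):=\mathcal{T}(x)\cap\{V\geq-x-M\}$: for $M$ large the first moment is still of order $e^{-\kappa x}$, and the second moment is estimated by a many-to-two decomposition at the most recent common ancestor $w$ of a pair $\{u,v\}\subset\mathcal{T}_M(x)$, using the branching property to factor the two disjoint subtrees below the children of $w$ that separate $u$ from $v$, and many-to-one under the $\kappa$-tilt for the ancestral segment from $\rho$ to $w$. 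The target bound is $\E[|\mathcal{T}_M(x)|^2]\leq C_M\,e^{-\kappa x}$ (linear in $e^{-\kappa x}$, not quadratic), which then gives $\P(\tM\leq-x)\geq c_1 e^{-\kappa x}$. Achieving this linear rate is the delicate point: it relies on the bounded-undershoot truncation to control the inner subtree moments and on the exponential integrability $\psi(\kappa+\delta)<\infty$ from Assumption~\ref{cond4} to bound the offspring second moment $\E\bigl[\bigl(\sum_{\pa{z}=w}e^{-\kappa V(z)}\bigr)^2\bigr]$, so that the many-to-two sum over the generation of $w$ is geometrically summable.
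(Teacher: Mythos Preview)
Your upper bound is correct and essentially the paper's: the stopping-line argument with the $\kappa$-martingale gives $\P(\tM\leq-x)\leq e^{-\kappa x}$ with constant $1$, which the paper obtains in one line via many-to-one and the random walk $S^{(\kappa)}$.

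For the lower bound, the overall strategy---first moment via many-to-one under the $\kappa$-tilt, then Paley--Zygmund on a truncated count---is also the paper's, and your first-moment analysis (overshoot, renewal) is fine. The genuine gap is in the second-moment step. You intend to control $\E[|\mathcal{T}_M(x)|^2]$ by a many-to-two at the most recent common ancestor $w$, invoking $\psi(\kappa+\delta)<\infty$ to bound the offspring second moment $\E\bigl[(\sum_{\pa z=w}e^{-\kappa V(z)})^2\bigr]$. But $\psi(\kappa+\delta)=\E\bigl[\sum_{|u|=1}e^{-(\kappa+\delta)V(u)}\bigr]$ is a \emph{first} moment and says nothing about this square. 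Since $\sum_{|u|=1}e^{-\kappa V(u)}\leq(\sum_{|u|=1}e^{-V(u)})^{\kappa}$, finiteness of the square would require $\E[W_1^{2\kappa}]<\infty$, whereas Assumption~\ref{cond4} only gives order $\kappa+\delta$; for $\kappa$ close to $1$ this fails. Your bounded-undershoot truncation does cap $e^{-\kappa(x+V(u))}$ at the \emph{endpoint} $u$, but it does not control the sibling mass $\sum_{k}\sum_{z\in\Omega(u_k)}e^{-\kappa(V(z)+x)}$ accumulated \emph{along the path}, and that is exactly what appears in the off-diagonal.

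The paper's remedy is to truncate on precisely this quantity: besides asking $V(u)\in I(x)=(-x-1,-x]$ (so an undershoot of at most $1$), it imposes
\[
F_u^L:=\Bigl\{\sum_{k=1}^{|u|}\sum_{z\in\Omega(u_k)}e^{-\kappa(V(z)+x)}\leq L\Bigr\}.
\]
On $F_u^L$ the sibling sum is \emph{deterministically} $\leq L$, so after bounding each sibling-subtree contribution by $\E[N(x+V(z))\mid V(z)]\leq c\,e^{-\kappa(V(z)+x)}$, the off-diagonal is $\leq cL\,\E[N(x)]\leq cL\,e^{-\kappa x}$ with no second-moment hypothesis on the offspring whatsoever. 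A spine computation then shows that this truncation removes only $o_L(1)e^{-\kappa x}$ from the first moment, so Paley--Zygmund applies for $L$ large but fixed. Replacing (or supplementing) your undershoot cutoff with this sibling-mass cutoff closes the gap.
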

Its proof is postponed to Section~\ref{TechLem}.

Recall that $\mathcal{W}^{\tM}=e^{\tM}W_\infty=\sum_{k=1}^{|u^*|} \sum_{z\in\Omega(u^*_k)}e^{\tM-V(z)}W_\infty^{(z)}+W_\infty^{(u^*)}$.
Its truncated version is defined for $0\leq t<|u^*|$ by
\begin{align}\label{truncatedW}
\mathcal{W}^{u^*,\leq t}:=&\sum_{k=|u^*|-t}^{|u^*|} \sum_{z\in\Omega(u^*_k)}e^{\tM-V(z)}W_\infty^{(z)}+W_\infty^{(u^*)}\nonumber\\
=&\sum_{k=|u^*|-t}^{|u^*|} e^{V(u^*)-V(u^*_{k-1})}\sum_{z\in\Omega(u^*_k)}e^{-[ V(z)-V(u^*_{k-1})]}W_\infty^{(z)}+W_\infty^{(u^*)}.
\end{align}

Let us state the following results for the truncated random variable.

\begin{lem}\label{BRWmaincvg}
Let $t$ be a fixed integer. Under the assumptions of Theorem \ref{BRWcvg}, for any continuous and bounded function $\phi: \R\rightarrow\R_+$, the following limit exists.
\begin{equation}
\lim_{x\rightarrow\infty} e^{\kappa x}\E\left[\phi(\mathcal{W}^{u^*,\leq t})\ind{\tM\leq -x}\right]:=\mathcal{E}_t(\phi)
\end{equation}
The explicit expression of $\mathcal{E}_t(\phi)$ will be given in Section~\ref{lemBRW} by equation~\eqref{eq:defE}.
\end{lem}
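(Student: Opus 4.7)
The plan is to apply a $\kappa$-spine change of measure and then pass to the limit via fluctuation/renewal theory for the induced tilted random walk.

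First, I decompose over the generation of $u^*$ and apply the many-to-one formula with tilt $e^{-\kappa V(u)}$ (licit since $\psi(\kappa)=1$), rewriting
\[
e^{\kappa x}\E\bigl[\phi(\mathcal{W}^{u^*,\leq t})\ind{\tM\leq -x}\bigr]=\sum_{n\geq 0}\widetilde{\E}\bigl[e^{-\kappa\eta_n}\phi(\mathcal{W}^{w_n,\leq t})\ind{w_n=u^*,\,S_n\leq -x}\bigr],
\]
where under the spine measure $\widetilde{\P}$, the spine positions $S_k:=V(w_k)$ form a random walk with i.i.d. increments of law $e^{-\kappa y}\nu(dy)$ (with $\nu=\E[\C]$), the off-spine subtrees are i.i.d. copies of the original BRW rooted at the spine position, and $\eta_n:=-S_n-x\geq 0$ on the relevant event. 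Strict convexity of $\psi$ together with $\psi(1)=\psi(\kappa)=1$ forces $\widetilde{\E}[S_1]=-\psi'(\kappa)<0$, so $(S_n)$ has negative drift.

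Next, I decompose the event $\{w_n=u^*\}$ into three groups of constraints: (a) spine constraints of the form $S_k>S_n$ for $k<n$ (strict, by the youngest-minimizer convention), (b) off-spine constraints forcing the minimum of every subtree rooted at a sibling $z\in\Omega(w_k)$ to stay above $S_n$, and (c) the condition that the subtree rooted at $w_n$ itself has nonnegative minimum relative to $S_n$. Reversing the pre-$n$ spine segment via $\widehat{S}_j:=S_{n-j}-S_n$ produces a random walk with positive drift $+\psi'(\kappa)$ whose increments are i.i.d., and the spine constraint (a) becomes $\widehat{S}_j>0$ for $1\leq j\leq n$. By fluctuation theory for the walk $S$ (whose strict descending ladder heights have finite mean), the joint law of $(\eta_n,(\widehat{S}_j)_{j\geq 0},(S_{n+\ell}-S_n)_{\ell\geq 0})$ together with the attached off-spine subtrees, under $\widetilde{\P}(\,\cdot\,\mid w_n=u^*,\,S_n\leq -x)$, converges as $x\to\infty$ to an explicit stationary two-sided configuration: the overshoot $\eta$ follows the stationary overshoot distribution of the strict descending ladder process (continuous by the non-lattice Condition~\ref{cond3}), the reversed pre-minimum walk is a walk conditioned to stay strictly positive, and the subtree at $w_n$ is an independent BRW conditioned to have nonnegative minimum.

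Since $t$ is fixed, $\mathcal{W}^{w_n,\leq t}$ is a continuous bounded function of only finitely many coordinates of this limiting object, namely $(\widehat{S}_j)_{0\leq j\leq t}$ together with the siblings at these $t+1$ generations and their BRW subtrees, plus the subtree rooted at $w_n$; distant generations $k\ll n$ enter only through indicator weights. Combining the convergence in distribution from the previous step with bounded convergence, and with an integrable dominant provided by $e^{-\kappa\eta_n}$ in $\eta_n\geq 0$, then yields the limit $\mathcal{E}_t(\phi)$, expressible as the integral of $\phi$ against the limiting stationary object weighted by $e^{-\kappa\eta}$. The main obstacle will be justifying the interchange of $\sum_{n}$ and $\lim_{x\to\infty}$: dominating each summand uniformly in $x$ requires sharp uniform control of the probability that \emph{all} off-spine subtrees at generations $k$ far from $n$ avoid dipping below $S_n$, which translates into fine tail estimates on the minimum of a BRW started at a positive level and conditioned to stay above $0$. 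This is exactly the place where Condition~\ref{cond4} (moments of the offspring point process up to order $\kappa+\delta$) and the rough bound $\P(\tM\leq -x)\leq e^{-\kappa x}$ of Lemma~\ref{BRWroughbd} are needed to produce an integrable dominant in the summation over $n$.
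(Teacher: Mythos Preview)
Your plan is sound and parallels the paper's proof closely in structure: spine decomposition over the generation of $u^*$, time reversal of the spine increments, renewal-type asymptotics for the overshoot, and a careful control of the off-spine constraints at generations far from the minimizer. The one genuine methodological difference is that you tilt directly by $e^{-\kappa V}$ so that the spine walk has drift $-\psi'(\kappa)<0$, whereas the paper uses the $e^{-V}$-tilt $\Q^*$ (spine walk $(S_n)$ with positive drift) and only passes to the $\kappa$-walk $(S_n^{(\kappa)})$ later via the Radon--Nikodym identity~\eqref{rwschange}. These are equivalent parametrizations; your choice makes the overshoot weight $e^{-\kappa\eta_n}$ appear more transparently, while the paper's choice keeps the off-spine decoration law identical to the one already set up in Section~\ref{changeofmeasureBRW}.

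Two points deserve sharpening. First, you omit the normalization by the number of youngest minimizers: the paper keeps track of $\bigl(\sum_{|v|=k}\ind{V(v)=V(u)}\bigr)^{-1}$ throughout, and this factor survives into the limiting expression~\eqref{eq:defE}. Under the non-lattice assumption this term need not be identically~$1$ (the point process may still have atoms), so it cannot simply be dropped. Second, your appeal to ``convergence to a stationary two-sided configuration'' hides exactly the work that constitutes most of the paper's proof. The paper does not prove any such global weak convergence; instead it introduces two truncation levels $b_1\leq b_2$, shows that generations $k\leq b_2$ give $o(e^{-\kappa x})$ via~\eqref{smallk}, and shows that for $k>b_2$ the off-spine constraints at generations $j\leq k-b_1$ can be removed at cost $o(e^{-\kappa x})$ via the estimate~\eqref{awayspine} (this is where Lemma~\ref{BRWroughbd} and the moment Assumption~\ref{cond4} enter, through bounds of the type~\eqref{RWeSbd} and~\eqref{newcouplebd}). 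Only after this double truncation do the remaining finitely many near-$u^*$ coordinates admit a clean renewal limit via~\eqref{RWeScvg}. Your identification of this as ``the main obstacle'' is exactly right; the missing piece is a concrete dominated-convergence or truncation scheme of this kind rather than an abstract weak-convergence statement.
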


\begin{lem}\label{BRWrest}
Under the assumptions of Theorem \ref{BRWcvg}, for any $\delta>0$, we have
\begin{equation}
\lim_{t\rightarrow\infty}\sup_{x\in\R_+}\P\left(\mathcal{W}^{\tM}-\mathcal{W}^{u^*,\leq t}\geq \delta\vert \tM\leq-x\right)=0.
\end{equation}
\end{lem}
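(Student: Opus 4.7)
My approach is Markov's inequality combined with a spinal change of measure at the critical exponent $\kappa$ (using $\psi(\kappa)=1$). First I rewrite the difference in a more convenient form: setting $Y_j:=V(u^*_{|u^*|-j})$ and $\bar S_j:=Y_j-\tM\ge 0$ (non-negative because $u^*$ realises the minimum), re-indexing the outer sum in \eqref{eMW} via $m=|u^*|-k+1$ yields
\[
\mathcal{W}^{\tM}-\mathcal{W}^{u^*,\le t}=\sum_{j=t+2}^{|u^*|}e^{-\bar S_j}\,C_j,\qquad C_j:=\!\!\sum_{z\in\Omega(u^*_{|u^*|-j+1})}\!\! e^{-(V(z)-Y_j)}\,W_\infty^{(z)}.
\]
By Markov's inequality and the lower bound $\P(\tM\le -x)\ge c_1 e^{-\kappa x}$ of Lemma~\ref{BRWroughbd}, the claim reduces to
\[
\lim_{t\to\infty}\sup_{x\ge 0}e^{\kappa x}\,\E\!\left[\sum_{j\ge t+2}e^{-\bar S_j}\,C_j\,;\,\tM\le -x\right]=0.
\]

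Next I sum over all candidate minimisers $u^*=u$ at generation $n$ (relaxing $V(u)=\tM$ to $V(u)\le V(u_k)$ for every $k\le n$) and apply the many-to-one formula with tilt $\kappa$. Under the resulting tilted law $\hat\P$, the spinal walk $\hat S_n:=V(w_n)$ is an i.i.d.\ random walk of step-mean $-\psi'(\kappa)<0$, while the siblings $\Omega_k$ of $w_k$ and the subtrees they carry form independent size-biased copies of the BRW (with $\E[W_\infty]=1$ per subtree). By stationarity of the sibling law along the spine, $\hat\E[C_j\mid\text{spine}]$ collapses to the $j$-independent constant $\bar C:=\hat\E[\sum_{z\in\Omega_1}e^{-V(z)}]$, which is finite under Assumption~\ref{cond4} (bounded via H\"older's inequality using the $(\kappa+\delta)$-moment of $W_1$). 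This produces
\[
\E\!\Big[\sum_{j\ge t+2}e^{-\bar S_j}C_j;\,\tM\le -x\Big]\le \bar C\sum_{n}\hat\E\!\left[e^{\kappa\hat S_n}\mathbf{1}_{\{\hat S_n\le -x,\,\hat S_k\ge \hat S_n\,\forall k\le n\}}\sum_{j=t+2}^{n}e^{\hat S_n-\hat S_{n-j}}\right].
\]

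To finish, I time-reverse along the spine: $R_j:=\hat S_{n-j}-\hat S_n$ is a random walk started at $0$ with positive drift $\psi'(\kappa)$. The indicator becomes $\{R_j\ge 0\ \forall j\le n,\,R_n\ge x\}$ and the inner sum becomes $\sum_{j=t+2}^n e^{-R_j}$. A cycle/Tanaka-type identity, together with the very renewal estimate that underlies Lemma~\ref{BRWroughbd}, extracts the factor $e^{-\kappa x}$ from the sum over $n$, leaving the residual $\hat\E[\sum_{j\ge t+2}e^{-R_j^\uparrow}]$, where $R^\uparrow$ is a random walk of positive drift conditioned to stay non-negative. Since $R_j^\uparrow\to+\infty$ almost surely at a geometric rate, dominated convergence forces the residual to vanish as $t\to\infty$, independently of $x$. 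The principal difficulty lies precisely in this last step: the cycle/time-reversal bookkeeping must be performed uniformly in $x$, cleanly decoupling the $e^{-\kappa x}$ factor from a residual that depends only on $t$. In Madaule's boundary case one uses Sparre--Andersen fluctuation estimates for a centered walk; here the ballistic drift simplifies matters, but the interaction between the hitting time of $[x,\infty)$ by $R^\uparrow$ and the summability of $\sum_j e^{-R_j^\uparrow}$ still requires careful renewal-theoretic estimates near the minimum of the spinal walk.
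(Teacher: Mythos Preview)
Your overall architecture---Markov's inequality, a spinal change of measure, time-reversal along the spine, and a renewal estimate to extract the factor $e^{-\kappa x}$---is exactly the paper's. The substantive gap is the passage from $C_j$ to the constant $\bar C$. Under your $\kappa$-tilt (and equally under the $1$-tilt the paper actually uses),
\[
\bar C=\hat\E\Bigl[\sum_{z\in\Omega(w_1)}e^{-\Delta V(z)}\Bigr]=\E\Bigl[\sum_{|u|=1}e^{-\kappa V(u)}\sum_{z\neq u}e^{-V(z)}\Bigr],
\]
and no H\"older pairing using only the $(\kappa+\delta)$-moment of $W_1$ brings this down: every route leads to a requirement of order $\E[W_1^{\kappa+1}]<\infty$ (equivalently $\delta\ge 1$), which Assumption~\ref{cond4} does not grant. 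For instance, with i.i.d.\ displacements and an offspring variable $N$ having finite $(\kappa+\delta)$-th moment but infinite variance (possible when $\kappa+\delta<2$), one gets $\bar C=\E[N(N-1)]/(\E N)^2=\infty$. The paper avoids this by \emph{not} taking the first moment of the sibling contribution: it applies Markov's inequality conditionally on the spine $\sigma$-field, keeps the truncation $1\wedge(\cdot)$, and then splits term-by-term via
\[
1\wedge\Bigl(\tfrac{1}{\delta}\,e^{V(w_k)-V(w_{j-1})}\tilde\Delta_j^e\Bigr)\le \tfrac{1}{\delta}\,e^{(1-\alpha)(V(w_k)-V(w_{j-1}))}+\ind{\ln\tilde\Delta_j^e\ge \alpha(V(w_{j-1})-V(w_k))},
\]
so that only \emph{logarithmic} moments of $\tilde\Delta_j^e=\sum_{z\in\Omega(w_j)}e^{-\Delta V(z)}$ are needed; these are available under Assumption~\ref{cond4}. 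After this splitting, each of the two pieces is handled by time-reversal, the Markov property at time $j$, and the renewal bound \eqref{RWeSbd}, producing tails of the form $\sum_{j>t}\E[(1-S_j)e^{(1-\alpha)S_j}e^{-\kappa'S_j}\ind{\MS_{[1,j]}<0}]$ that are $o_t(1)$.

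A second, related issue: even granting $\bar C<\infty$, your residual $\hat\E[\sum_{j\ge t+2}e^{-R_j^\uparrow}]$ is delicate when $\kappa\le 2$, since for the unconditioned walk $\hat\E[e^{-R_1}]=\psi(\kappa-1)\ge 1$. The conditioning on $\{R_k\ge 0\ \forall k\}$ alone does not immediately yield summability in $j$, and the ``cycle/Tanaka'' decoupling you invoke is not made precise; a naive swap of the sums over $n$ and $j$ produces a divergent inner sum. The paper's $\alpha$-splitting above is precisely what makes the corresponding series absolutely convergent and hence $o_t(1)$.
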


The next lemma states the tightness of the law of $\mathcal{W}^{\tM}$ conditionally on $\tM\leq -x$.
\begin{lem}\label{BRWtight}
Under the assumptions of Theorem \ref{BRWcvg}, we have
\begin{equation}
\lim_{M\rightarrow\infty}\sup_{x\in\R_+}\P[\mathcal{W}^{\tM}\geq M \vert \tM\leq -x]=0.
\end{equation}
\end{lem}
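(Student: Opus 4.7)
The plan is to bypass any fine description of the conditional law of $\mathcal{W}^{\tM}$ given $\{\tM\leq-x\}$ and reduce tightness to a direct comparison of two polynomial tails already in hand: that of $W_\infty$, from the estimate $\P(W_\infty\geq y)\sim C_0 y^{-\kappa}$ of \cite{Liu} recalled in the introduction, and that of $\tM$, from Lemma \ref{BRWroughbd}.

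The starting observation is that $\mathcal{W}^{\tM}=e^{\tM}W_\infty$, so on $\{\tM\leq-x\}$ we have $W_\infty=\mathcal{W}^{\tM}e^{-\tM}\geq \mathcal{W}^{\tM}e^{x}$, whence
\[
\{\mathcal{W}^{\tM}\geq M\}\cap\{\tM\leq-x\}\subseteq\{W_\infty\geq Me^x\}.
\]
From $\P(W_\infty\geq y)\sim C_0 y^{-\kappa}$ one extracts a constant $C$ with $\P(W_\infty\geq y)\leq Cy^{-\kappa}$ for every $y\geq 1$; applied at $y=Me^x$ (for $M\geq 1$, $x\geq 0$, so $y\geq 1$) this yields
\[
\P(\mathcal{W}^{\tM}\geq M,\ \tM\leq-x)\leq CM^{-\kappa}e^{-\kappa x}.
\]

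For the denominator I would use Lemma \ref{BRWroughbd}, which gives $\P(\tM\leq-x)\geq c_1 e^{-\kappa x}$ for $x\geq 1$; for $x\in[0,1)$ monotonicity yields $\P(\tM\leq-x)\geq\P(\tM\leq-1)\geq c_1e^{-\kappa}$, which still dominates a constant times $e^{-\kappa x}$ on this bounded range. Merging these, one obtains a single lower bound $\P(\tM\leq-x)\geq c_1'e^{-\kappa x}$ valid uniformly in $x\geq 0$, and dividing the two estimates gives
\[
\P\bigl(\mathcal{W}^{\tM}\geq M\bigm|\tM\leq-x\bigr)\leq\frac{C}{c_1'\,M^\kappa}
\]
for every $M\geq 1$ and every $x\geq 0$. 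Sending $M\to\infty$ completes the proof.

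The only point worth flagging as the main step — rather than a genuine obstacle — is noticing that the $e^{-\kappa x}$ factor picked up from the inclusion $\{W_\infty\geq Me^x\}$ is exactly cancelled by the $e^{-\kappa x}$ lower bound on $\P(\tM\leq-x)$; it is this matching of exponents that makes the conditional probability uniformly bounded in $x$. Observe in particular that neither Lemma \ref{BRWmaincvg} nor Lemma \ref{BRWrest} is invoked here, since they are reserved for identifying the weak limit in Theorem \ref{BRWcvg}, which is strictly more delicate than tightness.
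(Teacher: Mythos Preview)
Your proof is correct and follows essentially the same approach as the paper's own proof: both use the inclusion $\{\mathcal{W}^{\tM}\geq M,\ \tM\leq -x\}\subseteq\{W_\infty\geq Me^x\}$, the tail asymptotic $\P(W_\infty\geq y)\sim C_0y^{-\kappa}$, and the lower bound on $\P(\tM\leq -x)$ from Lemma~\ref{BRWroughbd}, so that the $e^{-\kappa x}$ factors cancel. Your version is in fact slightly more careful than the paper's in explicitly extending the lower bound to $x\in[0,1)$.
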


Let us prove Theorem \ref{BRWcvg} by these lemmas.

\begin{proof}[Proof of Theorem \ref{BRWcvg}]
By taking $\phi\equiv1$, the tail \eqref{tailM} of $\tM$ follows from Lemma \ref{BRWmaincvg} with $c_\tM=\mathcal{E}_0(1)$. We have $c_\tM\in(0,\infty)$ because of Lemma \ref{BRWroughbd}.

One sees from Lemmas \ref{BRWtight} and \ref{BRWroughbd} that the joint distribution of $(\mathcal{W}^{\tM}, \tM+x)$ conditionally on $\{\tM\leq -x\}$ is tight. By the classical L\'evy's theorem, it suffices to prove that for any $\theta_1,\theta_2\in\R_+$, the following limit
\begin{equation}
\lim_{x\rightarrow\infty}\E[e^{-\theta_1\mathcal{W}^\tM+\theta_2(\tM+x)}\vert \tM\leq -x]%=\lim_{t\rightarrow\infty}\frac{\mathcal{E}_t(e^{-\theta_1\cdot})}{\mathcal{E}_0(1)}\frac{\theta_2}{\kappa+\theta_2}
\end{equation}
exists. 

First, as given in \eqref{truncatedW}, $0\le\mathcal{W}^{u^*,\leq t}\leq \mathcal{W}^{\tM}$. So, the tightness of $(\mathcal{W}^{\tM}, \tM+x)$ conditionally on $\{\tM\leq -x\}$ yields also tightness of $(\mathcal{W}^{u^*,\leq t}, \tM+x)$ conditionally on $\{\tM\leq -x\}$. On the other hand, it follows from Lemma \ref{BRWmaincvg} that,
\begin{equation}\label{brwmaincvg}
\lim_{x\rightarrow\infty}\E[e^{-\theta_1\mathcal{W}^{u^*,\leq t}+\theta_2(\tM+x)}\vert \tM\leq -x]=\frac{\mathcal{E}_t(e^{-\theta_1\cdot})}{\mathcal{E}_t(1)}\frac{\kappa}{\kappa+\theta_2}=\frac{\mathcal{E}_t(e^{-\theta_1\cdot})}{\mathcal{E}_0(1)}\frac{\kappa}{\kappa+\theta_2},
\end{equation}
where $\mathcal{E}_t(e^{-\theta_1\cdot})>0$ by the tightness.
%Then by the classical L\'evy's theorem, for any $t\in\mathbb{N}^*$, one gets immediately the following convergence in law
%\begin{equation}
%(\mathcal{W}^{u^*,\leq t}, \tM+x)\Longrightarrow (\mathcal{W}^{\tM,\leq t}_\infty, -U),
%\end{equation}
%where $U$ is an exponential random variable with parameter $\kappa$, independent of $\mathcal{W}^{\tM,\leq t}_\infty$.  

Next, observe that $t\mapsto \mathcal{W}^{u^*,\leq t}$ is increasing. So $t\mapsto \mathcal{E}_t(e^{-\theta_1\cdot})$ is decreasing and positive. Consequently,
\begin{equation}\label{cvgbyt}
\lim_{t\rightarrow\infty}\frac{\mathcal{E}_t(e^{-\theta_1\cdot})}{\mathcal{E}_0(1)}\frac{\kappa}{\kappa+\theta_2}\textrm{ exists and is positive.}
\end{equation}
We then are going to show that
\begin{equation}
\lim_{x\rightarrow\infty}\E[e^{-\theta_1\mathcal{W}^\tM+\theta_2(\tM+x)}\vert \tM\leq -x]=\lim_{t\rightarrow\infty}\frac{\mathcal{E}_t(e^{-\theta_1\cdot})}{\mathcal{E}_0(1)}\frac{\kappa}{\kappa+\theta_2}.
\end{equation}
In fact, by \eqref{brwmaincvg} and \eqref{cvgbyt}, it suffices to show that
\begin{equation}
\lim_{t\rightarrow\infty}\limsup_{x\rightarrow\infty}\E[e^{-\theta_1\mathcal{W}^{u^*,\leq t}+\theta_2(\tM+x)}-e^{-\theta_1\mathcal{W}^\tM+\theta_2(\tM+x)}\vert \tM\leq -x]=0.
\end{equation}
Note that for any $\epsilon>0$,
\begin{align*}
0\leq&\E[e^{-\theta_1\mathcal{W}^{u^*,\leq t}+\theta_2(\tM+x)}-e^{-\theta_1\mathcal{W}^\tM+\theta_2(\tM+x)}\vert \tM\leq -x]\\
\leq &\E[(e^{-\theta_1\mathcal{W}^{u^*,\leq t}}-e^{-\theta_1\mathcal{W}^\tM})e^{\theta_2(\tM+x)}; \mathcal{W}^\tM-\mathcal{W}^{u^*,\leq t}\leq \varepsilon\vert \tM\leq -x]+\P(\mathcal{W}^\tM-\mathcal{W}^{u^*,\leq t}\geq \varepsilon\vert \tM\leq -x)\\
\leq & \theta_1\varepsilon+\P(\mathcal{W}^\tM-\mathcal{W}^{u^*,\leq t}\geq \varepsilon\vert \tM\leq -x).
\end{align*}
By Lemma \ref{BRWrest}, we obtain that for any $\varepsilon>0$,
\[
\limsup_{t\rightarrow\infty}\limsup_{x\rightarrow\infty}\E[e^{-\theta_1\mathcal{W}^{u^*,\leq t}+\theta_2(\tM+x)}-e^{-\theta_1\mathcal{W}^\tM+\theta_2(\tM+x)}\vert \tM\leq -x]\leq \theta_1\varepsilon,
\]
which is what we need to conclude the theorem.
\end{proof}

\section{Edge local times of the randomly biased random walk}
\label{tailRWRE}
As stated in Section~\ref{idea}, we are going to study the multi-type GW tree $\{\beta(u), u\in\T\}$ under $\p_n$, which describes the annealed distribution of edge local times up to $\tau_n$.

In what follows, we introduce a new probability measure $\hat{\p}^*$ on a marked multi-type GW tree. 

\subsection{Change of measures and spinal decomposition under $\widehat{\p}^*$}
\label{s:changeofmeasurep}

Let us define 
\[
Z_n:=\sum_{|u|=n}\beta(u), \forall n\geq0.
\]
Then under $\p_k$, $\{Z_n; n\geq0\}$ is a martingale with respect to the natural filtration $\{\nf^\beta_n; n\geq0\}$ where $\nf^\beta_n$ is the sigma-field generated by $\{(u,\beta(u)); |u|\leq n\}$. We are hence ready to define the new probability measure $\widehat{\p}^*$.

Recall that under $\p$, the offspring law of $(\T,\beta)$ is given by $\xi=\{\xi_i; i\geq1\}$. More precisely, for any $n\geq0$, for any $k_1,\cdots,k_n\in\N$, for any $u\in\T$,
\begin{multline}\label{offspringMtype}
\xi_i(k_1,\cdots, k_n)=\p\left(\{v:\overleftarrow{v}=u\}=\{u1,\cdots, un\}; \beta(u1)=k_1,\cdots,\beta(un)=k_n\Big\vert \beta(u)=i\right)\\
=\E\left[{\sum_{j=1}^nk_j+k-1 \choose k_1,\cdots,k_n,k-1}\frac{e^{-kV(u)}\prod_{j=1}^n e^{-k_j V(uj)}}{(e^{-V(u)}+\sum_{j=1}^n e^{-V(uj)})^{\sum_{j=1}^nk_j+k}}; \sum_{v:\overleftarrow{v}=u}1=n\right].
\end{multline}
As $\e_i[Z_1]=i$, we can define $\widehat{\xi}=\{\widehat{\xi}_i; i\geq1\}$ to be another collection of offsprings such that for any $i\geq1$, $\widehat{\xi}_i(k_1,\cdots,k_n)=\frac{\sum_{j=1}^nk_j}{i}\xi_i(k_1,\cdots,k_n)$.
The probability measure $\widehat{\p}^*_i$ on multi-type Galton-Watson tree with a marked ray $(\T,\beta, (w_n)_{n\geq0})$ is defined as follows. 
\begin{enumerate}
\item For the root $\rho$, let $\beta(\rho)=i$ and $w_0=\rho$. 
\item For any $n\geq0$, suppose that the process up to the $n$-th generation with the spine $(w_k)_{0\leq k\leq n}$ has been constructed. The vertex $w_n$ produces its children, independently of the others, according to the offspring $\widehat{\xi}_{\beta(w_n)}$. All other vertices $u$ of the $n$-th generation produce independently their children according to the offspring $\xi_{\beta(u)}$, respectively. The children of all vertices of the $n$-th generation form the $(n+1)$-th generation. We choose $w_{n+1}$ among the children of $w_n$, each $y$ child of $w_n$ being chosen with probability~$\frac{\beta(y)}{\sum_{v: \overleftarrow{v}=w_n}\beta(v)}$.
\end{enumerate}

Usually we call the marked ray $(w_n)_{n\ge0}$ the spine. Denote by $\widehat{\p}_i$ the marginal law of $(\T,\beta)$ constructed above. We state the following proposition from \cite{KLPP}.

\begin{prop}\label{prop:changeofmeasurep}
 Let $i\in\mathbb{N}^*$. Then $\{Z_n/i\}_{n\geq0}$ is a nonnegative $\p$-martingale, and the following assertions hold.
\begin{enumerate}
\item 
\[
\frac{d\widehat{\p}_i}{d\p_i}\Big\vert_{\nf^\beta_n}= \frac{Z_n}{i}, \forall n\geq0.
\]
\item\label{item:changemeasure} For any $u\in\T$ of the $n$-th generation,
\begin{equation}\label{pchoosew}
\widehat{\p}^*_i(w_n=u\vert\nf^\beta_n)=\frac{\beta(u)}{Z_n}.
\end{equation}
\item Under $\widehat{\p}^*_i$, $\{\beta(w_k); k\geq0\}$ is a recurrent Markov chain taking values in $\N^*$, started from $i$,  with transition probabilities $p_{i,j}$ such that
\begin{equation}\label{MC}
p_{i,j}={i+j-1 \choose i}\E\left[\sum_{|u|=1}\frac{e^{-jV(u)}}{(1+e^{-V(u)})^{i+j}}\right], \forall i,j\geq1.
\end{equation}
\end{enumerate}

\end{prop}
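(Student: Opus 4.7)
The plan is to carry out the classical spinal decomposition for multi-type Galton--Watson trees, in the spirit of~\cite{KLPP}, exploiting the explicit negative-multinomial offspring structure~\eqref{offspringMtype}.

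First I would establish that $\{Z_n/i\}_{n\geq 0}$ is a $\p_i$-martingale. The key input is $\e_k[Z_1]=k$ for every $k\geq 1$. From the PGF displayed just before \eqref{offspringMtype}, the offspring vector $(\beta(uj))_j$ is negative multinomial given $V$ and $\beta(u)=k$, with conditional mean $\e[\beta(uj)\mid V,\beta(u)=k]=k\,e^{V(u)-V(uj)}$; averaging over the environment gives $\e_k[Z_1]=k\,\E[\sum_{|u|=1}e^{-V(u)}]=k\psi(1)=k$ by \eqref{hyp1}. The branching property then yields $\e_i[Z_{n+1}\mid\nf^\beta_n]=\sum_{|u|=n}\beta(u)=Z_n$.

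For items~1 and~2 I would compute the joint density of the marked tree $(\T,\beta,(w_k)_{k\leq n})$ under $\widehat{\p}^*_i$. For a finite tree $T$ of depth $n$ with types and a distinguished ray $(\rho=w_0,\ldots,w_n=u)$, the construction yields
\[
\widehat{\p}^*_i(T,w_0,\ldots,w_n)=\p_i(T)\prod_{k=0}^{n-1}\frac{\widehat{\xi}_{\beta(w_k)}(\mathbf{k}^{(k)})}{\xi_{\beta(w_k)}(\mathbf{k}^{(k)})}\cdot\frac{\beta(w_{k+1})}{\sum_{\overleftarrow{y}=w_k}\beta(y)},
\]
where $\mathbf{k}^{(k)}$ is the child-type vector at $w_k$. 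The defining identity $\widehat{\xi}_i(\mathbf{k})=(\sum_m k_m/i)\,\xi_i(\mathbf{k})$ cancels the denominator of the size-biased selection, leaving $\beta(w_{k+1})/\beta(w_k)$ at each level, and telescoping gives $\widehat{\p}^*_i(T,w_0,\ldots,w_n)=\p_i(T)\,\beta(u)/i$. Summing over the possible spine endpoints $u$ at depth $n$ of $T$ proves item~1, while dividing the joint density by this marginal gives the spine selection formula~\eqref{pchoosew}.

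The Markov property of $(\beta(w_k))$ is automatic from the construction. For the transition probabilities I would start from
\[
p_{i,j}=\widehat{\p}^*_i(\beta(w_1)=j)=\sum_{n\geq 1}\sum_{\ell=1}^n\sum_{\mathbf{k}:\,k_\ell=j}\widehat{\xi}_i(\mathbf{k})\,\frac{j}{\sum_m k_m},
\]
simplify using the definition of $\widehat{\xi}_i$ to $p_{i,j}=(j/i)\,\e_i[\#\{v:\overleftarrow{v}=\rho,\beta(v)=j\}]$, and extract the marginal number of children of type $j$ from the PGF preceding~\eqref{offspringMtype} (setting $s_v=s$ at a single child, $s_{v'}=1$ elsewhere, and using branching invariance to normalise $V(\rho)=0$):
\[
\p(\beta(v)=j\mid V,\beta(\rho)=i)=\binom{i+j-1}{j}\frac{e^{-jV(v)}}{(1+e^{-V(v)})^{i+j}}.
\]
Summing over $|v|=1$ and using the identity $\tfrac{j}{i}\binom{i+j-1}{j}=\binom{i+j-1}{i}$ then gives~\eqref{MC}.

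The subtle and, in this plan, main obstacle is the recurrence of $(\beta(w_k))$. Irreducibility on $\mathbb{N}^*$ is clear from~\eqref{MC} since every $p_{i,j}$ is strictly positive under Condition~\ref{cond2}. For recurrence itself I would use that $i\mapsto i$ is a right eigenvector of the mean matrix $m_{i,j}:=\e_i[\#\{v:\overleftarrow{v}=\rho,\beta(v)=j\}]$ with eigenvalue~$1$ (since $Z_n$ is a $\p_i$-martingale), so that $p_{i,j}=(j/i)\,m_{i,j}$ is a size-biased chain admitting an explicit stationary measure $\mu_j=j\nu_j$ for any left eigenvector $\nu$ of $m$ with eigenvalue~$1$; the work lies in producing such a $\nu$ under Assumptions~\ref{cond1}--\ref{cond4} and checking a Foster-type drift criterion. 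Alternatively, recurrence can be transferred from that of the biased walk $(X_n)$ (guaranteed under Condition~\ref{cond1} by~\cite{LP}) via the identification $\beta(u)=\overline{L}_{\tau_1}(u)<\infty$ a.s., which together with irreducibility rules out transience. This is the route carried out in~\cite{KLPP}.
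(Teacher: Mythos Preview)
The paper does not prove this proposition in-line: it is stated as borrowed from \cite{KLPP}, and for the recurrence (and the invariant law) it points to Section~6.1 of \cite{AdR15} and to the Lyapunov drift computation recorded in~\eqref{Domsum}. Your treatment of the martingale property, of the Radon--Nikodym identity and the spine selection formula (items~1 and~2), and of the derivation of~\eqref{MC} via the negative-binomial marginal is correct and is exactly the KLPP telescoping argument specialised to the present offspring law.

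The genuine gap in your plan is the recurrence of $(\beta(w_k))_{k\geq 0}$. Your second proposed route does not work: the finiteness of $\overline{L}_{\tau_1}(u)$ under $\p$ says nothing about the behaviour of the types along the spine under $\widehat{\p}^*_i$---an irreducible Markov chain on $\N^*$ can take only finite values yet be transient (drift to infinity). Moreover, \cite{KLPP} is a \emph{finite}-type result, where recurrence is automatic from irreducibility; it contains no argument that transfers to countably many types. Your first route is the right one but needs to be made concrete. What the paper actually uses is precisely this: either the explicit invariant \emph{probability} $(\pi_j)_{j\geq 1}$ from \cite{AdR15} (irreducibility plus an invariant probability forces positive recurrence), or equivalently the Foster--Lyapunov drift inequality \eqref{Domsum} with $f(i)=\Gamma(i+\gamma)/\Gamma(i)$, $\gamma\in(0,\kappa-1)$, taken from the Appendix of \cite{CdRH16}, which yields positive recurrence via Theorem~15.3.3 of \cite{MT93}. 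Replacing your last paragraph by either of these two arguments would close the gap.
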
 

Moreover, we observe that this Markov chain $(\beta(w_k);k\geq 0)$ admits an invariant law $(\pi_j)_{j\geq 1}$ whose expression can be found in Section~6.1 of~~\cite{AdR15}. Point~\ref{item:changemeasure} of Proposition~~\ref{prop:changeofmeasurep} yields the \textit{multi-type many-to-one lemma} as follows:
\begin{lem}\label{lemma:many-to-one-multi-type}
For all $n\in\N$, let $g:{\N}^{n+1} \to \R_+$ be a positive measurable function and $X_n$ a positive ${\mathcal{F}}^\beta_n$-measurable random variable, then
\begin{equation*}
\e_{i}\Big[\sum_{|u|=n}\beta(u)g(\beta(\rho),\beta(u_1),\beta(u_2),\dots,\beta(u_n)) X_n \Big]=i\widehat{\e}^*_i\Big[g(\beta(w_0),\beta(w_1),\beta(w_2),\dots,\beta(w_n)) X_n \Big]. 
\end{equation*}
\end{lem}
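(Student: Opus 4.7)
The idea is to chain the two assertions of Proposition~\ref{prop:changeofmeasurep}: first use the conditional distribution of the spine to turn the right-hand side into a sum over the $n$-th generation under $\widehat{\p}_i$, then use the Radon–Nikodym density $Z_n/i$ to revert to $\p_i$.

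Starting from the right-hand side, I would condition on $\mathcal{F}^\beta_n$ under $\widehat{\p}^*_i$. Since the tree together with all the types up to generation $n$ is $\mathcal{F}^\beta_n$-measurable, and since the spine vertices $w_0,w_1,\dots,w_n$ are (by construction) precisely the ancestors of $w_n$ in the tree, on the event $\{w_n=u\}$ the value $g(\beta(w_0),\dots,\beta(w_n))$ coincides with $g(\beta(\rho),\beta(u_1),\dots,\beta(u_n))$, where $(u_1,\dots,u_n=u)$ denotes the ancestral line of $u$. As $X_n$ is also $\mathcal{F}^\beta_n$-measurable, formula~\eqref{pchoosew} gives
\[
\widehat{\e}^*_i\!\left[X_n\, g(\beta(w_0),\dots,\beta(w_n))\right]
=\widehat{\e}_i\!\left[\frac{X_n}{Z_n}\sum_{|u|=n}\beta(u)\,g(\beta(\rho),\beta(u_1),\dots,\beta(u_n))\right].
\]

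Next I would apply the change-of-measure identity $d\widehat{\p}_i/d\p_i|_{\mathcal{F}^\beta_n}=Z_n/i$ from point~1 of Proposition~\ref{prop:changeofmeasurep}. Since the random variable inside the expectation on the right is $\mathcal{F}^\beta_n$-measurable, the $Z_n$ factors cancel and one obtains
\[
\widehat{\e}^*_i\!\left[X_n\, g(\beta(w_0),\dots,\beta(w_n))\right]
=\frac{1}{i}\,\e_i\!\left[X_n\sum_{|u|=n}\beta(u)\,g(\beta(\rho),\beta(u_1),\dots,\beta(u_n))\right].
\]
Multiplying by $i$ yields the claim.

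The only mildly delicate point is the identification of $g(\beta(w_0),\dots,\beta(w_n))$ with $g(\beta(\rho),\beta(u_1),\dots,\beta(u_n))$ on $\{w_n=u\}$; this is not a computation but a statement about the joint construction of the tree and the spine, and it is ensured by the recursive definition of $(w_k)_{k\le n}$ which guarantees that $w_k$ is the ancestor of $w_n$ at generation $k$. Once this observation is made and the two points of Proposition~\ref{prop:changeofmeasurep} are invoked, the proof is a one-line manipulation of expectations, so I do not anticipate a genuine obstacle.
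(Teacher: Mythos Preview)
Your proof is correct and follows exactly the approach the paper intends: the lemma is stated as a direct consequence of Proposition~\ref{prop:changeofmeasurep}, and your two-step argument (first use \eqref{pchoosew} to rewrite the spine expectation as a weighted sum under $\widehat{\p}_i$, then apply the Radon--Nikodym identity of point~1 to pass to $\p_i$) is precisely what that entails.
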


\subsection{Second construction of  $\widehat{\p}^*$}
In this subsection, we introduce another construction of  $\widehat{\p}^*$ which was borrowed from \cite{dR16}. Recall that the environment $\en=\{(u, V(u)); u\in\T\}$ is given by a branching random walk for which $W_n:=\sum_{|u|=n}e^{-V(u)}$ is a $\P$-martingale with respect to the filtration $\{\nf^V_n; n\geq0\}$. We first define another probability $\Q^*$ on branching random walk with a marked spine ${\en^*}:=(\T, V, (w_n)_{n\geq0})$. Then on the new environment ${\en^*}$, we introduce the associated biased random walks and their edge local times to reconstruct the marked multi-type GW tree under  $\widehat{\p}^*$.

\subsubsection{Change of measures and spinal decomposition: %$\{(V(u), u\in\T), (w_n)_{n\geq0}\}$ under 
$\Q^*$}\label{changeofmeasureBRW}

Recall that under $\P$, the branching random walk is constructed by use of the point process $\C$. Let us introduce a probability measure $\Q^*$ of a branching random walk with a spine: $\{(V(u); u\in\T), (w_n, V(w_n))_{n\geq0}\}$. First, as $\E[\int e^{-x}\C(dx)]=1$, let $\widehat{\C}$ be a point process with Radon-Nykodim derivative $\int e^{-x}\C(dx)$ with respect to the law of $\C$. We use $\widehat{\C}$ and $\C$ to construct  $\{(V(u); u\in\T), (w_n, V(w_n))_{n\geq0}\}$ under $\Q^*_x$ for any $x\in\R$ as follows.
\begin{enumerate}
\item For the root $\rho$, let $V(\rho)=x$ and $w_0=\rho$. $w_0$ gives birth to its children according to the point process $\widehat{\C}$ (i.e., the relative positions of its children with respect to $V(w_0)$ are distributed as $\widehat{\C}$).
\item For any $n\geq0$, suppose that the process with the spine $(w_k)_{0\leq k\leq n}$ has been constructed up to the $n$-th generation. All vertices of the $n$-th generation, except $w_n$, produce independently their children according to the law of $\C$. Yet, the vertex $w_n$ produces its children, independently of the others, according to the law of $\widehat{\C}$. All the children of the vertices of the $n$-th generation form the $(n+1)$-th generation, whose positions are denoted by $V(\cdot)$. And among the children of $w_n$, we choose $w_{n+1}=u$ with probability $\frac{e^{-V(u)}}{\sum_{z:\overleftarrow{z}=w_n}e^{-V(z)}}$.
\end{enumerate}
 
 We denote by $\Q_x$ the marginal distribution of $(\T, (V(u), u\in\T))$. For simplicity, write $\Q^*$ and $\Q$ for $\Q_0^*$ and $\Q_0$ respectively. Let us state the following proposition given in Lyons \cite{Lyons}. 
 \begin{prop}\label{BRWchangeofp}
 \begin{enumerate}
\item For any $n\geq0$, and $x\in\R$,
\[
\frac{d\Q_x}{d\P_x}\vert_{\mathcal{F}^V_n}=e^xW_n=\sum_{|u|=n}e^{-V(u)+x}, 
\]
where $\mathcal{F}^V_n$ denotes the sigma-field generated by $((u,V(u)); |u|\leq n)$. 
\item For any vertex $u\in\T$ of the $n$-th generation,
\[
\Q^*_x(w_n=u\vert\mathcal{F}^V_n)=\frac{e^{-V(u)}}{W_n}.
\]
\item Under $\Q^*_x$, $(V(w_n); n\geq0)$ is a random walk with i.i.d. increments and started from $x$.
\end{enumerate}
\end{prop}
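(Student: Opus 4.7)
The strategy is to unfold the recursive construction of $\Q^*_x$ one generation at a time and read off the joint density of $(\T,V,(w_k)_{0 \le k \le n})$ with respect to $\P_x$. Once this joint density is identified explicitly, parts (1), (2), and (3) all follow by marginalization.

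Fix $n \ge 0$ and a path $\rho = u_0 < u_1 < \cdots < u_n$ in $\T$. For each $k \in \{0,\ldots,n-1\}$, the construction assigns to the spine vertex $w_k = u_k$:
\begin{itemize}
\item a children point process distributed as $\widehat{\mathcal C}$ instead of $\mathcal C$, which introduces the Radon--Nikodym factor $\sum_{z : \overleftarrow{z} = u_k} e^{-(V(z) - V(u_k))}$ relative to $\P_x$;
\item a selection of the next spine vertex $w_{k+1} = u_{k+1}$ with conditional probability $\frac{e^{-V(u_{k+1})}}{\sum_{z : \overleftarrow{z} = u_k} e^{-V(z)}}$.
\end{itemize}
All non-spine vertices continue to branch under $\mathcal C$, which contributes no extra density. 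Multiplying the two factors above and using $\sum_{z} e^{-V(z)} = e^{-V(u_k)}\sum_z e^{-(V(z)-V(u_k))}$ the $\widehat{\mathcal C}$ density cancels the denominator of the spine-selection probability, leaving the telescoping product $\prod_{k=0}^{n-1} e^{-V(u_{k+1}) + V(u_k)} = e^{-V(u_n) + x}$. Hence for every $A \in \mathcal F^V_n$,
\begin{equation}\label{eq:jointdens}
\Q^*_x\bigl(\{w_n = u_n\} \cap A\bigr) \;=\; \E_x^{\P}\!\left[ e^{-V(u_n)+x}\, \mathbf 1_{\{u_n \in \T\}}\,\mathbf 1_A \right].
\end{equation}

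Summing \eqref{eq:jointdens} over all $u_n$ with $|u_n|=n$ yields $\Q^*_x(A) = \E^{\P}_x[e^x W_n\, \mathbf 1_A]$, which gives part (1) (and confirms $e^x W_n$ is the correct likelihood ratio on $\mathcal F^V_n$). Dividing \eqref{eq:jointdens} by $\Q^*_x(A) = \E^{\Q}_x[\mathbf 1_A]$ and rewriting with the density from (1) gives $\Q^*_x(w_n = u \mid \mathcal F^V_n) = e^{-V(u)}/W_n$, proving part (2).

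For part (3), the law of the increment $\Delta_k := V(w_{k+1}) - V(w_k)$, conditionally on the past spine, is obtained by integrating the size-biased choice against the tilted offspring law at $w_k$: for any bounded measurable $f$,
\begin{equation*}
\E^{\Q^*}[f(\Delta_k) \mid V(w_k)] \;=\; \E\!\left[ \sum_{z : \overleftarrow z = w_k} \frac{e^{-(V(z)-V(w_k))}}{\sum_{y} e^{-(V(y)-V(w_k))}} f(V(z)-V(w_k)) \cdot \sum_{y} e^{-(V(y)-V(w_k))} \right],
\end{equation*}
which simplifies to $\E[\sum_{|u|=1} e^{-V(u)} f(V(u))]$ under the $\P$-law of $\mathcal C$. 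By Assumption \ref{cond1} this is a probability measure (total mass $\psi(1)=1$), it depends neither on $k$ nor on the past, and the randomness used at distinct spine vertices is independent by construction. Thus $(V(w_k))_{k \ge 0}$ is a random walk under $\Q^*_x$ started from $x$ with the stated i.i.d.\ increment law.

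The only subtle point is the bookkeeping in the cancellation leading to \eqref{eq:jointdens}; once done cleanly, the three claims are essentially free. One should also note that to make the construction rigorous one must verify that, at each finite $n$, the product of the density factors defines a consistent family of probability measures, which is immediate since the total mass at step $n$ is $\E_x^{\P}[e^x W_n] = 1$ by the $\P$-martingale property of $W_n$.
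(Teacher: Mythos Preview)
Your proof is correct and is precisely the standard argument. The paper does not prove this proposition at all; it merely states it and attributes it to Lyons \cite{Lyons}, so your sketch in fact supplies the argument that the paper omits by citation.
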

Since according to~\cite{Big77} ,under our assumptions, the additive martingale $W_n$ converges in $L^1$ to $W_\infty$ under $\P$, one has $d\Q=W_\infty d\P$, and $W_\infty$ is also $\Q$-a.s. the limit of $W_n$.

\subsubsection{Reconstruction of $\widehat{\p}^*$ on biased environment $\Q^*$}
Let us introduce another interpretation of the multi-type Galton-Watson tree under $\widehat{\p}_1^*$ which was first given and proved in Proposition 5 of \cite{dR16}. Given the marked environment ${\en^*}:=(\T, V, (w_n)_{n\geq0})$, we denote by $\{X_k^{(1, w_i)}; k\geq0\}_{i\geq0}$ and $\{X_k^{(2, w_i)}; k\geq0\}_{i\geq0}$ two i.i.d. sequence of killed nearest-neighbour random walks as follows. For any $i\geq0$ fixed, $\{X_k^{(1, w_i)}; k\geq0\}$ is a random walk on $\T\cup\{\pa{\rho}\}$ started at $w_i$ such that before hitting $w_{i-1}$ (with $w_{-1}:=\rho$), the transition probabilities are
\[
\p^{{\en^*}}(X_{n+1}^{(1,w_i)}=y\vert X_n^{(1,w_i)}=x)=
\begin{cases}
\frac{e^{-V(x)}}{e^{-V(x)}+\sum_{z: \overleftarrow{z}=x}e^{-V(z)}}\textrm{ if } \overleftarrow{x}=y\\
\\
\frac{e^{-V(y)}}{e^{-V(x)}+\sum_{z: \overleftarrow{z}=x}e^{-V(z)}}\textrm{ if } \overleftarrow{y}=x.
\end{cases}
\]
When it reaches $w_{i-1}$, it is killed instantly. Let
\[
\tilde{\beta}^{j}_{i}(u):=\sum_{n\geq0}\ind{X_n^{(j,w_i)}=\overleftarrow{u}, X_{n+1}^{(j,w_i)}=u}, \forall j\in\{1,2\},
\]
and
\[
\tilde{\beta}^j(u)=\sum_{i\geq0}\beta^{j}_{i}(u), \forall j\in\{1,2\}.
\]
Finally, let 
\begin{equation}\label{eq:defbetatilde}
\tilde{\beta}(u)=\tilde{\beta}^1(u)+\tilde{\beta}^2(u)+\ind{\exists i\geq0: u=w_i}
\end{equation}
Then, according to Proposition~5 of~\cite{dR16}, the marked tree $\{\T, (\tilde{\beta}(u), u\in\T), (w_n)_{n\geq0}\}$ under $\p^{{\en^*}}\times\Q^*(d{\en^*})$ has the same law as $(\T,\beta, (w_n)_{n\geq0})$ under $\widehat{\p}^*_1$. With the convention, we still use $\widehat{\p}^*_1$ to represent the annealed probability  $\p^{{\en^*}}\times\Q^*(d{\en^*})$, and we will use the notation $\beta$ for both constructions. 

%and use $\L_1$, $L_1$, $M_1$, $\Omega(z)$ for the corresponding objects of $\{\T, (e(u), u\in\T), (w_n)_{n\geq0}\}$ as in \eqref{lineL}, \eqref{cardLM} and \eqref{defbro}. 

This fact sums up in the following diagram. \\

\tikzstyle{block} = [rectangle, draw, text centered, minimum height=2em]%rectangle, draw, text width=6em, text centered, rounded corners, minimum height=4em
\tikzstyle{line} = [draw, -latex', thick]
\tikzstyle{line2} = [draw, latex'-latex',thick]
\begin{center}\begin{tikzpicture}[node distance=1cm, auto]
\node (init) {};
\node [block] (A) {$\en$ under $\P$};
\node [block, below=3.2cm of A] (B) {$(\T,\beta)$ under $\p_1$};
\node [block, right=5.5cm of A] (C) {${\en^*}$ under $\Q^*$};
\node [block, below=3.2cm of C] (D) {$(\T,\beta,(w_n)_{n\geq 0})$ under $\widehat{\p}^*_1$};
\begin{small}
\path [line] (A) -- node [midway,left ] {$\p^\en$} node [midway,right] {\eqref{RW}} (B);
\path [line] (B) -- node [midway,above ] {Proposition~\ref{prop:changeofmeasurep}} (D);
\path [line] (A) -- node [midway,above ] {Proposition~\ref{BRWchangeofp}} (C);
\path [line] (C) -- node [midway,right ] {Process of local times~\eqref{eq:defbetatilde}} (D);
\end{small}
\end{tikzpicture}
\end{center}

\subsection{Proof of Proposition \ref{tailML}: joint tail of $(L_1,M_1)$}

In fact, the joint tail of $(L_1, M_1)$ under $\p_1$ follows from the following results.

\begin{lem}\label{tailsmallLM}
Under the assumptions of Proposition \ref{tailML}, one has
\begin{equation}
\lim_{a\downarrow0} \limsup_{x\rightarrow\infty}x^{\kappa}\p_1(L_1\leq ax, M_1\geq x)=0.
\end{equation}
\end{lem}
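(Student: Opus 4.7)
My plan is to reduce the joint lower-tail statement for $(L_1, M_1)$ to a corresponding statement about the environment, and then invoke Lemma~\ref{BRWtailM}. First, I isolate a ``heavy'' vertex: on $\{M_1 \geq x\}$, let $u^*$ denote the first vertex in breadth-first order lying in $\mathcal{B}_1$ with $\beta(u^*) \geq x$. By the strong branching property of the multi-type Galton-Watson tree, conditionally on $\beta(u^*) = k$ and on $(\beta(v))_{v \leq u^*}$, the subtree of types rooted at $u^*$ is distributed as $(\beta(u))_{u\in\T}$ under $\p_k$. The set of $\mathcal{L}_1$-descendants of $u^*$, of cardinality $L_1^{(u^*)}$, therefore has the law of $L_1$ under $\p_k$, and $L_1 \geq L_1^{(u^*)}$. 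Summing over $u^*$ and using $k \geq x$ yields
\[
\p_1(L_1 \leq ax, M_1 \geq x) \;\leq\; \p_1(M_1 \geq x)\cdot\sup_{k \geq x}\p_k(L_1 \leq ak).
\]
A rough moment/Chebyshev bound already gives $\p_1(M_1 \geq x) = O(x^{-\kappa})$, so the lemma reduces to showing that $\lim_{a\downarrow 0}\limsup_{k \to \infty}\p_k(L_1/k \leq a) = 0$.

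For this reduced statement I combine the scaling convergence of Proposition~\ref{cvgLM} with Lemma~\ref{BRWtailM}. Markov's inequality yields, for any $\eta>0$ and any $\alpha\in(0,\kappa-1)$,
\[
\p_k(L_1/k \leq a) \;\leq\; \eta^{-(1+\alpha)}\, \e_k\bigl[|L_1/k - W_\infty|^{1+\alpha}\bigr] \;+\; \p_k(W_\infty \leq a + \eta),
\]
and the first term vanishes uniformly as $k \to \infty$ by Proposition~\ref{cvgLM}. For the second term I refine the branching reduction by tracking the environment at $u^*$ as well: a quenched Green-function computation shows that large $\beta(u^*)$ forces $V(u^*)$ to be small, so that $\mathcal{M}_e \gtrsim x$; then Lemma~\ref{BRWtailM} applied to the (shifted) subtree environment rooted at $u^*$ gives $W_\infty^{(u^*)} \geq \epsilon$ with probability tending to $1$ as $\epsilon \downarrow 0$, and consequently $L_1 \geq L_1^{(u^*)} \gtrsim \epsilon k \geq \epsilon x$.

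The main obstacle is bridging the quenched local-time tail $\p_k^\en(L_1 \leq ak)$ and the environmental tail $\P(W_\infty \leq a)$. Naively the extinction event $\{W_\infty = 0\}$ contributes $q := \P(\#\T < \infty) > 0$ to the limit, so a direct use of Proposition~\ref{cvgLM} is not sufficient. Eliminating this extinction residue requires showing that $\{M_1 \geq x\}$ combined with extinction of the subtree at $u^*$ is negligible at scale $x^{-\kappa}$; the cleanest route is to work under $\widehat{\p}^*_1$ via the reconstruction of Section~\ref{tailRWRE}, where the environment is automatically non-extinct (the spine is an infinite ray) and both $M_1$ and $L_1$ can be tracked jointly through the spinal Markov chain $(\beta(w_n))_{n}$ and the BRW spine $(V(w_n))_{n}$, so that Lemma~\ref{BRWtailM} transfers directly. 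The delicate non-routine step is the quantitative correspondence between the events $\{M_1 \geq x\}$ and $\{\mathcal{M}_e \gtrsim x\}$, which demands sharp quenched local-time estimates for the biased walk trapped near a vertex of low $V$-value.
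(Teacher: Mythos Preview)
Your factorised bound
\[
\p_1(L_1\le ax,\ M_1\ge x)\ \le\ \p_1(M_1\ge x)\cdot \sup_{k\ge x}\p_k(L_1\le ak)
\]
cannot close the argument, and you correctly identify why: by Proposition~\ref{cvgLM}, $\p_k(L_1/k\le a)\to\P(W_\infty\le a)\ge \P(\#\T<\infty)>0$, so the second factor is bounded away from $0$. (A secondary issue: the moment bound of Lemma~\ref{Mom} only gives $\p_1(M_1\ge x)=O(x^{-\kappa+\varepsilon})$, not $O(x^{-\kappa})$; the sharp $x^{-\kappa}$ tail for $M_1$ is precisely what Proposition~\ref{tailML} delivers \emph{after} this lemma.) Your proposed repair --- condition on the subtree rooted at $u^*$ and invoke Lemma~\ref{BRWtailM} there --- does not work either: by the branching property that you yourself used, the environment of the subtree rooted at $u^*$ is a \emph{fresh} copy of the branching random walk under $\P$, independent of $\beta(u^*)$, so there is no large-$\mathcal{M}_e$ conditioning available in that subtree, and extinction of that subtree still occurs with probability $q>0$.

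The paper's proof avoids both problems by bounding $L_1$ from below not by $L_1^{(u^*)}$ but by $L_1^{\slash u}:=\sum_{j=1}^{|u|}\sum_{z\in\Omega(u_j)}L_1^{(z)}\ind{z\in\B_1}$, the contributions of the \emph{siblings along the path} to the heavy vertex. After the change of measure to $\widehat\p^*_1$ (so that the spine follows the heavy vertex $w_{\sigma_r}$), these sibling contributions are approximated by $\beta^{\sigma_A}(w_{\sigma_A})\sum_{j>\sigma_A}\sum_{z\in\Omega(w_j)}e^{V(w_{\sigma_A})-V(z)}W_\infty^{(z)}$, i.e.\ essentially $\beta^{\sigma_A}(w_{\sigma_A})$ times the additive martingale limit along the spine, which under $\Q^*$ is a.s.\ positive. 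Simultaneously, $\beta(w_k)$ along the spine is approximated by $\beta^{\sigma_A}(w_{\sigma_A})e^{-(V(w_k)-V(w_{\sigma_A}))}$, so $\{M_1\ge r\}$ translates into $\{\min_k V(w_k)\lesssim -r_A\}$. The joint event $\{L_1^{\slash w_{\sigma_r}}\le\epsilon r,\ M_1\ge r\}$ thus becomes exactly the $\Q^*$-analogue of $\{W_\infty\le\epsilon x,\ \mathcal{M}_e\ge x\}$, to which (the proof of) Lemma~\ref{BRWtailM} applies. The point you missed is that the relevant $\mathcal{M}_e$ information lives along the path \emph{to} the heavy vertex, not in the subtree \emph{below} it.
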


\begin{lem}\label{tailLMp}

%\begin{enumerate}
Under the assumptions of Proposition \ref{tailML}, there exist a decreasing continuous function $\eta_1:[0,\infty)\rightarrow (0,\infty)$ such that for any $a>0$, as $x\rightarrow\infty$,
\begin{equation*}
\widehat{\p}^*_1\left(L_1\geq ax, M_1\geq x\right)\sim \eta_1(a) x^{-(\kappa-1)};
\end{equation*}
%\item for any $a>0$ and $x>0$, 
%\begin{equation}
%\e_1[L_1; L_1\geq ax, M_1\geq x]= \widehat{\p}^*_1\left(L_1\geq ax, M_1\geq x\right);
%\end{equation}
%\item moreover, 
%\[
%\p_1(L_1\geq ax, M_1\geq x)\sim \eta(a)x^{-\kappa}, \textrm{ as } x\rightarrow\infty,
%\]
%where
%\[
%\eta(a)=\int_a^\infty \left(\eta_1(a)-\eta_1(u)\right)\frac{du}{u^2}\in [0,\infty).
%\]

%\end{enumerate}

\end{lem}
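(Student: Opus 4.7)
I would derive the joint tail under $\widehat{\p}^*_1$ from the one under $\p_1$ (Proposition~\ref{tailML}) via a stopping-line change of measure. First, I observe that $\L_1$ is a stopping line of the multi-type Galton--Watson tree at which every vertex carries type $1$, so the martingale value $Z_{\L_1}:=\sum_{u \in \L_1}\beta(u)$ equals $L_1$. Since $(Z_n)$ is a nonnegative $\p_1$-martingale with $\e_1[Z_0]=1$, applying the optional stopping theorem at $\L_1$ extends the identity of Proposition~\ref{prop:changeofmeasurep} to
\[
\frac{d\widehat{\p}_1}{d\p_1}\Big|_{\nf^\beta_{\L_1}} = L_1.
\]
Since the event $\{L_1 \geq ax,\, M_1 \geq x\}$ is $\nf^\beta_{\L_1}$-measurable and does not depend on the spine,
\[
\widehat{\p}^*_1(L_1 \geq ax,\, M_1 \geq x) \;=\; \e_1\bigl[L_1\,\mathbf{1}_{\{L_1 \geq ax,\, M_1 \geq x\}}\bigr].
\]

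Then I apply the layer-cake formula to the nonnegative variable $L_1 \mathbf{1}_{\{M_1 \geq x\}}$, thresholded at $ax$:
\[
\e_1\bigl[L_1\,\mathbf{1}_{\{L_1 \geq ax,\, M_1 \geq x\}}\bigr] = ax\,\p_1(L_1 \geq ax,\, M_1 \geq x) + \int_{ax}^\infty \p_1(L_1 > t,\, M_1 \geq x)\,dt.
\]
By Proposition~\ref{tailML}, the first summand is equivalent to $C_\infty\,a\,\gamma(a)\,x^{-(\kappa-1)}$ as $x \to \infty$. For the second, the substitution $t = sx$ yields $x \int_a^\infty \p_1(L_1 > sx,\, M_1 \geq x)\,ds$; the pointwise limit $x^\kappa \p_1(L_1 > sx,\, M_1 \geq x) \to C_\infty\gamma(s)$ from Proposition~\ref{tailML}, combined with the uniform domination $x^\kappa \p_1(L_1 > sx) \leq C\,s^{-\kappa}$ coming from the marginal asymptotic $\p_1(L_1>y)\sim c_L\,y^{-\kappa}$, lets one invoke dominated convergence (the bound $s^{-\kappa}$ is integrable on $[a,\infty)$ because $\kappa > 1$). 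Putting the two pieces together gives the announced asymptotic with
\[
\eta_1(a) \;=\; C_\infty\Bigl[a\,\gamma(a) + \int_a^\infty \gamma(s)\,ds\Bigr].
\]

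The stated properties of $\eta_1$ follow immediately: continuity is inherited from that of $\gamma$ (Theorem~\ref{thmtailWM}) and the dominated convergence step; monotonicity from $\eta_1'(a) = C_\infty\,a\,\gamma'(a) \leq 0$ since $\gamma$ is decreasing; and positivity from $\eta_1(a) \geq C_\infty \int_a^\infty \gamma(s)\,ds > 0$, using $\gamma > 0$ everywhere together with finiteness of $\int_0^\infty \gamma(s)\,ds$ (which itself follows from $\gamma(s) \leq c_\tM\,\E[1 \wedge \mathcal{W}^\tM_\infty/s]$ and integrability of $\mathcal{W}^\tM_\infty$). The main obstacle I anticipate is justifying the stopping-line change of measure in the opening step: one must check that $\L_1$ is $\p_1$-a.s.\ finite with $\e_1[L_1]=1$, and that the stopped martingale $(Z_{n \wedge \L_1})$ is uniformly integrable. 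This reduces to recurrence of the spine chain $(\beta(w_k))$ described in~\eqref{MC}, which is already supplied by Proposition~\ref{prop:changeofmeasurep}; granted this, the remainder is a routine transfer from Proposition~\ref{tailML}.
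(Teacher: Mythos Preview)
Your argument is circular. In the paper's logical architecture, Proposition~\ref{tailML} is \emph{derived from} Lemma~\ref{tailLMp}, not the other way around: the proof of Proposition~\ref{tailML} opens by establishing the identity $\e_1[L_1;\,L_1\geq ax,\,M_1\geq x]=\widehat{\p}^*_1(L_1\geq ax,\,M_1\geq x)$ (equation~\eqref{eq:tailmoments}) and then invokes Lemma~\ref{tailLMp} to obtain the asymptotic of the right-hand side, from which the $\p_1$-tail is recovered by the Abelian computation
\[
P_0(ax,x)=\frac{P_1(ax,x)}{ax}-\frac{1}{x}\int_a^\infty P_1(xu,x)\,\frac{du}{u^2}.
\]
Your proposal runs this machinery in reverse, citing the conclusion of Proposition~\ref{tailML} (and its marginal $\p_1(L_1>y)\sim c_L y^{-\kappa}$) to deduce Lemma~\ref{tailLMp}. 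That would be fine if an independent proof of Proposition~\ref{tailML} existed, but none is given; the phrase ``under the assumptions of Proposition~\ref{tailML}'' in the lemma refers only to the standing hypotheses (Assumptions~\ref{cond1}--\ref{cond4} and $\kappa\in(1,\infty)$), not to its conclusion.

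The paper's actual proof of Lemma~\ref{tailLMp} (Subsection~\ref{jointtailLMp}) works directly under $\widehat{\p}^*_1$ via the spinal decomposition on the biased environment~$\Q^*$. One introduces the stopping time $\sigma_A=\min\{k:\beta(w_k)>A\}$ and shows, through three approximation steps (Lemmas~\ref{lemma:LMstep1}, \ref{lemma:LMstep2part1}, \ref{lemma:LMstep2part2}), that up to errors of size $\epsilon x^{-(\kappa-1)}K_A$ one may replace $(L_1,M_1)$ by $\bigl(\beta^{\sigma_A}(w_{\sigma_A})\,W_\infty^{(w_{\sigma_A})},\ \beta^{\sigma_A}(w_{\sigma_A})\,\mathcal{M}_e^{(w_{\sigma_A})}\bigr)$. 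Since $\beta^{\sigma_A}(w_{\sigma_A})$ is independent of $(W_\infty^{(w_{\sigma_A})},\mathcal{M}_e^{(w_{\sigma_A})})$, whose law under $\widehat{\p}^*_1$ is that of $(W_\infty,\mathcal{M}_e)$ under $\Q^*$, the problem reduces to the $\Q^*$-tail~\eqref{tailWMQ}, itself a direct consequence of Theorem~\ref{thmtailWM}. Letting $A\to\infty$ then yields $\eta_1(a)=\mu(a)\,C_\infty$ with $C_\infty=\lim_{A\to\infty}K_A$. Your change-of-measure identity and layer-cake computation are correct as algebraic manipulations---indeed they are exactly what the paper uses in the \emph{other} direction---but they do not supply a proof of the lemma without an independent source for the $\p_1$-tail.
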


\begin{proof}[Proof of Proposition \ref{tailML}]
Let us first show that
\begin{equation}\label{eq:tailmoments}
\e_1[L_1; L_1\geq ax, M_1\geq x]= \widehat{\p}^*_1\left(L_1\geq ax, M_1\geq x\right).
\end{equation}\label{meantoprobab}
Observe that by Proposition~\ref{prop:changeofmeasurep},
\begin{align*}
\e_1[L_1; L_1\geq ax, M_1\geq x]=&\e_1\left[\Big(\sum_{k\geq1}\sum_{|u|=k}\ind{u\in\mathcal{L}_1}\Big)\ind{L_1\geq ax, M_1\geq x}\right]\\
=&\sum_{k\geq 1}\e_1\left[\sum_{|u|=k}\ind{u\in\mathcal{L}_1}\p(L_1\geq ax, M_1\geq x\vert \mathcal{F}^\beta_k)\right]\\
=&\sum_{k\geq1}\widehat{\e}_1\left[\sum_{|u|=k}\frac{1}{Z_n}\ind{u\in\mathcal{L}_1}\p(L_1\geq ax, M_1\geq x\vert \mathcal{F}^\beta_k)\right]
%=&\sum_{k\geq1}\widehat{\e}_1\left[\widehat{\e}_1[\ind{w_k\in\mathcal{L}_1}\vert\mathcal{F}^\beta_k]{\p}(L_1\geq ax, M_1\geq x\vert \mathcal{F}^\beta_k)\right]\\
%=&\sum_{k\geq1}\widehat{\e}_1\left[\ind{w_k\in\mathcal{L}_1}\p(L_1\geq ax, M_1\geq x\vert \mathcal{F}^\beta_k)\right]
\end{align*}
which by \eqref{pchoosew}, is equal to
\begin{align*}
&\sum_{k\geq1}\widehat{\e}_1\left[\widehat{\e}^*_1[\ind{w_k\in\mathcal{L}_1}\vert\mathcal{F}^\beta_k]{\p}(L_1\geq ax, M_1\geq x\vert \mathcal{F}^\beta_k)\right]\\
=&\sum_{k\geq1}\widehat{\e}^*_1\left[\ind{w_k\in\mathcal{L}_1}\p(L_1\geq ax, M_1\geq x\vert \mathcal{F}^\beta_k)\right]
\end{align*}
Note that given $\{w_k\in\mathcal{L}_1\}$, $\p(L_1\geq x, M_1\geq ax\vert \mathcal{F}^\beta_k)=\widehat{\p}^*(L_1\geq x, M_1\geq ax\vert \mathcal{G}^\beta_k)$ where $\mathcal{G}^\beta_k$ denotes the sigma-field generated by $((u,\beta(u))_{|u|\leq k}, (w_i)_{i\leq k})$. Therefore,
\begin{align*}
\e_1[L_1; L_1\geq ax, M_1\geq x]=&\sum_{k\geq1}\widehat{\e}^*_1\left[\ind{w_k\in\mathcal{L}_1}\widehat{\p}^*(L_1\geq ax, M_1\geq x\vert \mathcal{G}^\beta_k)\right]\\
=&\sum_{k\geq1}\widehat{\p}^*_1\left[w_k\in\mathcal{L}_1, L_1\geq ax, M_1\geq x\right]
\end{align*}
Recall also that the spine $(\beta(\omega_k); k\geq0)$ is a recurrent Markov chain under $\widehat{\p}^*_1$. So, $\sum_{k\geq1}\ind{w_1\in\L_1}=1$. We hence conclude \eqref{eq:tailmoments}.

By Lemma \ref{tailLMp}, one obtains that
for any $a>0$,
\[
\e_1[L_1; L_1\geq ax, M_1\geq x]\sim \eta_1(a) x^{-(\kappa-1)},
\]
as $x\rightarrow\infty$. 
Let $P_\alpha(ax,x):=\e_1[L_1^\alpha; L_1\geq ax, M_1\geq x]$ for any $\alpha\geq 0$. One sees that for $x>0$ and $a>0$,
\begin{align*}
\frac{1}{ax}P_1(ax,x)-P_0(ax,x)=&\e_1[(\frac{L_1}{ax}-1); L_1\geq ax, M_1\geq x]\\
%=&\e_1\left[L_1\Big(\frac{1}{ax}-\frac{1}{L_1}\Big); L_1\geq ax, M_1\geq x\right]\\
=&\e_1\left[L_1\int_{ax}^{L_1}\frac{dy}{y^2}; L_1\geq ax, M_1\geq x\right]\\
%=&\e_1\left[L_1\int_{ax}^\infty \frac{dy}{y^2}; L_1\geq y, M_1\geq x\right]=\int_{ax}^\infty P_1(y,x)\frac{dy}{y^2}\\
=& \int_a^\infty \frac{1}{x}P_1(xu, x)\frac{du}{u^2}
\end{align*}
which implies
\[
P_0(ax,x)=\frac{P_1(ax,x)}{ax}-\frac{1}{x}\int_a^\infty P_1(xu,x)\frac{du}{u^2}.
\]
Note that for all $x>0$, $x^{\kappa-1}P_1(ux,x)\rightarrow \eta_1(u)$. Moreover, for $x$ sufficiently large and $u\geq a>0$,
\[
x^{\kappa-1}P_1(ux,x)\frac{1}{u^2}\leq \frac{x^{\kappa-1}P_1(ax,x)}{u^2}\leq \frac{1+\eta_1(a)}{u^2},
\] 
which is integrable on $[a,\infty)$. By dominated convergence theorem, for any $a>0$, as $x\rightarrow\infty$,
\[
x^{\kappa}\p_1(L_1\geq ax, M_1\geq x)=x^{\kappa}P_0(ax,x)\rightarrow \int_a^\infty \left(\eta_1(a)-\eta_1(u)\right)\frac{du}{u^2}.
\]
Because of Lemma \ref{tailsmallLM}, we also obtain that $\p_1(M_1\geq x)\sim x^{-\kappa}\int_0^\infty \left(\eta_1(0)-\eta_1(u)\right)\frac{du}{u^2}$.
\end{proof}

In the next subsections, we prove Lemma \ref{tailLMp} and \ref{tailsmallLM}. 

\subsection{Joint tail of $(L_1,M_1)$ under $\widehat{\p}^*_1$: proof of Lemma \ref{tailLMp}}

We are going to show that for any $a>0$, as $x\rightarrow\infty$,
\begin{equation}\label{tailLMstar}
\widehat{\p}^*_1\left(L_1\geq ax, M_1\geq x\right)\sim \eta_1(a) x^{-(\kappa-1)}.
\end{equation}
The main idea follows that of Section~4.2 of \cite{dR16} where the author shows that $\widehat{\p}_1^*(L_1\geq x)\sim \frac{\kappa}{\kappa-1}c_L x^{-\kappa+1}$ for $\kappa\in (1,2]$. In fact, this joint tail largely depends on the joint tail of $(W_\infty,\mathcal{M}_e)$ on the  marked environment ${\en^*}$, which will be stated and proved in the following. We also prove Proposition \ref{cvgLM} which will be needed to obtain \eqref{tailLMstar}.

\subsubsection{Joint tail of $(W_\infty,\mathcal{M}_e)$ under $\Q^*$}

By Theorem \ref{thmtailWM}, one has $\P(W_\infty\geq ax,\mathcal{M}_e\geq x)\sim \gamma(a)x^{-\kappa}$ with $\gamma(a)=c_\tM\E[(1\wedge \frac{\mathcal{W}_\infty^\tM}{a})^\kappa]$. The change of measures given in Section~\ref{changeofmeasureBRW} and the non-triviality of $W_\infty$ imply that
\begin{align*}
\Q^*(W_\infty\geq ax, \mathcal{M}_e\geq x)=&\Q(W_\infty\geq ax, \mathcal{M}_e\geq x)\\
=&\E[W_\infty \ind{W_\infty\geq ax, \mathcal{M}_e\geq x}]\\
=&ax\P(W_\infty\geq ax, \mathcal{M}_e\geq x)+x\int_a^\infty \P(W_\infty \geq xu, \mathcal{M}_e\geq x)du
\end{align*}
Note that for $x$ large enough and $u\geq a>0$
\[
x^\kappa  \P(W_\infty \geq xu, \mathcal{M}_e\geq x)\leq x^\kappa  \P(W_\infty \geq xu)\leq (C_0+1)u^{-\kappa}.
\]
By dominated convergence theorem, one obtains that
\[
\lim_{x\rightarrow\infty}x^{\kappa-1}\Q^*(W_\infty\geq ax, \mathcal{M}_e\geq x)=a\gamma(a)+\int_a^\infty \gamma(u)du.
\]
Some simple calculations yield that  for any $a>0$, as $x\rightarrow\infty$,
\begin{equation}\label{tailWMQ}
\Q^*(W_\infty\geq ax, \mathcal{M}_e\geq x)\sim \mu(a)x^{-\kappa+1}
\end{equation} with $\mu(a):=c_\tM\frac{\kappa}{\kappa-1}\E[\mathcal{W}_\infty^\tM(1\wedge \frac{\mathcal{W}_\infty^\tM}{a})^{\kappa-1}]$ for any $a>0$.

\subsubsection{Proof of Proposition \ref{cvgLM}}

Let us state the following result on the moments of $L_1$ and $M_1$.

\begin{lem}\label{Mom}
%We assume Assumptions \ref{cond1}, \ref{cond2}, \ref{cond3}, \ref{cond4}. 
If $\kappa\in(1,\infty)$, for any $\alpha\in[0,\kappa-1)$, there exists a constant $C_{\alpha}\in (0,\infty)$ such that for any $i\geq1$,
\begin{equation}\label{MomL}
\e_i\left[L_1^{1+\alpha}\right]\leq C_\alpha i^{1+\alpha};
\end{equation}
and that
\begin{equation}\label{MomM}
\e_i\left[M_1^{1+\alpha}\right] \leq C_\alpha i^{1+\alpha}.
\end{equation}
Further, if $\kappa=\infty$, \eqref{MomM} holds for all $\alpha\geq0$ and \eqref{MomL} holds also for $\alpha\in[0,1]$.
\end{lem}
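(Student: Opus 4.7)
The plan is to reduce the moment bounds to a recursive inequality via the spine decomposition (Proposition~\ref{prop:changeofmeasurep}) and the many-to-one formula (Lemma~\ref{lemma:many-to-one-multi-type}), and then close the recursion using the contractive factor $\psi(1+\alpha)<1$ that is available in the range $\alpha\in(0,\kappa-1)$. As a warm-up and first base case, write $L_1=\sum_u\beta(u)\mathbf{1}_{\{u\in\mathcal{L}_1\}}$ (legal since $\beta\equiv 1$ on $\mathcal{L}_1$) and apply many-to-one generation by generation: the recurrence of the spine Markov chain $(\beta(w_k))_{k\geq 0}$ then gives $\e_i[L_1]=i\widehat{\p}^*_i(\text{spine ever hits type }1)=i$. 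A parallel but more delicate computation, bounding $\e_i[M_1]$ via the direct sum $M_1\leq \sum_{u\in\mathcal{B}_1}\beta(u)$ combined with a Jensen-type argument, produces $\e_i[M_1]=O(i)$ and handles the $\alpha=0$ base case.

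Next, decompose $(L_1,M_1)$ at the first generation:
\[
L_1=\#\{v:\pa{v}=\rho,\beta(v)=1\}+\sum_{\substack{v:\pa{v}=\rho\\ \beta(v)\geq 2}}L_1^{(v)},\qquad
M_1=i\vee\max_{\substack{v:\pa{v}=\rho\\ \beta(v)\geq 2}}M_1^{(v)},
\]
where the pairs $(L_1^{(v)},M_1^{(v)})$ are, conditionally on the first-generation types, independent copies of $(L_1,M_1)$ under $\p_{\beta(v)}$. For $\alpha\in[0,1]$, the subadditivity $(x+y)^{1+\alpha}\leq 2^\alpha(x^{1+\alpha}+y^{1+\alpha})$ and $(\max_j x_j)^{1+\alpha}\leq\sum_j x_j^{1+\alpha}$ yield recursive inequalities
\[
\e_i[Q_1^{1+\alpha}]\leq c\,i^{1+\alpha}+c'\,\e_i\Bigl[\sum_{\substack{v:\pa{v}=\rho\\ \beta(v)\geq 2}}\e_{\beta(v)}[Q_1^{1+\alpha}]\Bigr],\qquad Q_1\in\{L_1,M_1\}.
\]
For $\alpha>1$ one iterates the estimate dyadically through the intervals $[1,2],[2,3],\ldots$, combining Minkowski's inequality with the bounds already obtained from the previous interval.

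The key input that closes the recursion is the identity, derivable from the negative-multinomial structure~\eqref{offspringMtype},
\[
\e_i\Bigl[\sum_{v:\pa{v}=\rho}\beta(v)^{1+\alpha}\Bigr]=\psi(1+\alpha)\,i^{1+\alpha}\bigl(1+o_i(1)\bigr),\qquad i\to\infty.
\]
Since $\psi(1)=1$, $\psi'(1)<0$ and $\psi(\kappa)=1$ (Assumptions~\ref{cond1}--\ref{cond2}), one has $\psi(1+\alpha)<1$ precisely for $\alpha\in(0,\kappa-1)$; substituting the ansatz $\e_i[Q_1^{1+\alpha}]\leq C_\alpha i^{1+\alpha}$ into the recursion delivers the bound by fixed-point iteration, with Assumption~\ref{cond4} ensuring that the non-asymptotic correction $o_i(1)$ is controlled uniformly. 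The $\kappa=\infty$ regime works identically for $M_1$, whose recursion only requires one-step moments along the spine; the $L_1$ bound is restricted to $\alpha\leq 1$ because~\ref{cond4} then only yields $(2+\delta)$-moments of $\sum_{|u|=1}e^{-V(u)}$, which is insufficient to control the $(1+\alpha)$-th moment of $\sum_v\beta(v)$ for $\alpha>1$. The main obstacle in this plan is to prove the contractive estimate above uniformly in $i\geq 1$ and to carry out the dyadic Minkowski induction for $\alpha>1$ without losing the crucial $\psi(1+\alpha)<1$ factor.
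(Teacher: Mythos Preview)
Your approach for $M_1$ is correct and genuinely different from the paper's. You use the first–generation inequality $M_1^{1+\alpha}\leq i^{1+\alpha}+\sum_{v:\beta(v)\geq2}(M_1^{(v)})^{1+\alpha}$, which gives the clean recursion $\e_i[M_1^{1+\alpha}]\leq i^{1+\alpha}+\e_i[\sum_v\beta(v)^{1+\alpha}]\cdot\sup_j j^{-(1+\alpha)}\e_j[M_1^{1+\alpha}]$ with coefficient exactly $1$ in front of the recursive term; since $\e_i[\sum_v\beta(v)^{1+\alpha}]=\psi(1+\alpha)i^{1+\alpha}+O(i^{\alpha'})$ for some $\alpha'<1+\alpha$, the contraction $\psi(1+\alpha)<1$ closes it (after a truncation to justify finiteness). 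The paper instead proves a tail bound $\p_i(M_1\geq r)\leq c_\alpha(i/r)^{1+\alpha}$ via the change of measure to $\widehat{\p}^*_i$ and the estimate $\widehat{\p}^*_i(\max_{k<\widehat\tau_1}\beta(w_k)\geq r)\leq c(i/r)^\alpha$ from \cite{CdRH16}, then integrates. Your route is more self–contained; theirs gives the tail directly.

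For $L_1$, however, your recursion does not close. You split $L_1=A+\sum_{v:\beta(v)\geq2}L_1^{(v)}$ and invoke $(x+y)^{1+\alpha}\leq 2^\alpha(x^{1+\alpha}+y^{1+\alpha})$, but the second piece is still a \emph{sum} over a random set of children: you would need $\bigl(\sum_v L_1^{(v)}\bigr)^{1+\alpha}\leq\sum_v(L_1^{(v)})^{1+\alpha}$ to land on the recursion you wrote, and this inequality is false for exponents $>1$. Any honest bound (convexity, Minkowski) introduces a multiplicative constant $c'>1$ depending on the (random) number of terms, and then the fixed–point condition becomes $c'\psi(1+\alpha)<1$, which fails as $\alpha\uparrow\kappa-1$. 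The paper avoids this by a different mechanism: it writes $\e_i[L_1^{1+m}]=i\,\widehat{\e}^*_i[L_1^m]$, decomposes $L_1$ along the whole spine as $L_1=\chi_0=\sum_{l=0}^{\widehat\tau_1}(\chi_l^m-\chi_{l+1}^m)$, expands each increment binomially, and reduces everything to \emph{lower} moments $\e_j[L_1^k]$, $k\leq m$, together with the drift estimate $\widehat{\e}^*_i[\sum_{l<\widehat\tau_1}\beta(w_l)^m]\leq c\,i^m$. No step ever compares $(\sum x_v)^{1+\alpha}$ to $\sum x_v^{1+\alpha}$; the key contraction comes from~\eqref{eq:dompath} rather than from $\psi(1+\alpha)<1$ applied at one generation.
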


We postpone the proof of this lemma in Section~\ref{lemRWRE}. Now we are ready to prove Proposition \ref{cvgLM}, mainly inspired by the arguments in Section~4.2.1 of \cite{dR16}.

\begin{proof}[Proof of Proposition \ref{cvgLM}]
For any $M\in(0,\infty)$ and for any $k\geq 0$, let $\mathcal{M}_e^{\leq k}:=\sup_{|u|\leq k}e^{-V(u)}$ and $\mathcal{M}_e^{\leq k, M}:=\mathcal{M}_e^{\leq k}\wedge M$. Similarly, write 
$W_k^M:=(\sum_{|u|=k}e^{-V(u)})\wedge M$ and $M_1^{\leq k}:=\max_{|u|\leq k, u\leq \mathcal{L}_1}\beta(u)$. And for any $u\in \B_1$, let 
\begin{equation}\label{eq:defL1uM1u}
L_1^{(u)}:=\sum_{v: v\geq u}\ind{v\in\L_1}\quad \text{and}\quad M_1^{(u)}:=\max_{v: v\geq u}\beta(v)\ind{v\leq \L_1}. 
\end{equation}
Observe that for any $t, s>0$ and $k\geq \ell\geq1$,
\begin{align}\label{upbdcvgLMloi}
&\e_n\left[e^{-t(\frac{L_1}{n}-W_\ell^M)-s(\frac{M_1}{n}-\mathcal{M}_e^{\leq \ell,M})}\right]\leq  \e_n\left[e^{-t(\frac{L_1}{n}-W_\ell^M)-s(\frac{M_1^{\leq k}}{n}-\mathcal{M}_e^{\leq \ell,M})}\right]\\
=&\e_n\left[e^{-s\frac{M_1^{\leq k}}{n}}\e_n\left[1\wedge e^{\left\{-\frac{t}{n}\Big(\sum_{|u|=k}\ind{u<\mathcal{L}_1}L_1^{(u)}+\sum_{|u|\leq k}\ind{u\in\L_1}\Big)\right\}}\Big\vert \mathcal{F}_k^\beta,\mathcal{F}^V_k \right]e^{tW_\ell^M+s\mathcal{M}_e^{\leq \ell,M}}\right]\nonumber\\
%\leq &\e_n\left[e^{-s\frac{M_1^{\leq k}}{n}}\e_n\left[1\wedge\exp\left(-\frac{t}{n}\Big(\sum_{|u|=k}\ind{u<\mathcal{L}_1}L_1^{(u)}\Big)\right)\Big\vert \mathcal{F}_k,\mathcal{F}^V_k\right]e^{tW_\ell^M+s\mathcal{M}_e^{\leq \ell,M}}\right]\\
\leq&\e_n\left[e^{-s\frac{M_1^{\leq k}}{n}}\left(\prod_{|u|=k, u<\mathcal{L}_1}\e_n\left[\exp\left(-\frac{t}{n}L_1^{(u)}\right)\Big\vert \mathcal{F}^\beta_k,\mathcal{F}^V_k\right]\wedge 1\right) e^{tW_\ell^M+s\mathcal{M}_e^{\leq \ell,M}}\right].\nonumber
\end{align}
As $e^{-x}\leq 1-x+x^{1+\alpha}\leq e^{-x+x^{1+\alpha}}$ for $\alpha\in(0,\kappa-1)$ and $x\geq0$, one has
\begin{align*}
\e_n\left[\exp\left(-\frac{t}{n}L_1^{(u)}\right)\Big\vert \mathcal{F}^\beta_k,\mathcal{F}^V_k\right]\wedge 1\leq &\e_n\left[1-\frac{t}{n}L_1^{(u)}+(\frac{t}{n}L_1^{(u)})^{1+\alpha}\vert \mathcal{F}^\beta_k,\mathcal{F}^V_k\right]\wedge1\\
=&\left(1-\frac{t}{n}\e_n[L_1^{(u)}\vert \mathcal{F}^\beta_k,\mathcal{F}^V_k]+(\frac{t}{n})^{1+\alpha}\e_n[(L_1^{(u)})^{1+\alpha}\vert \mathcal{F}^\beta_k,\mathcal{F}^V_k]\right)\wedge 1\\
\leq & \left(1-\frac{t}{n}\beta(u)+\left(\frac{t}{n}\right)^{1+\alpha}C_\alpha \beta(u)^{1+\alpha}\right)\wedge 1.
\end{align*}
Plugging it into \eqref{upbdcvgLMloi} yields that
\begin{align*}
&\e_n\left[e^{-t(\frac{L_1}{n}-W_\ell^M)-s(\frac{M_1}{n}-\mathcal{M}_e^{\leq \ell,M})}\right]\\
\leq& \e_n\left[e^{-s\frac{M_1^{\leq k}}{n}}\left(\prod_{|u|=k, u<\mathcal{L}_1} \left(1-\frac{t}{n}\beta(u)+\left(\frac{t}{n}\right)^{1+\alpha}C_\alpha \beta(u)^{1+\alpha}\right)\wedge 1\right)e^{tW_\ell^M+s\mathcal{M}_e^{\leq \ell,M}}\right]\\
\leq & \e_n\left[e^{-s\frac{M_1^{\leq k}}{n}}\left(e^{\left(-\sum_{|u|=k}t\frac{\beta(u)}{n}\ind{u<\mathcal{L}_1}+t^{1+\alpha}C_\alpha\sum_{|u|=k} (\frac{\beta(u)}{n})^{1+\alpha}\ind{u<\mathcal{L}_1}\right)}\wedge 1\right)e^{tW_\ell^M+s\mathcal{M}_e^{\leq \ell,M}}\right].
\end{align*}
Note that given the environment $\en$, for any $k\geq 0$ fixed, by the law of large number,
\[
\sum_{|u|=k}\frac{\beta(u)}{n}\ind{u<\L_1}\xrightarrow{in\ \p^\en} \sum_{|u|=k}e^{-V(u)}.
\]
One can refer to the proof of Proposition~6 \cite{dR16} for more details for this convergence. Moreover,
\[
\sum_{|u|=k}(\frac{\beta(u)}{n})^{1+\alpha}\xrightarrow{in\ \p^\mathcal{E}} \sum_{|u|=k}e^{-(1+\alpha)V(u)}%\xrightarrow[k\rightarrow\infty]{\P-a.s.}0,
\]
and
\[
\frac{M_1^{\leq k}}{n}=\max_{|u|\leq k} \frac{\beta(u)}{n}\ind{u<\mathcal{L}_1}\xrightarrow{in\ \p^\mathcal{E}} \max_{|u|\leq k}e^{-V(u)}=\mathcal{M}_e^{\leq k}.
\]
By dominated convergence theorem, 
\begin{multline}
\limsup_{n\rightarrow\infty}\e_n\left[e^{-t(\frac{L_1}{n}-W_\ell^M)-s(\frac{M_1}{n}-\mathcal{M}_e^{\leq \ell,M})}\right]\\
\leq \E\left[e^{-s\mathcal{M}_e^{\leq k}}\left(\exp\left(-t W_k+C_\alpha t^{1+\alpha}\sum_{|u|=k}e^{-(1+\alpha)V(u)}\right)\wedge 1\right)e^{tW_\ell^M+s\mathcal{M}_e^{\leq \ell,M}}\right]
\end{multline}
Notice that by definition of $\kappa$ and thanks to the many-to-one lemma, $\sum_{|u|=k}e^{-(1+\alpha)V(u)}$ converges towards $0$ almost surely, for $\alpha\in(0;\kappa-1)$. Letting $k\rightarrow\infty$ implies that
\begin{equation}
\limsup_{n\rightarrow\infty}\e_n\left[e^{-t(\frac{L_1}{n}-W_\ell^M)-s(\frac{M_1}{n}-\mathcal{M}_e^{\leq \ell,M})}\right]\leq \E\left[e^{-s(\mathcal{M}_e-\mathcal{M}_e^{\leq \ell, M})-t(W_\infty-W_\ell^M)}\right]
\end{equation}

On the other hand, let us define
\[
\widetilde{M}_1^{\leq k}:=\max\{M_1^{\leq k}, \max_{|u|=k, u<\mathcal{L}_1}\frac{1}{2}\beta(u)e^{V(u)}\}, \textrm{ and }
E_k:=\cap_{|u|=k, u<\mathcal{L}_1}\{M_1^{(u)}\leq \beta(u)e^{V(u)}/2\}.
\]
Observe that on $E_k$, $M_1\leq \widetilde{M}_1^{\leq k}$. Consequently,
\begin{align}\label{lwbdcvgLMloi}
&\e_n\left[e^{-t(\frac{L_1}{n}-W_\ell^M)-s(\frac{M_1}{n}-\mathcal{M}_e^{\leq \ell,M})}\right]\geq  \e_n\left[e^{-t(\frac{L_1}{n}-W_\ell^M)-s(\frac{\widetilde{M}_1^{\leq k}}{n}-\mathcal{M}_e^{\leq \ell,M})}{\bf 1}_{E_k}\right]\nonumber\\
\geq &\e_n\left[e^{-t(\frac{L_1}{n}-W_\ell^M)-s(\frac{\widetilde{M}_1^{\leq k}}{n}-\mathcal{M}_e^{\leq \ell,M})}\right]-e^{tM+sM}\p_n(E_k^c).
\end{align}
Here, note that by Markov inequality, for any $\alpha\in(0,\kappa-1)$,
\begin{align*}
\p_n(E_k^c)\leq&\e_n\left[\sum_{|u|=k, u<\mathcal{L}_1}\ind{M_1^{(u)}\geq \beta(u)e^{V(u)}/2}\right]\\
=&\e_n\left[\sum_{|u|=k, u<\mathcal{L}_1}\p_n\left(M_1^{(u)}\geq \beta(u)e^{V(u)}/2\Big\vert \mathcal{F}^\beta_k,\mathcal{F}_k^V\right)\right]\\
\leq &\e_n\left[\sum_{|u|=k, u<\mathcal{L}_1}2^{1+\alpha}\frac{\e_{\beta(u)}[M_1^{1+\alpha}]}{\beta(u)^{1+\alpha}e^{(1+\alpha)V(u)}}\right]
%\leq&\e_n\left[\sum_{|u|=k, u<\mathcal{L}_1} 2^{1+\alpha}\frac{C_\alpha (\beta(u))^{1+\alpha}}{(\beta(u))^{1+\alpha}e^{(1+\alpha)V(u)}}\right]\\
%\leq & c\e_n\left[\sum_{|u|=k}e^{-(1+\alpha)V(u)}\right]=o_k(1).
\end{align*}
By \eqref{MomM}, one sees that
\[
\p_n(E_k^c)\leq c\E\left[\sum_{|u|=k}e^{-(1+\alpha)V(u)}\right]=o_k(1).
\]
Write $L_1^{\leq k}:=\sum_{|u|\leq k}\ind{u\in\L_1}$. Going back to \eqref{lwbdcvgLMloi}, by Jensen's inequality, for any $k\geq\ell$,
\begin{align*}
&\e_n\left[e^{-t(\frac{L_1}{n}-W_\ell^M)-s(\frac{M_1}{n}-\mathcal{M}_e^{\leq \ell,M})}\right]\\
\geq &\e_n\left[\e_n\left[e^{-\frac{t}{n}\left(L_1^{\leq k}+\sum_{|u|=k}\ind{u<\mathcal{L}_1}L_1^{(u)}\right)-\frac{s}{n}\widetilde{M}_1^{\leq k}}\Big\vert \mathcal{F}^\beta_k,\mathcal{F}_k^V\right]e^{tW_\ell^M+s\mathcal{M}_e^{\leq \ell,M}}\right]+o_k(1)\\
\geq &\e_n\left[e^{-\frac{1}{n}\e_n\left[t\sum_{|u|=k}\ind{u<\mathcal{L}_1}L_1^{(u)}\Big\vert \mathcal{F}^\beta_k,\mathcal{F}_k^V\right]}e^{-\frac{t}{n}L_1^{\leq k}-\frac{s}{n}\widetilde{M}_1^{\leq k}+tW_\ell^M+s\mathcal{M}_e^{\leq \ell,M}}\right]+o_k(1)\\
=& \e_n\left[e^{-\frac{t}{n}\sum_{|u|=k}\ind{u<\mathcal{L}_1}\beta(u)}e^{-\frac{t}{n}L_1^{\leq k}-\frac{s}{n}\widetilde{M}_1^{\leq k}+tW_\ell^M+s\mathcal{M}_e^{\leq \ell,M}}\right]+o_k(1)
\end{align*}
Observe that clearly, under $\p_n$, $\frac{M_1^{\leq k}}{n}\geq 1$. Note also that given the environment $\en$, 
\[
\max_{|u|=k, u<\mathcal{L}_1}\frac{1}{2}\frac{\beta(u)}{n}e^{V(u)}\xrightarrow{in\ \p_n^\mathcal{E}} \max_{|u|=k} \frac{1}{2}e^{-V(u)}e^{V(u)}=\frac{1}{2}
\]
and $\frac{M_1^{\leq k}}{n}\xrightarrow{in\ \p_n^\mathcal{E}} \max_{|u|\leq k}e^{-V(u)}=\mathcal{M}_e^{\leq k,M}$. So,
\[
\frac{\widetilde{M}_1^{\leq k}}{n}\xrightarrow{in\ \p_n^\mathcal{E}} \mathcal{M}_e^{\leq k,M}.
\]
Besides, $\frac{L_1^{\leq k}}{n}\xrightarrow{in\ \p_n^\mathcal{E}}0 $ according to the arguments of the proof of Proposition~6 of~\cite{dR16}. Again, by dominated convergence theorem,
\[
\liminf_{n\rightarrow\infty}\e_n\left[e^{-t(\frac{L_1}{n}-W_\ell^M)-s(\frac{M_1}{n}-\mathcal{M}_e^{\leq \ell,M})}\right]\geq \E\left[e^{-t W_k-s \M_e^{\le k, M}+tW_\ell^M+s\M_e^{\le \ell, M}}\right]+o_k(1)
\]
Letting $k\rightarrow\infty$ implies that
\begin{equation}
\liminf_{n\rightarrow\infty}\e_n\left[e^{-t(\frac{L_1}{n}-W_\ell^M)-s(\frac{M_1}{n}-\mathcal{M}_e^{\leq \ell,M})}\right]\geq \E\left[e^{-s(\mathcal{M}_e-\mathcal{M}_e^{\leq \ell, M})-t(W_\infty-W_\ell^M)}\right].
\end{equation}
We hence deduce the convergence in law of $(\frac{L_1}{n}-W_\ell^M, \frac{M_1}{n}-\mathcal{M}_e^{\leq \ell,M})$ towards $(W_\infty-W_\ell^M, \M_e-\M_e^{\le\ell,M})$. Repeating the arguments in \cite{dR16}, we then conclude from Lemma \ref{Mom} that 
\[
\lim_{n\rightarrow\infty}\e_n\left[|\frac{L_1}{n}-W_\infty|^{1+\alpha}+|\frac{M_1}{n}-\M_e|^{1+\alpha}\right]=0.
\]
\end{proof}

\subsubsection{Proof of Lemma \ref{tailLMp}}\label{jointtailLMp}
Let us consider $(\T, \beta, (w_n)_{n\geq0})$ defined on the biased environment ${\en^*}=(\T, V, (w_n)_{n\geq0})$.  Note that under $\widehat{\p}_1^*$, $\{\beta(w_n); n\ge 0\}$ is a recurrent Markov chain started from $1$. Let
\[
\widehat{\tau}_1:=\min\{k\geq1: \beta(w_k)=1\},\qquad \sigma_A:=\min\{k\geq0: \beta(w_k)>A\}, \forall A\geq1,
\]
and recall from~\eqref{eq:defL1uM1u} the notation
\[
L_1^{(u)}:=\sum_{v: u\leq v}\ind{v\in\L_1}, \quad M_1^{(u)}:=\max_{v: u\leq v}\beta(v)\ind{v\leq\L_1}, \forall u\in\B_1.
\]
Let also
\begin{equation*}
W_\infty^{(u)}:=\lim_{n\to\infty}\sum_{v: u\leq v, |v|=|u|+n}\e^{-(V(v)-V(u))}, \quad \mathcal{M}_e^{(u)}:=\max_{v: u\leq v}e^{-(V(v)-V(u))}, \forall u\in\T.
\end{equation*}
Moreover, for any $u\in\T$, let
\[
\beta^{\sigma_A}(u):=\sum_{k=0}^{\sigma_A-1}(\beta^1_k(u)+\beta^2_k(u)).
\]
Clearly, $\beta(w_{\sigma_A})=\beta^{\sigma_A}(w_{\sigma_A})+1$ and $\beta^{\sigma_A}(v)=0$ for any $v\geq w_{\widehat{\tau}_1}$. The decomposition along the spine $(w_k; k\geq0)$ gives
\[
L_1=\sum_{k=1}^{\widehat{\tau}_1}\sum_{u\in\Omega(w_k)}L_1^{(u)}+1,\textrm{ and } M_1=\max\{M_1^{(u)}; u\in\cup_{k=1}^{\widehat{\tau}_1}\Omega(w_k)\}\vee \max\{\beta(w_k); 1\leq k\leq \widehat{\tau}_1\}.
\]

Similarly as in Section~4.2.2 of \cite{dR16}, we will study
\begin{equation*}
P_1(ax,x)=\widehat{\p}_1^*(L_1\geq ax, M_1\geq x)
\end{equation*}
for $a\geq0$, $x\gg1$ and $\kappa\in(1,\infty)$ in three steps. In the end, we will obtain that $P_1(ax,x)\approx \widehat{\p}^*_1(\beta^{\sigma_A}(w_{\sigma_A})W_\infty^{(w_{\sigma_A})}\geq ax,\beta^{\sigma_A}(w_{\sigma_A}) \mathcal{M}_e^{(w_{\sigma_A})}\geq x,\ \sigma_A<\widehat{\tau}_1)$ where $(W_\infty^{(w_{\sigma_A})}, \mathcal{M}_e^{(w_{\sigma_A})})$ is independent of $\beta^{\sigma_A}(w_{\sigma_A})$ and distributed as $(W_\infty, \mathcal{M}_e)$ under $\Q^*$. This implies \eqref{tailLMstar}. 

Let us first state some facts which will be used in the proof.
\begin{lem}\label{bdness}
\begin{enumerate}
\item If $\kappa\in(1,\infty)$, for any $A\geq 1$ fixed,
\begin{equation}\label{MombdA}
\widehat{\e}_1^*\left[\left(\sum_{k=1}^{\widehat{\tau}_1}\ind{\beta(w_{k-1})<A}\sum_{u\in\Omega(w_k)}L_1^{(u)}\right)^{\kappa-1}\right]<\infty.
\end{equation}
\item If $\kappa\in(1,\infty)$, for any $A\geq 1$ fixed,
\begin{equation}\label{Mombdkappa}
\widehat{\e}_1^*\left[\left(\beta(w_{\sigma_A})\right)^{\kappa-1}\ind{\sigma_A<\widehat{\tau}_1}\right]\in(0,\infty).
\end{equation}
\item For $(\zeta_i)_{1\leq i\leq n}$ i.i.d. random variables with $\E[\zeta_1]=0$ and $\E[|\zeta_1|^\alpha]<\infty$ for some $\alpha\geq1$, then there exists $C_1(\alpha)>0$ depending only on $\alpha$, such that
\begin{equation}\label{MomofSum}
\E\left[|\sum_{i=1}^n \zeta_i|^\alpha\right]\leq 
\begin{cases}
C_1(\alpha) n^{\alpha/2}\E[|\zeta_1|^\alpha], \textrm{ if }\alpha\geq 2\\
2 n \E[|\zeta_1|^\alpha], \textrm{ if }\alpha\in[1,2].
\end{cases}
\end{equation}
\item If under $\P$, $(\zeta_i)_{1\leq i\leq N}$ is a random vector with negative multinomial distribution of parameters $n$ and $(\frac{A_i}{1+\sum_{j=1}^N A_j})_{1\leq i\leq N}$, then for any $\alpha\in[1,\infty)$, there exists $C_2(\alpha)>0$ depending only on $\alpha$, such that for any $(z_1,\cdots,z_N)\in \R_+^{N}$,
\begin{equation}\label{NBlaw}
\E\left[|\sum_{i=1}^N z_i \zeta_i - n\sum_{i=1}^N A_i z_i |^\alpha\right]\leq 
%C_\alpha n^{\alpha/2\vee 1} \left[(\sum_{i=1}^N  A_i z_i)^\alpha+ \sum_{i=1}^N A_i z_i^{\alpha}\right] \textrm{ if } \alpha\in[1,2]\\
C_2(\alpha) n^{\alpha/2\vee 1} \left[\sum_{k=0}^{\lfloor \alpha-1\rfloor}(\sum_{i=1}^NA_i z_i)^k(\sum_{i=1}^N A_i z_i^{\alpha-k})+(\sum_{i=1}^N  A_i z_i)^\alpha\right]. %\textrm{ if } \alpha>2
\end{equation}
\end{enumerate}

\end{lem}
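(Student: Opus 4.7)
My plan is to treat the four items essentially independently, since items (3) and (4) are general moment inequalities while items (1) and (2) are structural statements about the multi-type Galton-Watson tree along the spine. The main technical obstacle will be item (1): the exponent $\kappa-1$ is the borderline exponent accessible through Lemma \ref{Mom} (which only guarantees finite $(1+\alpha)$-moments for $\alpha<\kappa-1$), so there is no slack and the argument must combine Rosenthal-type inequalities applied conditionally on the spine with a return-time estimate for the chain $\{\beta(w_k)\}_{k\geq 0}$.

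Item (3) is classical: for $\alpha\in[1,2]$ I would invoke the von Bahr--Esseen inequality directly, and for $\alpha\geq 2$ Rosenthal's inequality delivers the $n^{\alpha/2}$ scaling; both rely only on $\E[\zeta_1]=0$ and finiteness of $\E[|\zeta_1|^\alpha]$. For item (4) I would use the convolution representation of a negative multinomial vector: one can write $\zeta_i=\sum_{j=1}^n\eta_i^{(j)}$ with $(\eta^{(j)})_{1\leq j\leq n}$ i.i.d., each $\eta^{(j)}$ being a negative multinomial with parameters $1$ and $(p_i)$. Decomposing $\sum_i z_i\zeta_i-n\sum_i A_iz_i=\sum_{j=1}^n(\sum_i z_i\eta_i^{(j)}-\sum_i A_iz_i)$ and applying item~(3) reduces everything to the $\alpha$-moment of a single centred variable $\sum_i z_i\eta_i^{(1)}$. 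The latter is conveniently handled via the Poisson-mixture representation: conditionally on $G\sim\mathrm{Exp}(1)$, the coordinates $(\eta_i^{(1)})_i$ are independent Poisson with means $A_iG$, and an elementary bookkeeping on cumulants produces the bracketed sum of mixed moments $\sum_k(\sum_iA_iz_i)^k(\sum_iA_iz_i^{\alpha-k})$ appearing in \eqref{NBlaw}.

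For item (1) the plan is to condition on the full spine $(w_k)_{0\leq k\leq\widehat{\tau}_1}$ together with the brothers' types $\{\beta(u);u\in\Omega(w_k)\}$. By the second construction of $\widehat{\p}^*_1$ from Section~\ref{tailRWRE}, the subtrees rooted at the brothers are then conditionally independent multi-type Galton--Watson trees of law $\p_{\beta(u)}$. Invoking Lemma~\ref{Mom} with $1+\alpha=\kappa-1$ for $\kappa\geq 2$ gives $\widehat{\e}^*_1[(L_1^{(u)})^{\kappa-1}\mid\text{spine}]\leq C\beta(u)^{\kappa-1}$, and for $\kappa\in(1,2]$ the concavity of $x\mapsto x^{\kappa-1}$ lets us replace this step by Jensen's inequality. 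Centring $L_1^{(u)}-\beta(u)$ and applying item~(3) conditionally on the spine converts the statement into bounds on
\[
\widehat{\e}^*_1\Big[\Big(\sum_{k=1}^{\widehat{\tau}_1}\ind{\beta(w_{k-1})<A}\sum_{u\in\Omega(w_k)}\beta(u)\Big)^{\kappa-1}\Big]
\]
and an analogous $\beta(u)^{\kappa-1}$-term coming from the Rosenthal bound. Because $\beta(w_{k-1})<A$ restricts the spine type to finitely many values and the biased offspring law $\widehat{\xi}_i$ has uniformly bounded $(\kappa-1)$-moments for $i\leq A$ under Assumption~\ref{cond4}, both terms reduce to return-time expectations which are finite by recurrence of the chain $\{\beta(w_k)\}$.

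For item (2), I decompose via the last spine step below $A$: the Markov property yields
\[
\widehat{\e}^*_1\big[\beta(w_{\sigma_A})^{\kappa-1}\ind{\sigma_A<\widehat{\tau}_1}\big]\leq \sum_{i\leq A}\mu_i\sum_{j>A}j^{\kappa-1}p_{i,j},
\]
where $\mu_i:=\widehat{\e}^*_1[\#\{k<\widehat{\tau}_1:\beta(w_k)=i\}]$ is finite since $1$ is recurrent for $\{\beta(w_k)\}$. Combining the explicit expression \eqref{MC} with Assumption~\ref{cond4}, one checks that $p_{i,\cdot}$ has a tail of order $j^{-\kappa}$ (driven morally by the $\P(\mathcal{M}_e\geq y)\sim c y^{-\kappa}$ estimate of Theorem~\ref{thmtailWM}), which makes $\sum_{j>A}j^{\kappa-1}p_{i,j}$ finite; positivity is immediate from $p_{i,j}>0$ for all $i,j\geq 1$. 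As already noted, the genuinely hard step of the lemma is the moment bound in item~(1) when $\kappa>2$, where Lemma~\ref{Mom} is only just strong enough and the spine-conditioning/Rosenthal chain has to be set up with care.
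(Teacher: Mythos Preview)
Your outline for items~(3) and~(4) is sound, though your route for~(4) via the Poisson--Gamma mixture differs from the paper's, which reduces to $n=1$ by the same convolution argument but then proceeds by induction on $\alpha$ using a size-biased measure $d\P_i^\dagger=\frac{\zeta_i}{A_i}d\P$; both approaches work and yours is arguably cleaner.

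For item~(2) your decomposition is correct and matches the paper's, but the justification you give is wrong: if $p_{i,j}$ really had tail of order $j^{-\kappa}$, then $\sum_{j>A}j^{\kappa-1}p_{i,j}\sim\sum_j j^{-1}$ would diverge. The heuristic linking $p_{i,\cdot}$ to the tail of the \emph{global} minimum $\mathcal{M}_e$ is misplaced---what matters here is the one-step environment, and the relevant input is Assumption~\ref{cond4}, namely $\E[(\sum_{|u|=1}e^{-V(u)})^\kappa]<\infty$. This is exactly Lemma~\ref{momentofw1} in the appendix, which gives $\widehat{\e}^*_i[\beta(w_1)^{\kappa-1}]\leq c\,i^{\kappa-1}$ by a convexity argument; combined with $\widehat{\e}^*_1[\widehat\tau_1]<\infty$ this finishes~(2) along the lines you indicate.

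For item~(1) the paper takes a quite different route: it mirrors the proof of Lemma~\ref{Mom} via the telescoping identity $\chi_0^m=\sum_l(\chi_l^m-\chi_{l+1}^m)=\sum_l\sum_{k<m}\binom{m}{k}\chi_{l+1}^k\Sigma_l^{m-k}$, then applies the Markov property at time $l$ so that only moments of order $k\leq m-1<\kappa-1$ of $\chi_{l+1}$ are needed---this is precisely where Lemma~\ref{Mom} has slack. Your spine-conditioning/Rosenthal scheme is a plausible alternative, but note that item~(3) as stated is for i.i.d.\ summands, whereas the $L_1^{(u)}-\beta(u)$ are only conditionally independent with different laws, and the number of terms is random; you would need the non-i.i.d.\ Rosenthal inequality and some control on $\widehat\tau_1$-moments to close the argument, which you have not spelled out.
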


Equation~\eqref{MomofSum} collects two inequalities from \cite{DFJ68} for $\alpha\geq2$ and from (2.6.20) of \cite{Pe95} for $\alpha\in[1,2]$. Note that it is immediate from \eqref{Mombdkappa} that for any $A\geq 1$ fixed,
\begin{equation}
K_A:=\widehat{\e}_1^*\left[\left(\beta^{\sigma_A}(w_{\sigma_A})\right)^{\kappa-1}\ind{\sigma_A<\widehat{\tau}_1}\right]<\infty.
\end{equation}
In \eqref{NBlaw}, we finally see that by convexity, $(\sum_{i=1}^N A_i z_i)^k\leq (\sum_{i=1}^N A_i)^{k-1}(\sum_{i=1}^NA_i z_i^{k})$ for any $k\geq1$. The proof of Lemma \ref{bdness} is postponed in Section~\ref{lemRWRE}.

Let us start analysing $P_1(ax,x)$. We let $\kappa':=\kappa-1$. 

\paragraph{Step1} Let 
\[
L_1^{>\sigma_A}:=\sum_{k=\sigma_A+1}^{\widehat{\tau}_1}\sum_{u\in\Omega(w_k)}L_1^{(u)}, %\textrm{ and } M_1^{> \sigma_A}=\max\{M_1^{(u)}; u\in\cup_{k=1+\sigma_A}^{\widehat{\tau}_1}\Omega(w_k)\}\vee \max\{\beta(w_k)\ind{\sigma_A\leq k\leq \widehat{\tau}_1}\}.
\]
and $M_1^{> \sigma_A}:=\max\{M_1^{> \sigma_A, \dagger}, M_1^{> \sigma_A,\star}\}$ with
\begin{equation}\label{eq:defM1geqsigma}
M_1^{> \sigma_A, \dagger}:=\max\{M_1^{(u)}; u\in\cup_{k=\sigma_A}^{\widehat{\tau}_1-1}\Omega(w_{k+1})\}, \quad M_1^{> \sigma_A,\star}:=\max\{\beta(w_k)\ind{\sigma_A\leq k\leq \widehat{\tau}_1}\}.
\end{equation}
\begin{lem}\label{lemma:LMstep1}
For any $\epsilon>0$, $A\geq 1$ and as $x\rightarrow\infty$,
\begin{multline}\label{LMstep1}
\widehat{\p}^*_1(L_1^{>\sigma_A}\geq ax, M_1^{> \sigma_A}\geq x,\ \sigma_A<\widehat{\tau}_1)\leq P_1(ax,x)\\
\leq\widehat{\p}^*_1(L_1^{>\sigma_A}\geq (a-\epsilon)x, M_1^{> \sigma_A}\geq x,\ \sigma_A<\widehat{\tau}_1)+o(x^{-\kappa'}).
\end{multline}
\end{lem}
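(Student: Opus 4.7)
The plan is to prove the two inequalities separately. The lower bound is immediate: via the spinal decomposition one has $L_1\geq L_1^{>\sigma_A}$ and $M_1\geq M_1^{>\sigma_A}$, so the event $\{L_1^{>\sigma_A}\geq ax,\ M_1^{>\sigma_A}\geq x,\ \sigma_A<\widehat{\tau}_1\}$ is contained in $\{L_1\geq ax,\ M_1\geq x\}$, giving the left-hand inequality.

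For the upper bound, I would split the event $\{L_1\geq ax,\ M_1\geq x\}$ according to whether $\sigma_A<\widehat{\tau}_1$ or not, and within the first set further decompose $L_1=L_1^{\leq\sigma_A}+L_1^{>\sigma_A}+O(1)$ and $M_1=\max\{M_1^{\leq\sigma_A},M_1^{>\sigma_A}\}$, where $L_1^{\leq\sigma_A}$ and $M_1^{\leq\sigma_A}$ denote the analogous contributions from the spine and its sibling subtrees up to time $\sigma_A$. The target event then appears on $\{L_1^{\leq\sigma_A}<\epsilon x/2,\ M_1^{\leq\sigma_A}<x,\ \sigma_A<\widehat{\tau}_1\}$, because for $x$ large this forces $L_1^{>\sigma_A}\geq(a-\epsilon)x$ and $M_1^{>\sigma_A}\geq x$. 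It therefore remains to show that each of
\[
\widehat{\p}_1^*(\sigma_A\geq\widehat{\tau}_1,\ L_1\geq ax),\qquad \widehat{\p}_1^*(\sigma_A<\widehat{\tau}_1,\ L_1^{\leq\sigma_A}\geq\epsilon x/2),\qquad \widehat{\p}_1^*(\sigma_A<\widehat{\tau}_1,\ M_1^{\leq\sigma_A}\geq x)
\]
is $o(x^{-\kappa'})$ as $x\to\infty$.

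The first two probabilities are controlled simultaneously: on $\{\sigma_A\geq\widehat{\tau}_1\}$ all types $\beta(w_{k-1})$ along the spine are $<A$ up to $\widehat{\tau}_1$, so $L_1\leq 1+\sum_{k=1}^{\widehat{\tau}_1}\mathbf{1}_{\{\beta(w_{k-1})<A\}}\sum_{u\in\Omega(w_k)}L_1^{(u)}$, and on $\{\sigma_A<\widehat{\tau}_1\}$ the very same sum dominates $L_1^{\leq\sigma_A}$ since it only involves sibling subtrees off the spine up to time $\sigma_A$. By \eqref{MombdA} this dominating variable $X$ has finite $(\kappa-1)$-moment, hence the truncated Markov inequality
\[
\widehat{\p}_1^*(X\geq \epsilon x)\leq (\epsilon x)^{-(\kappa-1)}\,\widehat{\e}_1^*\!\left[X^{\kappa-1}\mathbf{1}_{\{X\geq\epsilon x\}}\right],
\]
combined with dominated convergence, upgrades the plain Markov estimate $O(x^{-\kappa'})$ to the required $o(x^{-\kappa'})$. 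For the third probability, $M_1^{\leq\sigma_A}$ is bounded above by $\beta(w_{\sigma_A})\mathbf{1}_{\{\sigma_A<\widehat{\tau}_1\}}$ (controlled in $L^{\kappa-1}$ by \eqref{Mombdkappa}) together with the maximum of $M_1^{(u)}$ over the sibling subtrees along the spine before $\sigma_A$; the latter maximum is handled by the same truncated Markov device using an analogue of \eqref{MombdA} for $M_1^{(u)}$, which follows from \eqref{MomM} and the spinal decomposition.

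The main obstacle is extracting a genuine $o(x^{-\kappa'})$ rather than merely $O(x^{-\kappa'})$, since the moment bounds \eqref{MombdA} and \eqref{Mombdkappa} only supply finite moments at the critical order $\kappa-1$. The truncated-Markov/dominated-convergence trick is what provides the extra ``$o$''; the delicate point is executing it cleanly for $M_1^{\leq\sigma_A}$, where the spine contribution $\beta(w_{\sigma_A})$ (which can itself be of order $x$) has to be controlled jointly with the off-spine subtree maxima, and the $O(1)$ boundary terms from $w_{\widehat{\tau}_1}$ have to be absorbed into the parameter $\epsilon$.
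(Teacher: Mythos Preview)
Your proposal is correct and follows essentially the same route as the paper: the lower bound is immediate from $L_1\geq L_1^{>\sigma_A}$, $M_1\geq M_1^{>\sigma_A}$, while the upper bound comes from controlling $L_1-L_1^{>\sigma_A}$ and the ``pre-$\sigma_A$'' part of $M_1$ via the finiteness of the $(\kappa-1)$-moment of $\sum_{k\leq\widehat\tau_1}\mathbf 1_{\{\beta(w_{k-1})<A\}}\sum_{u\in\Omega(w_k)}L_1^{(u)}$ (resp.\ $M_1^{(u)}$), plus the truncated-Markov/dominated-convergence upgrade from $O(x^{-\kappa'})$ to $o(x^{-\kappa'})$.

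One small simplification: you do not need to invoke \eqref{Mombdkappa} for $\beta(w_{\sigma_A})$. By the definition \eqref{eq:defM1geqsigma}, $M_1^{>\sigma_A,\star}$ already contains $\beta(w_{\sigma_A})$, so the spine contribution to $M_1^{\leq\sigma_A}$ consists only of $\beta(w_k)$ for $k<\sigma_A$, all of which are $\leq A$. Hence for $x>A$ the event $\{M_1^{\leq\sigma_A}\geq x\}$ is entirely carried by the off-spine maxima $\max_{k\leq\sigma_A,\,u\in\Omega(w_k)}M_1^{(u)}$, and the analogue of \eqref{MombdA} for $M_1^{(u)}$ alone suffices; the ``joint control'' you worry about in the last paragraph is not needed.
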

\begin{proof}
Notice that 
\begin{equation*}
L_1-L_1^{>\sigma_A}= 1+\sum_{k=1}^{\sigma_A}\sum_{u\in\Omega(w_k)}L_1^{(u)}.
\end{equation*}
The proof of Lemma~16 of~\cite{dR16} applies to show that for any $\epsilon>0$, $A>0$, 
\begin{equation}\label{LMstep1eq1}
\widehat{\p}^*_1\big(L_1>\epsilon x,\widehat{\tau}_1\leq \sigma_A\big)=o(x^{-\kappa'})\quad \text{and}\quad \widehat\p^*_1\big(\sum_{k=1}^{\sigma_A}\sum_{u\in\Omega(w_k)}L_1^{(u)}>\epsilon x,\sigma_A<\widehat{\tau}_1\big)=o(x^{-\kappa'}), 
\end{equation}
excepted that for $\kappa>2$ we use \eqref{MombdA} of Lemma~\ref{bdness} to show the finiteness of the quantity $\widehat{\e}_1^*\left[\left(\sum_{k=1}^{\widehat{\tau}_1}\ind{\beta(w_{k-1})<A}\sum_{u\in\Omega(w_k)}L_1^{(u)}\right)^{\kappa'}\right]$. 
Now if we show that 
\begin{equation}\label{eq:M1o}
\widehat\p^*_1\big(M_1-M_1^{> \sigma_A}>\epsilon x,\sigma_A<\widehat{\tau}_1\big)=o(x^{-\kappa'}), 
\end{equation}
then the union bound together with~\eqref{LMstep1eq1} will conclude the lemma. To prove~\eqref{eq:M1o}, we simply adjust the proof of Lemma~16 of~\cite{dR16}. First, Markov's inequality yields
\begin{align*}
\widehat\p^*_1\big(M_1&-M_1^{> \sigma_A}>\epsilon x,\sigma_A<\widehat{\tau}_1\big)\leq (\epsilon x)^{-\kappa'}\widehat{\e}^*_1\left[ \big(M_1-M_1^{> \sigma_A}\big)^{\kappa'} \ind{\sigma_A<\widehat{\tau}_1}\ind{M_1-M_1^{> \sigma_A}>\epsilon x} \right]\\
 &\leq (\epsilon x)^{-\kappa'}\widehat{\e}^*_1\left[ \max_{1\leq k \leq \tauh_1}(\ind{\beta(w_{k-1})<A}\max(\beta(w_{k-1})^{\kappa'},\max_{u\in\Omega(w_k)} (M_1^{(u)})^{\kappa'}) \ind{M_1-M_1^{> \sigma_A}>\epsilon x} \right]\\
&\leq (\epsilon x)^{-\kappa'} \widehat{\e}^*_1\left[\Big(A^{\kappa'}+\big(\sum_{1\leq k \leq \tau_1}\ind{\beta(w_{k-1})<A}\sum_{u\in\Omega(w_k)}M_1^{(u)}\big)^{\kappa'}\Big) \ind{M_1-M_1^{> \sigma_A}>\epsilon x} \right]. 
\end{align*}
But since the indicator function in this last expectation tends to $0$ a.s.\ when $x\to\infty$, it is enough to show the finiteness of $\widehat{\e}^*_1\left[\big(\sum_{k=1}^{\tau_1}\ind{\beta(w_{k-1})<A}\sum_{u\in\Omega(w_k)}M_1^{(u)}\big)^{\kappa'}\right]$ to conclude. Proving this is similar to proving \eqref{MombdA}, with $M_1^{(u)}$ instead of $L_1^{(u)}$. Indeed the only fact we need on $L_1^{(u)}$ in this proof is that $\widehat{\e}^*_1[(L_1^{(u)})^{\kappa'}|\beta(u)]\leq C_{\kappa'}\beta(u)^{\kappa'}$ for a certain constant $C_{\kappa'}$, a domination which is also satisfied by the $M_1^{(u)}$ according to Lemma~\ref{Mom}. 
\end{proof}

\paragraph{Step 2}

Recall the notation of the beginning of this Subsection~\ref{jointtailLMp}. Let also for any $u\in\T$, 
\begin{equation*}
\Delta V(u):=V(u)-V(\pa{u}).
\end{equation*}
The aim of this step is to get the following lemma. 
\begin{lem}\label{lemma:LMstep2}
For all $\epsilon>0$ small enough, for all $A\geq A_\epsilon$ large enough, for all $x>0$ large enough, 
\begin{multline}\label{LMstep2}
\widehat{P}_1((a+3\epsilon)x, (1+5\epsilon)x;A)-8\epsilon x^{-\kappa'}K_A+o(x^{-\kappa'})\\
\leq P_1(ax,x)\leq\widehat{P}_1((a-3\epsilon)x, (1-5\epsilon)x; A)+8\epsilon x^{-\kappa'}K_A+o(x^{-\kappa'}),
\end{multline}
where $\widehat{P}_1(ax,x;A):=\widehat{\p}^*_1(\beta^{\sigma_A}(w_{\sigma_A})W_\infty^{(w_{\sigma_A})}\geq ax,\beta^{\sigma_A}(w_{\sigma_A}) \mathcal{M}_e^{(w_{\sigma_A})}\geq x,\ \sigma_A<\widehat{\tau}_1)$.
\end{lem}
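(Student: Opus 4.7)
The plan is to start from the two-sided bound of Lemma~\ref{lemma:LMstep1} and substitute for $L_1^{>\sigma_A}$ and $M_1^{>\sigma_A}$ the ballistic approximations $\beta^{\sigma_A}(w_{\sigma_A})W_\infty^{(w_{\sigma_A})}$ and $\beta^{\sigma_A}(w_{\sigma_A})\mathcal{M}_e^{(w_{\sigma_A})}$, quantifying the replacement through the $L^{\kappa'}$-concentration of Proposition~\ref{cvgLM} combined with the spinal moment bound $K_A$ of Lemma~\ref{bdness}~\eqref{Mombdkappa}. A preliminary observation is that on $\{\sigma_A<\widehat{\tau}_1\}$ we have $\beta(w_{\sigma_A})>A\geq 2$, so $w_{\sigma_A}\in\mathcal{B}_1\setminus\mathcal{L}_1$, and unfolding the definitions gives $L_1^{>\sigma_A}=L_1^{(w_{\sigma_A})}$ and $M_1^{>\sigma_A}=M_1^{(w_{\sigma_A})}$.

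Next, I would apply the strong Markov property of $\widehat{\p}^*_1$ at the stopping level $\sigma_A$: conditionally on the sigma-field generated by the tree, spine and environment up to generation $\sigma_A$, and in particular on $\beta(w_{\sigma_A})=n$, the pair $(L_1^{(w_{\sigma_A})},M_1^{(w_{\sigma_A})})$ has the law of $(L_1,M_1)$ in the subtree rooted at $w_{\sigma_A}$, with $w_{\sigma_A}$ of type $n$. Taking $\alpha$ close to $\kappa-1$ in Proposition~\ref{cvgLM}, together with Jensen when $\kappa'<1+\alpha$, produces some $\delta(A)\to 0$ as $A\to\infty$ such that
\[
\sup_{n>A}\e_n\!\left[\left|\tfrac{L_1}{n}-W_\infty\right|^{\kappa'}+\left|\tfrac{M_1}{n}-\mathcal{M}_e\right|^{\kappa'}\right]\leq\delta(A).
\]
Defining the good event $G:=\{|L_1^{(w_{\sigma_A})}-\beta(w_{\sigma_A})W_\infty^{(w_{\sigma_A})}|\vee|M_1^{(w_{\sigma_A})}-\beta(w_{\sigma_A})\mathcal{M}_e^{(w_{\sigma_A})}|\leq\epsilon x\}$, a conditional Markov inequality combined with Lemma~\ref{bdness}~\eqref{Mombdkappa} yields
\[
\widehat{\p}^*_1\!\left(G^c\cap\{\sigma_A<\widehat{\tau}_1\}\right)\leq 2(\epsilon x)^{-\kappa'}\delta(A)\,\widehat{\e}^*_1\!\left[\beta(w_{\sigma_A})^{\kappa'};\sigma_A<\widehat{\tau}_1\right]\leq 2\delta(A)K_A(\epsilon x)^{-\kappa'},
\]
which is $\leq 8\epsilon x^{-\kappa'}K_A$ for $A=A_\epsilon$ chosen large enough that $\delta(A_\epsilon)\leq 4\epsilon^{1+\kappa'}$.

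On $G$, the identity $\beta(w_{\sigma_A})=\beta^{\sigma_A}(w_{\sigma_A})+1$ produces an additional additive error $W_\infty^{(w_{\sigma_A})}+\mathcal{M}_e^{(w_{\sigma_A})}$ when replacing $\beta$ by $\beta^{\sigma_A}$; since both these quantities are tight under $\widehat{\p}^*_1$, truncating them at a large constant depending only on $\epsilon$ removes a residual event of $\widehat{\p}^*_1$-probability $o(x^{-\kappa'})$ and, for $x$ large, bounds the extra error by $\epsilon x$. On the resulting truncated good event, one then has the inclusion
\begin{align*}
\{L_1^{(w_{\sigma_A})}\geq (a-\epsilon)x,\,M_1^{(w_{\sigma_A})}\geq x\}\subseteq&\ \{\beta^{\sigma_A}(w_{\sigma_A})W_\infty^{(w_{\sigma_A})}\geq (a-3\epsilon)x,\\
&\quad \beta^{\sigma_A}(w_{\sigma_A})\mathcal{M}_e^{(w_{\sigma_A})}\geq(1-5\epsilon)x\},
\end{align*}
as well as its reverse at $(a+3\epsilon)x$ and $(1+5\epsilon)x$ starting from the event defining $\widehat{P}_1((a+3\epsilon)x,(1+5\epsilon)x;A)$. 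Combining with Lemma~\ref{lemma:LMstep1} then delivers both inequalities of~\eqref{LMstep2}.

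The main obstacle will be the joint bookkeeping of the two error sources: the concentration rate $\delta(A)$ decays only when $A$ is large while $K_A$ may grow with $A$, so $A_\epsilon$ must be tuned to make $\delta(A_\epsilon)$ dominate $\epsilon^{1+\kappa'}$ without $K_{A_\epsilon}$ spoiling the bad-event estimate. A secondary subtlety is that below $w_{\sigma_A}$ the $\widehat{\p}^*_1$-environment contains the biased continuation of the spine, so one should verify that the proof of Proposition~\ref{cvgLM} still applies after re-conditioning on this biased sub-environment; this follows since the off-spine subtrees remain standard $\P$-branching random walks and the martingale limit $W_\infty^{(w_{\sigma_A})}$ is well-defined and non-trivial in the biased setting.
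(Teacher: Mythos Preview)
Your strategy has a genuine gap in the use of Proposition~\ref{cvgLM}. Under $\widehat{\p}^*_1$, conditionally on the sigma-field up to generation $\sigma_A$, the subtree rooted at $w_{\sigma_A}$ (together with the continuation of the spine) has the law $\widehat{\p}^*_{\beta(w_{\sigma_A})}$, \emph{not} $\p_{\beta(w_{\sigma_A})}$. Hence the bound you invoke, $\sup_{n>A}\e_n[\,|L_1/n-W_\infty|^{\kappa'}+|M_1/n-\mathcal M_e|^{\kappa'}]\le\delta(A)$, is the wrong conditional estimate: what your Markov inequality actually needs is the same statement with $\e_n$ replaced by $\widehat{\e}^*_n$ and with $(W_\infty,\mathcal M_e)$ taken under the $\Q^*$-environment of the subtree. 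Proposition~\ref{cvgLM} is stated and proved only for the unbiased annealed law $\p_n$ with $\P$-environment; transferring it to the size-biased tree with a spine is not automatic, and your closing remark that ``the off-spine subtrees remain standard $\P$-branching random walks'' does not settle the matter, since the spine itself contributes to $L_1^{(w_{\sigma_A})}$, $M_1^{(w_{\sigma_A})}$, $W_\infty^{(w_{\sigma_A})}$ and $\mathcal M_e^{(w_{\sigma_A})}$ in a way not covered by Proposition~\ref{cvgLM}.

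The paper avoids exactly this obstacle. It never applies Proposition~\ref{cvgLM} to the biased subtree at $w_{\sigma_A}$; instead it decomposes $L_1^{>\sigma_A}$ and $M_1^{>\sigma_A}$ along the spine (Lemmas~\ref{lemma:LMstep2part1} and~\ref{lemma:LMstep2part2}, adapting Lemmas~16,~18,~19,~20 of~\cite{dR16}). Proposition~\ref{cvgLM} is used only for the off-spine subtrees rooted at $u\in\Omega(w_k)$, which genuinely carry the unbiased law $\p_{\beta(u)}$; the passage from $\beta(u)$ to $\beta^{\sigma_A}(w_{\sigma_A})e^{-(V(u)-V(w_{\sigma_A}))}$ is then handled separately via negative-binomial concentration~\eqref{NBlaw}, and the spine maxima $\beta(w_k)$ are controlled by an analogous argument. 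If you wish to repair your argument, you would either have to prove a $\widehat{\p}^*_n$-version of Proposition~\ref{cvgLM} from scratch (which essentially requires the same spine decomposition), or follow the paper's route and apply the proposition only off the spine.
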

Notice that the quantity $K_A=\widehat{\e}_1^*[(\beta^{\sigma_A}(w_{\sigma_A}))^{\kappa'}\ind{ \sigma_A<\widehat{\tau}_1}]$ for any $A\geq1$ is finite according to Lemma~\ref{bdness}.

Actually, we will divide the proof in two parts, one dealing with $L_1$ and the other with $M_1$. The following two lemmas will immediately yield Lemma~\ref{lemma:LMstep2}. 

\begin{lem}\label{lemma:LMstep2part1}
For all $\epsilon>0$ small enough, for all $A\geq A_\epsilon$ large enough, for all $x>0$ large enough, 
\begin{equation}\label{eq:ecartL1Winf}
\widehat{\p}^*_1\big( |L_1-\ty^{\sigma_A}(w_{\sigma_A})W_\infty^{(w_{\sigma_A})}|>3\epsilon x,\ \sigma_A<\widehat{\tau}_1 \big)\leq 3\epsilon x^{-\kappa} K_A +o(x^{-\kappa'}). 
\end{equation}
\end{lem}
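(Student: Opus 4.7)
The plan is to reduce, via Lemma~\ref{lemma:LMstep1}, to estimating
\[
\widehat{\p}_1^*\big(\,|L_1^{>\sigma_A}-\beta^{\sigma_A}(w_{\sigma_A})W_\infty^{(w_{\sigma_A})}|>(3\epsilon-o(1))x,\ \sigma_A<\widehat\tau_1\,\big),
\]
since the proof of Lemma~\ref{lemma:LMstep1} already established $\widehat\p_1^*(|L_1-L_1^{>\sigma_A}|>\epsilon x,\sigma_A<\widehat\tau_1)=o(x^{-\kappa'})$ as an intermediate step. So from now on I work with $L_1^{>\sigma_A}$ and regard $\beta^{\sigma_A}(w_{\sigma_A})W_\infty^{(w_{\sigma_A})}$ as its ``predicted mean''.

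Next I would condition on the sigma-field $\mathcal{G}_{\sigma_A}$ generated by the environment $\mathcal{E}^*$, the ancestral line $(w_k)_{0\le k\le \sigma_A}$, and the labels $(\beta(u))_{u\not\ge w_{\sigma_A}}$, restricted to the event $\{\sigma_A<\widehat\tau_1\}$. Using the second construction of $\widehat\p_1^*$ (Proposition~5 of~\cite{dR16}), on this event the contribution $L_1^{>\sigma_A}$ is built from $n:=\beta^{\sigma_A}(w_{\sigma_A})$ independent killed excursions into the subtree rooted at $w_{\sigma_A}$ (plus the spine's continuation, which by definition of $\widehat\tau_1$ does not yet re-enter a type-$1$ state). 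A many-to-one identity applied to the BRW beyond $w_{\sigma_A}$ identifies the conditional mean of each such excursion's contribution to $\mathcal{L}_1$ with $W_\infty^{(w_{\sigma_A})}$, so that the centered sum is what one needs to control. To do so, I would combine Lemma~\ref{bdness}(3)--(4): the negative multinomial distribution governing the children types at each step of the subtree past $w_{\sigma_A}$ gives a generation-by-generation recursion for the $\kappa'$-moment of the centered partial sums, which Lemma~\ref{bdness}(3) then aggregates. The typical outcome is an estimate of the shape
\[
\widehat\e_1^*\!\bigl[\,|L_1^{>\sigma_A}-nW_\infty^{(w_{\sigma_A})}|^{\kappa'}\ \big|\ \mathcal{G}_{\sigma_A},\sigma_A<\widehat\tau_1\bigr]\le C\, n^{\kappa'-\delta}\,\Phi(\mathcal{E}^*_{>w_{\sigma_A}}),
\]
for some $\delta>0$ and an integrable functional $\Phi$ of the environment beyond $w_{\sigma_A}$.

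Finally I would split the probability according to whether $\beta^{\sigma_A}(w_{\sigma_A})\le \epsilon x$ or $>\epsilon x$. On the first event, direct comparison (using $W_\infty^{(w_{\sigma_A})}$ having finite moments of all orders $p<\kappa$) absorbs both $L_1^{>\sigma_A}$ and $nW_\infty^{(w_{\sigma_A})}$ into the $3\epsilon x$ threshold with leftover probability $o(x^{-\kappa'})$; on the second event, Markov's inequality applied to the conditional $\kappa'$-moment bound above, followed by integration against the law of $n\ind{\sigma_A<\widehat\tau_1}$, yields at most $\epsilon x^{-\kappa'} K_A$ (up to constants that may be tuned by choosing $A$ large enough). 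The choice of $A$ large enough is what lets us treat the spine's own contribution beyond $\sigma_A$ and the spine ``error'' term as lower-order: a large value of $\beta(w_{\sigma_A})$ means $\widehat\tau_1$ is very unlikely to occur close to $\sigma_A$, so that the spine effectively samples $W_\infty^{(w_{\sigma_A})}$ many times.

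The main obstacle is the concentration step: unlike the one-dimensional analogue treated in Section~4.2.2 of~\cite{dR16}, here we must keep the joint tail structure of $(L_1,M_1)$ compatible, which forces us to work directly with a $\kappa'$-moment rather than a second-moment bound (since $\kappa$ may be less than $2$), and to handle separately the ``small-$n$'' and ``large-$n$'' regimes so that the bound is of the exact form $3\epsilon x^{-\kappa'}K_A+o(x^{-\kappa'})$, with $\epsilon$ coming from the concentration tolerance and $K_A$ from the definition of the joint tail $\widehat P_1$.
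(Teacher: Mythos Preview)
Your approach diverges from the paper's and has a genuine gap. The paper does not attempt to view $L_1^{>\sigma_A}$ as a sum over $n=\beta^{\sigma_A}(w_{\sigma_A})$ independent excursions; instead it decomposes along the spine's \emph{brothers}, citing and extending Lemmas~16, 18, 19 and 20 of~\cite{dR16} to all $\kappa>1$. Concretely, for each $u\in\Omega(w_k)$ with $k>\sigma_A$ the subtree rooted at $u$ carries the \emph{unbiased} law $\p_{\beta(u)}$ (no spine passes through it), so Lemma~20 replaces $L_1^{(u)}$ by $\beta(u)W_\infty^{(u)}$; Lemma~19 replaces $\beta(u)$ by $\beta^{\sigma_A}(u)$; Lemma~18 replaces $\beta^{\sigma_A}(u)$ by $\beta^{\sigma_A}(w_{\sigma_A})e^{-(V(u)-V(w_{\sigma_A}))}$; and the sum over all brothers then collapses to $\beta^{\sigma_A}(w_{\sigma_A})W_\infty^{(w_{\sigma_A})}$. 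Each replacement is controlled by a Markov inequality with a $\kappa'$-moment computed via~\eqref{NBlaw} and~\eqref{eq:dompath}, producing one $\epsilon x^{-\kappa'}K_A$ per step.

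The gap in your outline is the assertion that ``the conditional mean of each excursion's contribution to $\mathcal{L}_1$ is $W_\infty^{(w_{\sigma_A})}$''. The stopping line $\mathcal{L}_1$ is defined by the conditions $\beta(u)=1$ and $\min_{\rho<v<u}\beta(v)\ge 2$, which refer to the \emph{total} local time $\beta$. Since $\beta$ is the superposition of all excursion local times plus the spine indicator, there is no additive notion of ``one excursion's contribution to $\mathcal{L}_1$'': adding a single excursion can create, destroy, or shift vertices of $\mathcal{L}_1$ in a non-linear way. Your many-to-one claim is therefore ill-defined, and the concentration estimate you assert cannot be extracted from Lemma~\ref{bdness}(3)--(4) applied ``generation by generation'': those tools bound moments of \emph{linear} functionals of the local times, not of the non-additive functional $L_1$. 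The only way to linearise is to first pass from $L_1^{(u)}$ to $\beta(u)W_\infty^{(u)}$ on each brother subtree (where the law \emph{is} unbiased and Proposition~\ref{cvgLM} applies), which is exactly Lemma~20 of~\cite{dR16}; the remaining steps are then genuinely spine-wise. Note also that the full subtree below $w_{\sigma_A}$ under $\widehat{\p}_1^*$ carries the spine bias, so even a well-posed version of your conditional law is not $\p^{\mathcal E}_n$ and Proposition~\ref{cvgLM} cannot be invoked on it directly.
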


\begin{lem}\label{lemma:LMstep2part2}
For all $\epsilon>0$ small enough, for all $A\geq A_\epsilon$ large enough, for all $x>0$ large enough, 
\begin{equation}\label{eq:ecartM1Winf}
\widehat{\p}^*_1\big( |M_1-\ty^{\sigma_A}(w_{\sigma_A})\mathcal{M}_e^{(w_{\sigma_A})}|>5\epsilon x,\ \sigma_A<\widehat{\tau}_1 \big)\leq 5\epsilon x^{-\kappa} K_A +o(x^{-\kappa'}). 
\end{equation}
\end{lem}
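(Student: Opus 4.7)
My plan is to run the argument of Lemma~\ref{lemma:LMstep2part1}, replacing the sum $L_1^{(u)}$ by the maximum $M_1^{(u)}$ and the martingale limit $W_\infty^{(w_{\sigma_A})}$ by the environmental maximum $\mathcal{M}_e^{(w_{\sigma_A})}$. By Lemma~\ref{lemma:LMstep1} it suffices to control $|M_1^{>\sigma_A} - \beta^{\sigma_A}(w_{\sigma_A})\mathcal{M}_e^{(w_{\sigma_A})}|$ on $\{\sigma_A < \widehat{\tau}_1\}$, up to an $o(x^{-\kappa'})$ error. Recall the decomposition $M_1^{>\sigma_A} = M_1^{>\sigma_A,\dagger} \vee M_1^{>\sigma_A,\star}$ from~\eqref{eq:defM1geqsigma}. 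The spine piece $M_1^{>\sigma_A,\star}$ involves only the recurrent Markov chain $(\beta(w_k))$ between $\sigma_A$ and $\widehat{\tau}_1$; a direct $L^{\kappa'}$ moment bound via Lemma~\ref{bdness} shows that its contribution to the fluctuation is absorbed by the much larger main term $M_1^{>\sigma_A,\dagger}$, except on events of probability $o(x^{-\kappa'})$.

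The core step is to analyse $M_1^{>\sigma_A,\dagger}$. I would condition on the biased environment $\en^*$, on $\mathcal{F}^\beta_{\sigma_A}$, and on the event $\{\sigma_A < \widehat{\tau}_1\}$. Setting $n := \beta^{\sigma_A}(w_{\sigma_A})$, the multi-type branching property gives that $(\beta(v))_{v \geq w_{\sigma_A}}$ is a multi-type Galton--Watson tree with root type $n+1$, each level of branching being negative multinomial with parameters determined by the environment. Iterating the negative multinomial moment estimate~\eqref{NBlaw} with $\alpha = \kappa'$ along the spine of this subtree, together with the moment bounds of Lemma~\ref{Mom}, yields a uniform estimate of the form
\[
\widehat{\e}^*_1\bigl[|\beta(v) - n\,e^{-(V(v)-V(w_{\sigma_A}))}|^{\kappa'} \bigm| \en^*, \mathcal{F}^\beta_{\sigma_A}\bigr] \leq C_{\en^*}\, n^{\kappa'/2 \vee 1}\, e^{-\kappa'(V(v)-V(w_{\sigma_A}))}
\]
for each vertex $v$ in the subtree of $w_{\sigma_A}$. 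A union bound over the vertices attaining values close to $\mathcal{M}_e^{(w_{\sigma_A})}$ then allows one to conclude that conditionally $|M_1^{>\sigma_A,\dagger} - n\,\mathcal{M}_e^{(w_{\sigma_A})}| \leq 4\epsilon x$ with probability at least $1 - 5\epsilon(n/x)^{\kappa'} - o(1)$. Integrating against the joint law of $(n,\en^*)$ via~\eqref{Mombdkappa} produces the announced bound $5\epsilon x^{-\kappa'}K_A + o(x^{-\kappa'})$.

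The main obstacle, and the reason for the extra $5\epsilon$ compared to the $3\epsilon$ of Lemma~\ref{lemma:LMstep2part1}, is that $M_1^{>\sigma_A,\dagger}$ is a maximum rather than a sum: a direct $L^{\kappa'}$ law-of-large-numbers argument does not apply, since the fluctuations of individual $\beta(v)$ do not average out. The correct substitute is a two-scale truncation: on the set of vertices $v$ with $e^{-(V(v) - V(w_{\sigma_A}))} \geq \delta\,\mathcal{M}_e^{(w_{\sigma_A})}$ for an appropriate $\delta = \delta(\epsilon)$ one applies~\eqref{NBlaw} together with a union bound, the cardinality of this set being controlled via the many-to-one lemma; the deeper vertices are then controlled by the environmental tail of $\mathcal{M}_e^{(w_{\sigma_A})}$ from Theorem~\ref{thmtailWM}. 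Balancing the $\delta$-loss against the $\epsilon$-loss is the most delicate step.
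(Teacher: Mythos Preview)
Your overall plan---parallel the proof of Lemma~\ref{lemma:LMstep2part1}, reduce via Lemma~\ref{lemma:LMstep1}, and split into spine and off-spine pieces---is sound, but two points are off.

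First, the spine piece $M_1^{>\sigma_A,\star}=\max_{\sigma_A\le k\le\tauh_1}\beta(w_k)$ is \emph{not} absorbed by $M_1^{>\sigma_A,\dagger}$: it is of the same order, namely $\beta^{\sigma_A}(w_{\sigma_A})$ times a nontrivial environmental factor $\max_k e^{-(V(w_k)-V(w_{\sigma_A}))}$, and this factor is genuinely part of $\mathcal{M}_e^{(w_{\sigma_A})}$. The paper treats it on its own footing: one shows $\max_{k\ge\sigma_A}(\beta(w_k)-\beta^{\sigma_A}(w_k))\le\epsilon x$ except with probability $\le\epsilon x^{-\kappa'}K_A$ (by comparing with the corresponding sum, as in Lemma~19 of~\cite{dR16}), and then $\max_k|\beta^{\sigma_A}(w_k)-\beta^{\sigma_A}(w_{\sigma_A})e^{-(V(w_k)-V(w_{\sigma_A}))}|\le\epsilon x$ except with probability $\le\epsilon x^{-\kappa'}K_A$ (by the $f_A$-argument of Lemma~18 of~\cite{dR16} with $Z_{w_\ell}\equiv1$). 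These two spine $\epsilon$'s are exactly why the final constant is $5$ rather than $3$.

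Second, and more importantly, your two-scale truncation for the off-spine piece is unnecessary and the displayed bound with a random $C_{\en^*}$ is not workable as stated (you would need to integrate it and the tree is infinite). The paper's device is much simpler: for the two error terms where every summand is nonnegative---namely $\max_{k,u}(\beta(u)-\beta^{\sigma_A}(u))\mathcal{M}_e^{(u)}$ and $\max_{k,u}|\beta(u)\mathcal{M}_e^{(u)}-M_1^{(u)}|$---one uses the trivial inequality $\max\le\sum$ to reduce to the sum versions already controlled in Lemmas~19 and~20 of~\cite{dR16} (with $(M_1^{(u)},\mathcal{M}_e^{(u)})$ in place of $(L_1^{(u)},W_\infty^{(u)})$, which satisfy the same moment hypotheses by Proposition~\ref{cvgLM} and Lemma~\ref{Mom}). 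The only place where the maximum requires a genuinely new argument is the analogue of Lemma~18, comparing $\max_{k,u}\beta^{\sigma_A}(u)\mathcal{M}_e^{(u)}$ with $\beta^{\sigma_A}(w_{\sigma_A})\max_{k,u}e^{-(V(u)-V(w_{\sigma_A}))}\mathcal{M}_e^{(u)}$; here the paper bounds the corresponding $g_A(x)$ by passing back to a sum over $k$ and $u\in\Omega(w_k)$ and applying~\eqref{NBlaw} with $N=1$ term by term, yielding a bound $CA^{-\kappa'/2}x^{-\kappa'}K_A$ that is $\le\epsilon x^{-\kappa'}K_A$ for $A$ large.
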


\begin{proof}[Proof of Lemma~\ref{lemma:LMstep2part1}]
For $\kappa\leq 2$, Lemmas~16,~18,~19 and~20 of~\cite{dR16} put together yield the result. Actually the proofs of Lemmas~16,~18,~19 and~20 of~\cite{dR16} can be adjusted to the case $\kappa> 2$. Let us detail this. 

$\bullet$ For Lemma~16, it is the lemma we just proved in Step~1, Lemma~\ref{lemma:LMstep1}. 

$\bullet$ Let us now follow the lines of Lemma~18. Let $\Omega:=\cup_{k=1}^{\tauh_1}\Omega(w_k)$, and $(Z_u)_{u\in\Omega}$ a family of i.i.d.\ random variables admitting a finite moment of order $\kappa'$. This ensures that the $\sum_{u\in\Omega(w_k)}e^{-\Delta V(u)}Z_u$ also have a finite moment of order $\kappa'$:
\begin{align}\label{eq:momentZu}
\widehat{\e}^*_1\left[\big(\sum_{u\in\Omega(w_k)}e^{-\Delta V(u)}Z_u\big)^{\kappa'}\right]&\leq \widehat{\e}^*_1\left[\sum_{u\in\Omega(w_k)}e^{-\Delta V(u)}(Z_u)^{\kappa'}\big(\sum_{u\in\Omega(w_k)}e^{-\Delta V(u)}\big)^{\kappa'-1}\right]\nonumber\\
&=\e_1\left[\sum_{|u|=1}e^{-V(u)}\big(\sum_{v\neq u} e^{-V(v)}\big)^{\kappa'}\right]\widehat{\e}^*_1\big[Z^{\kappa'}\big]<\infty, 
\end{align}
where the first inequality is obtained by convexity of $x\mapsto x^{\kappa'}$, and where $Z$ is a generic random variable of same law as the $(Z_u)_{u\in\Omega}$. Now we replace the inequality~(4.26) by the following (using~\eqref{NBlaw} with $N=1$ and $z_1=1$):
\begin{align*}
\widehat{\e}_1^*[|\beta^{\sigma_A}(w_\ell)-\beta^{\sigma_A}(w_{\ell-1})e^{-(V(w_{\ell})-V(w_{\ell-1}))}|^{\kappa'}\mid \beta^{\sigma_A}&(w_{\ell-1}),\Delta V(w_\ell)]\\ 
&\leq C_3 (\beta^{\sigma_A}(w_{\ell-1}))^{\kappa'/2}(e^{-\Delta V(w_{\ell})}+e^{-\kappa'\Delta V(w_{\ell})})
\end{align*}
for a  certain constant $C_3>0$, and we can reach the same conclusion on $f_A(x)$. % Plugging this into~(3.27) allows to dominate $f_A$: for all $x$ large enough, 
%\begin{align*}
%f_A(x)&\leq C'(\epsilon x)^{-\kappa}\widehat{\e}^*[\sum_{\ell\leq \sigma_A+1}(\ell-\sigma_A)^{2\kappa'}\ty^{\sigma_A}(w_{\ell-1})^{\kappa/2}(e^{-\Delta V(w_{\ell})}+e^{-\kappa'\Delta V(w_{\ell})})\ind{\sigma_A<\widehat{\tau}_1}]\\
%&=C'(\epsilon x)^{-\kappa}\widehat{\e}^*[\sum_{\ell=\sigma_A+1}^{\widehat{\tau}_1+1}(\ell-\sigma_A)^{2\kappa'}\ty^{\sigma_A}(w_{\ell-1})^{\kappa/2}\ind{\sigma_A<\widehat{\tau}_1}](\psi(2)+\psi(\kappa)). 
%\end{align*} 
%When conditioning on $\mathcal{F}^\beta_{\sigma_A}$, we get from equation~equation~\eqref{Domsum} that this last quantity is smaller than
%\begin{equation*}
%C''(\epsilon x)^{-\kappa}\widehat{\e}^*[\ty^{\sigma_A}(w_{\sigma_A})^{\kappa/2}\ind{\sigma_A<\widehat{\tau}_1}] 
%\end{equation*}
%for a certain constant $C''$, which allows us to get the conclusion of~(3.29) and ~(3.30) of~\cite{dR16}. 
Finally, we have to replace~(4.30) by 
\begin{align}\label{eq:domsum}
&\widehat{\e}^*_1\big[ |\sum_{u\in\Omega(w_k)} \big( \ty^{\sigma_A}(u)-\ty^{\sigma_A}(w_{k-1})e^{-\Delta V(u)}\big)Z_u |^{\kappa'} \mid \ty^{\sigma_A}(w_{k-1})\big]\\ 
\leq &C_4 (\ty^{\sigma_A}(w_{k-1}))^{\kappa'/2}\widehat{\e}^*_1\big[\sum_{k=0}^{\fl{\kappa'-1}}(\sum_{u\in\Omega(w_k)} e^{-\Delta V(u)}Z_u)^k(\sum_{u\in\Omega(w_k)} e^{-\Delta V(u)}Z_u^{\kappa'-k})+(\sum_{u\in\Omega(w_k)} e^{-\Delta V(u)}Z_u)^{\kappa'}\big]\nonumber\\
\leq &C_4 (\ty^{\sigma_A}(w_{k-1}))^{\kappa'/2}\widehat{\e}^*_1\left[\sum_{k=0}^{\fl{\kappa'-1}}(\sum_{u\in\Omega(w_k)} e^{-\Delta V(u)})^{k+1}+(\sum_{u\in\Omega(w_k)} e^{-\Delta V(u)})^{\kappa'}\right]\times\max_{0\leq k\leq \fl{\kappa'}}\widehat{\e}^*_1[Z^{k}]\widehat{\e}^*[Z^{\kappa'-k}].\nonumber
\end{align}
The second inequality comes from~\eqref{NBlaw}. The third one comes from the convexity of $x\mapsto x^k$ (as explained in the remark after Lemma~\ref{bdness}), and the linearity of the expectation. The finiteness of the first expectation comes from the many-to-one lemma and assumption~\ref{cond4}. We conclude following the lines of Lemma~18 of~\cite{dR16}, by use of~\eqref{eq:dompath}. \\

$\bullet$ The proof of Lemma~19 of~\cite{dR16} adjusts to the case $\kappa>2$, as its arguments boil down to those of the proof of Lemma~18 that we just showed adjust to the case $\kappa>2$. 

%$\bullet$ In order to adjust the proof of Lemma~19 of~\cite{dR16} to the case $\kappa>2$, the only change is in the proof of equation~(3.38) ; we rather dominate $f$ as follows: 
%\begin{align*}
%f(x)&\leq \widehat{\p}^*_1\Big( \sum_{k\geq 1}\big|\sum_{u\in\Omega(w_k)}\big(\ty_0^1(u)-\ty_0^1(w_{k-1})e^{-\Delta V(u)}\big) Z_u \big|>\sum_{k\geq 1}\frac{6}{\pi^2}\frac{1}{k^2} \frac{x}{2}\Big)\\
%&\leq  \sum_{k\geq 1}\widehat{\p}^*_1\Big(\big|\sum_{u\in\Omega(w_k)}\big(\ty_0^1(u)-\ty_0^1(w_{k-1})e^{-\Delta V(u)}\big) Z_u \big|>\frac{6}{\pi^2}\frac{1}{k^2} \frac{x}{2}\Big)\\
%&\leq C_5x^{-\kappa'}\sum_{k\geq 1}k^{2\kappa'}\widehat{\e}^*_1\Big[ \big|\sum_{u\in\Omega(w_k)}(\ty_0^1(u)-\ty_0^1(w_{k-1})e^{-\Delta V(u)})  Z_u\big|^{\kappa'} \Big]
%\end{align*}
%Using the same reasoning as in~\eqref{eq:domsum}, we get that this last quantity is smaller than
%\begin{equation*}
%C_6x^{-\kappa'}\sum_{k\geq 1}k^{2\kappa'}\widehat{\e}^*_1\big[ \ty_0^1(w_{k-1})^{\kappa'/2}\big].
%\end{equation*}
%As $\ty_0^1(w_{k-1})=0$ for $k\geq \widehat{\tau}_1$, we get that this is finally smaller than
%\begin{equation*}
%C_6x^{-\kappa'}\widehat{\e}^*_1\big[\sum_{k=1}^{\widehat{\tau}_1}k^{2\kappa'} \ty_0^1(w_{k-1})^{\kappa'/2} \big]
%\end{equation*}
%which is finite according to equation~\eqref{eq:dompath} ($\ty_0^1$ being dominated by $\ty$). 

$\bullet$ Finally, let us deal with Lemma~20 of~\cite{dR16}. The finiteness $\widehat{\e}^*_1\big[ \big(\sum_{u\in\Omega(w_k)} e^{-\Delta V(u)}Y_u\big)^{\kappa'} \big]$ is obtained similarly to~\eqref{eq:momentZu}. The last point is equation (4.37), which can be dominated as explained in the lines that follow it, let appart that we use the convexity inequality rather than Jensen's inequality. 
\end{proof}

\begin{proof}[Proof of Lemma~\ref{lemma:LMstep2part2}]
First, Lemma~\ref{lemma:LMstep1} yields that 
\begin{equation}\label{eq:domM11}
\widehat{\p}^*_1\big( \max_{1\leq k <\sigma_A}\beta(w_k)\vee\max_{1\leq k\leq \sigma_A,\ u\in\Omega(w_k)} M_1^{(u)}>\epsilon x,\sigma_A<\widehat{\tau}_1 \big)=o(x^{-\kappa'}). 
\end{equation}
Recall from~\eqref{eq:defM1geqsigma} the notation
\begin{equation*}
M_1^{> \sigma_A, \dagger}:=\max\{M_1^{(u)}; u\in\cup_{k=\sigma_A}^{\widehat{\tau}_1-1}\Omega(w_{k+1})\}, \quad M_1^{> \sigma_A,\star}:=\max\{\beta(w_k); \sigma_A\leq k\leq \widehat{\tau}_1\}.
\end{equation*}
We will first deal with vertices outside the spine. We first want to prove that
\begin{align}\label{eq:ecartM1Winf}
\widehat{\p}^*_1\big( |M_1^{>\sigma_A,\dagger}-(\beta^{\sigma_A}(w_{\sigma_A})\max_{\sigma_A+1\leq k\leq \tauh_1,\ u\in\Omega(w_k)} e^{-(V(u)-V(w_{\sigma_A})} \mathcal{M}_e^{(u)})|>3\epsilon x,\ \sigma_A<\widehat{\tau}_1 \big)\leq& 3\epsilon x^{-\kappa} K_A. 
\end{align}
As we have shown before, Lemmas~16,~18,~19 and~20 of~\cite{dR16} apply to any $\kappa>1$ for $L_1$. We will adjust them to get the same dominations on $M_1$. \\

$\bullet$ Since $\ty(u)-\ty^{\sigma_A}(u)\geq 0$ for any $u\in\T$, we have
\begin{equation*}
\sum_{k=\sigma_A+1}^{\widehat{\tau}_1}\sum_{u\in\Omega(w_k)}(\ty(u)-\ty^{\sigma_A}(u))\mathcal{M}_e^{(u)}\geq \max_{\sigma_A+1\leq k\leq \widehat{\tau}_1}\max_{u\in\Omega(w_k)}(\ty(u)-\ty^{\sigma_A}(u))\mathcal{M}_e^{(u)}, 
\end{equation*}
and therefore the proof of Lemma~19 of~\cite{dR16} (with $Z_u=\mathcal{M}_e^{(u)}$) yields
\begin{equation}\label{eq:domM12}
\widehat{\p}^*_1\left(\max_{\sigma_A+1\leq k\leq\widehat{\tau}_1}\max_{u\in\Omega(w_k)}(\ty(u)-\ty^{\sigma_A}(u))\mathcal{M}_e^{(u)}>\epsilon x,\quad \sigma_A<\widehat{\tau}_1\right)\leq \epsilon x^{-\kappa'}K_A. 
\end{equation}

$\bullet$ Lemma~20 of~\cite{dR16} can also be reformulated. Indeed, it is shown in the proof that the two terms of the sum in the right-part of the inequality~(4.34) can be made smaller than $\epsilon x^{-\kappa'}K_A$ for $A$ and $x$ large enough. Replacing $(L_1^{(u)},W_{\infty}^{(u)})$ by $(M_1^{(u)},\mathcal{M}_e^{(u)})$ (which satisfies the same hypotheses, as shown in Proposition~\ref{cvgLM}) does not change the result. As a sum of positive terms is larger than their maximum, we get that
\begin{align}\label{eq:domM13}
&\widehat{\p}^*_1\left( \max_{\sigma_A+1\leq k\leq \widehat{\tau}_1}\max_{u\in\Omega(w_k)}|\ty(u)\mathcal{M}_e^{(u)}-M_1^{(u)}|>\epsilon x,\sigma_A<\widehat{\tau}_1 \right)\nonumber\\ 
&\leq \widehat{\p}^*_1\left( \max_{\sigma_A+1\leq k\leq \widehat{\tau}_1}\max_{u\in\Omega(w_k)}\ty^{\sigma_A}(u)|\mathcal{M}_e^{(u)}-\frac{M_1^{(u)}}{\ty(u)}|>\epsilon x/2,\sigma_A<\widehat{\tau}_1 \right)\nonumber\\
&+\widehat{\p}^*_1\left( \max_{\sigma_A+1\leq k\leq \widehat{\tau}_1}\max_{u\in\Omega(w_k)}(\ty(u)-\ty^{\sigma_A}(u))|\mathcal{M}_e^{(u)}-\frac{M_1^{(u)}}{\ty(u)}|>\epsilon x/2,\sigma_A<\widehat{\tau}_1 \right)\nonumber\\
&\leq \epsilon x^{-\kappa'}K_A
\end{align} 
for $A$ and $x$ large enough. \\

$\bullet$ Finally, it remains to adjust Lemma~18. We will show that for $A$ and $x$ large enough, 
\begin{equation}\label{eq:domM14}
\widehat{\p}^*_1\left( |M_\eqref{eq:domM14}|>\epsilon x,\sigma_A<\ \widehat{\tau}_1 \right)
\leq \epsilon x^{-\kappa'}K_A. 
\end{equation}
where 
\[
M_\eqref{eq:domM14}:=\left(\max_{\sigma_A+1\leq k\leq \widehat{\tau}_1}\max_{u\in\Omega(w_k)}\ty^{\sigma_A}(u)\mathcal{M}_e^{(u)}\right)-\left(\ty^{\sigma_A}(w_{\sigma_A})\max_{\sigma_A+1\leq k\leq \tauh_1}\max_{\ u\in\Omega(w_k)}e^{-(V(u)-V(w_{\sigma_A}))}\mathcal{M}_e^{(u)}\right).
\]
To prove this, we follow the lines of the proof and replace the sums by maxima. The only delicate point is when bounding $g_A$ in equation~(4.29). We proceed as follows. We let
\begin{equation*}
g_A(x):=\widehat{\p}^*_1\left( \max_{k\geq \sigma_A+1}\max_{u\in\Omega(w_k)}|(\ty^{\sigma_A}(u)-\ty^{\sigma_A}(w_{k-1})e^{-\Delta V(u)})Z_u|>\frac{\epsilon}{2}x,\sigma_A<\widehat{\tau}_1 \right). 
\end{equation*}
If $\kappa\geq 2$, Markov's inequality yields
\begin{align*}
g_A(x)&\leq (\frac{\epsilon}{2}x)^{-\kappa'}\widehat{\e}^*_1\left[ \sum_{k\geq \sigma_A+1}\sum_{u\in\Omega(w_k)}|(\ty^{\sigma_A}(u)-\ty^{\sigma_A}(w_{k-1})e^{-\Delta V(u)})Z_u|^{\kappa'}\ind{\sigma_A<\widehat{\tau}_1} \right]\\
 &\leq(\frac{\epsilon}{2}x)^{-\kappa'}\widehat{\e}^*_1\left[ \sum_{k\geq \sigma_A+1} (\ty^{\sigma_A}(w_{k-1}))^{\kappa'/2}\ind{\sigma_A<\widehat{\tau}_1} \right]\widehat{\e}^*_1[(Z_u)^{\kappa'}]\\
 &\qquad\qquad\qquad\times C_7\widehat{\e}^*_1\left[(\sum_{u\in\Omega(w_k)}e^{-\Delta V(u)})+(\sum_{u\in\Omega(w_k)}e^{-\Delta V(u)})^{\kappa'}\right], 
\end{align*}
where the second inequality is due to~\eqref{NBlaw} with $N=1$. Using~\eqref{eq:dompath}, we get 
\begin{equation*}
g_A(x)\leq C_8 x^{-\kappa'}\widehat{\e}^*_1[(\ty^{\sigma_A}(w_{\sigma_A}))^{\kappa'/2}\ind{\sigma_A<\widehat{\tau}_1}]\leq C_8 A^{-\kappa/2} x^{-\kappa'}\widehat{\e}^*_1[(\ty^{\sigma_A}(w_{\sigma_A}))^{\kappa'}\ind{\sigma_A<\widehat{\tau}_1}] 
\end{equation*}
which is smaller than $\frac{\epsilon}{2}x^{-\kappa'}K_A$ for $A$ large enough. If $\kappa<2$, we write 
\begin{equation*}
g_A(x)\leq (\frac{\epsilon}{2}x)^{-\kappa'}\widehat{\e}^*_1\left[\widehat{\e}^*\left[ \sum_{k\geq \sigma_A+1}\sum_{u\in\Omega(w_k)}|(\ty^{\sigma_A}(u)-\ty^{\sigma_A}(w_{k-1})e^{-\Delta V(u)})Z_u|^{2}\ind{\sigma_A<\widehat{\tau}_1} \Big\vert \ty^{\sigma_A}(w_{\sigma_A)}\right]^{\kappa'/2}\right]
\end{equation*}
and we can use the same reasoning as in the case $\kappa\geq 2 $. \\

Equations \eqref{eq:domM12}, \eqref{eq:domM13} and \eqref{eq:domM14} yield \eqref{eq:ecartM1Winf}. \\

Let us now deal with the vertices on the spine. We want to prove that
\begin{equation}\label{eq:ecartM1infspine}
\widehat{\p}^*_1\big( |M_1^{>\sigma_A,\star}-(\beta^{\sigma_A}(w_{\sigma_A})\max_{\sigma_A\leq k\leq \tauh_1}e^{-(V(w_k)-V(w_{\sigma_A}))})|>2\epsilon x,\ \sigma_A<\widehat{\tau}_1 \big)\leq 2\epsilon x^{-\kappa'} K_A. 
\end{equation}

First, the same reasoning as that at the beginning of the proof of Lemma~19 of~\cite{dR16} yields that if one shows the existence of a constant $C_9\in(0;\infty)$ such that 
\begin{equation}\label{eq:toshowspine1}
\widehat{\p}^*_1\Big(\sum_{k\geq 1} \beta_0^1(w_k)>x\Big)\leq C_9 x^{-\kappa'}, 
\end{equation}
then 
\begin{equation*}
\widehat{\p}^*_1\Big(\sum_{k\geq \sigma_A} (\beta(w_k)-\beta^{\sigma_A}(w_k))>\epsilon x,\ \sigma_A<\widehat{\tau}_1\Big)\leq \epsilon x^{-\kappa'}K_A, 
\end{equation*}
which would yield 
\begin{equation}\label{eq:shownspine1}
\widehat{\p}^*_1\Big(\max_{k\geq \sigma_A} (\beta(w_k)-\beta^{\sigma_A}(w_k))>\epsilon x,\ \sigma_A<\widehat{\tau}_1\Big)\leq \epsilon x^{-\kappa'}K_A
\end{equation}
as $\beta(w_k)-\beta^{\sigma_A}(w_k)\geq 0$ for all $k\geq \sigma_A$. But showing~\eqref{eq:toshowspine1} is similar to showing the domination of $g(x)$ in the proof of~Lemma~19 of~\cite{dR16}, after replacing $\sum_{u\in\Omega(w_k)}e^{-\Delta V(u)}Z_u$ by $1$, hence~\eqref{eq:shownspine1} stands. 

Finally, the same reasoning as that made for the domination of $f_A$ in the proof of~Lemma~18 of~\cite{dR16} but replacing $Z_{w_\ell}$ by $1$ gives
\begin{equation}\label{eq:shownspine2}
\widehat{\p}^*_1\left( \max_{\sigma_A\leq k \leq \tauh_1} \big|\beta^{\sigma_A}(w_k)-\beta^{\sigma_A}(w_{\sigma_A})e^{-(V(w_k)-V(w_{\sigma_A}))}\big|>\epsilon x,\ \sigma_A<\widehat{\tau}_1 \right)\leq \epsilon x^{-\kappa'}K_A. 
\end{equation}
Equation~\eqref{eq:shownspine1} together with~\eqref{eq:shownspine2} yield~\eqref{eq:ecartM1infspine}. 

Combining equations~\eqref{eq:domM11},~\eqref{eq:ecartM1infspine} and~\eqref{eq:ecartM1Winf} concludes the proof of the lemma. 

\end{proof}
\paragraph{Step 3}

Because of the independence between $(\beta^{\sigma_A}(w_{\sigma_A}), \ind{\sigma_A<\widehat{\tau}_1})$ and $(W_\infty^{(w_{\sigma_A})},\mathcal{M}_e^{(w_{\sigma_A})})$, we only need to consider the joint tail $\widehat{\p}^*_1(W_\infty^{(w_{\sigma_A})}\geq ax,\mathcal{M}_e^{(w_{\sigma_A})}\geq x)$, which equals $\Q^*(W_\infty\geq ax, \mathcal{M}_e\geq x)$. It then follows from \eqref{tailWMQ} that as $x\to\infty$, 
\begin{align*}
\widehat{P}_1(ax,x;A)=&\widehat{\p}^*_1(W_\infty^{(w_{\sigma_A})}\geq a\frac{x}{\beta^{\sigma_A}(w_{\sigma_A})},\mathcal{M}_e^{(w_{\sigma_A})}\geq \frac{x}{\beta^{\sigma_A}(w_{\sigma_A})}; \sigma_A<\widehat{\tau}_1)\\
\sim & \mu(a) x^{-\kappa'}K_A.
\end{align*}
So, letting $x\rightarrow\infty$, $\epsilon\downarrow0$ then $A\uparrow\infty$ in \eqref{LMstep2} implies that 
\[
P_1(ax,x)\sim \eta_1(a)x^{-\kappa'}=\mu(a) C_\infty x^{-\kappa'},
\]
where
\begin{equation}\label{eq:defCinf}
C_\infty:=\lim_{A\rightarrow\infty}K_A\in (0,\infty).
\end{equation} 
The positivity of $C_\infty$ can be obtained by use of Lemma 4.1 in \cite{CdRH16}. By Lemma \ref{tailLMp}, one sees that
\begin{equation}\label{tailLMp1}
\p_1(L_1\geq ax, M_1\geq x)\sim \eta(a)x^{-\kappa},
\end{equation}
where 
\[
\eta(a)=c_\tM C_\infty\E[(1\wedge \frac{\mathcal{W}_\infty^\tM}{a})^{\kappa}].
\]

If we only use the tail of $W_\infty$, similar arguments imply that $\p_1(L_1\geq x)\sim c_L x^{-\kappa}$ with $c_L=C_0C_\infty$. However, we could not deduce the tail of $M_1$ under $\p_1$ as $\widehat{\p}_1^*(M_1\geq x)=\widehat{\p}_1^*(L_1\geq1, M_1\geq x)=\e_1[L_1; L_1\geq 1, M\geq x]$. In the next subsection, we study the tail of $M_1$.

\subsection{Tail of $M_1$ under $\p_1$: proof of Lemma \ref{tailsmallLM}}
It is immediate from Lemma \ref{tailsmallLM} and \eqref{tailLMp1} that as $x\rightarrow\infty$.
\[
\p_1(M_1\geq x)\sim \eta(0) x^{-\kappa},
\]
with $\eta(0)=c_\tM C_\infty$. Let us prove Lemma \ref{tailsmallLM} here, which mainly follows from Lemma \ref{BRWtailM}.

\begin{proof}[Proof of Lemma \ref{tailsmallLM}]
Let us consider $\p_1(L_1\leq \epsilon r, M_1\geq r)$ for $\epsilon\in(0,1)$ small and $r\gg1$. Let $\Sigma_r:=\sum_{u\in\T}\ind{\beta(u)\geq r>\max_{\rho\leq v<u}\beta(v),  \min_{\rho<v<u}\beta(v)\geq 2}$ for any $r>2$. Obviously, $\{M_1\geq r\}=\{\Sigma_r\geq 1\}$. Moreover, for any $u<\mathcal{L}_1$, one has
\[
L_1=\sum_{j=1}^{|u|} \sum_{z\in\Omega(u_j)}L_1^{(z)}\ind{z\in\B_1}+L_1^{(u)}\geq L_1^{\slash u}:=\sum_{j=1}^{|u|} \sum_{z\in\Omega(u_j)}L_1^{(z)}\ind{z\in \B_1}.
\]
Then observe that
\begin{equation*}
\p_1(M_1\geq r, L_1\leq\epsilon r)\leq \e_1\left[\Sigma_r\ind{L_1\leq \epsilon r}\right]
\leq  \e_1\left[\sum_{u\in\T}\ind{\beta(u)\geq r>\max_{\rho\leq v<u}\beta(v), \min_{\rho<v<u}\beta(v)\geq 2}\ind{L_1^{\slash u}\leq \epsilon r}\right]
\end{equation*}
which by change of measures, is bounded by 
\[
 \sum_{n\geq 1}\widehat{\e}^*_1\left[\frac{1}{\beta(w_n)}\ind{n=\sigma_r<\widehat{\tau}_1}\ind{L_1^{\slash w_n}\leq \epsilon r}\right]
\]
So, one sees that for any $2\leq A<r$, 
\begin{align*}
\p_1(M_1\geq r, L_1\leq \epsilon r)
\leq & \sum_{n\geq 1}\widehat{\e}^*_1\left[\frac{1}{\beta(w_n)}\ind{n=\sigma_r<\widehat{\tau}_1}\ind{L_1^{\slash w_n}\leq \epsilon r}\right]\\
%=&\widehat{\e}^*_1\left[\frac{1}{\beta(w_{\sigma_r})}\ind{\sigma_r<\widehat{\tau}_1}}\ind{L_1^{\slash w_{\sigma_r}}\leq \epsilon r}\right]\\
\leq &\widehat{\e}^*_1\left[\frac{1}{\beta(w_{\sigma_r})}\ind{\sigma_r<\widehat{\tau}_1}\ind{\sum_{j=\sigma_A+1}^{\sigma_r}\sum_{z\in\Omega(w_j)}L_1^{(z)}\leq \epsilon r}\right]
\end{align*}
As in the previous subsection, under $\widehat{\p}^*_1$, we can approximate $L_1^{(z)}$ by $\beta^{\sigma_A}(w_{\sigma_A})e^{V(w_{\sigma_A})-V(z)}W^{(z)}_\infty$, and along the spine $(w_k)_{k\geq\sigma_A}$, $\beta(w_k)$ can be approximated by $\beta^{\sigma_A}(w_{\sigma_A})e^{-(V(w_k)-V(w_{\sigma_A}))}$. In fact, we have
\begin{align*}
&\p_1(M_1\geq r, L_1\leq \epsilon r)\\
\leq & \widehat{\e}^*_1\left[\frac{1}{\beta(w_{\sigma_r})}%\ind{\sum_{j=\sigma_A+1}^{\sigma_r}\sum_{z\in\Omega(w_j)}L_1^{(z)}\leq \epsilon r}
\ind{\sigma_r<\widehat{\tau}_1}\ind{|\sum_{j=\sigma_A+1}^{\sigma_r}\sum_{z\in\Omega(w_j)}[L_1^{(z)}-\beta^{\sigma_A}(w_{\sigma_A})e^{V(w_{\sigma_A})-V(z)}W^{(z)}_\infty]|\geq \epsilon r}\right]\\
&+ \widehat{\e}^*_1\left[\frac{1}{\beta(w_{\sigma_r})}\ind{\sigma_r<\tauh_1}%\ind{\sigma_r<\widehat{\tau}_1}
\ind{\max_{\sigma_A\leq k\leq \widehat{\tau}_1}|\beta(w_k)-\beta^{\sigma_A}(w_{\sigma_A})e^{-(V(w_k)-V(w_{\sigma_A}))}|\geq \epsilon r}\right]\\
&+ \widehat{\e}^*_1\left[\frac{1}{\beta(w_{\sigma_r})}\ind{\sigma_r<\widehat{\tau}_1}\ind{\beta^{\sigma_A}(w_{\sigma_A})\geq \sqrt{r}}%\ind{\beta^{\sigma_A}(w_{\sigma_A})e^{-(V(w_{\sigma_r})-V(w_{\sigma_A})}\geq (1-\epsilon) r}
\ind{\max_{\sigma_A\leq k\leq \widehat{\tau}_1}|\beta(w_k)-\beta^{\sigma_A}(w_{\sigma_A})e^{-(V(w_k)-V(w_{\sigma_A}))}|< \epsilon r}\right]+\widehat{\e}^*_{\eqref{keybd}},
\end{align*}
where
\begin{multline}\label{keybd}
\widehat{\e}^*_{\eqref{keybd}}:=\widehat{\e}^*_1\left[\frac{1}{\beta(w_{\sigma_r})}\ind{\sigma_r<\widehat{\tau}_1}\ind{\sum_{j=\sigma_A+1}^{\sigma_r}\sum_{z\in\Omega(w_j)}\beta^{\sigma_A}(w_{\sigma_A})e^{V(w_{\sigma_A})-V(z)}W^{(z)}_\infty\leq 2\epsilon r; \ \beta^{\sigma_A}(w_{\sigma_A})< \sqrt{r}}\right.\\
\times\left.\ind{\max_{\sigma_A\leq k\leq \widehat{\tau}_1}|\beta(w_k)-\beta^{\sigma_A}(w_{\sigma_A})e^{-(V(w_k)-V(w_{\sigma_A}))}|< \epsilon r}\right]
\end{multline}
Notice that the third term of the sum is smaller than 
\begin{align*}
&\widehat{\e}^*_1\left[\frac{1}{\beta(w_{\sigma_r})}\ind{\sigma_r<\widehat{\tau}_1}\ind{\beta^{\sigma_A}(w_{\sigma_A})\geq \sqrt{r}}\ind{\beta^{\sigma_A}(w_{\sigma_A})e^{-(V(w_{\sigma_r})-V(w_{\sigma_A})}\geq (1-\epsilon) r}\right]\\
&\le\frac{1}{r}\widehat{\p}^*_1\left( \beta^{\sigma_A}(w_{\sigma_A})\geq \sqrt r,\ \sigma_A<\tauh_1,\ \sum_{n\geq \sigma_A}e^{-(V(w_n)-V(w_{\sigma_A})}\geq (1-\epsilon)\frac{r}{\beta^{\sigma_A}(w_{\sigma_A})} ) \right)\\
&\leq \frac{1}{r}\widehat{\e}^*_1\left[ \ind{\beta^{\sigma_A}(w_{\sigma_A})\geq \sqrt r}\ind{\sigma_A<\tauh_1}C_{10}(\beta^{\sigma_A}(w_{\sigma_A}))^{\kappa'}r^{-\kappa'} \right], 
\end{align*}
where the inequality is due to Fact~2.2(1) of~\cite{CdRH16} and the Markov property applied at time $\sigma_A$. Moreover, one sees that
\begin{align*}
\widehat{\e}^*_{\eqref{keybd}}\le&\widehat{\e}^*_1\left[\sum_{n=\sigma_A+1}^\infty \frac{1}{\beta(w_{\sigma_r})}\ind{\sigma_r=n<\tauh_1,\ \beta^{\sigma_A}(w_{\sigma_A})<\sqrt{r}}\ind{\sum_{j=\sigma_A+1}^{n}\sum_{z\in\Omega(w_j)}\beta^{\sigma_A}(w_{\sigma_A})e^{V(w_{\sigma_A})-V(z)}W^{(z)}_\infty\leq 2\epsilon r}\right.\\
&\qquad\qquad\times\left.\ind{\max_{\sigma_A\leq k\leq \sigma_r}|\beta(w_k)-\beta^{\sigma_A}(w_{\sigma_A})e^{-(V(w_k)-V(w_{\sigma_A}))}|<  \epsilon r\le \epsilon \beta(w_{\sigma_r})}\right]\\
\leq &\widehat{\e}^*_1\left[\sum_{n=\sigma_A+1}^\infty \frac{1+\epsilon}{\beta^{\sigma_A}(w_{\sigma_A})e^{-(V(w_{n})-V(w_{\sigma_A}))}}\ind{\sigma_r=n<\tauh_1,\ \beta^{\sigma_A}(w_{\sigma_A})<\sqrt{r}}\ind{\beta^{\sigma_A}(w_{\sigma_A})e^{-(V(w_n)-V(w_{\sigma_A}))}\ge (1-\epsilon)r}\right.\\
&\times\left.\ind{\sum_{j=\sigma_A+1}^{n}\sum_{z\in\Omega(w_j)}\beta^{\sigma_A}(w_{\sigma_A})e^{V(w_{\sigma_A})-V(z)}W^{(z)}_\infty\leq 2\epsilon r}\ind{\max_{\sigma_A\leq k< n}\beta^{\sigma_A}(w_{\sigma_A})e^{-(V(w_k)-V(w_{\sigma_A}))}< (1+\epsilon)r}\right].
\end{align*}
Let $r_A:=\log(\frac{r}{\beta^{\sigma_A}(w_{\sigma_A})})$. We get that
\begin{align*}
\widehat{\e}^*_{\eqref{keybd}}\le&(1+\epsilon)\widehat{\e}^*_1\left[\ind{\beta^{\sigma_A}(w_{\sigma_A})< \sqrt{r},\ \sigma_A<\widehat{\tau}_1}\sum_{n\geq \sigma_A+1}\frac{e^{V(w_n)-V(w_{\sigma_A})}}{\beta^{\sigma_A}(w_{\sigma_A})}\ind{V(w_n)-V(w_{\sigma_A})\le -r_A-\log(1-\epsilon)}\right.\\
&\times\left.\ind{\sum_{j=\sigma_A+1}^{n}\sum_{z\in\Omega(w_j)}e^{-(V(z)-V(w_{\sigma_A}))}W^{(z)}_\infty\leq 2\epsilon e^{r_A}}\ind{\min_{\sigma_A\leq k< n}(V(w_k)-V(w_{\sigma_A}))> -r_A-\log(1+\epsilon)}\right]
\end{align*}
Consequently, for $\epsilon\in(0,1/3)$ sufficiently small, 
\begin{align}\label{tailMsmallL}
&\p_1(M_1\geq r, L_1\leq \epsilon r)\\
\leq & \frac{1}{r}\widehat{\p}_1^*\left(\sum_{j=\sigma_A+1}^{\tauh_1}|\sum_{z\in\Omega(w_j)}[L_1^{(z)}-\beta^{\sigma_A}(w_{\sigma_A})e^{V(w_{\sigma_A})-V(z)}W^{(z)}_\infty]|\geq \epsilon r\right)\nonumber\\
&+\frac{1}{r}\widehat{\p}_1^*\left(\max_{\sigma_A\leq k\leq \widehat{\tau}_1}|\beta(w_k)-\beta^{\sigma_A}(w_{\sigma_A})e^{-(V(w_k)-V(w_{\sigma_A}))}|\geq \epsilon r\right)\nonumber\\
&+\frac{C_{10}}{r^\kappa}\widehat{\e}_1^*\left[(\beta^{\sigma_A}(w_{\sigma_A}))^{\kappa-1}\ind{\beta^{\sigma_A}(w_{\sigma_A})\geq \sqrt{r}}\ind{\sigma_A<\widehat{\tau}_1}\right]+(1+\epsilon)\widehat{\e}^*_{\eqref{tailMsmallLp}}\nonumber
\end{align}
where
\begin{multline}\label{tailMsmallLp}
\widehat{\e}^*_{\eqref{tailMsmallLp}}=:\widehat{\e}^*\Bigg[\ind{\beta^{\sigma_A}(w_{\sigma_A})< \sqrt{r}; \sigma_A<\widehat{\tau}_1}\sum_{n\geq \sigma_A+1}\frac{e^{(V(w_n)-V(w_{\sigma_A}))}}{\beta^{\sigma_A}(w_{\sigma_A})}\ind{\sum_{j=\sigma_A+1}^n\sum_{z\in\Omega(w_j)}e^{-(V(z)-V(w_{\sigma_A}))}W_\infty^{(z)}\leq 2\epsilon \exp(r_A)}
\\\times\ind{\min_{\sigma_A\leq k<n}(V(w_k)-V(w_{\sigma_A}))> -r_A-\epsilon;\ (V(w_n)-V(w_{\sigma_A}))\leq-r_A+2\epsilon}\Bigg],
\end{multline}
with $r_A\geq \log(r)/2$. According to the Subsection \ref{jointtailLMp}, one sees from \eqref{tailMsmallL} that for $A\geq2$ fixed and $r\gg 1$,
\begin{equation}\label{upbdtailMsmallL}
\p_1(M_1\geq r, L_1\leq \epsilon r)\leq C_{11} \epsilon r^{-\kappa}+ o_r(1) r^{-\kappa}+ 2 \widehat{\e}^*_{\eqref{tailMsmallLp}}.
\end{equation}
Here for $\widehat{\e}^*_{\eqref{tailMsmallLp}}$, by Markov property at time $\sigma_A$, one has
\[
\widehat{\e}^*_{\eqref{tailMsmallLp}}=\widehat{\e}_1^*\left[\frac{\ind{\beta^{\sigma_A}(w_{\sigma_A})< \sqrt{r}; \sigma_A<\widehat{\tau}_1}}{\beta^{\sigma_A}(w_{\sigma_A})}\E_{\Q^*}^{\eqref{tailWsmallVP}}(\epsilon, r_A)\right]
\]
where for any $\delta_1,\delta_2>0$,
\begin{multline}\label{tailWsmallVP}
\E_{\Q^*}^{\eqref{tailWsmallVP}}(\epsilon, r_A)=: \sum_{n\geq1}\E_{\Q^*}\Bigg[e^{V(w_n)}\ind{V(w_n)\leq -r_A+2\epsilon,\ \min_{1\leq k\leq n-1}V(w_k)> -r_A-\epsilon}\\
\times\ind{\sum_{j=1}^n\sum_{z\in\Omega(w_j)}e^{-V(z)}W_\infty^{(z)}\leq 2\epsilon \exp(r_A)}\Bigg].
\end{multline}
Let us claim that 
\begin{equation}\label{cvgtailWsmallVP}
\limsup_{\epsilon\downarrow0}\limsup_{r\rightarrow\infty}e^{\kappa r}\E_{\Q^*}^{\eqref{tailWsmallVP}}(\epsilon, r)=0.
\end{equation}
It mainly follows from Lemma \ref{BRWtailM}. We postpone its proof in the Subsection \ref{lemRWRE}. Then \eqref{cvgtailWsmallVP} implies that
\[
\widehat{\e}^*_{\eqref{tailMsmallLp}}=\widehat{\e}_1^*\left[\frac{\ind{\beta^{\sigma_A}(w_{\sigma_A})< \sqrt{r}; \sigma_A<\widehat{\tau}_1}}{\beta^{\sigma_A}(w_{\sigma_A})} e^{-\kappa r_A}o_{\epsilon, r_A}(1)\right]\leq  \frac{o_{\epsilon, r}(1)}{r^\kappa} \E\left[(\beta^{\sigma_A}(w_{\sigma_A}))^{\kappa-1}; \sigma_A<\widehat{\tau}_1\right].
\]
Plugging it in \eqref{upbdtailMsmallL} yields that
\[
\limsup_{\epsilon\downarrow0}\limsup_{r\rightarrow\infty}r^\kappa \p_1(M_1\geq r, L_1\leq \epsilon r)=0.
\]

\end{proof}

\section{Proofs of the technical lemmas}
\label{TechLem}

\subsection{Proofs of the technical lemmas in Section~\ref{tailBRW}}
\label{lemBRW}

Recall that $\psi(1)=\psi(\kappa)=1$ and $\psi(-1-\delta)+\psi(\kappa+\delta)<\infty$ for some $\delta>0$. We have the well-known many-to-one lemma.
\begin{lem}[Many-to-one]\label{ManytoOne}
Assume \eqref{hyp1} and \eqref{hyp1+}. For any $n\geq 1$, $x\in\R$ and any measurable function $g:\mathbb{R}^n\rightarrow \R_+$,
\begin{equation}\label{manyto1}
\E_x\left[\sum_{|z|=n}g\Big(V(z_1),\cdots, V(z_n)\Big)\right]=\E_x\left[e^{S_n-x}g(S_1,\cdots,S_n)\right],
\end{equation}
where $(S_n)$ is a random walk with i.i.d. increments such that 
\[
\E[S_1]=\E\left[\sum_{|z|=1}V(z)e^{-V(z)}\right]>0,\quad \E[e^{-(\kappa-1)S_1}]=1,\quad \E[e^{\delta S_1}+e^{-(\kappa-1+\delta)S_1}]<\infty.
\]
In particular, under $\Q^*_x$, $(V(w_i); 1\leq i\leq n)$ has the same distribution as $(S_i; 1\leq i\leq n)$ under $\P_x$ where $\P_x(S_0=x)=1$.
Similarly, as $\psi(\kappa)=1$ and $\psi'(\kappa)>0$, we have
\begin{equation}\label{manyto1+}
\E_x\left[\sum_{|z|=n}g\Big(V(z_1),\cdots, V(z_n)\Big)\right]=\E\left[e^{\kappa (S^{(\kappa)}_n-x)}g(S^{(\kappa)}_1,\cdots,S^{(\kappa)}_n)\right],
\end{equation}
where $(S^{(\kappa)}_n)$ is a random walk with i.i.d. increments such that 
\[
\E[S^{(\kappa)}_1]=\E\left[\sum_{|z|=1}V(z)e^{-\kappa V(z)}\right]<0,\quad \E[e^{(\kappa-1+\delta) S^{(\kappa)}_1}+e^{-\delta S^{(\kappa)}_1}]<\infty.
\]
Moreover, 
\begin{equation}\label{rwschange}
\E_x\left[g(S_1,\cdots,S_n)\right]=\E_x\left[e^{(\kappa-1) (S^{(\kappa)}_n-x)}g(S^{(\kappa)}_1,\cdots,S^{(\kappa)}_n)\right]
\end{equation}
\end{lem}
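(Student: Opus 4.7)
The plan is to establish \eqref{manyto1}, \eqref{manyto1+} and \eqref{rwschange} by an induction on $n$ powered by the branching property, and then read off the law of $(V(w_i))_{1\leq i\leq n}$ under $\Q^*_x$ by combining \eqref{manyto1} with Proposition~\ref{BRWchangeofp}.

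First I would define the step law $\mu$ of $S$ by
\[
\mu(dy):=\E_0\!\left[\sum_{|z|=1}e^{-V(z)}\ind{V(z)\in dy}\right],
\]
which is a probability measure because $\psi(1)=1$, and declare $(S_n)$ under $\P_x$ to be the random walk with i.i.d.\ increments of law $\mu$ started at $x$. The relation $\int e^{\theta y}\mu(dy)=\psi(1-\theta)$, valid in a neighbourhood of $[-(\kappa-1),0]$ by \eqref{hyp1+}, combined with $\psi'(1)<0$, immediately yields the claimed mean and exponential moment bounds for $S_1$. The step law of $S^{(\kappa)}$ is defined in the same way with $e^{-\kappa V(z)}$ in place of $e^{-V(z)}$: it is a probability measure because $\psi(\kappa)=1$, and the sign $\E[S_1^{(\kappa)}-x]=-\psi'(\kappa)<0$ follows from the convexity of $\psi$ together with $\psi(1)=\psi(\kappa)=1$ and $\kappa>1$.

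Identity \eqref{manyto1} is then proved by induction on $n$. The case $n=1$ is a direct computation from the definition of $\mu$:
\[
\E_x\!\left[e^{S_1-x}g(S_1)\right]=\int e^{y}g(x+y)\mu(dy)=\E_0\!\left[\sum_{|z|=1}g(x+V(z))\right]=\E_x\!\left[\sum_{|z|=1}g(V(z))\right].
\]
For the induction step, the branching property gives
\[
\E_x\!\left[\sum_{|z|=n}g(V(z_1),\ldots,V(z_n))\right]=\E_x\!\left[\sum_{|u|=1}\E_{V(u)}\!\left[\sum_{|y|=n-1}g(V(u),V(y_1),\ldots,V(y_{n-1}))\right]\right];
\]
applying the inductive hypothesis to the inner sum, then the base case to the outer sum, and finally the Markov property of $(S_k)$, one recovers $\E_x[e^{S_n-x}g(S_1,\ldots,S_n)]$. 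Identity \eqref{manyto1+} is proved identically, using the step law defined from $e^{-\kappa V(z)}$. For \eqref{rwschange}, both sides coincide with $\E_x[\sum_{|z|=n}e^{x-V(z)}g(V(z_1),\ldots,V(z_n))]$: the left-hand side by applying \eqref{manyto1} to $(y_1,\ldots,y_n)\mapsto e^{x-y_n}g(y_1,\ldots,y_n)$, and the right-hand side by applying \eqref{manyto1+} to the same function.

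Finally, Proposition~\ref{BRWchangeofp} yields, for any bounded measurable $f$,
\[
\E_{\Q^*_x}[f(V(w_1),\ldots,V(w_n))]=\E_x\!\left[\sum_{|u|=n}e^{x-V(u)}f(V(u_1),\ldots,V(u_n))\right],
\]
so applying \eqref{manyto1} with $g(y_1,\ldots,y_n):=e^{x-y_n}f(y_1,\ldots,y_n)$ produces $\E_{\Q^*_x}[f(V(w_\bullet))]=\E_x[f(S_\bullet)]$, which is the claimed distributional identity. No step presents a serious obstacle; the only care needed is the bookkeeping of shifts by $x$ and by $V(u)$ through the branching recursion, which is standard.
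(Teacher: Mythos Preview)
Your proof is correct and follows the standard induction-via-branching-property route. The paper, however, does not prove this lemma at all: it introduces it with the phrase ``We have the well-known many-to-one lemma'' and then proceeds directly to its consequences, treating the result as classical.
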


Before proving the lemmas, let us state some classic results on random walks $(S_n)_{n\geq0}$ and $(S_n^{(\kappa)})_{n\geq0}$.

\subsubsection{Renewal theory for one-dimensional random walk}

For any $0\leq j< n$, let
\[
\MS_{[j,n]}:=\max_{j\leq k\leq n}{S_k}, \mS_{[j,n]}:=\min_{j\leq k\leq n}S_k,\textrm{ and } \overline{V}(w_{[j,n]}):=\max_{j\leq k\leq n}V(w_k), \underline{V}(w_{[j,n]}):=\min_{j\leq k\leq n}V(w_k).
\]

For the random walk $(S_n^{(\kappa)})_{n\geq0}$, we define the renewal measures $U_s^{(\kappa),\pm}$ corresponding to the strict ascending/descending ladder process by 
\begin{equation}
U_s^{(\kappa),\pm}([0,x]):=\E\left[\sum_{k=0}^{\tau^{(\kappa),\mp}-1}\ind{\pm S^{(\kappa)}_k\leq x}\right], \forall x\geq0.
\end{equation}
with $\tau^{(\kappa),+}:=\inf\{k\geq1: S^{(\kappa)}_k\geq0\}$ and $\tau^{(\kappa),-}:=\inf\{k\geq1: S^{(\kappa)}_k\leq 0\}$. And the renewal measures corresponding to the weak ascending/descending ladder process are defined as follows:
\[
U_w^{(\kappa),\pm}([0,x]):=\E\left[\sum_{k=0}^{\hat{\tau}^{(\kappa),\mp}-1}\ind{\pm S^{(\kappa)}_k\leq x}\right], \forall x\geq0.
\]
with $\hat{\tau}^{(\kappa),+}:=\inf\{k\geq1: S^{(\kappa)}_k>0\}$ and $\hat{\tau}^{(\kappa),-}:=\inf\{k\geq1: S^{(\kappa)}_k<0\}$.

As usual, we set the strict renewal functions to be $R_s^{(\kappa),\pm}(x):=U_s^\pm([0,x])$. Then it is known that there exist constants $C_s^\pm\in(0,\infty)$ such that for any $h>0$, as $x\rightarrow\infty$,
\[
R^{(\kappa),+}_s(x)\rightarrow C_s^{(\kappa),+}\textrm{ and } U^{(\kappa),-}_s((x-h,x])\rightarrow C_s^{(\kappa),-}h.
\]
As $\E[S_1^{(\kappa)}]<0$, its strict descending process, denoted by $(\hat{\tau}_n^{(\kappa),-}, \hat{H}_n^{(\kappa),-})_{n\geq0}$, is proper. For the strict ascending ladder process, define the epochs by
\[
\hat{\tau}_n^{(\kappa),+}:=\inf\{k >\hat{\tau}_{n-1}^{(\kappa),+}: S^{(\kappa)}_k > S^{(\kappa)}_{\hat{\tau}_{n-1}^{(\kappa),+}} \}, \ \forall n\geq1,
\]
with $\hat{\tau}_0^{(\kappa),+}:=0$. On $\{\hat{\tau}_n^{(\kappa),+}<\infty\}$, let $\hat{H}_n^{(\kappa),+}:=S^{(\kappa)}_{\hat{\tau}_n^{(\kappa),+}}$. Then
\[
R_s^{(\kappa),+}(x)=\E\left[\sum_{k\geq0}\ind{\hat{\tau}_k^{(\kappa),+}<\infty, \hat{H}_k^{(\kappa),+}\leq x}\right]\textrm{ and } R_s^{(\kappa),-}(x)=\E\left[\sum_{k\geq0}\ind{\hat{H}_k^{(\kappa),-}\geq -x}\right]
\]

Write $I(x)$ for $(-x-1,-x]$. 

\begin{lem}
Under the assumptions of Lemma \ref{ManytoOne}, for any $a>0$, as $x\rightarrow\infty$,
\begin{equation}\label{RWeScvg}
\E_{-a}\left[\sum_{k\geq0}e^{S_k+\kappa x}\ind{\MS_{[1,k]}<0, S_k\in I(x)}\right]\rightarrow e^{(\kappa-1)a}\frac{1-e^{-\kappa}}{\kappa}C_s^{(\kappa),-} U_w^{(\kappa),+}([0,a)),
\end{equation}
In addition, there exists a constant $c_2>0$ such that for any $x\geq0$ and $a>0$,
\begin{equation}\label{RWeSbd}
\E_{-a}\left[\sum_{k\geq0}e^{S_k+\kappa x}\ind{\MS_{[1,k]}<0, S_k\in I(x)}\right]\leq  c_2(1+a)e^{\kappa' a}. 
\end{equation}
\end{lem}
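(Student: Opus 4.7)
The plan is to carry out the argument in three steps and then derive the uniform bound from the same decomposition.

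First, I would apply the change-of-measure identity~\eqref{rwschange} in Lemma~\ref{ManytoOne} with $g(s_1,\dots,s_k)=e^{s_k+\kappa x}\ind{\max_{1\le j\le k}s_j<0,\ s_k\in I(x)}$, which converts the quantity into
\[
e^{(\kappa-1)a}\,\E_{-a}\Bigl[\sum_{k\ge 0}e^{\kappa(S^{(\kappa)}_k+x)}\ind{\overline{S^{(\kappa)}}_{[1,k]}<0,\ S^{(\kappa)}_k\in I(x)}\Bigr].
\]
This moves the analysis to the walk $(S^{(\kappa)}_n)$, which has negative drift. On the event $\{S^{(\kappa)}_k\in I(x)\}$, writing $S^{(\kappa)}_k=-x-t$ with $t\in[0,1)$, the exponential factor reduces to $e^{-\kappa t}$.

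Second, I would recognise the remaining expectation as the integral of $y\mapsto e^{\kappa(y+x)}$ against the Green measure
\[
G^{(\kappa),-}(-a,dy):=\E_{-a}\Bigl[\sum_{k\ge 0}\ind{\overline{S^{(\kappa)}}_{[1,k]}<0,\ S^{(\kappa)}_k\in dy}\Bigr]
\]
of $(S^{(\kappa)})$ killed on first entry to $[0,\infty)$. The key point is the classical Wiener--Hopf/ladder factorization of this Green measure: split each trajectory at its global minimum up to time $k$; by duality (time reversal of the post-minimum piece), the portion from $-a$ down to the minimum is encoded by the strict descending ladder process of $(S^{(\kappa)})$, while the reversed post-minimum excursion corresponds to a weak ascending path of the dual walk starting from the endpoint that must not overshoot $a$ (otherwise the original walk would have exited above $0$). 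This gives $G^{(\kappa),-}(-a,\cdot)$ as a convolution of $U_w^{(\kappa),+}$ restricted to $[0,a)$ with $U_s^{(\kappa),-}$.

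Third, I would invoke Blackwell's renewal theorem, which is available because the non-lattice assumption~\ref{cond3} passes to $S^{(\kappa)}_1$ and thus to the strict descending ladder height: as $x\to\infty$, $U_s^{(\kappa),-}((x-h,x])\to C_s^{(\kappa),-}h$ uniformly on compact sets of $h$. Combined with the ascending factor $U_w^{(\kappa),+}([0,a))$ from the initial climb and the residual integration $\int_0^1 e^{-\kappa t}\,dt=\frac{1-e^{-\kappa}}{\kappa}$ coming from the position within $I(x)$, this reconstructs the constant of~\eqref{RWeScvg}. For the uniform bound~\eqref{RWeSbd}, the same factorization, together with the crude estimates $U_s^{(\kappa),-}((x-h,x])\le C$ (a consequence of standard Lorden-type renewal bounds) and $U_w^{(\kappa),+}([0,a))\le C(1+a)$, directly produces $c_2(1+a)e^{\kappa' a}$.

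The main obstacle is the Wiener--Hopf decomposition of the killed Green measure together with the uniformity needed for~\eqref{RWeSbd}: one has to verify that the linear growth in $a$ coming from $U_w^{(\kappa),+}([0,a))$ really controls the whole quantity uniformly in $x\ge 0$, rather than being dominated by boundary effects when $a$ is comparable to $x$; this is the point at which the non-lattice assumption and a quantitative form of the renewal theorem are both needed.
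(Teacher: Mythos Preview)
Your proposal is correct and follows essentially the same route as the paper: change of measure to $S^{(\kappa)}$ via~\eqref{rwschange}, a duality/ladder factorization of the killed Green measure into the product of $U_w^{(\kappa),+}([0,a))$ and the strict descending renewal $U_s^{(\kappa),-}$, and then Blackwell's renewal theorem for the limit together with a uniform renewal bound for~\eqref{RWeSbd}. The only cosmetic difference is that the paper time-reverses the \emph{entire} path first (turning $\{\overline S_{[1,k]}<0\}$ into $\{S_k < a+\underline S_{[0,k-1]}\}$) and then decomposes along strict descending ladder epochs, rather than splitting at the running minimum as you describe; both routes produce the same convolution formula.
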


\begin{proof}
Let us consider 
\begin{equation}
R(x,a):=\sum_{j\geq0}\P_{-a}\left(\MS^{(\kappa)}_{[1,j]}<0, S^{(\kappa)}_j> -x\right), \forall a,x \geq0.
\end{equation}

Note that $(S^{(\kappa)}_i)_{1\leq i\leq j}$ has the same distribution as $(S^{(\kappa)}_j-S^{(\kappa)}_{j-i})_{1\leq i\leq j}$. As a consequence, for any $a>0$,
\begin{align*}
R(x,a)=&\sum_{j\geq0}\P(\MS^{(\kappa)}_{[1,j]}<a, S^{(\kappa)}_j> a-x)\\
=&\sum_{j\geq0}\P\left(S^{(\kappa)}_j< a+\mS^{(\kappa)}_{[0,j-1]}, S^{(\kappa)}_j> a-x\right)
\end{align*}
As the associated descending ladder process is proper, this implies that
\begin{align*}
R(x,a)=&\sum_{n\geq0}\E\left[\sum_{j=\hat{\tau}_n^{(\kappa),-}}^{\hat{\tau}_{n+1}^{(\kappa),-}-1}\ind{S^{(\kappa)}_j< a+\mS^{(\kappa)}_{[0,j-1]}, S^{(\kappa)}_j> a-x}\right]\\
=&\sum_{n\geq 0} \E\left[\ind{\hat{H}_n^{(\kappa),-}\geq -x}\sum_{j=\hat{\tau}_n^{(\kappa),-}}^{\hat{\tau}_{n+1}^{(\kappa),-}-1}\ind{a-x< S^{(\kappa)}_j<a +\hat{H}_n^{(\kappa),-}}\right],
\end{align*}
which by Markov property at time $\tau_n^{(\kappa),-}$ equals to
\begin{align*}
&\sum_{n\geq 0} \E\left[\ind{\hat{H}_n^{(\kappa),-}\geq-x}U_w^{(\kappa),+}\left((a-x-\hat{H}_n^{(\kappa),-}, a)\right)\right]\\
=&\sum_{n\geq 0} \E\left[\ind{\hat{H}_n^{(\kappa),-}\geq -x}U_w^{(\kappa),+}\left([0, a)\right)\right]-\sum_{n\geq 0} \E\left[\ind{\hat{H}_n^{(\kappa),-}\geq-x}U_w^{(\kappa),+}\left([0, a-x-\hat{H}_n^{(\kappa),-}]\right)\right]
\end{align*}
This means that for any $a>0$ and $x\geq0$, 
\begin{equation}
%=& \sum_{n\geq 0} \E\left[\ind{\hat{H}_n^{(\kappa),-}>-x}\left(\right)\right]\\
R(x,a)= U_w^{(\kappa),+}\left( [0, a)\right)R_s^{(\kappa),-}(x) -\int_{[(x-a)_+,x]} U_w^{(\kappa),+}\left([0, a-x+u]\right) U_s^{(\kappa),-}(du).
\end{equation}
As $U_w^{(\kappa),+}\left(\R\right)=C_w^{(\kappa),+}\in(0,\infty) $, one sees that there exists some constant $c_3>0$ such that for any $x\geq0$,
\begin{equation}\label{upperbd}
\sum_{k\geq0}\P_{-a}\left[\MS^{(\kappa)}_{[1,k]}<0, S^{(\kappa)}_k\in I(x)\right]=R(x+1,a)-R(x,a)\leq c_3 (1+a), \forall a>0.
\end{equation}
Moreover, for any $b>0$, as $x\rightarrow\infty$, 
\begin{equation}\label{cvgrenewal}
R(x+b,a)-R(x,a)\rightarrow U_w^{(\kappa),+}\left( [0, a)\right) C_s^{(\kappa),-}b.
\end{equation}
It follows from Lemma \ref{ManytoOne} that 
\[
\E_{-a}\left[\sum_{k\geq0}e^{S_k+\kappa x}\ind{\MS_{[1,k]}<0, S_k\in I(x)}\right]
=e^{(\kappa-1)a}\E_{-a}\left[\sum_{k\geq0}e^{\kappa(S_k^{(\kappa)}+ x)}\ind{\MS^{(\kappa)}_{[1,k]}<0, S^{(\kappa)}_k\in I(x)}\right]
\]
Clearly, we have
\[
\E_{-a}\left[\sum_{k\geq0}e^{S_k+\kappa x}\ind{\MS_{[1,k]}<0, S_k\in I(x)}\right]\leq e^{\kappa' a} (R(x+1,a)-R(x,a))\leq c_3(1+a)e^{\kappa' a} .
\]
Moreover, for any $a>0$,
\begin{align*}
&\E_{-a}\left[\sum_{k\geq0}e^{S_k+\kappa x}\ind{\MS_{[1,k]}<0, S_k\in I(x)}\right]\\
%=&e^{(\kappa-1)a}\E_{-a}\left[\sum_{k\geq0}e^{\kappa(S_k^{(\kappa)}+ x)}\ind{\MS^{(\kappa)}_{[1,k]}<0, S^{(\kappa)}_k\in I(x)}\right]\\
=&e^{(\kappa-1)a+\kappa x}\int_{-\infty}^{-x}\kappa e^{\kappa u}\sum_{k\geq0}\P_{-a}\left(S^{(\kappa)}_k>u, \MS^{(\kappa)}_{[1,k]}<0, S^{(\kappa)}_k\in (-x-1,-x]\right)du\\
=&e^{(\kappa-1)a-\kappa}(R(x+1,a)-R(x,a))+e^{(\kappa-1)a}\int_0^1\kappa e^{-\kappa t}(R(x+t,a)-R(x,a))dt
\end{align*}
which, by \eqref{cvgrenewal}, converges towards
\[
e^{(\kappa-1)a}\left(e^{-\kappa}+\int_0^1\kappa te^{-\kappa t}dt\right) C_s^{(\kappa),-} U_w^{(\kappa),+}([0,a)),
\]
as $x\rightarrow\infty$. Here $e^{-\kappa}+\int_0^1\kappa te^{-\kappa t}dt=(1-e^{-\kappa})/\kappa$.
\end{proof}

\subsubsection{Proofs of lemmas \ref{BRWroughbd}, \ref{BRWmaincvg}, \ref{BRWrest}, \ref{BRWtight} and \ref{BRWtailM}}\label{subsubsec:prooflemmas}

\begin{proof}[Proof of Lemma \ref{BRWroughbd}]
The upper bound is quite easy. In fact, observe immediately that $\{\tM\leq -x\}$ implies that $\{\sum_{u\in\T}\ind{V(u)\leq -x<\min_{\rho<v<u}V(v)}\geq1\}$
 So, by Markov inequality then by \eqref{manyto1+},
\begin{align*}
\P(\tM\leq -x)\leq& \E\left[\sum_{k\geq1}\sum_{|u|=k}\ind{V(u)\leq -x<\min_{\rho<v<u}V(v)}\right]=\sum_{k\geq1}\E\left[e^{\kappa S^{(\kappa)}_k}; S^{(\kappa)}_k\leq -x < \min_{1\leq i\leq k-1}S^{(\kappa)}_i\right]\\
\leq & e^{-\kappa x} \sum_{k\geq 1}\P(S^{(\kappa)}_k\leq -x < \min_{1\leq i\leq k-1}S^{(\kappa)}_i)\\
=&e^{-\kappa x}.
\end{align*}

For the lower bound, let us introduce the following events for any $u\in\T$:
\[
E_u:=\{V(u)\in I(x), V(u)<\min_{\rho\leq z<u}V(z)\},\textrm{ and } F^L_u:=\{\sum_{k=1}^{|u|}\sum_{z\in\Omega(u_k)}e^{-\kappa(V(z)+x)}\leq L\}, \forall L\geq1 .
\]
Define
\[
N(x):=\sum_{u\in\T}\ind{E_u}, \ N_L(x):=\sum_{u\in\T}\ind{E_u}\ind{F^L_u}.
\]
Then, by Paley-Zygmund inequality, one sees that
\begin{equation}\label{PaleyZygmund}
\P(\tM\leq -x)\geq \P(N_L(x)\geq1)\geq \frac{\E[N_L(x)]^2}{\E[N_L(x)^2]}.
\end{equation}
Let us estimates the first and the second moments of $N_L(x)$. Note that by many-to-one lemma,
\begin{align*}
\E[N(x)]=&\sum_{n\geq1}\E\left[\sum_{|u|=n}\ind{V(u)<\underline{V}(u_{[0,n-1]}), V(u)\in I(x)}\right]
=\sum_{n\geq1}\E\left[e^{\kappa S^{(\kappa)}_n}\ind{S^{(\kappa)}_n<\mS^{(\kappa)}_{[0,n-1]}, S^{(\kappa)}_n\in I(x)}\right]\\
= & \E\left[\sum_{k=0}^\infty e^{\kappa\hat{H}^{(\kappa),-}_k} \ind{\hat{H}^{(\kappa),-}_k\in I(x)}\right]%= c e^{-\kappa}U^{(\kappa),-}_s((-x-1,-x])
\end{align*}
So, 
\[
e^{-\kappa x-\kappa}U^{(\kappa),-}_s([x,x+1))\leq \E[N(x)]\leq e^{-\kappa x}U^{(\kappa),-}_s([x,x+1)).
\]
Consequently, there exist $0<c_4<c_5<\infty$ such that for any $x\geq0$,
\begin{equation}\label{meanNx}
c_4 e^{-\kappa x} \leq \E[N(x)]\leq c_5 e^{-\kappa x}.
\end{equation}
Note that $\E[N_L(x)]=\E[N(x)]-\E[\sum_{u\in\T}\ind{E_u}\ind{(F^L_u)^c}]$. Let us bound $\E[\sum_{u\in\T}\ind{E_u}\ind{(F^L_u)^c}]$. In fact, by Proposition \ref{BRWchangeofp},
\begin{align*}
\E\left[\sum_{u\in\T}\ind{E_u}\ind{(F^L_u)^c}\right]=&\sum_{n\geq1}\E_{\Q^*}\left[e^{V(w_n)}\ind{V(w_n)< \underline{V}(w_{[0,n-1]}), V(w_n)\in I(x)}\ind{\sum_{k=1}^n\sum_{u\in\Omega(w_k)}e^{-\kappa(V(u)+x)}>L}\right]
\end{align*}
Observe that $L\geq \sum_{k=1}^n\frac{6L}{\pi^2 k^2}=\sum_{k=1}^n\frac{6L}{\pi^2 (n-k+1)^2}$. It follows that
\begin{align*}
&\E\left[\sum_{u\in\T}\ind{E_u}\ind{(F^L_u)^c}\right]\\
\leq &\sum_{n\geq1}\E_{\Q^*}\left[e^{V(w_n)}\ind{V(w_n)< \underline{V}(w_{[0,n-1]}), V(w_n)\in I(x)}\sum_{k=1}^n\ind{\sum_{u\in\Omega(w_k)}e^{-\kappa(V(u)+x)}>\frac{6L}{\pi^2(n-k+1)^2}}\right]\\
\leq &\sum_{n=1}^\infty\sum_{k=1}^n \E_{\Q^*}\left[e^{V(w_n)}\ind{V(w_n)< \underline{V}(w_{[0,n-1]}), V(w_n)\in I(x)}; \Delta^e_k e^{-\kappa V(w_{k-1})}\geq \frac{6Le^{\kappa x}}{\pi^2(n-k+1)^2}\right]
\end{align*}
where $\Delta_k^e:=\sum_{u\in\Omega(w_k)}e^{-\kappa \Delta V(u)}$. From the fact that $(\Delta V(w_k), \Delta_k^e)_{1\leq k\leq n}$ have the same law as $(\Delta V(w_{n-k+1}), \Delta_{n-k+1}^e)_{1\leq k\leq n}$, one sees that $(V(w_k), \Delta_k^e)_{1\leq k\leq n}$ and $(V(w_n)-V(w_{n-k}), \Delta_{n+1-k}^e)_{1\leq k\leq n}$ have the same law. As a consequence, 
\begin{align*}
&\E\left[\sum_{u\in\T}\ind{E_u}\ind{(F^L_u)^c}\right]\\
\leq &\sum_{n\geq1}\sum_{k=1}^n\E_{\Q^*}\left[e^{V(w_n)}\ind{\overline{V}(w_{[1,n]})<0, V(w_n)\in I(x)}; \Delta^e_{n-k+1} e^{\kappa V(w_{n-k+1})-\kappa V(w_n)}\geq\frac{6Le^{\kappa x}}{\pi^2(n-k+1)^2}\right] \\
\leq & \sum_{n=1}^\infty\sum_{k=1}^n\E_{\Q^*}\left[e^{V(w_n)}\ind{\overline{V}(w_{[1,n]})<0, V(w_n)\in I(x)}; \Delta_k^e e^{\kappa V(w_k)}\geq \frac{6L}{\pi^2k^2}\right]\\
=& \sum_{k=1}^\infty\E_{\Q^*}\left[\ind{\overline{V}(w_{[1,k]})<0, -V(w_k)\leq \frac{\log \Delta_k^e+2\ln k-\ln (6L/\pi^2)}{\kappa}}\sum_{n=k}^\infty\E_{V(w_k)}\left[e^{S_{n-k}}\ind{\MS_{[1,n-k]}<0, S_{n-k}\in I(x)}\right]\right],
\end{align*}
where the last equality follows from the Markov property at $w_k$. Note that $\{V(w_i); 1\leq i\leq j-1\}$ which is distributed as $(S_i; 1\leq i\leq j-1)$, is independent of $(\Delta V(w_j),\Delta_j^e)$. Let us introduce a new couple $(\zeta, \Delta^e)$ which under $\P$ is distributed as $(\Delta V(w_1),\Delta_1^e)$ under $\Q^*$ and is independent of the random walk $(S_k)$. By \eqref{RWeSbd}, 
\begin{align}\label{newcouplebd}
&\E\left[\sum_{u\in\T}\ind{E_u}\ind{(F^L_u)^c}\right]\\
\leq &c_2 e^{-\kappa x} \sum_{k=1}^\infty\E_{\Q^*}\left[\ind{\overline{V}(w_{[1,k]})<0, -V(w_k)\leq \frac{\ln \Delta_k^e+2\ln k-\log (6L/\pi^2)}{\kappa}}(1-V(w_k))e^{-\kappa' V(w_k)}\right]\nonumber\\
 \leq & c_6 e^{-\kappa x} \sum_{k=1}^\infty\E\left[\ind{\MS_{[1,k-1]}<0, -S_{k-1}\leq \zeta+\frac{\ln \Delta_k^e+2\ln k-\ln (cL)}{\kappa}}(1+\ln k)(1+\ln_+ \frac{\Delta^e}{cL})e^{-\kappa' S_{k-1}}e^{-\kappa'\zeta}\right].\nonumber
 %& c e^{-\kappa x} \sum_{k=1}^\infty\E_{\Q^*}\left[\ind{\overline{V}(w_{[1,k]})<0, -V(w_k)\leq \frac{\log \Delta_k^e-\log (cL)}{\kappa}}(1-V(w_k))e^{-\kappa' V(w_k)}\right]\\
 %&+c e^{-\kappa x} \sum_{k=1}^\infty\E_{\Q^*}\left[\ind{\overline{V}(w_{[1,k]})<0, \frac{\log \Delta_k^e-\log (cL)}{\kappa}<-V(w_k)\leq \frac{\log \Delta_k^e+2\log k-\log (cL)}{\kappa}}(1-V(w_k))e^{-\kappa' V(w_k)}\right]
\end{align}
%can be seen as a consequence of Theorem 3(ii) in \cite{AHZ13}, where they considered the killed branching random walk and showed that there exists $c\in (0,\infty)$ such that for any $a\geq0$, as $x\rightarrow\infty$,
%\[
%\P_{-a}(\exists u\in \T: V(u)\leq -x-a, \max_{\rho<w<u}V(w)\leq 0 )\sim c R^-_s(a)e^{-\kappa x},
%\]
%with $R^-_s(a)\geq1$. %Somehow, the tail of $\tM$ without killing is hidden in their arguments. As we are more interested in the joint tail, we will mainly use the idea of \cite{Ma16}. 
Observe that for $\lambda\in(0,\kappa')$, we have $a_\lambda:=-\log\E[e^{\lambda S_1^{(\kappa)}}]>0$. Thus, for any $ a<b$, 
\begin{align*}
&\E\left[\ind{\MS_{[1,k]}<0,\ a<-S_k\leq b}e^{-\kappa' S_k}\right]\\
= & \P\left(\MS^{(\kappa)}_{[1,k]}<0, a<-S^{(\kappa)}_k\leq b\right)\leq e^{\lambda b}\E\left[e^{\lambda S_k^{(\kappa)}}\right]\leq e^{\lambda b-a_\lambda k}.
\end{align*}
It hence follows that
\begin{align*}
\E\left[\sum_{u\in\T}\ind{E_u}\ind{(F^L_u)^c}\right]\leq & c_7 e^{-\kappa x} \sum_{k=0}^\infty (1+\ln_+ k)(k+1)^{2\lambda/\kappa} e^{-a_{\lambda}k}\E\left[(1+\ln_+ \frac{\Delta^e}{cL})\left(\frac{\Delta^e}{cL}\right)^{\lambda/\kappa}e^{(\lambda-\kappa')\zeta}\right]\\
\leq & c_8 e^{-\kappa x}\E\left[(1+\ln_+ \frac{\Delta^e}{cL})\left(\frac{\Delta^e}{cL}\right)^{\lambda/\kappa}e^{(\lambda-\kappa')\zeta}\right].%=o_L(1)e^{-\kappa x}.
\end{align*}
Now observe that by Many-to-one lemma,
\begin{align*}
&\E\left[(1+\ln_+\Delta^e)(\Delta^e)^{\lambda/\kappa}e^{(\lambda-\kappa')\zeta}\right]\\
=&\E_{\Q^*}\left[e^{(\lambda-\kappa')V(w_1)}\left(\sum_{z\in\Omega(w_1)}e^{-\kappa V(z)}\right)^{\lambda/\kappa}\left(1+\ln_+\left(\sum_{z\in\Omega(w_1)}e^{-\kappa V(z)}\right)\right)\right]\\
\leq &\E\left[\sum_{|u|=1}e^{(\lambda-\kappa)V(u)}\left(\sum_{|u|=1}e^{-\kappa V(u)}\right)^{\lambda/\kappa}\left(1+\ln_+\left(\sum_{|u|=1}e^{-\kappa V(u)}\right)\right)\right]\\
\leq & \E\left[\left(\sum_{|u|=1}e^{-V(u)}\right)^{\kappa-\lambda}\left(\sum_{|u|=1}e^{-\kappa V(u)}\right)^{\lambda/\kappa}\left(1+\ln_+\left(\sum_{|u|=1}e^{-\kappa V(u)}\right)\right)\right]
\end{align*}
where the last inequality holds as $\kappa-\lambda>1$. By Cauchy-Schwartz inequality, one has hence
\begin{align*}
&\E\left[(1+\ln_+\Delta^e)(\Delta^e)^{\lambda/\kappa}e^{(\lambda-\kappa')\zeta}\right]\\
\leq & \E\left[\left(\sum_{|u|=1}e^{-V(u)}\right)^{\kappa}\right]^{(\kappa-\lambda)/\kappa}\E\left[\left(\sum_{|u|=1}e^{-\kappa V(u)}\right)\left(1+\ln_+\left(\sum_{|u|=1}e^{-\kappa V(u)}\right)\right)^{\kappa/\lambda}\right]^{\lambda/\kappa}<\infty,
\end{align*}
by Assumption \ref{cond4}. As a result, $\E\left[\sum_{u\in\T}\ind{E_u}\ind{(F^L_u)^c}\right]=o_L(1)e^{-\kappa x}$ and for $L\gg 1$, $x\geq1$,
\[
c_9 e^{-\kappa x}\leq \E[N_L(x)]\leq c_{10} e^{-\kappa x}.
\]
For the second moment,
\begin{align*}
&\E[N_L(x)^2]=\E[N_L(x)]+\E\left[\sum_{u,v\in\T; u\neq v}\ind{E_u\cap F_u^L}\ind{E_v\cap F_v^L}\right]\\
=&\E[N_L(x)]+\E\left[\sum_{u,v\in\T; u\neq v; u\wedge v\in\{u,v\}}\ind{E_u\cap F_u^L}\ind{E_v\cap F_v^L}\right]+\E\left[\sum_{u,v\in\T; u\neq v; u\wedge v\notin\{u, v\}}\ind{E_u\cap F_u^L}\ind{E_v\cap F_v^L}\right]\\
\leq & c_{10} e^{-\kappa x}+ 2 \E\left[\sum_{u\in\T}\ind{E_u\cap F_u^L}\left(\sum_{v: v>u}\ind{E_v}\right)\right]+ \E\left[\sum_{u\in\T}\ind{E_u\cap F_u^L}\left(\sum_{k=1}^{|u|}\sum_{z\in\Omega(u_k)}\sum_{v: v\geq z}\ind{E_v}\right)\right]\\
=& c_{10} e^{-\kappa x}+ 2 \sum_{n\geq1}\E\left[\sum_{|u|=n}\ind{E_u\cap F_u^L}\left(\sum_{v: v>u}\ind{E_v}\right)\right]+\sum_{n\geq1}\E\left[\sum_{|u|=n}\ind{E_u\cap F_u^L}\left(\sum_{k=1}^{n}\sum_{z\in\Omega(u_k)}\sum_{v: v\geq z}\ind{E_v}\right)\right]
\end{align*}
which conditionally on $\nf^V_n$ or on $\sigma((u_k, V(u_k))_{1\leq k\leq |u|})$, is bounded by
\begin{multline*}
 c_{10}e^{-\kappa x}+ 2 \sum_{n\geq1}\E\left[\sum_{|u|=n}\ind{E_u\cap F_u^L}\E[N(x+V(u))\vert V(u)]\right]\\
+\sum_{n\geq1}\E\left[\sum_{|u|=n}\ind{E_u\cap F_u^L}\left(\sum_{k=1}^{n}\sum_{z\in\Omega(u_k)}\E[N(x+V(z))\vert V(z)]\right)\right].
\end{multline*}
In view of \eqref{meanNx}, for $L\geq1$, we have
\begin{align*}
\E[N_L(x)^2]=&  c_{10}e^{-\kappa x}+ c_{11} \sum_{n\geq1}\E\left[\sum_{|u|=n}\ind{E_u\cap F_u^L}e^{-\kappa (V(u)+x)}\right]\\
&\quad +c_{11}\sum_{n\geq1}\E\left[\sum_{|u|=n}\ind{E_u\cap F_u^L}\left(\sum_{k=1}^{n}\sum_{z\in\Omega(u_k)}e^{-\kappa V(z)-\kappa x}\right)\right]\\
\leq & c_{10}e^{-\kappa x}+ c_{11} \E[N(x)]+ c_{11}L \E[N(x)]\leq c_{12} L e^{-\kappa x}.
\end{align*}
Therefore, for $L$ sufficiently large and fixed, we conclude from \eqref{PaleyZygmund} that
\[
\P(\tM\leq -x)\geq c_{13} e^{-\kappa x}.
\]
\end{proof}

Let us introduce some notation here. For any $k\geq0$, let
\[
\tM_k:=\inf_{|u|\leq k}V(u).
\]
 For any $u\in\T\setminus\{\rho\}$, recall that $\Delta V(u):=V(u)-V(\pa{u})$. Let
\[
\tM^{(u)}:=\inf_{v: v\geq u}(V(v)-V(u)),\textrm{ and } \tM^{(u)}_k:=\inf_{v: v\geq u, |v|\leq |u|+k}(V(v)-V(u)).
\]
Write $\kappa'$ for $\kappa-1$.

\begin{proof}[Proof of Lemma \ref{BRWmaincvg}] In fact, we only need to show the convergence of $e^{\kappa x}\E\left[\phi(\mathcal{W}^{u^*,\leq t})\ind{\tM\in I(x)}\right]$.

Recall that $u^*$ is chosen at random among the youngest individuals attaining $\tM$. Then, observe that
\begin{align}\label{spineuptomin}
&\E\left[\phi(\mathcal{W}^{u^*,\leq t})\ind{\tM\in I(x)}\right]=\sum_{k\geq1}\E\left[\frac{1}{\sum_{|v|=k}\ind{V(v)=\tM}}\sum_{|u|=k}\ind{V(u)=\tM, V(u)\in I(x)}\phi(\mathcal{W}^{u, \leq t})\right]\nonumber\\
=&\sum_{k\geq1}\E\left[\sum_{|u|=k}\frac{1}{\sum_{|v|=k}\ind{V(v)=V(u)}}\ind{V(u)=\tM_k<\tM_{k-1}, V(u)\in I(x)}\ind{\inf_{|z|>k}V(z)\geq V(u)}\phi(\mathcal{W}^{u,\leq t})\right]\nonumber\\
=&\sum_{k\geq1}\E\left[\sum_{|u|=k}\frac{1}{\sum_{|v|=k}\ind{V(v)=V(u)}}\ind{V(u)=\tM_k<\tM_{k-1}, V(u)\in I(x)}\E\left[\ind{\inf_{|z|>k}V(z)\geq V(u)}\phi(\mathcal{W}^{u,\leq t})\mid \mathcal{F}_k^V\right]\right]. 
\end{align}
Notice that
\[
\mathcal{W}^{u,\leq t}=\mathcal{W}_0^{u,\leq t}+W^{(u)}_\infty,
\]
where $\mathcal{W}_0^{u,\leq t}:=\sum_{j=k-t}^k e^{V(u)-V(u_{j-1})}\sum_{z\in\Omega(u_j)}e^{-\Delta V(z)}W^{(z)}_\infty$ with $\Delta V(z)=V(z)-V(\overleftarrow{z})$. This yields
\begin{align*}
\E\left[\ind{\inf_{|z|>k}V(z)\geq V(u)}\phi(\mathcal{W}^{u,\leq t})\mid \mathcal{F}_k^V\right]&=\E[\phi(\mathcal{W}_0^{u,\leq t}+W_\infty^{(u)})\ind{\tM^{(u)}\geq 0}\mid \mathcal{F}_k^V]\times\prod_{|z|=k,z\neq u}\overline{F}(V(u)-V(z))\\
&=\Phi(\mathcal{W}_0^{u,\leq t})\times\prod_{|v|=k,v\neq u}\overline{F}(V(u)-V(v)), 
\end{align*}
where $\overline{F}(t)=\P(\tM\geq t)$, and
\[
\Phi(a):=\E[\phi(a+W_\infty)\ind{\tM\geq0}],\ \forall a\geq0.
\]
Since $\phi$ is continuous and bounded, so is $\Phi$. By the many-to-one lemma, equation~\eqref{spineuptomin} becomes
\begin{align}\label{eq:spineuptomin2}
&\E\left[\phi(\mathcal{W}^{u^*,\leq t})\ind{\tM\in I(x)}\right]\\
=&\sum_{k\geq1}\E_{\Q^*}\left[\frac{e^{V(w_k)}}{\sum_{|v|=k}\ind{V(v)=V(w_k)}}\ind{V(w_k)=\tM_k<\tM_{k-1}, V(w_k)\in I(x)}\Phi(\mathcal{W}_0^{w_k,\leq t})\times\prod_{|v|=k,v\neq w_k}\overline{F}(V(w_k)-V(v))\right] \nonumber\\
=&\sum_{k\geq1}\E_{\Q^*}\left[\frac{e^{V(w_k)}}{\sum_{|v|=k}\ind{V(v)=V(w_k)}}\ind{V(w_k)=\tM_k<\tM_{k-1}, V(w_k)\in I(x)}\Phi(\mathcal{W}_0^{w_k,\leq t})\times\prod_{|v|=k,v\neq w_k}\ind{\tM^{(v)}+V(v)\geq V(w_k)}\right]. \nonumber
\end{align}
%
%et
%which by the change of measures $d\Q^{(k)}\otimes\P=W_kd\P$, is
%\begin{equation}\label{spineuptomin}
%\sum_{k\geq1}\E_{\Q^{(k)}\otimes\P}\left[\frac{e^{V(w_k)}}{\sum_{|v|=k}\ind{V(v)=V(w_k)}}\phi(\mathcal{W}^{w_k,\leq t}); V(w_k)=\tM<\tM_{k-1}, V(w_k)\in I(x)\right]
%\end{equation}
Here, one sees that
\[
\sum_{|v|=k}\ind{V(v)=V(w_k)}=1+\sum_{j=1}^k\sum_{z\in\Omega(w_j)}\sum_{|v|=k, v\geq z}\ind{V(v)=V(w_k)},
\]
The event $\{V(w_k)=\tM_k<\tM_{k-1}\}\cap\bigcap_{|v|=k,v\neq w_k}\{\tM^{(v)}+V(v)\geq V(w_k)\}$ can be rewritten as
\begin{multline*}
\{V(w_k)<\underline{V}(w_{[0,k-1]})\}\cap\left(\bigcap_{z\in\cup_{j=1}^k\Omega(w_j)}\{V(z)+\tM_{k-j-1}^{(z)}>V(w_k), V(z)+\tM^{(z)}\geq V(w_k)\}\right).
\end{multline*}
Then, \eqref{eq:spineuptomin2} becomes
\[
\E\left[\phi(\mathcal{W}^{u^*,\leq t})\ind{\tM\in I(x)}\right]=\sum_{k\geq1}\chi_k,
\]
where 
\begin{multline*}
\chi_k:=\E_{\Q^*}\left[\frac{e^{V(w_k)}\ind{V(w_k)\in I(x)}\ind{ V(w_k)<\underline{V}(w_{[0,k-1]})}}{1+\sum_{j=1}^k\sum_{y\in\Omega(w_j)}\sum_{|v|=k, v\geq y}\ind{V(v)=V(w_k)}}\Phi(\mathcal{W}_0^{w_k,\leq t})\right.\\
\left.\times\prod_{j=1}^k\prod_{z\in\Omega(w_j)}\ind{\Delta V(z)+M_{k-j-1}^{(z)}>V(w_k)-V(w_{j-1}), \Delta V(z)+M^{(z)}\geq V(w_k)-V(w_{j-1})}\right].
\end{multline*}
To simplify this expression, we observe that only sufficiently large $k$ make contributions to the sum. Let us first prove the following result. 
For any $b\geq1$, 
\begin{equation}\label{smallk}
\lim_{x\rightarrow\infty}e^{\kappa x}\sum_{k\leq b}\E_{\Q^*}\left[e^{V(w_k)}; V(w_k)\in I(x),V(w_k)<\underline{V}(w_{[0,k-1]}) \right]=0,
\end{equation}
which means that as $x\rightarrow\infty$,
\begin{equation}
\E\left[\phi(\mathcal{W}^{u^*,\leq t})\ind{\tM\in I(x)}\right]=\sum_{k\geq b}\chi_k+o(e^{-\kappa x}).
\end{equation}
In fact, by many-to-one lemma,
\begin{align*}
&e^{\kappa x}\sum_{k\leq b}\E_{\Q^*}\left[e^{V(w_k)}; V(w_k)\in I(x),V(w_k)<\underline{V}(w_{[0,k-1]})\right]\\
%=&e^{\kappa x}\sum_{k\leq b}\E\left[\sum_{|u|=k}\ind{V(u)\leq -x, V(u)<\min_{\rho\leq v\leq u}V(v)}\right]\\
\leq &e^{\kappa x}\sum_{k\leq b}\E\left[e^{\kappa S_k^{(\kappa)}}; S_k^{(\kappa)}\leq -x, S_k^{(\kappa)}<\mS_{[0,k-1]}^{(\kappa)}\right]
\leq \sum_{k\leq b}\P\left(S_k^{(\kappa)}\leq -x\right),
\end{align*}
Recall that $\E[e^{-\delta S_1^{(\kappa)}}]<\infty$ for some $\delta>0$. Then, by Markov inequality, 
\[
e^{\kappa x}\sum_{k\leq b}\E_{\Q^*}\left[e^{V(w_k)}; V(w_k)\leq I(x),V(w_k)<\underline{V}(w_{[0,k-1]})\right]\leq\sum_{k\leq b}e^{-\delta x}\E[e^{-\delta S_1^{(\kappa)}}]^k=o_x(1),
\]
as $x\rightarrow\infty$ for any $b\geq1$ fixed. This implies that 
\begin{equation}\label{smallk0}
\sum_{k\leq b}e^{\kappa x}\chi_k=o_x(1).
\end{equation}
So it suffices to study $\chi_k$ for $k$ sufficiently large. Next, for any integer $k\geq b_1\geq1$, let us introduce the event
\[
E_k(b_1):=\{\forall j\leq k-b_1, \forall z\in \Omega(w_j), V(z)+\tM^{(z)}\geq V(w_k)+1\}.
\]
We claim that
\begin{equation}\label{awayspine}
\lim_{b_2\geq b_1\rightarrow\infty}\lim_{x\rightarrow\infty}e^{\kappa x}\sum_{k\geq b_2}\E_{\Q^*}\left[e^{V(w_k)}; V(w_k)\in I(x),V(w_k)<\underline{V}(w_{[0,k-1]}); E_k(b_1)^c\right]=0.
\end{equation}
Let $\E_{k}^{\eqref{awayspine}}$ denote the expectation in \eqref{awayspine}, observe that
\begin{align*}
&\E_{k}^{\eqref{awayspine}}\leq\sum_{j=1}^{k-b_1}\E_{\Q^*}\left[e^{V(w_k)}\ind{V(w_k)\in I(x),V(w_k)<\underline{V}(w_{[0,k-1]})}; \exists z\in \Omega(w_j), V(z)+\tM^{(z)}< V(w_k)+1\right]\\
=& \sum_{j=1}^{k-b_1}\E_{\Q^*}\left[e^{V(w_k)}\ind{V(w_k)\in I(x),V(w_k)<\underline{V}(w_{[0,k-1]})}\Q^*\left(\exists z\in \Omega(w_j), V(z)+\tM^{(z)}< V(w_k)+1\vert \nf_k^{w}\right)\right]
\end{align*}
where $\nf_k^{w}:=\sigma\{ (w_i, V(w_i))_{1\leq i\leq k}, (V(u), u\in\Omega(w_i))_{1\leq i \leq k}\}$. Given the spine, one sees that
\[
\Q^*\left(\exists z\in \Omega(w_j), V(z)+\tM^{(z)}< V(w_k)+1 \Big\vert \nf_k^w\right)\leq 1\wedge\left( \sum_{z\in \Omega(w_j)}\P(\tM\leq x)\vert_{x=V(w_k)-V(z)+1}\right)%\leq 1\wedge \sum_{z\in\Omega(w_j)}e^{\kappa (V(w_k)-V(z)+1)}=1\wedge e^{\kappa(V(w_k)-V(w_{j-1}))+1}\Delta_j^e,
\]
which by Lemma \ref{BRWroughbd} is bounded by 
\[
 1\wedge \left(\sum_{z\in\Omega(w_j)}e^{\kappa (V(w_k)-V(z)+1)}\right)=1\wedge \left(e^{\kappa(V(w_k)-V(w_{j-1}))+1}\Delta_j^e\right),
\]
where $\Delta_j^e=\sum_{z\in\Omega(w_j)}e^{-\kappa\Delta V(z)}$. It then follows that
\begin{align*}
\sum_{k\geq b_2}\E_{k}^{\eqref{awayspine}}\leq&\sum_{k\geq b_2}\sum_{j=1}^{k-b_1}\E_{\Q^*}\left[\left(1\wedge e^{\kappa(V(w_k)-V(w_{j-1}))}\Delta_j^e\right)e^{V(w_k)}\ind{V(w_k)\in I(x),V(w_k)<\underline{V}(w_{[0,k-1]})}\right]\\
%\leq & e^{-x}\sum_{k\geq b_2}\sum_{j=1}^{k-b_1}\E_\Q\left[\left(1\wedge e^{\kappa(V(w_k)-V(w_{j-1}))}\Delta_j^e\right); V(w_k)\in I(x),V(w_k)<\min_{0\leq j\leq k-1}V(w_j)\right]\\
=& \sum_{k\geq b_2}\sum_{j=1}^{k-b_1}\E_{\Q^*}\left[\left(1\wedge e^{\kappa V(w_{k-j+1})}\Delta_{k-j+1}^e\right)e^{V(w_k)}\ind{V(w_k)\in I(x), \overline{V}(w_{[1,k]})<0}\right]\\
=& \sum_{k\geq b_2}\sum_{j=b_1+1}^k\E_{\Q^*}\left[\left(1\wedge e^{\kappa V(w_{j})}\Delta_{j}^e\right)e^{V(w_k)}; V(w_k)\in I(x),  \overline{V}(w_{[1,k]})<0\right]
\end{align*}
where the equality follows from the fact that $(\Delta V(w_j), \sum_{z\in\Omega(w_j)}\delta_{\Delta V(z)})_{j=1,\cdots, k}$ are i.i.d. and has the same distribution as $(\Delta V(w_j), \sum_{z\in\Omega(w_j)}\delta_{\Delta V(z)})_{j=k,k-1,\cdots, 1}$. By the Markov property at time $j$ and~\eqref{RWeSbd}, one sees that
\begin{align*}
&\sum_{k\geq b_2}\E_{k}^{\eqref{awayspine}}\leq \sum_{k\geq b_2}\sum_{j=b_1+1}^k\E_{\Q^*}\left[\left(1\wedge e^{\kappa V(w_{j})}\Delta_{j}^e\right)\ind{ \overline{V}(w_{[1,j]})<0}\E_{V(w_j)}\left[e^{S_{k-j}}\ind{S_{k-j}\in I(x), \MS_{[1,k-j]}<0}\right]\right]\\
&\leq \sum_{j=b_1+1}^\infty\E_{\Q^*}\left[\left(1\wedge e^{\kappa V(w_{j})}\Delta_{j}^e\right)\ind{ \overline{V}(w_{[1,j]})<0}\sum_{k\geq j}\E_{V(w_j)}\left[e^{S_{k-j}}\ind{S_{k-j}\in I(x), \MS_{[1,k-j]}<0}\right]\right]\\
&\leq c_2e^{-\kappa x}\sum_{j=b_1+1}^\infty\E_{\Q^*}\left[\left(1\wedge e^{\kappa V(w_{j})}\Delta_{j}^e\right)\ind{ \overline{V}(w_{[1,j]})<0}e^{-\kappa'V(w_j)}(1-V(w_j))\right]
\end{align*}
Moreover, observe that for any $L>0$ and $\alpha>0$, $1\wedge e^{\kappa V(w_{j})}\Delta_{j}^e \leq e^{L+(\kappa-\alpha) V(w_j)}+\ind{\ln \Delta_j^e\geq L-\alpha V(w_j)}$. So, 
\begin{align*}
&\left(1\wedge e^{\kappa V(w_{j})}\Delta_{j}^e\right)e^{-\kappa' V(w_j)}(1-V(w_j)) \\
\leq  & e^{L+(1-\alpha) V(w_j)}(1-V(w_j))+(1-V(w_j))e^{-\kappa' V(w_j)}\ind{\ln \Delta_j^e\geq L-\alpha V(w_j)}
%\leq & e^{L+(1-\alpha) V(w_j)}(1-V(w_j))+e^{-\kappa'\Delta V(w_j)}(1+\frac{1}{\alpha}\ln \Delta_j^e)_+e^{-\kappa'V(w_{j-1})}\ind{-\alpha V(w_{j-1})\leq \alpha\Delta V(w_j)+\ln \Delta_j^e-L}
\end{align*}
It hence follows that
\begin{align}\label{upbdawayspine}
\sum_{k\geq b_2}e^{\kappa x}\E_{k}^{\eqref{awayspine}}\leq&  c_2 e^L\sum_{j=b_1+1}^\infty\E_{\Q^*}\left[(1-V(w_j))e^{(1-\alpha)V(w_j)}\ind{\overline{V}(w_{[1,j]})<0}\right]\\
&+c_2\sum_{j=b_1+1}^\infty\E_{\Q^*}\left[(1-V(w_j))e^{-\kappa' V(w_j)}\ind{-V(w_j)\leq (\ln \Delta_j^e-L)/\alpha;\overline{V}(w_{[1,j]})<0}\right]\nonumber
\end{align}
Note that $\{V(w_i); 1\leq i\leq j-1\}$ which is distributed as $(S_i; 1\leq i\leq j-1)$, is independent of $(\Delta V(w_j),\Delta_j^e)$. Therefore, the first sum on the right hand side of \eqref{upbdawayspine} is
\begin{align*}
&e^L\sum_{j=b_1+1}^\infty\E_{\Q^*}\left[(1-V(w_j))e^{(1-\alpha)V(w_j)}\ind{\overline{V}(w_{[1,j]})<0}\right]\\
=&e^L\sum_{j=b_1+1}^\infty\E\left[(1-S_j)e^{(1-\alpha)S_j}\ind{\MS_{[1,j]}<0}\right]=e^Lo_{b_1}(1),
\end{align*}
because $\sum_{j=0}^\infty\E\left[(1-S_j)e^{(1-\alpha)S_j}\ind{\MS_{[1,j]}<0}\right]=\int_0^\infty(1+x)e^{-(\kappa-\alpha)x}U_s^{(\kappa),-}(dx)<\infty$ for any $\alpha\in(0,\kappa)$. Let us take $\alpha=1$. Then, similarly as \eqref{newcouplebd}, the second sum on the right hand side of \eqref{upbdawayspine} is
\begin{align*}
&\sum_{j=b_1+1}^\infty\E_{\Q^*}\left[(1-V(w_j))e^{-\kappa' V(w_j)}\ind{-V(w_j)\leq (\ln \Delta_j^e-L)/\alpha;\overline{V}(w_{[1,j]})<0}\right]\\
\leq& c\E\left[e^{-\kappa'\zeta}(1+\ln \Delta^e)\ind{\ln \Delta^e\geq L}\E\left[\sum_{j\geq0}e^{-\kappa' S_j}\ind{-S_j\leq \zeta+\ln \Delta^e-L; \MS_{[1,j]}<0}\Big\vert \zeta, \Delta^e\right]\right]
%\leq & e^L o_{b_1}(1)+ce^{-\frac{\kappa'}{\alpha}L}\E_\Q\left[(\Delta^e)^{\kappa'/\alpha}(1+\frac{1}{\alpha}\ln \Delta^e)\ind{\alpha\zeta+\ln\Delta^e\geq L}\right],
\end{align*}
Here  $\E[\sum_{j\geq0}e^{-\kappa' S_j}\ind{-S_j\leq x, \MS_{[1,j]}<0}]=U_s^{(\kappa),-}[0,x]\leq \cb_R(1+x)$ for any $x\geq0$ and $\cb_R>0$. As a consequence,
\begin{align}\label{intermediateupbd}
&\sum_{j=b_1+1}^\infty\E_{\Q^*}\left[(1-V(w_j))e^{-\kappa' V(w_j)}\ind{-V(w_j)\leq (\ln \Delta_j^e-L)/\alpha;\overline{V}(w_{[1,j]})<0}\right]\\
\leq &c_{14}\E\left[e^{-\kappa'\zeta}(1+\ln \Delta^e)\ind{\ln \Delta^e\geq L}(1+\zeta_+ +\ln\Delta^e-L)\right]\nonumber\\
\leq &c_{14} \E\left[e^{-\kappa'\zeta}(1+(\ln \Delta^e)^2+\zeta^2)\ind{\ln \Delta^e\geq L}\right]\nonumber
\end{align}
which is
\begin{align*}
&c_{14}\E_{\Q^*}\left[e^{-\kappa'V(w_1)}(1+(\ln(\sum_{u\in\Omega(w_1)}e^{-\kappa V(u)}))^2+V(w_1)^2)\ind{\ln(\sum_{u\in\Omega(w_1)}e^{-\kappa V(u)})\geq L}\right]\\
\leq & c_{14}\E\left[\Delta_+^e(1+\ln \Delta_+^e)^2\ind{\ln \Delta_+^e\geq L}\right]+\E\left[(S_1^{(\kappa)})^2\ind{\ln\Delta_+^e\geq L}\right]
\end{align*}
where $\Delta^e_+:=\sum_{|u|=1}e^{-\kappa V(u)}$. By Assumption \ref{cond4}, $\E\left[\Delta_+^e(1+\ln \Delta_+^e)^2\right]+\E\left[(S_1^{(\kappa)})^2\right]<\infty$. We hence deduce that
\[
\sum_{j=b_1+1}^\infty\E_{\Q^*}\left[(1-V(w_j))e^{-\kappa' V(w_j)}\ind{-V(w_j)\leq (\ln \Delta_j^e-L)/\alpha;\overline{V}(w_{[1,j]})<0}\right]=o_L(1).
\]
Therefore, $\sum_{k\geq b_2}e^{\kappa x}\E_{k}^{\eqref{awayspine}}\leq e^L o_{b_1}(1)+o_L(1)$. Recall that $b_2\geq b_1$. Letting $b_1\rightarrow\infty$ then $L\rightarrow\infty$ leads to \eqref{awayspine}, i.e.,
\[
\lim_{b_2\geq b_1\rightarrow\infty}\sum_{k\geq b_2}e^{\kappa x}\E_{k}^{\eqref{awayspine}}=0.
\]
Let 
\begin{align*}
\chi_k(b_1):=&\E_{\Q}\left[\frac{e^{V(w_k)}\ind{V(w_k)\in I(x)}}{1+\sum_{j=k-b_1+1}^k\sum_{y\in\Omega(w_j)}\sum_{|v|=k, v\geq y}\ind{V(v)=V(w_k)}}\Phi(\mathcal{W}_0^{w_k,\leq t})\ind{ V(w_k)<\overline{V}(w_{[0,k-1]})}\right.\\
&\qquad\left.\times\prod_{j=k-b_1+1}^k\prod_{z\in\Omega(w_j)}\ind{\Delta V(z)+M_{k-j-1}^{(z)}>V(w_k)-V(w_{j-1}), \Delta V(z)+M^{(z)}\geq V(w_k)-V(w_{j-1})}\right].
\end{align*}
In view of \eqref{smallk0} and \eqref{awayspine}, it suffices to study the convergence of
\begin{equation}
\sum_{k\geq b_2}e^{\kappa x}\chi_k(b_1).
\end{equation}
Let for $u\in\T$, $\en(u):=(v,V(v)-V(u))_{v\geq u}$. Observe that for $b_1>t$, as the random variables $(\Delta V(w_i), \sum_{u\in\Omega(w_i)}\delta_{(\Delta V(u), \en(u))})_{1\leq i\leq k}$ are i.i.d. hence exchangeable, 
\begin{align*}
\chi_k(b_1)=&\E_\Q\left[\frac{e^{V(w_k)}\ind{\overline{V}(w_{[1,k]})<0, V(w_k)\in I(x)}}{1+\sum_{j=1}^{b_1}\sum_{y\in\Omega(w_j)}\sum_{v\in\T^{(y)}_{j-1}}\ind{V^{(y)}(v)=V(w_j)-\Delta V(y)}}\Phi(\overleftarrow{\mathcal{W}}_0^{\leq t})\ind{\Xi_{b_1}}\right]
\end{align*}
where
\[
\overleftarrow{\mathcal{W}}_0^{\leq t}=\sum_{j=1}^{t+1}e^{V(w_j)}\sum_{z\in\Omega(w_j)}e^{-\Delta V(z)}W_\infty^{(z)}, \textrm{ and } V_y(v)=: V(v)-V(y), \forall v\geq y,
\]
and
\[
\T_j^{(y)}:=\{v\in\T\ :\ v\geq y,\ |v|=|y|+j\},\ \text{and}\ \ind{\Xi_{b_1}}:=\prod_{j=1}^{b_1}\prod_{z\in\Omega(w_j)}\ind{\tM^{(z)}\geq V(w_j)-\Delta V(z), \tM^{(z)}_{j-2}>V(w_j)-\Delta V(z)}.
\]
By the Markov property at time $b_2\geq b_1$ and Lemma \ref{ManytoOne}, one gets that
\begin{align*} 
\sum_{k\geq b_2}e^{\kappa x}\chi_k(b_1)=&\E_\Q\left[\frac{e^{V(w_{b_2})}\ind{\overline{V}(w_{[1,b_2]}<0)}\E_{V(w_{b_2})}\left[\sum_{k\geq0}e^{S_k+\kappa x}\ind{\MS_{[1,k]}<0, S_k\in I(x)}\right]}{1+\sum_{j=1}^{b_1}\sum_{y\in\Omega(w_j)}\sum_{v\in\T^{(y)}_{j-1}}\ind{V^{(y)}(v)=V(w_j)-\Delta V(y)}} \Phi(\overleftarrow{\mathcal{W}}_0^{\leq t})\ind{\Xi_{b_1}}\right],
\end{align*}
where by \eqref{RWeScvg},
\[
\E_{V(w_{b_2})}\left[\sum_{k\geq0}e^{S_k+\kappa x}\ind{\MS_{[1,k]}<0, S_k\in I(x)}\right]\rightarrow e^{-\kappa' V(w_{b_2})}\left(\frac{1-e^{-\kappa}}{\kappa}\right) C_s^{(\kappa),-} U_w^{(\kappa),+}([0,-V(w_{b_2}))),
\]
as $x\rightarrow\infty$. Therefore, with $C_\kappa:=\left(\frac{1-e^{-\kappa}}{\kappa}\right) C_s^{(\kappa),-} $,
\begin{align*}
\lim_{x\rightarrow\infty}\sum_{k\geq b_2}e^{\kappa x}\chi_k(b_1)=&C_\kappa\E_\Q\left[\frac{e^{(2-\kappa) V(w_{b_2})}U_w^{(\kappa),+}([0,-V(w_{b_2})))\ind{\overline{V}(w_{[1,b_2]}<0)}}{1+\sum_{j=1}^{b_1}\sum_{y\in\Omega(w_j)}\sum_{v\in\T^{(y)}_{j-1}}\ind{V^{(y)}(v)=V(w_j)-\Delta V(y)}} \Phi(\overleftarrow{\mathcal{W}}_0^{\leq t})\ind{\Xi_{b_1}}\right]
\end{align*}
Note that $\sum_{k\geq b_2}e^{\kappa x}\chi_k(b_1)$ is non-increasing in $b_1$ and $b_2$. We thus deduce that
\begin{equation}
\lim_{x\rightarrow\infty}e^{\kappa x}\E\left[\phi(\mathcal{W}^{u^*,\leq t})\ind{\tM\in I(x)}\right]= (1-e^{-\kappa}) \mathcal{E}_t(\phi)
\end{equation}
with
\begin{align}\label{eq:defE}
\mathcal{E}_t(\phi):=&\lim_{b_1\rightarrow\infty}\lim_{b_2\rightarrow\infty} \frac{C_\kappa}{1-e^{-\kappa}} \E_\Q\left[\frac{e^{(2-\kappa) V(w_{b_2})}U_w^{(\kappa),+}([0,-V(w_{b_2})))\ind{\overline{V}(w_{[1,b_2]}<0)}}{1+\sum_{j=1}^{b_1}\sum_{y\in\Omega(w_j)}\sum_{v\in\T^{(y)}_{j-1}}\ind{V^{(y)}(v)=V(w_j)-\Delta V(y)}}\right.\nonumber\\
&\qquad\qquad\qquad\times \Phi(\overleftarrow{\mathcal{W}}_0^{\leq t})\left. \prod_{j=1}^{b_1}\prod_{z\in\Omega(w_j)}\ind{\tM^{(z)}\geq V(w_j)-\Delta V(z), \tM^{(z)}_{j-2}>V(w_j)-\Delta V(z)}\right]
\end{align}
Similarly, for any $j\geq0$, one could prove that
\[
\lim_{x\rightarrow\infty}e^{\kappa x}\E\left[\phi(\mathcal{W}^{u^*,\leq t})\ind{\tM\in I(x+j)}\right]= (1-e^{-\kappa}) e^{-\kappa j} \mathcal{E}_t(\phi)
\]
which implies Lemma \ref{BRWmaincvg}.

\end{proof}

\begin{proof}[Proof of Lemma \ref{BRWrest}]
In fact, it suffices to show that for any $\delta>0$,
\begin{equation}
\lim_{t\rightarrow\infty}\sup_{x\in\R_+} e^{\kappa x}\P\left(\sum_{k=1}^{|u^*|-t}\sum_{z\in\Omega(u^*_k)}e^{\tM-V(z)}W_\infty^{(z)}\geq \delta, \tM\in I(x)\right)=0.
\end{equation}
Again by change of measures, one has
\begin{align*}
&e^{\kappa x}\P\left(\sum_{k=1}^{|u^*|-t}\sum_{z\in\Omega(u^*_k)}e^{\tM-V(z)}W_\infty^{(z)}\geq \delta, \tM\in I(x)\right)\\
\leq & \sum_{k\geq t+1}e^{\kappa x}\E\left[\sum_{|u|=k}\ind{u=u^*, V(u)\in I(x)}\ind{\sum_{j=1}^{k-t}\sum_{z\in\Omega(u_j)}e^{\tM-V(z)}W_\infty^{(z)}\geq \delta}\right]\\
%\leq &\sum_{k\geq t+1}\E_\Q\left[e^{V(w_k)}\ind{V(w_k)<\tM_{k-1}, V(w_k)\in I(x)}\ind{\sum_{j=1}^{k-t}\sum_{z\in\Omega(w_j)}e^{V(w_k)-V(z)}W_\infty^{(z)}\geq \delta}\right]\\
\leq &\sum_{k\geq t+1}e^{\kappa x}\E_{\Q^*}\left[e^{V(w_k)}\ind{V(w_k)<\underline{V}(w_{[0,k-1]}), V(w_k)\in I(x)}\ind{\sum_{j=1}^{k-t}\sum_{z\in\Omega(w_j)}e^{V(w_k)-V(z)}W_\infty^{(z)}\geq \delta}\right].
\end{align*}
One sees that conditionally on the spine $\nf_k^w$, by Markov's inequality,
\[
\Q^*\left(\sum_{j=1}^{k-t}\sum_{z\in\Omega(w_j)}e^{V(w_k)-V(z)}W_\infty^{(z)}\geq \delta\vert \nf_k^w\right)\leq 1\wedge \left(\sum_{j=1}^{k-t}\sum_{z\in\Omega(w_j)}e^{V(w_k)-V(z)}\frac{1}{\delta}\right),
\]
since $\E[W_\infty]=1$. Note that if we write $\tilde{\Delta}^e_j=\sum_{z\in\Omega(w_j)}e^{-\Delta V(z)}$, then $\sum_{z\in\Omega(w_j)}e^{V(w_k)-V(z)}=e^{V(w_k)-V(w_{j-1})}\tilde{\Delta}^e_j$ with $V(w_k)\in I(x)$. So,
\begin{align*}
1\wedge \left(\sum_{j=1}^{k-t}\sum_{z\in\Omega(w_j)}e^{V(w_k)-V(z)}\frac{1}{\delta}\right)\leq & \sum_{j=1}^{k-t}\left(\frac{e^{V(w_k)-V(w_{j-1})}\tilde{\Delta}^e_j}{\delta}\wedge 1\right)\\
\leq & \sum_{j=1}^{k-t}\frac{1}{\delta} e^{-(V(w_{j-1})-V(w_k))(1-\alpha)}+\sum_{j=1}^{k-t}\ind{\ln \tilde{\Delta}^e_j\geq \alpha(V(w_{j-1})-V(w_k))}.
\end{align*}
It follows that
\begin{align}\label{rgbdrest}
&e^{\kappa x}\P\left(\sum_{k=1}^{|u^*|-t}\sum_{z\in\Omega(u^*_k)}e^{\tM-V(z)}W_\infty^{(z)}\geq \delta, \tM\in I(x)\right)\\
\leq & \sum_{k\geq t+1}e^{\kappa x}\E_{\Q^*}\left[e^{V(w_k)}\ind{V(w_k)<\underline{V}(w_{[0,k-1]}), V(w_k)\in I(x)} \sum_{j=1}^{k-t}\frac{1}{\delta} e^{-(V(w_{j-1})-V(w_k))(1-\alpha)}\right]\nonumber\\
&+ \sum_{k\geq t+1}e^{\kappa x}\E_{\Q^*}\left[e^{V(w_k)}\ind{V(w_k)<\underline{V}(w_{[0,k-1]}), V(w_k)\in I(x)} \sum_{j=1}^{k-t}\ind{\ln \tilde{\Delta}^e_j\geq \alpha(V(w_{j-1})-V(w_k))}\right]\nonumber
\end{align}
We only need to prove that the right hand side of \eqref{rgbdrest} is $o_t(1)$ as $t\rightarrow\infty$. For the first sum of the right hand side, by time reversing then the Markov property,
\begin{align*}
&\sum_{k\geq t+1}e^{\kappa x}\E_{\Q^*}\left[e^{V(w_k)}\ind{V(w_k)<\underline{V}(w_{[0,k-1]}), V(w_k)\in I(x)} \sum_{j=1}^{k-t}\frac{1}{\delta} e^{-(V(w_{j-1})-V(w_k))(1-\alpha)}\right]\\
%=&\sum_{k\geq t+1}\sum_{j=1}^{k-t}\frac{1}{\delta}\E_\Q\left[e^{V(w_k)}\ind{V(w_k)<\underline{V}(w_{[0,k-1]}), V(w_k)\in I(x)} e^{-(V(w_{j-1})-V(w_k))(1-\alpha)}\right]\\
=&\sum_{k\geq t+1}\sum_{j=t+1}^{k}\frac{1}{\delta}\E\left[e^{S_k+\kappa x}\ind{\MS_{[1,k]}<0, S_k\in I(x)} e^{(1-\alpha)S_{j}}\right]\\
=&\sum_{j\geq t+1}\frac{1}{\delta}\E\left[e^{(1-\alpha)S_j}\ind{\MS_{[1,j]}<0}\sum_{k\geq0}\E_{S_j}[e^{S_k+\kappa x}\ind{\MS_{[1,k]}<0, S_k\in I(x)}]\right],
\end{align*}  
which by \eqref{RWeSbd} and for $\alpha\in(0,1)$ is bounded by 
\begin{align*}
%&\sum_{k\geq t+1}e^{\kappa x}\E_{\Q^*}\left[e^{V(w_k)}\ind{V(w_k)<\underline{V}(w_{[0,k-1]}), V(w_k)\in I(x)} \sum_{j=1}^{k-t}\frac{1}{\delta} e^{-(V(w_{j-1})-V(w_k))(1-\alpha)}\right]\\
\frac{c_2}{\delta}\sum_{j\geq t+1}\E\left[e^{(1-\alpha)S_j}\ind{\MS_{[1,j]}<0}(1-S_j)e^{-\kappa' S_j}\right]=o_t(1),
\end{align*}
since $\sum_{j=0}^\infty\E\left[(1-S_j)e^{(1-\alpha)S_j}e^{-\kappa' S_j}\ind{\MS_{[1,j]}<0}\right]=\int_0^\infty(1+x)e^{-(1-\alpha)x}U_s^{(\kappa),-}(dx)<\infty$.

On the other hand, for the second sum on the right hand side of \eqref{rgbdrest},
\begin{align*}
& \sum_{k\geq t+1}e^{\kappa x}\E_{\Q^*}\left[e^{V(w_k)}\ind{V(w_k)<\underline{V}(w_{[0,k-1]}), V(w_k)\in I(x)} \sum_{j=1}^{k-t}\ind{\ln \tilde{\Delta}^e_j\geq \alpha(-V(w_k)+V(w_{j-1}))}\right]\\
%=&\sum_{k\geq t+1} \sum_{j=1}^{k-t}e^{\kappa x}\E_{\Q^*}\left[e^{V(w_k)}\ind{V(w_k)<\underline{V}(w_{[0,k-1]}), V(w_k)\in I(x)} \ind{\ln \tilde{\Delta}^e_j\geq -\alpha(V(w_k)-V(w_{j-1}))}\right]\\
=&\sum_{k\geq t+1} \sum_{j=t+1}^{k}\E_{\Q^*}\left[e^{V(w_k)}\ind{\overline{V}(w_{[1,k]})<0, V(w_k)\in I(x)}\ind{\ln \tilde{\Delta}^e_{j}\geq -\alpha V(w_{j})}\right]\\
=&\sum_{j\geq t+1}\E_{\Q^*}\left[\ind{\overline{V}(w_{[1,j]})<0,\ln \tilde{\Delta}^e_{j}\geq -\alpha V(w_{j})}\sum_{k\geq0}\E_{V(w_j)}\left[e^{S_k}\ind{\MS_{[1,k]}<0, S_k\in I(x)}\right]\right]
\end{align*}
Again by \eqref{RWeSbd}, one gets that
\begin{align*}
& \sum_{k\geq t+1}e^{\kappa x}\E_{\Q^*}\left[e^{V(w_k)}\ind{V(w_k)<\underline{V}(w_{[0,k-1]}), V(w_k)\in I(x)} \sum_{j=1}^{k-t}\ind{\ln \tilde{\Delta}^e_j\geq \alpha(-V(w_k)+V(w_{j-1}))}\right]\\
\leq & \sum_{j\geq t+1}\E_{\Q^*}\left[\ind{\overline{V}(w_{[1,j]})<0,\ln \tilde{\Delta}^e_{j}\geq -\alpha V(w_{j})} (1-V(w_j))e^{-\kappa' V(w_j)}\right]
%=&\sum_{j\geq t+1}\E\left[ (\tilde{\Delta}^e)^{\kappa'/\alpha}(1+\ln \tilde{\Delta}^e)\ind{\MS_{[1,j-1]}<0, -\frac{1}{\alpha}\ln\tilde{\Delta}^e-\zeta\leq S_{j-1}}\right]\\
%= &o_t(1)
\end{align*}
Suppose that under $\P$, $(\zeta, \tilde{\Delta}^e)$ is independent of the random walk $(S_j)$ and is distributed as $(\Delta V(w_j), \tilde{\Delta}_j^e)$ under $\Q^*$. Similarly as \eqref{intermediateupbd}, one sees that
\begin{align*}
\sum_{j\geq 1}\E_{\Q^*}\left[\ind{\overline{V}(w_{[1,j]})<0,\ln \tilde{\Delta}^e_{j}\geq -\alpha V(w_{j})} (1-V(w_j))e^{-\kappa' V(w_j)}\right]
\leq &c_{15}\E\left[e^{-\kappa'\zeta}(1+(\ln\tilde{\Delta}^e)^2+\zeta^2)\right].
\end{align*}
Here $\E\left[e^{-\kappa'\zeta}(1+\frac{1}{\alpha^2}(\ln \tilde{\Delta}^e)^2+\zeta^2)\right]<\infty$
by Assumption \ref{cond4}. This means that 
\[
\sum_{k\geq t+1}e^{\kappa x}\E_{\Q^*}\left[e^{V(w_k)}\ind{V(w_k)<\underline{V}(w_{[0,k-1]}), V(w_k)\in I(x)} \sum_{j=1}^{k-t}\ind{\ln \tilde{\Delta}^e_j\geq \alpha(-V(w_k)+V(w_{j-1}))}\right]=o_t(1).
\]
This suffices to conclude Lemma \ref{BRWrest}.
\end{proof}
\begin{proof}[Proof of Lemma \ref{BRWtight}]
Observe that for any $M\geq1$ and $x\geq0$,
\begin{align*}
\P[\mathcal{W}^{\tM}\geq M \vert \tM\leq -x]\leq & c_{16} e^{\kappa x}\P\left(e^{\tM}W_\infty\geq M, \tM\leq -x\right)\\
\leq & c_{16} e^{\kappa x}\P(W_\infty \geq Me^x)
\end{align*}
Recall that $\P(W_\infty\geq r)\sim C_0 r^{-\kappa}$. It is immediate that $\sup_{x\geq 0}\P[\mathcal{W}^{\tM}\geq M \vert \tM\leq -x]=o_M(1)$.
\end{proof}

\begin{proof}[Proof of Lemma \ref{BRWtailM}]
For small $\varepsilon\in(0,1)$, let us consider $P_\varepsilon(r):=\P(W_\infty\leq \varepsilon e^r, \tM\leq -r)$ for $r\gg1$ and prove that
\[
\limsup_{\epsilon\downarrow0}\limsup_{r\rightarrow\infty}e^{\kappa r}P_\varepsilon(r)=0.
\]
Observe that by change of measure (Proposition~\ref{BRWchangeofp}),
\begin{align}\label{Er1+2}
&e^{\kappa r}P_\varepsilon(r)\leq \sum_{n\geq1}e^{\kappa r}\E\left[\sum_{|u|=n}\ind{V(u)=\tM<\tM_{n-1}, V(u)\leq -r}\ind{W_\infty \leq \varepsilon e^{r}}\right]\\
\leq & \sum_{n=1}^b e^{\kappa r} \E_{\Q^*}\left[e^{V(w_n)}\ind{V(w_n)\leq -r, V(w_n)<\underline{V}(w_{[1,n-1]})}\right]\nonumber\\
+&\sum_{n\geq b+1}e^{\kappa r}\E_{\Q^*}\left[e^{V(w_n)}\ind{V(w_n)\leq -r, V(w_n)<\underline{V}(w_{[1,n-1]})}\ind{\sum_{j=1}^n \sum_{z\in\Omega(w_j)}e^{-V(z)}W_\infty^{(z)}\leq \varepsilon e^{r}}\right]\nonumber\\
=:&E_1(r)+E_2(r).\nonumber
\end{align}
It is proved in \eqref{smallk} that $E_1(r)=o_r(1)$ for any fixed $b\geq1$. Note that $q:=\P(W_\infty>0)>0$, and that $\P(\min_{|u|=1}V(u)\leq K)>0$ for any $K\in\R$ sufficiently large. Consequently,
\begin{align}\label{ErE+S}
&E_2(r)\leq   \sum_{n\geq b+1}e^{\kappa r}\E_{\Q^*}\left[e^{V(w_n)}\ind{V(w_n)\leq -r, V(w_n)<\underline{V}(w_{[1,n-1]})}\prod_{j=n-b}^n\ind{\nexists z\in\Omega(w_j): \Delta V(z)\leq K\textrm{ and } W_\infty^{(z)}>0}\right]\\
&+\sum_{n\geq b+1}e^{\kappa r}\E_{\Q^*}\left[e^{V(w_n)}\ind{V(w_n)\leq -r, V(w_n)<\underline{V}(w_{[1,n-1]})}\ind{W_\infty^+\min_{n-b\leq j\leq n}e^{V(w_n)-V(w_{j-1})-K}\leq \varepsilon }\right]\nonumber\\
&=: E_2(r, extinction)+E_2(r, survival)\nonumber
\end{align}
where $W_\infty^+$ is distributed as $\P(W_\infty\in\cdot\vert W_\infty>0)$ and is independent of $\nf^w_n$. First by time reversing then by the Markov property, one obtains that
\begin{align*}
&E_2(r, extinction)\\
=&\sum_{i\geq 0} \sum_{n\geq b+1}e^{-\kappa i} \E_{\Q^*}\left[e^{V(w_n)+\kappa (r+i)}\ind{V(w_n)\in I(r+i), \overline{V}(w_{[1,n]})<0}\prod_{j=1}^{b+1}\ind{\nexists z\in\Omega(w_j): \Delta V(z)\leq K\textrm{ and } W_\infty^{(z)}>0}\right]\\
=& \sum_{i\geq0}\E_{\Q^*}\left[\prod_{j=1}^{b+1}\prod_{z\in\Omega(w_j)}\big(1-\ind{\Delta V(z)\leq K}q\big)\ind{\MV{w_{[0,b+1]}}<0}\right.\\
&\hspace{2cm}\times \left.\sum_{n\geq b+1}\E_{V(w_{b+1})}[e^{S_{n-b-1}+\kappa(r+i)}\ind{\MS_{[1,n-b-1]}<0, S_{n-b-1}\in I(r+i)}]\right]
%\leq & \E_\Q\left[q^{\sum_{j=1}^{b+1}\#\Omega(w_j)}(1-S_{b+1})e^{-\kappa' S_{b+1}}\ind{\MS_{[1,b+1]}<0}\right]\\
%\leq  & \E\left[ q^{T_{b+1}}(1-S_{b+1}^{(\kappa)})\ind{S_{b+1}^{(\kappa)<0}}\right]\\
%\leq & C\E\left[q^{2T_{b+1}}\right]^{1/2}\E[1+(S_{b+1}^{(\kappa)})^2]^{1/2}\leq C(1+b) \E[q^{2T_1}]^{\frac{b+1}{2}}=o_b(1).
\end{align*}
which by \eqref{RWeSbd} is bounded by 
\begin{align*}
&\E_{\Q^*}\left[e^{-q\sum_{j=1}^{b+1}\sum_{z\in\Omega(w_j)}\ind{\Delta V(z)\leq K}}(1-V(w_{b+1}))e^{-\kappa' V(w_{b+1})}\ind{\MV(w_{[1,b+1]})<0}\right]\\
&\leq \E_{\Q^*}\left[e^{-2q\sum_{j=1}^{b+1}\sum_{z\in\Omega(w_j)}\ind{\Delta V(z)\leq K}}e^{-\kappa' V(w_{b+1})}\right]^{1/2}\E_{\Q^*}\left[(1-V(w_{b+1}))^2e^{-\kappa' V(w_{b+1})}\ind{\MV(w_{[1,b+1]})<0}\right]^{1/2}\\
&= \E_{\Q^*}\left[e^{-2q\sum_{z\in\Omega(w_1)}\ind{\Delta V(z)\leq K}}e^{-\kappa' V(w_{1})}\right]^{(b+1)/2}\E_{\Q^*}\left[(1-V(w_{b+1}))^2e^{-\kappa' V(w_{b+1})}\ind{\MV(w_{[1,b+1]})<0}\right]^{1/2}\\
&=\E\left[\sum_{|u|=1}e^{-\kappa V(u)}e^{-2q\sum_{|z|=1,z\neq u}\ind{\Delta V(z)\leq K}}\right]^{(b+1)/2}\E\left[(1-S_{b+1}^{(\kappa)})^2\ind{S_{b+1}^{(\kappa)}<0}\right]\\
&\leq  e^{-c_{17}(b+1)}\E\left[(1-S_{b+1}^{(\kappa)})^2\ind{S_{b+1}^{(\kappa)}<0}\right]\leq c_{18}(1+b)^2e^{-c_{17} (b+1)}.
%\sqrt{\E\left[\sum_{|u|=b+1}e^{-\kappa V(u)}(1-V(u))^2\ind{\max_{\rho<z\leq u} V(z)<0}\right]\E\left[\sum_{|u|=b+1}e^{-\kappa V(u)}q^{2\sum_{\rho<z\leq u}\#\Omega(z)}\right]}
%\leq  & \E\left[ q^{T_{b+1}}(1-S_{b+1}^{(\kappa)})\ind{S_{b+1}^{(\kappa)}<0}\right]\\
%\leq & C\E\left[q^{2T_{b+1}}\right]^{1/2}\E[1+(S_{b+1}^{(\kappa)})^2]^{1/2}\leq C(1+b) \E[q^{2T_1}]^{\frac{b+1}{2}}=o_b(1).
\end{align*}
since $\E[(S_1^{(\kappa)})^2]<\infty$. The constant $c$ is positive because $\E\left[\sum_{|u|=1}e^{-\kappa V(u)}e^{-2q\sum_{|z|=1,z\neq u}\ind{\Delta V(z)\leq K}}\right]<\E\left[\sum_{|u|=1}e^{-\kappa V(u)}\right]=1$ Therefore, 
\begin{equation}\label{ErE}
E_2(r, extinction)=o_b(1).
\end{equation}
It remains to bound $E_{2}(r, survival)$. For convenience, under $\P$, let $W_\infty^+$ be still distributed as $\P(W_\infty\in\cdot\vert W_\infty>0)$ and is independent of the random walk $(S_n)_{n\geq0}$. Then by time reversing, one has
\begin{align*}
E_2(r, survival)=& \sum_{n\geq b+1}e^{\kappa r}\E\left[e^{S_n}\ind{S_n\leq -r, S_n<\mS_{[1,n-1]}}\ind{ W_\infty^+\min_{n-b\leq j\leq n}e^{S_n-S_{j-1}}\leq \varepsilon e^{K}}\right]\\
=&\sum_{j\geq 0} \sum_{n\geq b+1}e^{-\kappa j} \E\left[e^{S_n+\kappa (r+j)}\ind{\MS_{[1,n]}<0, S_n\in I(r+j)}\ind{W_\infty^+ e^{\mS_{[1,b+1]}}\leq \varepsilon e^{K}}\right]\\
=&\sum_{j\geq 0}e^{-\kappa j}\E\left[\ind{W_\infty^+ e^{\mS_{[1,b+1]}}\leq \varepsilon e^{K}}\ind{\MS_{[1,b+1]}<0}\E_{S_{b+1}}\left[\sum_{n\geq0}e^{S_n+\kappa (r+j)}\ind{\MS_{[1,n]}<0, S_n\in I(r+j)}\right]\right]
%\leq &\E\left[(1-S_{b+1})e^{-\kappa' S_{b+1}}\ind{W_\infty^+ e^{\mS_{[1,b+1]}}\leq \varepsilon e^{K}}\right]\\
%\leq & \E\left[(1-S_{b+1})e^{-\kappa' S_{b+1}}\ind{W_\infty^+ \leq \varepsilon e^{2K}}\right]+\E\left[(1-S_{b+1})e^{-\kappa' S_{b+1}}\ind{\mS_{[1,b+1]}\leq -K}\right]\\
%= & o_\varepsilon(1) C_b +o_K(1) C_b
\end{align*}
by the Markov property at time $b+1$. It follows from \eqref{RWeSbd} that
\begin{align*}
E_2(r, survival)\leq &c\E\left[(1-S_{b+1})e^{-\kappa' S_{b+1}}\ind{\MS_{[1,b+1]}<0}\ind{W_\infty^+ e^{\mS_{[1,b+1]}}\leq \varepsilon e^{K}}\right]\\
\leq & \E\left[(1+|S_{b+1}|)e^{-\kappa' S_{b+1}}\ind{W_\infty^+ \leq \varepsilon e^{2K}}\right]+\E\left[(1+|S_{b+1}|)e^{-\kappa' S_{b+1}}\ind{\mS_{[1,b+1]}\leq -K}\right]\\
= & \E\left[(1+|S^{(\kappa)}_{b+1}|)\right]\P(W_\infty^+\leq \epsilon e^{2K})+\E\left[(1+|S_{b+1}^{(\kappa)}|)\ind{\mS_{[1,b+1]}\leq -K}\right],
\end{align*}
where $\E\left[(1+|S^{(\kappa)}_{b+1}|)\right]\leq c_{19}(1+b)$. By Cauchy-Schwartz inequality,
\[
\E\left[(1+|S_{b+1}^{(\kappa)}|)\ind{\mS_{[1,b+1]}\leq -K}\right]\leq \sqrt{\E\left[(1+|S_{b+1}^{(\kappa)}|)^2\right]}\sqrt{\P\left(\mS^{(\kappa)}_{[1,b+1]}\leq -K\right)},
\]
with $\sqrt{\E\left[(1+|S_{b+1}^{(\kappa)}|)^2\right]}\leq c_{20}(1+b)$. We hence deduce that
\[
E_2(r, survival)\leq c_{21}(1+b) [o_\epsilon(1)+\sqrt{\P\left(\mS^{(\kappa)}_{[1,b+1]}\leq -K\right)}].
\]
Plugging it and \eqref{ErE} into \eqref{ErE+S} implies that
\[
E_2(r)=o_b(1)+c_{21}(1+b) [o_\epsilon(1)+\sqrt{\P\left(\mS^{(\kappa)}_{[1,b+1]}\leq -K\right)}].
\]
It then follows from \eqref{Er1+2} that 
\[
\limsup_{\varepsilon\downarrow0}\limsup_{r\rightarrow\infty}e^{\kappa r}P_\varepsilon(r)\leq o_b(1)+c_{21}(1+b) \sqrt{\P\left(\mS^{(\kappa)}_{[1,b+1]}\leq -K\right)}.
\]
Letting $K\rightarrow\infty$ then $b\rightarrow\infty$ yields what we need.

\end{proof}

\subsection{Proofs of the technical lemmas in Section~\ref{tailRWRE}}
\label{lemRWRE}

Recall that under $\widehat{\p}^*$, the spine $\beta(w_k), k\geq0$ is a Markov chain on $\mathbb{N}^*$, with transition probabilities given by
\[
p_{i,j}={i+j-1 \choose i}\E\left[\sum_{|u|=1}\frac{e^{-j V(u)}}{(1+e^{-V(u)})^{i+j}}\right],  \forall i, j\geq1.
\]
For the function $f(i):=\frac{\Gamma(i+\gamma)}{i}$ with $i\geq 1$ and $\gamma\in(0,\kappa-1)$, it is known (in Appendix of \cite{CdRH16}) that there exists $d\in(0,1)$ such that for any $i\geq i_0$ large enough,
\begin{equation}\label{Domsum}
\sum_{j=1}^\infty p_{i, j}f(j)\leq d f(i).
\end{equation}
As a consequence of Theorem 15.3.3 of \cite{MT93}, there exists $C_f>0$ such that for any $r\in(1,\frac{1}{d})$ and for any $i\geq1$, 
\[
\widehat{\e}_i^*\left[\sum_{k=1}^{\widehat{\tau}_1}f(\beta(w_k))r^k\right]\leq C_f f(i),
\]
where $\widehat{\tau}_1=\inf\{k\geq1: \beta(w_k)=1\}<\infty$.
As we can take $d$ sufficiently close to $\psi(1+\gamma)$ and $f(i)\sim i^\gamma$ for $i\rightarrow\infty$, we conclude that for any $\gamma\in(0,\kappa-1)$, there exists $\cb_\gamma>0$ such that for any $r\in(1, \frac{1}{\psi(1+\gamma)})$ and $i\geq1$,
\begin{equation}\label{eq:dompath}
\widehat{\e}_i^*\left[\sum_{k=1}^{\widehat{\tau}_1}(\beta(w_k))^\gamma r^k\right]\leq \cb_\gamma i^\gamma.
\end{equation}
Further, Jensen's inequality implies that for any $p>0$, for any $i\geq1$,
\[
\widehat{\e}_i^*[\widehat{\tau}_1^p]\leq c_{22} \log^p(1+i).
\]
Let us turn to prove the technical facts in Section~\ref{tailRWRE}.

\textbf{Proof of Lemma \ref{Mom}.}
In fact, \eqref{MomL} has been proven in \cite{dR16} (see Lemmas 11 and 14) when $\kappa\leq 2$. Let us prove \eqref{MomL} for $\kappa>2$ using the idea borrowed from the proof of Lemma 5 in \cite{AHZ13}.

\begin{proof}[Proof of \eqref{MomL}]
We treat  the $\e_i[L_1^{1+\alpha}]$ for integer $\alpha$ and non-integer $\alpha$ separately. First for $m\in\N^*$ satisfying $m<\kappa$, let us show by recurrence that there exists some constant $C_m>0$ such that $\e_i[L_1^m]\leq C_m i^m$ for any $i\geq1$. Recall that for $m=1$, $\e_i[L_1]=i$. Suppose that $\forall 1\leq k\leq m$ and $\forall i\geq1$,
\[
\e_i[L_1^{k}]\leq C_k i^k.
\]
Let us bound $\E_i[L_1^{1+m}]$ for $1+m<\kappa$. Note that by change of measures,
\begin{align*}
\e_i[L_1^{1+m}]=& i\widehat{\e}^*_i[L_1^m]\\
=&i \widehat{\e}^*_i\left[\left(1+\sum_{k=1}^{\tauh_1}\sum_{u\in\Omega(w_k)}L_1^{(u)}\right)^m\right]
\end{align*}
where $\tauh_1=\inf\{k\geq1: \beta(w_k)=1\}$ and $L_1^{(u)}=\sum_{v: v\geq u}\ind{v\in\L_1}$ for any $u\in \B_1$. Set $\Sigma_k:=\sum_{u\in\Omega(w_k)}L_1^{(u)}$ for $k\geq1$ and $\Sigma_0:=1$. And write
\[
\chi_l:=\sum_{k=l}^{\tauh_1}\Sigma_k, \forall 0\leq l\leq \tauh_1,
\]
with $\chi_{\tauh_1+1}:=0$. Apparently, $L_1=\chi_0$ under $\widehat{\p}_i^*$. As $\chi_0^m=\sum_{l=0}^{\tauh_1}(\chi_l^m-\chi_{l+1}^m)$, it follows that
\begin{align}
\e_i[L_1^{1+m}]=& i\widehat{\e}^*_i\left[\sum_{l=0}^{\tauh_1}(\chi_l^m-\chi_{l+1}^m)\right]= i\widehat{\e}^*_i\left[\sum_{l=0}^{\tauh_1}((\chi_{l+1}+\Sigma_l)^m-\chi_{l+1}^m)\right]\nonumber\\
=&i\widehat{\e}^*_i\left[\sum_{l=0}^{\tauh_1}\sum_{k=0}^{m-1}{m \choose k}\chi_{l+1}^k\Sigma_l^{m-k}\right]\leq  c_{23} i \sum_{k=0}^{m-1}\widehat{\e}^*_i\left[\sum_{l=0}^{\tauh_1} \chi_{l+1}^k\Sigma_l^{m-k}\right].\label{byrec}
\end{align}
Here by the Markov property at time $l$, one sees that for any $0\leq k\leq m-1$ and $l\geq1$,
\begin{align*}
\widehat{\e}^*_i\left[\ind{l\leq \tauh_1}\chi_{l+1}^k\Sigma_l^{m-k}\right]=&\widehat{\e}^*_i\left[\ind{l-1< \tauh_1}\Sigma_l^{m-k}\widehat{\e}^*_{\beta(w_l)}[\chi_{1}^k]\right]\\
\leq & \widehat{\e}^*_i\left[\ind{l-1< \tauh_1}\Sigma_l^{m-k}\widehat{\e}^*_{\beta(w_l)}[L_1^k]\right]\\
\leq & C_k \widehat{\e}^*_i\left[\ind{l-1< \tauh_1}\Sigma_l^{m-k} \beta(w_l)^k\right],
\end{align*}
where $\widehat{\e}^*_i[L_1^k]=\frac{1}{i}\e_i[L_1^k]\leq C_k i^{k-1}$ for $k\geq1$ and $\widehat{\e}^*_i[L_1^k]=1$ for $k=0$. Again, by the Markov property at time $l-1$, for any $l\geq1$, one has
\begin{equation}\label{oneterm}
\widehat{\e}^*_i\left[\ind{l\leq \tauh_1}\chi_{l+1}^k\Sigma_l^{m-k}\right]\leq C_k\widehat{\e}_i^*\left[\ind{l-1< \tauh_1}\widehat{\e}^*_{\beta(w_{l-1})}\left(\Sigma_1^{m-k}\beta(w_1)^k\right)\right]
\end{equation}
Now for any $\ell\geq1$, let us estimate $\widehat{\e}_\ell^*\left[\Sigma_1^{m-k}\beta(w_1)^k\right]$. Observe that given $(\beta(w_1); \beta(u), u\in\Omega(w_1))$, $(L_1^{(u)}, u\in\Omega(w_1))$ are independent and $L_1^{(u)}$ is distributed as $L_1$ under $\p_{\beta(u)}$. For any $0\leq k\leq m-1$ (i.e., $1\leq m-k\leq m$), by convexity, 
\[
\Sigma_1^{m-k}=\left(\sum_{u\in\Omega(w_1)}\beta(u)\frac{L_1^{(u)}}{\beta(u)}\right)^{m-k}\leq \left(\sum_{u\in\Omega(w_1)}\beta(u)\right)^{m-1-k} \sum_{u\in\Omega(w_1)}\beta(u)\left(\frac{L_1^{(u)}}{\beta(u)}\right)^{m-k}
\]
As a consequence, 
\begin{align*}
\widehat{\e}_\ell^*\left[\Sigma_1^{m-k}\beta(w_1)^k\right]=&\widehat{\e}_\ell^*\left[\beta(w_1)^k\widehat{\e}_\ell^*\left[\Sigma_1^{m-k}\Big\vert (w_1, \beta(w_1)); (u, \beta(u))_{ u\in\Omega(w_i)}\right]\right]\\
\leq & \widehat{\e}_\ell^*\left[\beta(w_1)^k \left(\sum_{u\in\Omega(w_1)}\beta(u)\right)^{m-1-k} \sum_{u\in\Omega(w_1)}\beta(u) \e_{\beta(u)}\left[\left(\frac{L_1}{\beta(u)}\right)^{m-k}\right]\right],
\end{align*}
where $\e_i[L_1^{m-k}]\leq C_{m-k} i^{m-k}$ for any $i\geq1$ as supposed above. Therefore, by Lemma \ref{momentofw1},  for any $\ell\geq1$ and $0\leq k\leq m-1$,
\begin{align*}
&\widehat{\e}_\ell^*\left[\Sigma_1^{m-k}\beta(w_1)^k\right]\leq  C_{m-k} \widehat{\e}_\ell^*\left[\beta(w_1)^k \left(\sum_{u\in\Omega(w_1)}\beta(u)\right)^{m-k}\right]\leq c_{25} \ell^m.
%=& \frac{C_{m-k}}{\ell}\e_\ell\left[\sum_{|z|=1}\beta(z)^{k+1}\left(\sum_{|u|=1, u\neq z}\beta(u)\right)^{m-k}\right]\leq \frac{C_{m-k}}{\ell}\e_\ell\left[\left(\sum_{|z|=1}\beta(z)\right)^{m+1}\right].
\end{align*}
Applying it in \eqref{oneterm} implies that for any $i\geq 1$ and $l\geq1$,
\[
\widehat{\e}^*_i\left[\ind{l\leq \tauh_1}\chi_{l+1}^k\Sigma_l^{m-k}\right]\leq c_{26} \widehat{\e}_i^*\left[\ind{l-1< \tauh_1} \beta(w_{l-1})^m\right]
\]
where for $l=0$, $\widehat{\e}^*_i\left[\ind{l\leq \tauh_1}\chi_{l+1}^k\Sigma_l^{m-k}\right]=\widehat{\e}^*_i\left[\chi_{1}^k\right]\leq C_k i^{k}$ by hypothesis. Plugging this inequality into \eqref{byrec} yields that
\begin{align*}
\e_i[L_1^{1+m}]\leq &  c_{27} i \sum_{k=0}^{m-1}\widehat{\e}^*_i\left[\sum_{l=0}^{\infty}\ind{l\leq \tauh_1} \chi_{l+1}^k\Sigma_l^{m-k}\right]\\
\leq & c_{27} i \sum_{k=0}^{m-1} C_k i^k+ c_{27}i \sum_{k=0}^{m-1} \widehat{\e}^*_i\left[\sum_{l=1}^{\infty}\ind{l-1<\tauh_1}\beta(w_{l-1})^m\right]\\
\leq & c_{28} i^{m+1} + c_{27} im \widehat{\e}^*_i\left[\sum_{l=0}^{\tauh_1-1}\beta(w_l)^m\right].
\end{align*}
%We claim that under $\widehat{\p}^*$, $(\beta(w_k))_{k\ge0}$ is a recurrent Markov chain such that for any $\alpha\in(0,\kappa-1)$ and $r\in (1, \frac{1}{\psi(1+\alpha)})$,
%\begin{equation}\label{spinekey}
%\widehat{\e}_i^*\left[\sum_{k=1}^{\sigma^+_1}\beta(w_k)^\alpha r^k\right]\leq C_\alpha i^\alpha, \forall i\geq1.
%\end{equation}
%When $\kappa\in (1,2]$, this is (3.7) in \cite{dR16}. When $\kappa>2$, (4.4) in \cite{CdRH16} suffices to conclude \eqref{spinekey} by Theorem 15.3.3 of \cite{MT93}. 
By \eqref{eq:dompath}, we hence end up with
\[
\e_i[L_1^{1+m}]\leq C_{m+1} i^{m+1},
\]
as long as $m+1<\kappa$. We deduce by recurrence that for any integer $1\leq m\leq \lceil \kappa \rceil -1 $, 
\[
\e_i[L_1^m]\leq C_m i^m, \forall i\geq1.
\]
It remains to consider $\e_i[L_1^{m+\delta}]$ for $2\leq m\leq \lceil \kappa \rceil -1$ and $\delta\in(0,(\kappa-m)\wedge 1)$. Write $m-1+\delta=m(1-\eta)$ for some $\eta\in(0,1)$. Observe that by \eqref{byrec},
\begin{align}\label{byrec+}
\e_i[L_1^{m+\delta}]=& i\widehat{\e}_i^*\left[L_1^{m-1+\delta}\right]=i\widehat{\e}_i^*\left[(L_1^{m})^{1-\eta}\right]=i\widehat{\e}^*_i\left[\left(\sum_{l=0}^{\tauh_1}\sum_{k=0}^{m-1}{m \choose k}\chi_{l+1}^k\Sigma_l^{m-k}\right)^{1-\eta}\right]\nonumber\\
\leq & c_{29}\sum_{k=0}^{m-1} i\widehat{\e}_i^*\left[\sum_{l=0}^{\tauh_1}\chi_{l+1}^{k(1-\eta)}\Sigma_l^{(m-k)(1-\eta)}\right].
\end{align}
It is proven for integer $0\leq k\leq m-1$ that for any $i\geq1$,
\[
\widehat{\e}^*_{i}[\chi_1^k]\leq \widehat{\e}^*_{i}[L_1^k]\leq C_k i^k.
\]
When $l=0$, for any $0\leq k\leq m-1$,
\[
\widehat{\e}^*_i\left[\ind{l\leq \tauh_1}\chi_{l+1}^{k(1-\eta)}\Sigma_l^{(m-k)(1-\eta)}\right]\leq \widehat{\e}^*_i\left[\chi_{1}^{k(1-\eta)}\right]\leq\widehat{\e}^*_i\left[\chi_{1}^{k}\right] ^{(1-\eta)}\leq c_{30} i^{k(1-\eta)}.
\]
For $l\geq1$, by Markov property at time $l$ then by Jensen's inequality, one sees that for any $0\leq k\leq m-1$,
\begin{align*}
\widehat{\e}^*_i\left[\ind{l\leq \tauh_1}\chi_{l+1}^{k(1-\eta)}\Sigma_l^{(m-k)(1-\eta)}\right]=&\widehat{\e}^*_i\left[\ind{l-1 < \tauh_1}\Sigma_l^{(m-k)(1-\eta)}\widehat{\e}^*_{\beta(w_l)}[\chi_{1}^{k(1-\eta)}]\right]\\
\leq & \widehat{\e}^*_i\left[\ind{l-1< \tauh_1}\Sigma_l^{(m-k)(1-\eta)}\widehat{\e}^*_{\beta(w_l)}[\chi_1^k]^{1-\eta}\right]\\
\leq & c_{30}\widehat{\e}^*_i\left[\ind{l-1< \tauh_1}\Sigma_l^{(m-k)(1-\eta)} (\beta(w_l))^{k(1-\eta)}\right].
%\leq & C_k \widehat{\e}^*_i\left[\ind{l< \tauh_1}\Sigma_l^{m-k} \beta(w_l)^k\right],
\end{align*}
As a consequence,
\begin{align*}
\e_i[L_1^{m+\delta}]\leq &c_{31}i^{m+\delta}+c_{31}\sum_{k=0}^{m-1} i\widehat{\e}^*_i\left[\sum_{l\geq1}\ind{l-1< \tauh_1}(\beta(w_l))^{k(1-\eta)}\Sigma_l^{(m-k)(1-\eta)}\right]
\end{align*}
For the integer $k$ such that $(m-k)(1-\eta)\geq1$, by convexity, the similar arguments as above show that
\[
\widehat{\e}^*_i\left[\ind{l-1< \tauh_1}(\beta(w_l))^{k(1-\eta)}\Sigma_l^{(m-k)(1-\eta)}\right]\leq c_{32} \widehat{\e}_i^*\left[\ind{l-1< \tauh_1} \beta(w_{l-1})^{m(1-\eta)}\right]
\]
For the integer $k$ such that $(m-k)(1-\eta)\leq 1$, by Markov property at time $l-1$,
\[
\widehat{\e}^*_i\left[\ind{l< \tauh_1}(\beta(w_l))^{k(1-\eta)}\Sigma_l^{(m-k)(1-\eta)}\right]
\leq C_k\widehat{\e}_i^*\left[\ind{l-1< \tauh_1}\widehat{\e}^*_{\beta(w_{l-1})}\left(\Sigma_1^{(m-k)(1-\eta)}\beta(w_1)^{k(1-\eta)}\right)\right]
\]
Here applying Jensen's inequality to $\widehat{\e}_\ell^*\left[\Sigma_1^{(m-k)(1-\eta)}\beta(w_1)^{k(1-\eta)}\right]$ yields that
\begin{align*}
\widehat{\e}_\ell^*\left[\Sigma_1^{(m-k)(1-\eta)}\beta(w_1)^{k(1-\eta)}\right]=&\widehat{\e}_\ell^*\left[\beta(w_1)^{k(1-\eta)}\widehat{\e}_\ell^*\left[\Sigma_1^{(m-k)(1-\eta)}\Big\vert (w_1, \beta(w_1)); (u, \beta(u))_{ u\in\Omega(w_i)}\right]\right]\\
\leq &\widehat{\e}_\ell^*\left[\beta(w_1)^{k(1-\eta)}\widehat{\e}_\ell^*\left[\Sigma_1\Big\vert (w_1, \beta(w_1)); (u, \beta(u))_{ u\in\Omega(w_i)}\right]^{(m-k)(1-\eta)}\right]\\
\leq &\widehat{\e}_\ell^*\left[\beta(w_1)^{k(1-\eta)}\left(\sum_{u\in\Omega(w_1)}\beta(u)\right)^{(m-k)(1-\eta)}\right]\leq c_{33} \ell^{m(1-\eta)}.
\end{align*}
Still, we see that for the integer $k$ such that $(m-k)(1-\eta)\leq 1$,
\[
\widehat{\e}^*_i\left[\ind{l< \tauh_1}(\beta(w_l))^{k(1-\eta)}\Sigma_l^{(m-k)(1-\eta)}\right]\leq c_{34} \widehat{\e}_i^*\left[\ind{l-1< \tauh_1} \beta(w_{l-1})^{m(1-\eta)}\right].
\]
It follows that
\begin{align*}
\e_i[L_1^{m+\delta}]\leq &c_{31}i^{m+\delta}+c_{31}c_{34}\sum_{k=0}^{m-1} i\widehat{\e}^*_i\left[\sum_{l=0}^{\tauh_1}(\beta(w_l))^{m(1-\eta)}\right]\\
=& c_{31}i^{m+\delta}+c_{31}c_{34}\sum_{k=0}^{m-1} i\widehat{\e}^*_i\left[\sum_{l=0}^{\tauh_1}(\beta(w_l))^{m+\delta-1}\right]\\
\leq & c_{35} i^{m+\delta}
\end{align*}
since $m+\delta<\kappa$.
\end{proof}
\begin{proof}[Proof of \eqref{MomM}]
Note that for any $i\in\N$ and $r> i$,
\[
\p_i(M_1\geq r)=\p_i\left(\sum_{u\in\mathcal{B}_1}\ind{\beta(u)\geq r}\geq 1\right)
\]
which by Markov inequality yields that
\begin{align*}
\p_i(M_1\geq r)\leq &\e_i\left[\sum_{u\in\mathcal{B}_1}\ind{\beta(u)\geq r}\right]\\
=&\sum_{n=1}^\infty\e_i\left[\sum_{|u|=n}\ind{\beta(u)\geq r>\max_{\rho<v<u}\beta(v), \min_{\rho<v<u}\beta(v)\geq2}\right].
\end{align*}
By change of measures and Proposition \ref{prop:changeofmeasurep}, one sees that
\begin{align*}
\p_i(M_1\geq r)\leq &\sum_{n\geq1}\widehat{\e}^*_i\left[\frac{i}{\beta(w_n)}; \beta(w_n)\geq r>\max_{1\leq k\leq n-1}\beta(w_k), \min_{1\leq k\leq n-1}\beta(w_n)\geq2\right]\\
\leq&\frac{i}{r}\widehat{\p}^*_i\left(\max_{0\leq k<\tauh_1}\beta(w_k)\geq r\right),
\end{align*}
where $\tauh_1=\inf\{k\geq1: \beta(w_k)=1\}$. According to Lemma 4.2 of \cite{CdRH16}, for any $\alpha\in(0,\kappa-1)$, there exists some constant $c_\alpha\in(1,\infty)$,
\[
\widehat{\p}^*_i\left(\max_{0\leq k<\tauh_1}\beta(w_k)\geq r\right)\leq c_\alpha \left(\frac{i}{r}\right)^\alpha.
\]
(Notice that in fact, the proof of this inequality holds also for $\kappa=\infty$). As a result, for any $r\geq i$,
\[
\p_i(M_1\geq r)\leq c_\alpha\left(\frac{i}{r}\right)^{\alpha+1}.
\]
Therefore, for any $\alpha\in[0,\kappa-1)$, we take $\eta\in(\alpha,\kappa-1)$ so that $\p_i(M_1\geq r)\leq c_\eta (i/r)^{\eta+1}$. Consequently,
\begin{align*}
\e_i[M_1^{1+\alpha} ]=&\int_0^\infty (1+\alpha) r^\alpha\p_i(M_1\geq r)dr\\
\leq& \int_0^i(1+\alpha)r^\alpha dr +\int_i^\infty (1+\alpha)r^\alpha c_\eta \left(\frac{i}{r}\right)^{\eta+1}dr\\
\leq & C_\alpha i^{1+\alpha}.
\end{align*}

\end{proof}

\textbf{Proof of Lemma \ref{bdness}.} \eqref{MombdA} is a special case of \eqref{MomL} and it holds in a similar way.

\begin{proof}[Proof of \eqref{MombdA}]
We prove \eqref{MombdA} in a similar way as in the proof of \eqref{MomL}. Observe that 
\[
\widehat{\e}_1^*\left[\left(\sum_{k=1}^{\widehat{\tau}_1}\ind{\beta(w_{k-1})<A}\sum_{u\in\Omega(w_k)}L_1^{(u)}\right)^{\kappa-1}\right]=\widehat{\e}_1^*\left[\left(\sum_{k=1}^{\widehat{\tau}_1}\ind{\beta(w_{k-1})<A}\Sigma_k\right)^{\kappa-1}\right].
\]
Its finiteness has been proven in \cite{dR16} for $\kappa\in(1,2]$. Assume now that $\kappa>2$. Let us prove its finiteness by recurrence. 

It is known from \eqref{MomL} that for any $0\leq \alpha<\kappa-1$ and any $i\geq1$,
\[
\widehat{\e}_i^*\left[\left(\sum_{k=1}^{\widehat{\tau}_1}\Sigma_k\right)^{\alpha}\right]<C_\alpha i^\alpha.
\]
If $\kappa>2$ is an integer, It follows from \eqref{byrec} that for $m=\kappa-2$,
\begin{align}
\widehat{\e}_1^*\left[\left(\sum_{k=1}^{\widehat{\tau}_1}\ind{\beta(w_{k-1})<A}\Sigma_k\right)^{m+1}\right]\leq &c_{36}\sum_{k=0}^{m-1}\widehat{\e}_1^*\left[\sum_{l=1}^{\widehat{\tau}_1}\ind{\beta(w_{l-1})<A}\Sigma_l^{m-k}\left(\sum_{j=l+1}^{\widehat{\tau}_1}\Sigma_j\ind{\beta(w_{j-1})<A}\right)^k\right]\nonumber\\
\leq  &c_{36}\sum_{k=0}^{m-1}\widehat{\e}_1^*\left[\sum_{l=1}^{\infty}\ind{l\leq\widehat{\tau}_1}\ind{\beta(w_{l-1})<A}\Sigma_l^{m-k}\chi_{l+1}^k\right]
\end{align}
where
\[
\widehat{\e}_1^*\left[\ind{l\leq\widehat{\tau}_1}\ind{\beta(w_{l-1})<A}\Sigma_l^{m-k}\chi_{l+1}^k\right]\leq c_{37}\widehat{\e}_1^*\left[\ind{l-1<\widehat{\tau}_1}\ind{\beta(w_{l-1})<A}\beta(w_{l-1})^m\right].
\]
Consequently,
\begin{align*}
\widehat{\e}_1^*\left[\left(\sum_{k=1}^{\widehat{\tau}_1}\ind{\beta(w_{k-1})<A}\Sigma_k\right)^{m+1}\right]\leq &c_{38}\sum_{k=0}^{m-1} \widehat{\e}_1^*\left[\sum_{l=1}^\infty\ind{l-1<\widehat{\tau}_1}\ind{\beta(w_{l-1})<A}\beta(w_{l-1})^m\right]\\
=& c_{38}m \widehat{\e}_1^*\left[\sum_{l=0}^{\widehat{\tau}_1-1}\ind{\beta(w_{l})<A}\beta(w_{l})^m\right]<\infty.
\end{align*}

If $\kappa>2$ is not an integer, for $m=\lfloor \kappa\rfloor$ and $\delta=\kappa-m\in(0,1)$, write $\kappa-1=m-1+\delta=m(1-\eta)$ for some $\eta\in(0,1)$. Similarly as \eqref{byrec+}, one sees that
\begin{align*}
\widehat{\e}_1^*\left[\left(\sum_{k=1}^{\widehat{\tau}_1}\ind{\beta(w_{k-1})<A}\Sigma_k\right)^{\kappa-1}\right]=&\widehat{\e}_1^*\left[\left(\sum_{k=1}^{\widehat{\tau}_1}\ind{\beta(w_{k-1})<A}\Sigma_k\right)^{m(1-\eta)}\right]\\
\leq & c_{39}\sum_{k=0}^{m-1} \widehat{\e}_1^*\left[\sum_{l=1}^{\tauh_1}\ind{\beta(w_{l-1})<A}\chi_{l+1}^{k(1-\eta)}\Sigma_l^{(m-k)(1-\eta)}\right]
\end{align*}
where 
\[
\widehat{\e}_1^*\left[\ind{l\leq\tauh_1}\ind{\beta(w_{l-1})<A}\chi_{l+1}^{k(1-\eta)}\Sigma_l^{(m-k)(1-\eta)}\right]\leq  c_{40} \widehat{\e}_1^*\left[\ind{l-1< \tauh_1}\ind{\beta(w_{l-1})<A} \beta(w_{l-1})^{m(1-\eta)}\right].
\]
Therefore,
\begin{align*}
\widehat{\e}_1^*\left[\left(\sum_{k=1}^{\widehat{\tau}_1}\ind{\beta(w_{k-1})<A}\Sigma_k\right)^{\kappa-1}\right]\leq &c_{41} m \widehat{\e}_1^*\left[\sum_{l=1}^{\widehat{\tau}_1}\ind{\beta(w_{l-1})<A}\beta(w_{l-1})^{m(1-\eta)}\right]\\
=&c_{41} m \widehat{\e}_1^*\left[\sum_{l=0}^{\widehat{\tau}_1-1}\ind{\beta(w_{l})<A}\beta(w_{l})^{\kappa-1}\right]\leq c_{42} m A^{\kappa-1}\widehat{\e}_1^*[\widehat{\tau}_1]<\infty.
\end{align*}
\end{proof}

\begin{proof}[Proof of \eqref{Mombdkappa}]
Let us prove the finiteness of $\widehat{\e}_1^*\left[\left(\beta(w_{\sigma_A})\right)^{\kappa-1}\ind{\sigma_A<\widehat{\tau}_1}\right]$. Note that
\[
\left(\beta(w_{\sigma_A})\right)^{\kappa-1}\ind{\sigma_A<\widehat{\tau}_1}\leq A^{\kappa-1}\sum_{k=1}^{\widehat{\tau}_1-1}\ind{\beta(w_{k-1})<A}\left(\frac{\beta(w_k)}{\beta(w_{k-1})}\right)^{\kappa-1}.
\]
It follows that 
\begin{align*}
\widehat{\e}_1^*\left[\left(\beta(w_{\sigma_A})\right)^{\kappa-1}\ind{\sigma_A<\widehat{\tau}_1}\right]\leq&A^{\kappa-1} \widehat{\e}_1^*\left[\sum_{k=1}^{\widehat{\tau}_1}\ind{\beta(w_{k-1})<A}\left(\frac{\beta(w_k)}{\beta(w_{k-1})}\right)^{\kappa-1}\right]\\
=& A^{\kappa-1} \widehat{\e}_1^*\left[\sum_{k=1}^\infty\ind{k-1<\widehat{\tau}_1}\frac{1}{\beta(w_{k-1})^{\kappa-1}}\widehat{\e}^*_{\beta(w_{k-1})}\left[\beta(w_1)^{\kappa-1}\right]\right]
\end{align*}
where by Lemma \ref{momentofw1},
\[
\frac{1}{\beta(w_{k-1})^{\kappa-1}}\widehat{\e}^*_{\beta(w_{k-1})}\left[\beta(w_1)^{\kappa-1}\right]\leq c_{43}<\infty.
\]
As $ \widehat{\e}_1^*[\widehat{\tau}_1]<\infty$, one ends up with
\[
\widehat{\e}_1^*\left[\left(\beta(w_{\sigma_A})\right)^{\kappa-1}\ind{\sigma_A<\widehat{\tau}_1}\right]\leq c_{43} A^{\kappa-1} \widehat{\e}_1^*[\widehat{\tau}_1]<\infty.
\]
\end{proof}

\begin{proof}[Proof of \eqref{NBlaw}]
Note that from the negative multinomial distribution, one has the generating function of $\sum_{i=1}^N z_i\zeta_i$ as follows
\[
\E[s^{\sum_{i=1}^N z_i\zeta_i}]=\left(\frac{1}{1+\sum_{i=1}^N A_i(1-s^{z_i})}\right)^n, \forall s\in[0,1].
\]
Apparently, $\sum_{i=1}^N z_i\zeta_i$ can be viewed as sum of $n$ i.i.d. random variables of mean $\sum_{i=1}^N A_i z_i$. According to \eqref{MomofSum}, it suffices to prove \eqref{NBlaw} for $n=1$. In fact we only need to show that 
\begin{equation}\label{NBMom}
\E\left[\left(\sum_{i=1}^N z_i \zeta_i \right)^\alpha\right]\leq 
%C_\alpha n^{\alpha/2\vee 1} \left[(\sum_{i=1}^N  A_i z_i)^\alpha+ \sum_{i=1}^N A_i z_i^{\alpha}\right] \textrm{ if } \alpha\in[1,2]\\
C_2(\alpha) \left[\sum_{k=0}^{\lfloor \alpha-1\rfloor}(\sum_{i=1}^NA_i z_i)^k(\sum_{i=1}^N A_i z_i^{\alpha-k})+(\sum_{i=1}^N  A_i z_i)^\alpha\right]. 
\end{equation}
Let us prove it by recurrence. Fix $n=1$, we now have
\[
\E\left[\prod_{i=1}^N s_i^{ z_i\zeta_i}\right]=\frac{1}{1+\sum_{i=1}^N A_i(1-s_i^{z_i})}, \E[\sum_{i=1}^N z_i\zeta_i]=\sum_{i=1}^N A_i z_i,
\]
and $Var(\sum_{i=1}^N z_i\zeta_i)=\sum_{i=1}^N A_i z_i^2+(\sum_{i=1}^N A_i z_i)^2$. So, \eqref{NBMom} holds for $\alpha=1$ and $\alpha=2$. For $\alpha\in(1,2)$, proving \eqref{NBMom} means proving 
\[
\E\left[\left(\sum_{i=1}^N z_i \zeta_i \right)^\alpha\right]\leq 
C_2(\alpha)  \left[(\sum_{i=1}^N  A_i z_i)^\alpha+ \sum_{i=1}^N A_i z_i^{\alpha}\right],
\]
Write $\alpha=1+\delta$ with some $\delta\in(0,1)$. Observe that
\begin{align}\label{NBMom1}
\E\left[\left(\sum_{i=1}^N z_i \zeta_i \right)^\alpha\right]=\E\left[\left(\sum_{i=1}^N z_i \zeta_i \right)\left(\sum_{i=1}^N z_i \zeta_i \right)^\delta\right]
\leq &\sum_{i=1}^Nz_i\E\left[ \zeta_i \left( z_i^\delta \zeta_i^\delta+ (\sum_{j\neq i}z_j \zeta_j)^\delta\right)\right]\nonumber\\
=& \sum_{i=1}^N z_i^{1+\delta} \E[\zeta_i\times\zeta_i^{\delta}]+\sum_{i=1}^N z_{i}\E\left[\zeta_i\left(\sum_{j\neq i}z_j\zeta_j\right)^\delta\right].
\end{align}
For any $i\in\{1,\cdots,N\}$, let us introduce a biased probability by 
\[
\frac{d\P^\dagger_i}{d\P}=\frac{\zeta_i}{\E[\zeta_i]}=\frac{\zeta_i}{A_i}.
\]
Then under $\P^\dagger_i$, the generating functions of $\zeta_i$ and of $\sum_{j\neq i}z_j \zeta_j$ are 
\[
\E^\dagger_i[s^{\zeta_i}]=\frac{1}{A_i}\E[\zeta_i s^\zeta_i], \textrm{ and } \E^\dagger_i[s^{\sum_{j\neq i}z_j\zeta_j}]=\frac{1}{A_i}\E\left[\zeta_i s^{\sum_{j\neq i}z_j\zeta_j}\right].
\]
By simple calculations, one sees that
\[
\E^\dagger_i[s^{\zeta_i}] =\frac{s}{(1+A_i(1-s))^2}=\E[s^{1+\zeta_i+\zeta_i^\dagger}]\textrm{ and } \E^\dagger_i[s^{\sum_{j\neq i}z_j\zeta_j}]=\E[s^{\sum_{j\neq i}z_j\zeta_j+\sum_{j\neq i}z_j\zeta_j^\dagger}]
\]
where under $\P$, $(\zeta_i^\dagger)_{1\leq i\leq N}$ is an independent copy of $(\zeta_i)_{1\leq i\leq N}$. Consequently, 
\begin{align*}
 \E[\zeta_i\times\zeta_i^{\delta}]=&A_i\E_i^\dagger[\zeta_i^\delta]=A_i\E[(1+\zeta_i+\zeta_i^\dagger)^\delta]\\
 \leq & A_i \E[1+\zeta_i+\zeta_i^\dagger]^\delta\leq A_i + 2 A_i^{1+\delta}. 
\end{align*}
Similarly, by Jensen's inequality,
\begin{align*}
\E\left[\zeta_i\left(\sum_{j\neq i}z_j\zeta_j\right)^\delta\right] =& A_i \E_i^\dagger\left[\left(\sum_{j\neq i}z_j\zeta_j\right)^\delta\right]=A_i\E\left[\left(\sum_{j\neq i}z_j\zeta_j+\sum_{j\neq i}z_j\zeta_j^\dagger\right)^\delta\right]\\
\leq & A_i\E\left[\left(\sum_{j\neq i}z_j\zeta_j+\sum_{j\neq i}z_j\zeta_j^\dagger\right)\right]^\delta \leq 2 A_i (\sum_{j\neq i}A_j z_j)^\delta
\end{align*}
Going back to \eqref{NBMom1}, we obtain that
\begin{align*}
\E\left[\left(\sum_{i=1}^N z_i \zeta_i \right)^\alpha\right]\leq &2\sum_{i=1}^N z_i^{1+\delta} (A_i+A_i^{1+\delta})+2\sum_{i=1}^N A_i z_i \left(\sum_{j\neq i}A_j z_j\right)^\delta\\
\leq & 4 \left(\sum_{i=1}^N A_i z_i\right)^{1+\delta} + 2 \sum_{i=1}^N A_i z_i^{1+\delta},
\end{align*}
as $\sum_{i\leq N} x_i^{1+\delta}\leq (\sum_{i\leq N}x_i)^{1+\delta}$ for $x_i\geq0$. 

Suppose now that for some $k\geq 2$, \eqref{NBMom} holds for any $\alpha\in[1,k]$. Let us prove \eqref{NBMom} for $1+\alpha$ with $\alpha\in[1,k]$. Similarly as above, observe that as $(x+y)^\alpha\leq 2^{\alpha-1}(x^\alpha+y^\alpha)$, 
\begin{align}\label{NBMomrec}
\E\left[\left(\sum_{i=1}^N z_i \zeta_i \right)^{1+\alpha}\right]=\E\left[\left(\sum_{i=1}^N z_i \zeta_i \right)\times \left(\sum_{i=1}^N z_i \zeta_i \right)^\alpha\right]
\leq & 2^{\alpha-1}\sum_{i=1}^N z_i \E\left[\zeta_i\times \left(z_i^\alpha\zeta_i^\alpha+\left(\sum_{j\neq i}z_j\zeta_j\right)^\alpha\right)\right]\nonumber\\
=& 2^{\alpha-1}\sum_{i=1}^N z_i A_i\E_i^\dagger\left[z_i^\alpha\zeta_i^\alpha+\left(\sum_{j\neq i}z_j\zeta_j\right)^\alpha\right]
\end{align}
where 
\begin{align*}
\E_i^\dagger\left[z_i^\alpha\zeta_i^\alpha+\left(\sum_{j\neq i}z_j\zeta_j\right)^\alpha\right]=&\E\left[z_i^\alpha(1+\zeta_i+\zeta_i^\dagger)^\alpha+\left(\sum_{j\neq i}z_j\zeta_j+\sum_{j\neq i}z_j \zeta_j\right)^\alpha\right]\\
\leq & c_{44} z_i^\alpha(1+\E[\zeta_i^\alpha])+ c_{44} \E\left[\left(\sum_{j\neq i}^N z_i \zeta_i \right)^{\alpha}\right].
\end{align*}
As \eqref{NBMom} is assumed to be true for $\alpha$, taking $z_i=0$ yields that
\[
\E\left[\left(\sum_{j\neq i}^N z_i \zeta_i \right)^{\alpha}\right]\leq C_2(\alpha)  \left[\sum_{k=0}^{\lfloor \alpha-1\rfloor}(\sum_{j\neq i}A_i z_i)^k(\sum_{j\neq i}^N A_i z_i^{\alpha-k})+(\sum_{j\neq i}^N  A_i z_i)^\alpha\right],
\]
while taking $z_j=0$ for any $j\neq i$ yields that
\[
1+\E[\zeta_i^\alpha]\leq 1+C_2(\alpha)[A_i+A_i^2+\cdots+A_i^{\lfloor\alpha\rfloor}+A_i^\alpha]\leq c_{45}(\alpha) (1+A_i^\alpha).
\]
As a result,
\begin{align*}
\E_i^\dagger\left[z_i^\alpha\zeta_i^\alpha+\left(\sum_{j\neq i}z_j\zeta_j\right)^\alpha\right]\leq c_{46}(\alpha)z_i^\alpha (1+A_i^\alpha)+ c_{46}(\alpha)\left[\sum_{k=0}^{\lfloor \alpha-1\rfloor}(\sum_{i=1}^NA_i z_i)^k(\sum_{i=1}^N A_i z_i^{\alpha-k})+(\sum_{i=1}^N  A_i z_i)^\alpha\right].
\end{align*}
Plugging it into \eqref{NBMomrec} implies that 
\begin{align*}
\E\left[\left(\sum_{i=1}^N z_i \zeta_i \right)^{1+\alpha}\right]\leq &c_{47}(\alpha)\left[ \sum_{i=1}^N z_i^{1+\alpha} A_i(1+A_i^\alpha)+ \sum_{k=0}^{\lfloor \alpha-1\rfloor}(\sum_{i=1}^NA_i z_i)^{k+1}(\sum_{i=1}^N A_i z_i^{\alpha-k})+(\sum_{i=1}^N  A_i z_i)^{1+\alpha}\right]\\
\leq & c_{48}(\alpha) \left[\sum_{k=0}^{\lfloor \alpha\rfloor}(\sum_{i=1}^NA_i z_i)^{k}(\sum_{i=1}^N A_i z_i^{\alpha+1-k})+(\sum_{i=1}^N  A_i z_i)^{1+\alpha}\right].
\end{align*}
We hence obtain \eqref{NBMom} for $1+\alpha$. By recurrence, we conclude \eqref{NBMom} for any $\alpha\geq1$.
\end{proof}

\begin{proof}[Proof of \eqref{cvgtailWsmallVP}]
Note that
\begin{align}
&e^{\kappa r_A}\E_{\Q^*}^{\eqref{tailWsmallVP}}(\epsilon, r_A)\leq \sum_{n\geq 1}\E_{\Q^*}\left[e^{V(w_n)+\kappa r_A}\ind{-r_A-\epsilon\leq V(w_n)\leq -r_A+2\epsilon,\ \min_{1\leq k\leq n-1}V(w_k)> -r_A-\epsilon}\right]\nonumber\\
&+ \sum_{n\geq1}e^{\kappa r_A}\E_{\Q^*}\Bigg[e^{V(w_n)}\ind{V(w_n)\leq -r_A-\epsilon,\ V(w_n)<\min_{1\leq k\leq n-1}V(w_k)}\ind{\sum_{j=1}^n\sum_{z\in\Omega(w_j)}e^{-V(z)}W_\infty^{(z)}\leq 3\epsilon r_A}\Bigg]\nonumber
\end{align}
The second sum of the righthand side is in fact $E_2(r)$ (see \eqref{Er1+2}), which has been treated in the the proof of Lemma \ref{BRWtailM}. We hence get that 
\[
 \sum_{n\geq1}e^{\kappa r_A}\E_{\Q^*}\Bigg[e^{V(w_n)}\ind{V(w_n)\leq -r_A-\epsilon,\ V(w_n)<\min_{1\leq k\leq n-1}V(w_k)}\ind{\sum_{j=1}^n\sum_{z\in\Omega(w_j)}e^{-V(z)}W_\infty^{(z)}\leq 3\epsilon r_A}\Bigg]=o_\epsilon(1).
\]
On the other hand, by Lemma \ref{ManytoOne} and time reversing for the random walk $(S^{(\kappa)}_k; 0\leq k\leq n)$, 
\begin{align*}
e^{\kappa r_A}\E_{\Q^*}^{\eqref{tailWsmallVP}}(\epsilon, r_A)%\leq &\sum_{n\geq 1}e^{\kappa r_A}\E_{\Q^*}\left[e^{V(w_n)}\ind{-r_A-\epsilon\leq V(w_n)\leq -r_A+2\epsilon,\ \min_{1\leq k\leq n-1}V(w_k)> -r_A-\epsilon}\right]+o_\epsilon(1)\\
\leq &\sum_{n\geq1}e^{\kappa r_A}\E\left[e^{\kappa S_n^{(\kappa)}}\ind{S^{(\kappa)}_n\in[-r_A-\epsilon, -r_A+2\epsilon], \min_{1\leq k\leq n}S^{(\kappa)}_k\geq -r_A-\epsilon}\right]+o_\epsilon(1)\\
%\leq &\sum_{n\geq 1}e^{2\kappa \epsilon}\E_{-3\epsilon}\left[\max_{1\leq k\leq n}S^{(\kappa)}_k<0, S^{(\kappa)}_n\in[-r_A-4\epsilon, -r_A-\epsilon]\right]+o_\epsilon(1)\\
\leq & \sum_{n\geq1}e^{2\kappa\epsilon}\P_{-3\epsilon}\left(\max_{1\leq k\leq n}S^{(\kappa)}_k<0, S_n\in(-r_A-5\epsilon, -r_A-\epsilon]\right)\\
\rightarrow&e^{2\kappa\epsilon}C_s^{(\kappa),-} 4\epsilon U_w^{(\kappa),+}([0,3\epsilon))+o_\epsilon(1),
\end{align*}
as $r_A\rightarrow\infty$ by \eqref{cvgrenewal}. It is then immediate to conclude \eqref{cvgtailWsmallVP}.
\end{proof}

\medskip

\appendix

\section{Appendix}
\begin{lem}\label{momentofw1}
For $\alpha\geq0,\ \beta\geq0$ such that $\alpha+\beta\leq \kappa'$, there exists some constant $c_{49}>0$ depending only on $\alpha+\beta$ such that  for any $\ell\geq1$, we have
\begin{equation}
 \widehat{\e}_\ell^*\left[\beta(w_1)^\alpha \left(\sum_{u\in\Omega(w_1)}\beta(u)\right)^{\beta}\right]\leq c_{49} \ell^{\alpha+\beta}.
\end{equation}
\end{lem}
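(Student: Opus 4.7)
My plan is to reduce the quantity to a moment of $Z_1 = \sum_{|u|=1}\beta(u)$ under $\p_\ell$, and then control the latter by exploiting the negative-multinomial structure of the offspring law.

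First, using the spine-selection formula from Proposition~\ref{prop:changeofmeasurep} (for $\mathcal{F}^\beta_1$-measurable integrands, $\widehat{\e}^*_\ell[g(w_1,\beta)] = \ell^{-1}\e_\ell[\sum_{|u|=1}\beta(u)\,g(u,\beta)]$), I would write
\[
\widehat{\e}^*_\ell\Bigl[\beta(w_1)^\alpha\bigl(\sum_{u\in\Omega(w_1)}\beta(u)\bigr)^\beta\Bigr]
=\frac{1}{\ell}\,\e_\ell\Bigl[\sum_{|u|=1}\beta(u)^{1+\alpha}(Z_1-\beta(u))^\beta\Bigr].
\]
Since $\beta(u)\leq Z_1$ and $Z_1-\beta(u)\leq Z_1$, one has $\beta(u)^{1+\alpha}(Z_1-\beta(u))^\beta\leq \beta(u)\,Z_1^{\alpha+\beta}$; summing over $|u|=1$ bounds the right-hand side by $\ell^{-1}\e_\ell[Z_1^{1+\alpha+\beta}]$.

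Next I would estimate $\e_\ell[Z_1^\gamma]$ with $\gamma=1+\alpha+\beta$. Conditionally on the environment $\en$, the vector $(\beta(u))_{|u|=1}$ is negative multinomial with parameters $\ell$ and weights $A_i=e^{-V(u_i)}$, and in particular $\e[Z_1\mid\en]=\ell W_1$ with $W_1=\sum_{|u|=1}e^{-V(u)}$. Applying Lemma~\ref{bdness}(4) with $N=\#\{|u|=1\}$, $z_i=1$, $A_i=e^{-V(u_i)}$ and $n=\ell$ gives
\[
\e_\ell\bigl[|Z_1-\ell W_1|^\gamma\bigm|\en\bigr]
\leq C_\gamma\,\ell^{\gamma/2\vee 1}\Bigl(\sum_{k=1}^{\lfloor\gamma\rfloor}W_1^k + W_1^\gamma\Bigr).
\]
Combined with $Z_1^\gamma\leq 2^{\gamma-1}\bigl((\ell W_1)^\gamma+|Z_1-\ell W_1|^\gamma\bigr)$ and $\ell^{\gamma/2\vee 1}\leq\ell^\gamma$ (valid since $\ell\geq 1$ and $\gamma\geq 1$), integration against $\P$ yields
\[
\e_\ell[Z_1^\gamma]\leq C'_\gamma\,\ell^\gamma\Bigl(\E[W_1^\gamma]+\sum_{k=1}^{\lfloor\gamma\rfloor}\E[W_1^k]\Bigr).
\]
Since $\gamma=1+\alpha+\beta\leq\kappa$, every moment $\E[W_1^p]$ with $1\leq p\leq\gamma$ is finite by Assumption~\ref{cond4}. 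Combining the two steps then gives $\widehat{\e}^*_\ell[\cdots]\leq c_{\alpha+\beta}\,\ell^{\alpha+\beta}$.

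There is no real obstacle of depth: the entire task is to choose the right crude majoration ($\beta(u)^{1+\alpha}(Z_1-\beta(u))^\beta\leq \beta(u)Z_1^{\alpha+\beta}$) so as to absorb one factor of $\beta(u)$ into the sum and reduce to the already-proved Lemma~\ref{bdness}(4). The only mildly delicate point is matching the range of $W_1$-moments that appear with the integrability provided by Assumption~\ref{cond4}, which is automatic under the hypothesis $\alpha+\beta\leq\kappa-1$.
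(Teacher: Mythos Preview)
Your proof is correct and follows the same skeleton as the paper's: the change-of-measure identity, the crude majoration $\beta(u)^{1+\alpha}(Z_1-\beta(u))^\beta\le\beta(u)Z_1^{\alpha+\beta}$, and the reduction to controlling $\e_\ell[Z_1^{1+\alpha+\beta}]$ via the negative-multinomial moment bound \eqref{NBlaw} and Assumption~\ref{cond4}. The only difference is in how you bound $\e_\ell[Z_1^\gamma]$: you apply \eqref{NBlaw} directly with $n=\ell$ to get $\e_\ell[|Z_1-\ell W_1|^\gamma\mid\en]\le C_\gamma\,\ell^{\gamma/2\vee1}(\cdots)$, whereas the paper first writes $Z_1$ under $\p_\ell$ as a sum of $\ell$ (dependent but identically distributed) copies of $Z_1$ under $\p_1$, uses convexity to get $\e_\ell[Z_1^\gamma]\le\ell^\gamma\,\e_1[Z_1^\gamma]$, and only then applies \eqref{NBlaw} with $n=1$. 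Your route is slightly more direct and avoids invoking the excursion decomposition; the paper's route makes that structural fact explicit but is otherwise equivalent.
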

 \begin{proof}
 Note that by change of measures and Proposition \ref{prop:changeofmeasurep}, 
 \begin{align}\label{momentofw1p}
  \ell\times\widehat{\e}_\ell^*\left[\beta(w_1)^\alpha \left(\sum_{u\in\Omega(w_1)}\beta(u)\right)^{\beta}\right]
  =&\e_\ell\left[\sum_{|u|=1}\beta(u)^{\alpha+1}\left(\sum_{|z|=1, z\neq u}\beta(z)\right)^{\beta}\right]\\
  \leq &\e_\ell\left[\left(\sum_{|u|=1}\beta(u)\right)^{\alpha+1}\left(\sum_{|z|=1}\beta(z)\right)^{\beta}\right]\nonumber\\
  =&\e_\ell\left[\left(\sum_{|u|=1}\beta(u)\right)^{\alpha+\beta+1}\right]\nonumber
 \end{align}
 Here under $\p_\ell$, $\sum_{|u|=1}\beta(u)$ is sum of $\ell$ random variables which are not independent but all distributed as  $\sum_{|u|=1}\beta(u)$ under $\p_1$. By convexity of $t\mapsto t^{\alpha+\beta+1}$, we have for any $t_i\geq0$,
 \[
 \left(\sum_{i=1}^\ell t_i\right)^{\alpha+\beta+1}\leq \ell^{\alpha+\beta}\left(\sum_{i=1}^\ell t_i^{\alpha+\beta+1}\right).
 \]
 Therefore,
 \begin{align*}
 \e_\ell\left[\left(\sum_{|u|=1}\beta(u)\right)^{\alpha+\beta+1}\right]\leq \ell^{\alpha+\beta+1}\e_1\left[\left(\sum_{|u|=1}\beta(u)\right)^{\alpha+\beta+1}\right].
 \end{align*}
 Plugging it into \eqref{momentofw1p} yields that
 \begin{equation}\label{momofw1p}
   \widehat{\e}_\ell^*\left[\beta(w_1)^\alpha \left(\sum_{u\in\Omega(w_1)}\beta(u)\right)^{\beta}\right]\leq  \ell^{\alpha+\beta}\e_1\left[\left(\sum_{|u|=1}\beta(u)\right)^{\alpha+\beta+1}\right],
 \end{equation}
 where under $\p_1^\en$, $(\beta(u))_{|u|=1}$ is of negative multinomial distribution with parameter $1$ and $(\frac{e^{-V(u)}}{1+\sum_{|v|=1}e^{-V(v)}})_{|u|=1}$. By \eqref{NBlaw}, one sees that
 \begin{align*}
 \e_1^\en\left[\left(\sum_{|u|=1}\beta(u)\right)^{\alpha+\beta+1}\right]\leq & \e_1^\en\left[|\sum_{|u|=1}\beta(u)-\sum_{|u|=1}e^{-V(u)}|^{\alpha+\beta+1}\right]+\left(\sum_{|u|=1}e^{-V(u)}\right)^{\alpha+\beta+1}\\
 \leq & c_{50}\left[\sum_{k=1}^{\lfloor\alpha+\beta+1\rfloor} \left(\sum_{|u|=1}e^{-V(u)}\right)^k+\left(\sum_{|u|=1}e^{-V(u)}\right)^{\alpha+\beta+1}\right]
 \end{align*}
 So, by Assumption \ref{cond4}, for $\alpha+\beta+1\leq \kappa$,
 \[ 
 \e_1\left[\left(\sum_{|u|=1}\beta(u)\right)^{\alpha+\beta+1}\right]\leq c_{50} \E\left[\sum_{k=1}^{\lfloor\alpha+\beta+1\rfloor} \left(\sum_{|u|=1}e^{-V(u)}\right)^k+\left(\sum_{|u|=1}e^{-V(u)}\right)^{\alpha+\beta+1}\right]=:c_{49}.
 \]
Plugging it into \eqref{momofw1p} implies that
 \[
 \widehat{\e}_\ell^*\left[\beta(w_1)^\alpha \left(\sum_{u\in\Omega(w_1)}\beta(u)\right)^{\beta}\right]\leq c_{49} \ell^{\alpha+\beta}.
 \]
 \end{proof}

\end{document}